\newcommand{%
    \def\svgwidth{\columnwidth}
    \import{./}{.pdf_tex}
}[1]{%
    \def\svgwidth{\columnwidth}
    \import{./}{#1.pdf_tex}
}
\numberwithin{equation}{section}
\theoremstyle{plain}
\newtheorem{theorem}{Theorem}[section]
\theoremstyle{theorem}
\newtheorem{prop}[theorem]{Proposition}
\newtheorem{lem}[theorem]{Lemma}
\newtheorem{cor}[theorem]{Corollary}
\newtheorem{question}[theorem]{Question}
\newtheorem*{question*}{Question}
\theoremstyle{plain}
\theoremstyle{definition}
\newtheorem{defn}[theorem]{Definition}
\newtheorem{rmk}[theorem]{Remark}
\newtheorem{example}[theorem]{Example}
\newcommand{\R}{\mathbb{R}}
\newcommand{\C}{\mathbb{C}}
\newcommand{\Q}{\mathbb{Q}}
\newcommand{\Z}{\mathbb{Z}}
\newcommand{\N}{\mathbb{N}}
\newcommand{\D}{\mathbb{D}}
\newcommand{\bp}{\mathcal{F}}
\newcommand{\rl}{\mathcal{R}}
\newcommand{\hCbb}{\widehat{\mathbb{C}}}
\newcommand{\Fcal}{\mathcal{F}}
\newcommand{\Pcal}{\mathcal{P}}
\newcommand{\Ncal}{\mathcal{N}}
\newcommand{\p}{p}
\newcommand{\fc}{\text{fc}}
\newcommand{\zc}{\text{zc}}
\newcommand{\pcf}{post-critically finite }
\newcommand{\Pcf}{Post-critically finite }
\newcommand{\shc}{relative hyperbolic component }
\newcommand{\shcs}{relative hyperbolic components }
\newcommand{\Mol}{\mathscr{M}}
\DeclareMathOperator{\mult}{mult}
\DeclareMathOperator{\BP}{\mathcal{B}}
\DeclareMathOperator{\Julia}{\mathcal{J}}
\DeclareMathOperator{\interior}{int}
\DeclareMathOperator{\Isom}{Isom}
\DeclareMathOperator{\Int}{Int}
\DeclareMathOperator{\diam}{diam}
\DeclareMathOperator{\Comp}{Comp}
\DeclareMathOperator{\End}{End}
\DeclareMathOperator{\val}{val}
\DeclareMathOperator{\ord}{ord}
\DeclareMathOperator{\Hdim}{H.dim}
\DeclareMathOperator{\Vertex}{\mathrm{Vert}}
\DeclareMathOperator{\Edge}{\mathrm{Edge}}
\DeclareMathOperator{\Path}{\mathrm{Path}}
\DeclareMathOperator{\depth}{\mathrm{depth}}
\DeclareMathOperator{\Crit}{\mathrm{Crit}}
\DeclareMathOperator{\CBrank}{\mathrm{rank_{CB}}}
\numberwithin{figure}{section}
\title[Polynomials with core entropy zero]{Polynomials with core entropy zero}
\subjclass[2020]{37F05, 37B40, 37A35, 37F10, 37F34}
\keywords{Complex dynamics, core entropy, main molecule}
\begin{document}
\begin{author}[Y.~Luo]{Yusheng Luo}
\address{Department of Mathematics, Cornell University, Ithaca, NY 14853, USA}
\email{yusheng.s.luo@gmail.com}
\end{author}
\thanks{The first-named author is partially supported by NSF Grant DMS-2349929}

\begin{author}[I.~Park]{Insung Park}
\address{Institute for Mathematical Sciences, Stony Brook University, Stony Brook, NY 11794-3660, USA}
\email{insung.park@stonybrook.edu}
\end{author}

\begin{abstract}
This paper studies polynomials with core entropy zero.
We give several characterizations of polynomials with core entropy zero. In particular, we show that a degree $d$ \pcf polynomial $f$ has core entropy zero if and only if $f$ is in the degree $d$ main molecule $\Mol_d$.
The characterizations define several comparable quantities which measure the complexities of polynomials with core entropy zero and allow us to have a better understanding of the structure of the main molecule in higher degrees.
\end{abstract}

\maketitle

\setcounter{tocdepth}{1}
\tableofcontents

\section{Introduction}
Classically, topological entropy measures the complexity of a dynamical system.
For a degree $d$ polynomial $f: \C \longrightarrow \C$, the usual topological entropy of $f$ on $\C$ is always $\log d$, which is not very useful, and we need a better notion of entropy for polynomials.

If the polynomial $f$ is {\em post-critically finite}, i.e., if every critical point of $f$ has a finite orbit, then it has a natural forward invariant tree, called its Hubbard tree $\mathcal{T}_f$.
William Thurston defined the {\em core entropy} of $f$ as the topological entropy of the restriction of $f$ on its Hubbard tree $\mathcal{T}_f$,
$$
h(f):= h_{top}(f|_{\mathcal{T}_f}).
$$
The core entropy has been studied extensively in the literature.
It is known that $h(f)$ is a continuous function (see \cite{Tio16, DS20} for the quadratic case and \cite{GT21} for the general case), settling a conjecture of Thurston.

In this paper, we study polynomials with core entropy zero, and give the first result that relates the core entropy with the topology of the parameter space for polynomials of degree $\geq 3$ (see \S \ref{sec:ccz}).
We introduce several finer measures of the complexity that distinguish polynomials with core entropy zero, and we show that these measures are all comparable (see \S \ref{sec:fmc}). 
These finer measures allow us to have a better understanding of the structure of the main molecules, which are more mysterious for degree $\geq 3$ (see \S \ref{subsec:mm}).

\subsection{Characterizations of polynomials with core entropy zero}\label{sec:ccz}
Let $\mathcal{P}_d$ be the space of {\em monic} and {\em centered} polynomials of degree $d$ (i.e., polynomials of the form $z^d+a_{d-2}z^{d-2}+\cdots+a_0$).
Let $f \in \mathcal{P}_d$ be a \pcf polynomial.
The {\em \shc} $\mathcal{H}_f \subseteq \mathcal{P}_d$ consists of polynomials that are quasiconformally conjugate to $f$ near the Julia set (see \S \ref{sec:mm}).
Note that if $f$ is hyperbolic, then $\mathcal{H}_f$ is the {\em hyperbolic component} of $\mathcal{P}_d$ containing $f$.
Two \shcs $\mathcal{H}_f$ and $\mathcal{H}_g$ are said to be {\em adjacent} if $\partial{\mathcal{H}_f} \cap \partial{\mathcal{H}_g} \neq \emptyset$.

Let $\mathcal{H}_d$ be the {\em main hyperbolic component}, i.e., the \shc that contains $f(z) = z^d$.
The degree $d$ {\em main molecule} is
$$
\Mol_d := \overline{\bigcup_{\mathcal{H}\in \mathfrak{S}} \mathcal{H}},
$$
where $\mathfrak{S}$ consists of all \shcs $\mathcal{H}$ that are obtained from $\mathcal{H}_d$ through a finite adjacent sequence of relative hyperbolic components.

For a \pcf polynomial $f$, the {\em Hubbard tree} is the regulated hull of the critical and post-critical points (see \cite{DH85, Poi10}).
It is a finite invariant tree that gives a combinatorial description for the dynamics of $f$.
Conversely, one can define the combinatorial notion of a marked abstract Hubbard tree (see \cite{Poi10} or Definition \ref{defn:aht1}).
By \cite[Theorem 1.1]{Poi10}, every marked abstract Hubbard tree gives a \pcf polynomial in $\mathcal{P}_d$.

By \cite{McM88}, every relative hyperbolic component contains a unique \pcf polynomial. 
Thus, we use these marked abstract Hubbard trees to represent relative hyperbolic components in $\mathcal{P}_d$. 
A {\em simplicial tuning} is a combinatorial operation on Hubbard trees that would allow us to characterize adjacent relative hyperbolic components (see \S \ref{subsec:sthf} for details).


Our first result provides various characterizations of polynomials with core entropy zero.

\begin{theorem}\label{thm:A}
Let $f\in \mathcal{P}_d$ be a \pcf polynomial.
Then the following are equivalent.
\begin{enumerate}
\item $h(f) = 0$.
\item $f$ is in the main molecule $\Mol_d$.
\item $f$ is obtained from $z^d$ by a finite sequence of simplicial tunings.
\item $\Julia_f \cap \mathcal{T}_f$ is a countable set where $\mathcal{T}_f$ is the Hubbard tree of $f$.
\end{enumerate}
\end{theorem}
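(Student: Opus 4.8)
The plan is to prove the equivalences by running the cycle $(1)\Leftrightarrow(4)\Rightarrow(3)\Rightarrow(2)\Rightarrow(1)$. The biconditional $(1)\Leftrightarrow(4)$ is a statement about the dynamics on a single Hubbard tree; $(3)\Rightarrow(2)$ and $(2)\Rightarrow(1)$ are comparatively soft facts about, respectively, how the main molecule is assembled in parameter space and about the continuity of core entropy; and $(4)\Rightarrow(3)$ is the structural heart, reconstructing a tuning decomposition from a countability hypothesis. Establishing $(1)\Leftrightarrow(4)$ first (rather than deducing it from the cycle) is convenient, since it lets us use ``$h(f)=0$'' freely while proving $(4)\Rightarrow(3)$.

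\emph{The biconditional $(1)\Leftrightarrow(4)$.} Marking the postcritical set of $f$ (and, if needed, finitely many of its preimages) subdivides $\mathcal{T}_f$ into a finite graph on which $f$ is a Markov map, and the classical computation of core entropy gives $h(f)=\log\rho(A)$, where $A$ is the incidence matrix of this subdivision and $\rho(A)\ge 1$ always. If $h(f)=0$, then $\rho(A)=1$, so the powers $A^n$ grow only polynomially; feeding this into the description of $\Julia_f\cap\mathcal{T}_f$ as the accumulation set of the subdivision points shows it is countable. Conversely, if $h(f)>0$, then $\rho(A)>1$ forces exponential growth, from which one extracts a subtree covered at least twice by some iterate $f^n$, i.e.\ a topological horseshoe inside $\mathcal{T}_f$, hence a Cantor set inside $\Julia_f\cap\mathcal{T}_f$, which is therefore uncountable. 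This is the step that ties the Cantor--Bendixson rank of $\Julia_f\cap\mathcal{T}_f$ to the Jordan structure of $A$, which is what feeds the comparability statements advertised in the abstract.

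\emph{The implications $(3)\Rightarrow(2)$ and $(2)\Rightarrow(1)$.} For $(3)\Rightarrow(2)$ I would use the description of tuning from \S\ref{sec:mm} to show that a simplicial tuning of a relative hyperbolic component $\mathcal{H}$ produces a relative hyperbolic component joined to $\mathcal{H}$ by a finite chain of adjacent relative hyperbolic components; since $z^d\in\mathcal{H}_d$, induction on the number of tunings places $f$ in $\mathfrak{S}$, hence in $\Mol_d$. For $(2)\Rightarrow(1)$, core entropy is constant on each relative hyperbolic component (the Hubbard tree and its dynamics are constant there up to conjugacy) and continuous on $\mathcal{P}_d$ by \cite{GT21}; since $h\equiv 0$ on $\overline{\mathcal{H}_d}$ (the Hubbard tree of $z^d$ is a point), propagating this value across each adjacency in a chain realizing $f\in\mathfrak{S}$ gives $h\equiv 0$ on every such component, and then on $\Mol_d$ by continuity.

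\emph{The implication $(4)\Rightarrow(3)$, and the main obstacle.} Assume $\Julia_f\cap\mathcal{T}_f$ is countable; being compact it has finite Cantor--Bendixson rank, and I would induct on it. When the rank is minimal, $f$ lies in (the closure of) $\mathcal{H}_d$ or is a single simplicial tuning thereof. For the inductive step, the hypothesis — together with the no-horseshoe half of the biconditional above, i.e.\ $h(f)=0$ — forces the edge dynamics of $\mathcal{T}_f$ to be non-expanding, and I would isolate a maximal ``simple'' renormalization: a periodic subtree whose first return map is combinatorially rotation-like. De-tuning this piece should exhibit $f$ as the simplicial tuning of a polynomial $g$ by a level-one simple piece, with $\Julia_g\cap\mathcal{T}_g$ of strictly smaller Cantor--Bendixson rank; applying the induction hypothesis to $g$ and composing tunings recovers $f$ from $z^d$ in finitely many simplicial tunings. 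I expect this to be the technical heart of the theorem: extracting the renormalization/tuning decomposition from a purely set-theoretic countability hypothesis, verifying that the piece peeled off is a genuine \emph{simplicial} tuning realized by quasiconformal surgery rather than a merely topological combination, and arranging the bookkeeping so that the chosen complexity measure — Cantor--Bendixson rank, or one of the comparable measures of the abstract — strictly decreases, so that the induction terminates.
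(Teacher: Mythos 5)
Your cycle $(1)\Leftrightarrow(4)\Rightarrow(3)\Rightarrow(2)\Rightarrow(1)$ matches the paper's overall architecture, and your treatment of $(1)\Leftrightarrow(4)$---spectral radius of the Markov matrix, polynomial growth forces countability, positive spectral radius produces a horseshoe and hence a Cantor set in $\Julia_f\cap\mathcal{T}_f$---is essentially the argument of \S\ref{sec:hd}. Your $(4)\Rightarrow(3)$ sketch is also close in spirit to the paper's \emph{simplicial quotient} (\S\ref{subsec:simp}): the paper collapses every maximal chain of adjacent bounded Fatou components (a ``core'') to one Fatou component, checks the result $g$ is a genuine \pcf polynomial with $f$ a simplicial tuning of $g$, and terminates the induction using the pair $(k_f,m_f)$ (number of periodic bounded Fatou components, critical multiplicity therein), which is cleaner than inducting directly on Cantor--Bendixson rank since the identification of that rank with the growth rate is only established afterward (Lemma~\ref{lem:sbgr}, Theorem~\ref{thm:cgrb}). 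Your ``peel off a rotation-like periodic piece'' would need to be replaced by this more global collapse operation, and you would need to prove the rank actually drops, which is exactly Lemma~\ref{lem:sbgr}.

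The genuine gap is in $(2)\Rightarrow(1)$. You argue that core entropy is constant on each relative hyperbolic component and continuous on $\mathcal{P}_d$ by [GT21], and then propagate $h\equiv 0$ across each adjacency and finally to the accumulation set. This does not go through as stated. The continuity result of [GT21] is continuity on the space of \pcf polynomials of degree $d$ (as the paper notes after Proposition~\ref{prop:azce}), but the boundary points in $\partial\mathcal{H}_f\cap\partial\mathcal{H}_g$ through which you would pass are geometrically finite and not \pcf, and a generic element of a relative hyperbolic component is not \pcf either; so ``$h$ is constant on $\mathcal{H}_f$'' and ``$h$ is continuous on $\mathcal{P}_d$'' each require a nontrivial extension of the definition before they can be invoked. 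Moreover, for a \pcf $f$ in the accumulation set $\Mol_d-\bigcup_{\mathcal{H}\in\mathfrak{S}}\mathcal{H}$, you cannot just say ``by continuity'': the paper explicitly flags that it is not known whether such an $f$ is a limit of \pcf polynomials in $\bigcup\mathcal{H}$, nor whether the diameters of the approaching \shcs go to zero. The paper circumvents all of this with a lamination argument: positive core entropy of a \pcf $f$ produces uncountably many leaves accumulating on a set of periodic leaves (Lemma~\ref{lem:ul}), and these periodic leaves---being carried by repelling cycles---persist both to adjacent \shcs (Proposition~\ref{prop:bzce}) and to any nearby polynomial with connected, locally connected Julia set (Lemma~\ref{lem:uncl}). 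You would need to fill the continuity gap or switch to this persistence-of-laminations argument. Finally, your $(3)\Rightarrow(2)$ is the right statement but it is not a soft consequence of \S\ref{sec:mm}; the actual proof (Theorem~\ref{prop:gbf}) uses the full quasi-\pcf degeneration machinery of [L21a, L21b] to construct an explicit degenerating sequence in $\mathcal{H}_g$ converging to a root of $\mathcal{H}_f$.
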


\begin{rmk}\label{rmk:mt}
	Here are some remarks on Theorem \ref{thm:A}.
\begin{itemize}
	\item For quadratic polynomials, the equivalences between (2), (3) and (4) are well known.
	It is known that the core entropy $h(f)$ is related by the simple formula
	$$
	\Hdim \mathscr{B}(f) = \frac{h(f)}{\log d},
	$$
	to the Hausdorff dimension of the set $\mathscr{B}(f) \subseteq \mathbb{S}^1$ of biaccessible angles (see \cite{Tio15, BS17}).
	It is proved in \cite[Proposition 2.11]{BS17} that the main molecule $\Mol_2$ is precisely the locus of parameters with $\Hdim \mathscr{B}(f) = 0$.
	Therefore, the equivalence between (1) and (2) for quadratic polynomials is also immediately obtained from these known results.
	
	\item For real quadratic polynomials, the equivalence between (1) and (2) is also proved in \cite{Dou95}, where it is shown that the core entropy is positive exactly beyond the Feigenbaum point.
	This argument also works, together with the monotonicity on {\em veins} proved in \cite{Tio15}, to prove the equivalence of (1) and (2) in general.
	Our approach is different from the above proofs, which allows us to generalize to higher degrees.
	
	\item
	It is suggested in \cite{GT21} that the core entropy may be a useful tool to define and investigate the hierarchical structure of the connectedness locus.
	Theorem \ref{thm:A} gives a precise connection between the core entropy and the parameter space for higher degrees. 
	
	
	\item There might be many other equivalent formulations of $\mathcal{J}_f \cap \mathcal{T}_f$ being countable. See Appendix \ref{sec:biset} for another characterization using bisets, suggested by L.\@ Bartholdi and V.\@ Nekrashevych.
\end{itemize}

\end{rmk}

\subsection{Finer measures of complexity}\label{sec:fmc}
It is natural to ask if there is a measure of complexity finer than core entropy that can distinguish polynomials with core entropy zero.
The perspectives of (1)-(4) in Theorem \ref{thm:A} give rise to different measures.

Let $f \in \mathcal{P}_d$ be a \pcf polynomial with core entropy zero.
\begin{enumerate}
\item (Growth rate) Let $A_f$ be the Markov matrix associated to the dynamics of $f$ on its Hubbard tree $\mathcal{T}_f$. Then $\| A_f^n\| \asymp  n^\alpha$ for some $\alpha \in \Z_{\ge0}$ (see \S \ref{sec:hd}). We define the {\em growth rate complexity} $\mathscr{C}_{gr}(f):= 1+\alpha$.
\item (Bifurcation) We define the {\em bifurcation complexity} $\mathscr{C}_{b}(f)$ as the smallest number of \shcs one needs to {\em bifurcate} to arrive at $\mathcal{H}_f$ from $\mathcal{H}_d$ (see \S \ref{sec:mm}).
\item (Combinatorial) We define the {\em combinatorial complexity} $\mathscr{C}_{c}(f)$ as the smallest number of {\em simplicial tunings} needed to obtain $f$ from $z^d$ (see \S \ref{sec:ss}). 
\item (Topological)
We define the {\em topological complexity} $\mathscr{C}_{t} (f)$ as the {\em Cantor-Bendixson rank} of $\Julia_f \cap \mathcal{T}_f$ (see \S \ref{subsec:cbd}).
\end{enumerate}
We emphasize that our bifurcation complexity is defined in terms of {\em dynamically meaningful transitions} from one \shc to another (see Definition \ref{defn:bif} and the subsection on open questions below for more discussions). 

The following theorem relates the four measures.
\begin{theorem}\label{thm:B}
Let $f \in \mathcal{P}_d$ be a \pcf with core entropy zero. Then we have
$$
\mathscr{C}_{b}(f)\leq \mathscr{C}_{gr}(f) = \mathscr{C}_c(f) = \mathscr{C}_{t}(f) \leq (d-1)\mathscr{C}_{b}(f).
$$
Moreover, the bounds are sharp. More precisely, for any degree $d \geq 2$, there exist \pcf polynomials $f, g$ with core entropy zero so that $\mathscr{C}_{b}(f) =  \mathscr{C}_{gr}(f)$ and $\mathscr{C}_{gr}(g) = (d-1)\mathscr{C}_{b}(g)$.
\end{theorem}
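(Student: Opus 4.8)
\emph{Overall strategy.} The plan is to attach to $f$ a single non-negative integer $\mathfrak{d}(f)$ — the nesting depth of its small-copy structure — and to prove $\mathscr{C}_{gr}(f)=\mathscr{C}_{c}(f)=\mathscr{C}_{t}(f)=\mathfrak{d}(f)$ together with $\mathscr{C}_{b}(f)\le \mathfrak{d}(f)\le (d-1)\,\mathscr{C}_{b}(f)$. By Theorem~\ref{thm:A} we may fix a shortest decomposition of $f$ into simplicial tunings, $z^d=f_0,f_1,\dots,f_k=f$ with $f_j$ obtained from $f_{j-1}$ by one simplicial tuning and $k=\mathscr{C}_{c}(f)$, and set $\mathfrak{d}(f):=k$. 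The first step is to record, alongside this decomposition, the induced forward-invariant filtration $\mathcal{T}_f=\mathcal{T}^{(0)}\supseteq \mathcal{T}^{(1)}\supseteq\cdots\supseteq \mathcal{T}^{(k)}$ of the Hubbard tree, where $\mathcal{T}^{(j)}$ is the sub-tree on which the $j$-th and later tunings act, together with the matching nested chain of small copies inside $\mathcal{P}_d$. The three equalities are then three ways of reading $k$ off of $f$.

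\emph{The equality $\mathscr{C}_{gr}=\mathfrak{d}$.} Since $h(f)=0$ gives $\rho(A_f)=1$, the Frobenius normal form of the non-negative integer matrix $A_f$ together with the standard Perron--Frobenius estimate yields $\|A_f^n\|\asymp n^{\nu-1}$, where $\nu$ is the length of a longest chain, in the condensation DAG of the transition graph, of \emph{basic classes} (strongly connected components of spectral radius $1$); thus $\mathscr{C}_{gr}(f)=\nu$. I would then show $\nu=k$. Every basic class is a single cyclic necklace of edges, because an irreducible non-negative integer matrix of spectral radius $1$ is a permutation matrix (for instance $\operatorname{tr}(A^{nq})$ is eventually constant, $q$ the period, which forces this), and these necklaces are exactly the inflated critical cores created by the tunings, while the remaining strongly connected components are transient spine/connector edges. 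Passing from $f_{j-1}$ to $f_j$ inserts one new basic class at a sink of the condensation, and — crucially, because $\mathcal{T}_f$ stays connected — the connector edges joining it to the rest of the tree lie above a previous longest chain, so $\nu$ grows by at most, and in fact exactly, $1$ per tuning, giving $\nu\le k$. Conversely a chain of $\nu$ basic classes exhibits $f$ as a $\nu$-fold iterated tuning, by the classification of core-entropy-zero Hubbard-tree maps, so $k\le\nu$ by minimality; hence $\nu=k$.

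\emph{The equality $\mathscr{C}_{t}=\mathfrak{d}$.} Here I would analyze $\Julia_f\cap\mathcal{T}_f$ along the same filtration. Base case: a level-$1$ \pcf polynomial meets only finitely many bounded Fatou components along its Hubbard tree, and $\Julia_f\cap\mathcal{T}_f$ is the finite set of points separating consecutive ones, so $\CBrank(\Julia_f\cap\mathcal{T}_f)=1$. Inductively, the outermost tuning contributes such a finite set $S$, and in a neighbourhood of each point of $S$ — and nowhere else — one finds a scaled homeomorphic copy of $\Julia_{f'}\cap\mathcal{T}_{f'}$, where $f'$ arises from $f$ by deleting the outermost tuning; hence the derived set of $\Julia_f\cap\mathcal{T}_f$ is a finite disjoint union of homeomorphic copies of $\Julia_{f'}\cap\mathcal{T}_{f'}$, so one derived-set operation peels off exactly one level and $\CBrank(\Julia_f\cap\mathcal{T}_f)=k$.

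\emph{Comparison with bifurcations and sharpness.} A simplicial tuning is a bifurcation in the sense of Definition~\ref{defn:bif}, so the chosen tuning sequence is a bifurcation sequence of the same length, giving $\mathscr{C}_{b}(f)\le \mathscr{C}_{c}(f)=\mathfrak{d}(f)$; conversely a single bifurcation changes the behaviour of at most the $d-1$ critical points, and such a transition factors into at most $d-1$ simplicial tunings, so $\mathfrak{d}(f)=\mathscr{C}_{c}(f)\le (d-1)\,\mathscr{C}_{b}(f)$. For sharpness, in each degree $d\ge 2$ I would exhibit (a) a polynomial $f$ built from $k$ quadratic-like tunings performed one after another along a single critical orbit, with the other $d-2$ critical points superattracting fixed points, so that no two steps can be merged and $\mathscr{C}_{b}(f)=k=\mathscr{C}_{gr}(f)$; and (b) a polynomial $g$ obtained by iterating $m$ times one bifurcation that pushes all $d-1$ critical points simultaneously one step deeper into a common nested tower, so that this single bifurcation is combinatorially a composition of $d-1$ simplicial tunings and lengthens the basic-class chain of $A_g$ by $d-1$, whence $\mathscr{C}_{b}(g)=m$ while $\mathscr{C}_{gr}(g)=(d-1)m$ (see \S\ref{subsec:eg}). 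I expect the main obstacle to be the exact (not merely coarse) bookkeeping in the growth-rate step — controlling the condensation of $A_f$ under a tuning and ruling out any spurious basic class or longer chain — which requires a precise normal form for Markov maps of core-entropy-zero Hubbard trees; a secondary difficulty is the construction in (b), since naive symmetric configurations produce parallel rather than nested cores and only yield $\mathscr{C}_{gr}=m$, so one must arrange the $d-1$ critical points into a genuine nested tower that is still traversed by a single bifurcation.
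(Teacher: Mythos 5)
Your high-level strategy matches the paper's: prove $\mathscr{C}_{gr}=\mathscr{C}_c=\mathscr{C}_t$ via the structure of cycles in the Markov transition graph, then sandwich $\mathscr{C}_b$ by showing that a simplicial tuning realizes a bifurcation (giving $\mathscr{C}_b\le\mathscr{C}_c$) and that one bifurcation unwinds into at most $d-1$ simplicial tunings (giving $\mathscr{C}_c\le (d-1)\mathscr{C}_b$). Your identification of basic classes with cyclic strongly connected components and the use of the condensation DAG is exactly the paper's ``depth'' formalism (Proposition \ref{prop:qnic}, Lemma \ref{lem:mnsc}). However, there are three real gaps.

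First, and most seriously, the claim that in a shortest tuning decomposition ``$\nu$ grows by ... in fact exactly $1$ per tuning'' is not justified and is not true for arbitrary simplicial tunings: a tuning performed on a Fatou orbit that sits above, or disjoint from, the longest chain of cyclic components does not lengthen that chain, so $\nu$ can stay fixed. The ``because $\mathcal{T}_f$ stays connected'' reasoning does not rule this out. The paper circumvents this by working with the canonical inverse operation, the \emph{simplicial quotient} (\S\ref{subsec:simp}), which collapses \emph{all} cores at once; Lemma \ref{lem:sbgr} then proves the exact drop $\mathscr{C}_{gr}(f)=\mathscr{C}_{gr}(g)+1$ for the quotient $g$, and this is where the genuine work lies (the Claim inside its proof, forcing a folding or branching at the end of the maximal chain, together with Lemmas \ref{lem:IntEdgeG} and \ref{lem:IntEdgeF}, Corollary \ref{cor:AllButOneAccPts}). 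Your converse sentence ``by the classification of core-entropy-zero Hubbard-tree maps'' is a black box that, unpacked, requires exactly this argument.

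Second, the step ``a simplicial tuning is a bifurcation in the sense of Definition \ref{defn:bif}'' is treated as immediate, but it is Theorem \ref{prop:gbf}, whose proof occupies all of \S\ref{subsec:zim}: one must realize an admissible angled forest map (an admissibly split version of the simplicial Hubbard forest) by a quasi-\pcf degeneration in the Blaschke model, then push the limit to a geometrically finite root on $\partial\mathcal{H}_f\cap\partial\mathcal{H}_g$. This is not a one-line observation. Likewise, the reverse bound ``one bifurcation factors into at most $d-1$ simplicial tunings'' is Proposition \ref{prop:is} plus Lemma \ref{lem:fs}, proved by the pointed iterated-simplicial Hubbard tree analysis of \cite[Theorem 1.1]{L21a}; the $d-1$ bound comes from strict decrease of $\mult_{go}$ under a pointed simplicial tuning, which you should state explicitly.

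Third, on the Cantor--Bendixson step, ``the derived set is a finite disjoint union of scaled copies of $\Julia_{f'}\cap\mathcal{T}_{f'}$'' is too coarse: the paper's Lemma \ref{lem:0or1} shows that the external edges joining the tuned-in pieces to the rest of the tree can themselves have depth $1$ (if they end at a tip), so the derived set is not supported only inside the tuned regions. The correct statement (Theorem \ref{thm:CBDecompJuliaIntersectGraph}) is an edge-by-edge depth computation via the semi-conjugacy $\pi:\Path_{\mathcal{G}_f}(\infty)\to\Julia_f\cap\mathcal{T}_f$ rather than a clean self-similar recursion. Your sharpness constructions are in the right spirit (the paper's Figure \ref{fig:EV} and \S\ref{subsec:bc} realize the $(d-1)$-fold nested tower you describe), and you correctly anticipate that naive symmetric configurations give parallel rather than nested cores.
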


See Figure \ref{fig:EV} for an example of $\mathscr{C}_{gr}(g) = (d-1)\mathscr{C}_{b}(g)$.

\subsection{Main molecules of higher degrees vs degree 2}\label{subsec:mm}
We remark that there are many subtleties for main molecules in higher degrees compared to the quadratic case.

\begin{itemize}
\item First, unlike the quadratic case, in higher degrees there exists an infinite sequence of distinct hyperbolic components in $\mathscr{M}_d$ that accumulates to a \pcf polynomial $f$ with core entropy zero. For example, the \pcf polynomial $f$ in \S \ref{subsec:eg} is in the limit of an infinite sequence of adjacent relative hyperbolic component (Property (2)).
\item Moreover, in the quadratic case, there is a unique sequence of adjacent relative hyperbolic components with no backtracking that connects two \pcf polynomials in the main molecule $\mathscr{M}_2$. 
This is not the case in higher degrees. For example, the relative hyperbolic component of the \pcf polynomial $f$ in \S \ref{subsec:eg}  is adjacent to the main hyperbolic component (Property (3)), while also satisfying Property (2) described in the previous paragraph. As another example, the polynomials in Figure \ref{fig:EV} represent the centers of a sequence of adjacent hyperbolic components, each of which is also adjacent to the main hyperbolic component.
\end{itemize}

Theorem \ref{thm:B} gives a bound on the shortest path connecting a \pcf polynomial in $\mathscr{M}_d$ with $z^d$.
In particular, one can always find a path to access a \pcf polynomial in $\mathscr{M}_d$ through a finite sequence of adjacent \shcs from $\mathcal{H}_d$.
As a corollary, we have:

\begin{cor}
There are no \pcf polynomials in 
$$
\Mol_d - \bigcup_{\mathcal{H}\in \mathfrak{S}} \mathcal{H}.
$$
\end{cor}

We remark that the above corollary would be false for degree $\geq 3$ if we replace relative hyperbolic components by hyperbolic components (see the first bullet point above and Remark \ref{rmk:ibc}).


\subsection{Techniques and strategies in the proofs}
Let us briefly discuss the techniques and strategies in our proofs of Theorems \ref{thm:A} and \ref{thm:B}.

First, the proof of $(1) \Longleftrightarrow (4)$ in Theorem \ref{thm:A} relies on a standard study of {\em simple cycles} in the directed graph for the Markov map $f$ on its Hubbard tree (see \S \ref{sec:hd}).
We remark that the implication (4) $\implies$ (1) also directly follows from the equation $\Hdim \mathcal{B}(f) = \frac{h(f)}{\log d}$ \cite{Tio15, BS17}.

To prove the implication (1) $\implies$ (3), we define an operation, called {\em simplicial quotient}, which produces a \pcf polynomial $g$ from any \pcf polynomial $f$.
The simplicial quotient is an inverse process of {\em simplicial tuning}.
More precisely, the map $f$ is obtained from its simplicial quotient $g$ via some simplicial tuning.
The key step is to prove that if the core entropy of $f$ is zero, then there exists a finite sequence $f_0 = f, f_1, ..., f_k(z) = z^d$ so that $f_{i+1}$ is the simplicial quotient of $f_i$ (see Theorem \ref{prop:fb}).

We prove (3) $\implies$ (2) by using the theory of {\em quasi \pcf degeneration} developed in \cite{L21a, L21b}.
The theory allows us to relate the combinatorial operation of simplicial tuning with a bifurcation on a \shc.

Finally, to prove (2) $\implies$ (1), we analyze how the external rays landing at the same point on the Julia set change as we perturb polynomials. This concludes the proof of Theorem \ref{thm:A}.

To get the second main result (Theorem \ref{thm:B}), we relate the complexity measures $\mathscr{C}_{gr}(f), \mathscr{C}_{c}(f)$ and $\mathscr{C}_{t}(f)$ with the {\em depths} of simple cycles in the directed graph for the Markov map $f$ on its Hubbard tree (see \S \ref{sec:hd}).
This establishes the equality $\mathscr{C}_{gr}(f)= \mathscr{C}_{c}(f)=\mathscr{C}_{t}(f)$.

We then use the theory developed in \cite{L21a} to show that if $\mathcal{H}_f$ bifurcates to $\mathcal{H}_g$, then $g$ can be constructed from $f$ by at most $d-1$ simplicial tunings.
This allows us to prove the bound $\mathscr{C}_{b}(f) \leq \mathscr{C}_{c}(f) \leq (d-1)\mathscr{C}_{b}(f)$.
We also construct explicit examples to show the sharpness of our bound, which completes the proof of Theorem \ref{thm:B}.

\subsection{Notes and discussions}
The study of topological entropy for real quadratic polynomials goes back to the work of Milnor-Thurston \cite{MT88}, where the continuity and the monotonicity of entropy were proved.

The continuity of core entropies is proved in the quadratic case in \cite{Tio16, DS20}, and for higher degrees in \cite{GT21}.
For quadratic polynomials, the core entropy is increasing from the center of the Mandelbrot set to the tips \cite{Tio15, Zen20}.
Core entropies and related concepts for other maps are studied in \cite{Tsu00, Li07, Jun14, BvS15, Gao19, FP20, Fil21, BDLW21, LTW21}.

The topology in higher degree of the connected locus of $\mathcal{P}_d$ are studied in \cite{Lav89, Milnor92, EY99}.
The topologies of the main hyperbolic components are studied in \cite{BOPT14, PT09, L21a}.

The idea of using the Cantor-Bendixson rank was suggested by K.\@ Pilgrim to the second-named author in the study of crochet maps \cite{Par21}. It was then further discussed with L.\@ Bartholdi, D.\@ Dudko, M.\@ Hlushchanka, V.\@ Nekrashevych, D.\@ Thurston. Polynomials with core entropy zero can be considered as crochet maps relative to the external Fatou component.

%
%

\subsection{Open questions}
Here are some more questions about the structure of the main molecule $\mathscr{M}_d$ which are not answered in this article.

\begin{question}
	Is $\mathscr{M}_d$ simply connected?
\end{question}

\begin{question}\label{quest:DenseHyp}
	Are the hyperbolic maps in $\mathscr{M}_d$ dense?
\end{question}

See Figure \ref{fig:BHM} and Figure \ref{fig:BL} for examples of non-hyperbolic \pcf polynomials $f$ with core entropy zero that are limits of hyperbolic \pcf polynomials in $\mathscr{M}_d$.

To understand the combinatorial structure of $\mathscr{M}_d$, we consider a graph $\mathscr{G}_d$ whose vertices are \shcs in $\Mol_d$ with two vertices joined by an edge if they are adjacent. 

There exists another natural complexity $\mathscr{C}_{a}(f)$ defined as the distance between $\mathcal{H}_f$ and $\mathcal{H}_d$ with respect to the graph metric of $\mathscr{G}_d$.
Since our bifurcation complexity is defined in terms of dynamically meaningful transitions, for any \pcf polynomial in $\Mol_d$, we have
$$
\mathscr{C}_a(f) \leq \mathscr{C}_b(f).
$$

We do not know whether the shortest path connecting $\mathcal{H}_f$ and the main hyperbolic component $\mathcal{H}_d$ in $\mathscr{G}_d$ can be realized as a sequence of bifurcations.
It would be interesting to know:

\begin{question}\label{quest:AdjBif}
Let $f \in \Mol_d$ be a \pcf polynomial.
Do we have $\mathscr{C}_a(f) = \mathscr{C}_b(f)$?
\end{question}

In degree $2$, the graph $\mathscr{G}_2$ is a tree.
In higher degrees $d>2$, it is easy to see that the graph $\mathscr{G}_d$ contains many closed loops.
One would like to know:

\begin{question}
What is the combinatorial structure of the graph $\mathscr{G}_d$?
\end{question}
The adjacent vertices of $\mathcal{H}_d$ in $\mathscr{G}_d$ are studied in \cite{L21a}, providing some partial answers to this question.

\subsection*{Acknowledgement}
The authors thank Laurent Bartholdi, Dzmitri Dudko, Mikhail Hlushchanka, Kevin Pilgrim, Dylan Thurston for helpful discussions.

This material is based upon work supported by the National Science Foundation under Grant No.\@ DMS-1928930 while the authors participated in a program hosted by the Mathematical Sciences Research Institute in Berkeley, California, during the Spring 2022 semester. The first-named author is partially supported by NSF Grant DMS-2349929. The second-named author was supported by the Simons Foundation Institute Grant Award ID 507536 while he was in residence at the Institute for Computational and Experimental Research in Mathematics in Providence, RI.

\section{Directed graphs and Cantor-Bendixson decomposition}\label{sec:hd}
Let $f$ be a \pcf polynomial.
Recall that the Hubbard tree is the regulated hull of the critical and post-critical points, and let $\mathcal{T}_f$ be the Hubbard tree of $f$.
Let $\mathcal{V} \subseteq \mathcal{T}_f$ be a finite vertex set so that 
\begin{itemize}
\item $\mathcal{V}$ is forward invariant, i.e., $f(\mathcal{V})\subset \mathcal{V}$, and
\item $\mathcal{V}$ contains all {\em branch points}, which are points with degree$>2$, and all critical points.
\end{itemize}

We remark that there might be many different sets $\mathcal{V}$'s satisfying these properties, though there is a unique minimal vertex set. 
An edge of $\mathcal{T}_f$ is the closure of a component $\mathcal{T}_f - \mathcal{V}$.
Let $\mathcal{E}$ be the collection of edges of $\mathcal{T}_f$.
Let $E \in \mathcal{E}$ be an edge of $\mathcal{T}_f$.
Then $f$ is injective on $E$ and $f(E)$ is a union of edges of $\mathcal{T}_f$. We can thus associate a directed graph $\mathcal{G}_f$ as follows.
\begin{itemize}
\item Vertices of $\mathcal{G}_f$ are in bijective correspondence with edges of $\mathcal{T}_f$.
\item There is a directed edge from $a$ to $b$ if the corresponding edges $E_a, E_b$ of the Hubbard tree $\mathcal{T}_f$ satisfy $E_b \subseteq f(E_a)$.
\end{itemize}
We remark that $f(E_a)$ cannot cover $E_b$ more than once because every critical point is a vertex.

A {\em simple cycle} of $\mathcal{G}_f$ is a simple closed directed path. Two simple cycles are {\em distinct} if they are not same up to cycle reordering. Two distinct simple cycles are said to be 
\begin{itemize}
\item {\em disjoint} if they have no vertices in common, and 
\item {\em intersecting} otherwise.
\end{itemize}
The directed graph $\mathcal{G}_f$ is said to have {\em no intersecting cycles} if any distinct simple cycles are disjoint.

In this section, we will show:
\begin{theorem}\label{thm:nic}
Let $f$ be a \pcf polynomial. Let $\mathcal{G}_f$ be a directed graph associated to $f$.
Then $f$ has core entropy zero if and only if $\mathcal{G}_f$ has no intersecting cycles.
\end{theorem}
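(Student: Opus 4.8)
The plan is to reduce Theorem~\ref{thm:nic} to a purely combinatorial statement about the directed graph $\mathcal{G}_f$ and its adjacency matrix $A_f$. Note that $A_f$ is a $0$--$1$ matrix: by the remark above, $f(E_a)$ covers a given edge $E_b$ at most once, so $A_f$ really is the adjacency matrix of $\mathcal{G}_f$ and also the transition matrix of the Markov map $f|_{\mathcal{T}_f}$ for the partition of $\mathcal{T}_f$ into edges (here we use that $\mathcal{V}$ is forward invariant). I would begin by recalling the standard identification of the core entropy with the logarithm of the spectral radius,
$$
h(f) = h_{top}(f|_{\mathcal{T}_f}) = \log \rho(A_f),
$$
where $\rho(A_f)$ denotes the spectral radius; moreover every vertex of $\mathcal{G}_f$ has out-degree at least $1$ (as $f(E_a)$ is a nonempty union of edges), so $\mathcal{G}_f$ contains at least one directed cycle and $\rho(A_f) \geq 1$. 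Thus it suffices to show that $\rho(A_f) = 1$ if and only if $\mathcal{G}_f$ has no intersecting cycles.

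For the implication ``no intersecting cycles $\Longrightarrow \rho(A_f) = 1$'', I would analyze the strongly connected components of $\mathcal{G}_f$ and show that, under this hypothesis, every nontrivial strongly connected component $S$ (one that contains a directed edge) is a single directed cycle. Indeed, if some $v \in S$ had two distinct out-neighbors $a \neq b$ in $S$, then strong connectivity of $S$ provides a simple path from $a$ to $v$ and a simple path from $b$ to $v$; prepending the edges $v \to a$ and $v \to b$ produces two simple cycles through $v$ with distinct first edges, hence two distinct intersecting simple cycles, contrary to hypothesis. Applying the same reasoning in the reversed graph rules out a vertex with two distinct in-neighbors in $S$. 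So every vertex of $S$ has in-degree and out-degree exactly $1$ within $S$, which forces $S$ to be a single directed cycle. Ordering the strongly connected components along the condensation of $\mathcal{G}_f$, the matrix $A_f$ is conjugate to a block upper-triangular matrix whose diagonal blocks are either $[0]$ or cyclic permutation matrices, so $\rho(A_f)$ equals the maximum of the spectral radii of these blocks, which is $1$.

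For the converse, ``intersecting cycles $\Longrightarrow \rho(A_f) > 1$'', suppose $C_1 \neq C_2$ are distinct simple cycles sharing a vertex $v$, and rotate each so that it is based at $v$; since a simple cycle meets $v$ only at its endpoints, for every word $\sigma = (\sigma_1, \dots, \sigma_k) \in \{1,2\}^k$ the concatenation $C_{\sigma_1} C_{\sigma_2} \cdots C_{\sigma_k}$ is a closed walk at $v$ whose set of visits to $v$ is exactly the set of partial sums of the block lengths. Reading the walk between consecutive visits to $v$ recovers $\sigma$, so these $2^k$ closed walks based at $v$ are pairwise distinct; as each has length at most $k\ell$ with $\ell = \max(|C_1|, |C_2|)$, we obtain $\sum_{n \leq k\ell} (A_f^n)_{vv} \geq 2^k$. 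This is incompatible with $\rho(A_f) \leq 1$, since then the left-hand side would grow at most polynomially in $k$; a routine estimate in fact gives $\rho(A_f) \geq 2^{1/\ell} > 1$. Combining the two implications with $h(f) = \log \rho(A_f)$ completes the proof.

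I expect the only delicate points to be (i) a clean justification of the identity $h(f) = \log \rho(A_f)$ in the Hubbard-tree setting, for which I would cite the standard theory of Markov maps and subshifts of finite type, and (ii) the bookkeeping in the strongly-connected-component argument, where one must check that the two simple cycles extracted at a vertex of out- or in-degree $\geq 2$ are genuinely distinct and share that vertex. Neither of these is a serious obstacle, so the proof is essentially a combination of a well-known entropy formula with an elementary analysis of cycles in finite directed graphs.
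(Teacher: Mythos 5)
Your proof is correct and follows the same basic reduction as the paper: identify $h(f)$ with $\log\rho(A_f)$, observe that every vertex of $\mathcal{G}_f$ has out-degree at least one so $\rho(A_f)\geq 1$, and then characterize $\rho(A_f)=1$ in terms of cycle structure. Where the paper packages the combinatorial core into Lemma~\ref{lem:nicscc} (stated as straightforward) together with Proposition~\ref{prop:qnic} (cited from \cite{Par20}) and Lemma~\ref{lem:srp} (via \cite{McM14}), you give direct, self-contained arguments for both implications: the in/out-degree count within a strongly connected component showing that ``no intersecting cycles'' forces each nontrivial component to be a single cycle (hence a block-triangular adjacency matrix with spectral radius $1$), and the concatenation-of-cycles count showing that intersecting cycles at a vertex $v$ give exponentially many closed walks based at $v$, hence $\rho(A_f)>1$. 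This buys a fully elementary and self-contained proof at the cost of losing the finer quantitative information in Proposition~\ref{prop:qnic}, which the paper reuses later to extract the polynomial growth exponent (the quantity $\alpha$ with $\|A_f^n\|\asymp n^\alpha$, feeding into $\mathscr{C}_{gr}$ and the Cantor--Bendixson analysis); your argument establishes the dichotomy but not the exponent. Two small points worth polishing: the ``two distinct out-neighbors'' phrasing silently uses that $\mathcal{G}_f$ has no multiple edges (true here, as you observe $A_f$ is $0$--$1$, but a general directed graph could have two parallel self-loops at $v$, giving intersecting cycles without two distinct out-neighbors), and in the final estimate it is cleanest to fix the number of copies of each cycle (e.g.\ $m$ of each, giving $\binom{2m}{m}$ closed walks of the common length $m(|C_1|+|C_2|)$) so that all walks have the same length and one reads off $\rho(A_f)\geq 4^{1/(|C_1|+|C_2|)}$ directly from $(A_f^{m(|C_1|+|C_2|)})_{vv}$.
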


\begin{theorem}\label{thm:cs}
Let $f$ be a \pcf polynomial. 
Then $f$ has core entropy zero if and only if $\Julia_f \cap \mathcal{T}_f$ is a countable set.
\end{theorem}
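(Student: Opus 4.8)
The plan is to deduce Theorem~\ref{thm:cs} from Theorem~\ref{thm:nic}: I would prove that $\mathcal{G}_f$ has no intersecting cycles if and only if $J:=\Julia_f\cap\mathcal{T}_f$ is countable. The common setup is a symbolic coding. Let $V:=\bigcup_{n\ge0}f^{-n}(\mathcal{V})$ be the grand orbit of the Markov vertex set; it is countable, each $f^{-N}$ is at most $d^N$-to-one, and every $x\in J\setminus V$ has a well-defined edge itinerary $\phi(x)=(e_0,e_1,\dots)$ with $f^n(x)\in\interior(e_n)$, which is an infinite directed path in $\mathcal{G}_f$. (The implication $(4)\Rightarrow(1)$ of Theorem~\ref{thm:A} can also be extracted from the Hausdorff-dimension formula for biaccessible angles, but I want a self-contained argument through the graph $\mathcal{G}_f$.)

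For the direction ``no intersecting cycles $\Rightarrow$ $J$ countable'', I would first use the elementary graph fact that in a finite digraph with no intersecting cycles every strongly connected component is a single vertex or a single simple cycle, so every infinite directed path is eventually periodic; hence the set $\Phi$ of infinite directed paths is countable. It remains to bound the fibers $\phi^{-1}(\mathbf{e})$. If $\mathbf{e}$ eventually cycles around a simple cycle $C=(E_0\to\cdots\to E_{p-1}\to E_0)$, following $C$ backwards gives the branch $\psi$ of $f^{-p}$ along $C$, a homeomorphism of $\overline{E_0}$ onto a subarc of itself, and one checks $\phi^{-1}(\mathbf{e})\subseteq f^{-N}\bigl(\Julia_f\cap I_\infty\bigr)$ where $I_\infty:=\bigcap_k\psi^k(\overline{E_0})\subseteq\mathcal{T}_f$ is a subarc on which $g:=f^{2p}$ is an orientation-preserving homeomorphism preserving $\Julia_f\cap I_\infty$. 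I claim this set is countable: otherwise its perfect kernel $K$ is nonempty, perfect and $g$-invariant, so the smallest subarc $[a,b]\subseteq I_\infty$ containing $K$ is $g$-invariant with $a,b\in\operatorname{Fix}(g)\cap\Julia_f$; since $f$ is \pcf these are repelling periodic points, and an orientation-preserving interval homeomorphism admits no orbit flowing into a repelling fixed point, which forces $K\subseteq\operatorname{Fix}(g)\subseteq\{z:f^{2p}(z)=z\}$, a finite set, contradicting that $K$ is perfect. Hence fibers are countable and $J=(J\cap V)\cup\bigcup_{\mathbf{e}\in\Phi}\phi^{-1}(\mathbf{e})$ is a countable union of countable sets.

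For the direction ``intersecting cycles $\Rightarrow$ $J$ uncountable'', I would first show that the edges whose interiors miss $\Julia_f$ span an out-degree-one subgraph of $\mathcal{G}_f$: such edges lie in (pre-)periodic superattracting Fatou components, and a finite forward-invariant vertex set cannot contain interior points of the internal rays of such a component, so each such edge maps onto a single edge; an out-degree-one graph has no intersecting cycles. Thus two intersecting simple cycles $C_1\ne C_2$ must share a Julia edge $\tilde{E}$ at which they diverge. Following $C_1,C_2$ backwards from $\tilde{E}$ gives homeomorphisms $\psi_1,\psi_2$ of $\overline{\tilde{E}}$ onto subarcs, each preserving $\Julia_f$, and since distinct edges of a tree share at most one vertex, $\psi_1(\overline{\tilde{E}})\cap\psi_2(\overline{\tilde{E}})$ is at most one point. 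Then $\Lambda:=\Julia_f\cap\bigcap_n\bigcup_{|w|=n}\psi_{w_1}\!\circ\cdots\circ\psi_{w_n}(\overline{\tilde{E}})$ is a nonempty compact subset of $J$ with $\Lambda=\psi_1(\Lambda)\cup\psi_2(\Lambda)\cup F$, $|F|\le1$ and $|\psi_1(\Lambda)\cap\psi_2(\Lambda)|\le1$. If $\Lambda$ were countable it would be scattered with some Cantor--Bendixson rank $\beta$; as CB-derivatives commute with the $\psi_i$ and satisfy $(A\cup B)'=A'\cup B'$, this gives $\Lambda^{(\beta)}=\psi_1(\Lambda^{(\beta)})\cup\psi_2(\Lambda^{(\beta)})$, nonempty finite of size $N$ with $N\ge2N-1$, so $N=1$ and $\Lambda^{(\beta)}=\{q\}$ with $\psi_1(q)=\psi_2(q)=q$; but then the monotone homeomorphisms $\psi_1,\psi_2$ send $\overline{\tilde{E}}$ to subarcs both containing $q$, which must overlap in more than one point whether $q$ is interior to or an endpoint of $\overline{\tilde{E}}$ — contradicting $|\psi_1(\overline{\tilde{E}})\cap\psi_2(\overline{\tilde{E}})|\le1$. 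Hence $\Lambda$, and so $J$, is uncountable.

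I expect the main obstacle to be the careful bookkeeping that makes the symbolic coding compatible with the Julia/Fatou dichotomy and with the exceptional grand orbit $V$ of the Markov vertices: in particular, proving that Fatou edges are transient so that intersecting cycles are forced to diverge at a Julia edge, and controlling the overlaps $\psi_i(\overline{\tilde{E}})$ precisely enough that the combinatorial alternative ``eventually periodic versus exponentially many directed paths'' transfers faithfully to ``countable versus uncountable'' for $J$. The inputs from complex dynamics are light — that $f$ is \pcf (no non-repelling periodic points in $\Julia_f$; finite internal-ray structure of superattracting components relative to $\mathcal{V}$) and that $f$ is injective on edges — while the remaining work is Cantor--Bendixson point-set topology and finite graph theory.
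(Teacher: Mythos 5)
Your proposal is correct and takes a genuinely different route from the paper. The paper deduces Theorem~\ref{thm:cs} from Theorem~\ref{thm:CBDecompJuliaIntersectGraph}: it sets up the semi-conjugacy $\pi\colon\Path_{\mathcal{G}_f}(\infty)\to\Julia_f\cap\mathcal{T}_f$, identifies its fibers exactly (Proposition~\ref{prop:PathtoJulia}: the fiber over $x$ has size $\val_{\mathcal{T}_f}(f^k(x))$ where $f^k(x)$ first hits a vertex), computes the Cantor--Bendixson decomposition of the path space via the depth function (Proposition~\ref{prop:CBDecompofPaths}, where the uncountable piece arises from the sets $Z(p)$ attached to non-cyclic strongly connected components), and pushes it through $\pi$ (Theorem~\ref{thm:CBrankofJuliaIntersectGraph}). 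Both directions of Theorem~\ref{thm:cs} drop out simultaneously, together with the quantitative formula $\CBrank(\Julia_f\cap\mathcal{T}_f)=1+\max_e\depth(e)$ used for Corollary~\ref{cor:gr=t}. You instead work with the forward itinerary $\phi$ on $J\setminus V$ and prove the two implications separately. For the ``no intersecting cycles'' direction you trap $\phi^{-1}(\mathbf e)$ in a nested invariant subarc $I_\infty$ and run a perfect-kernel argument using monotone interval dynamics; the decisive fact is that the $\omega$-limit of a non-fixed $K$-orbit is a fixed point lying in $K\subseteq\Julia_f$, hence repelling since $f$ is \pcf, which rules out forward convergence and forces $K$ inside the finite set $\operatorname{Fix}(f^{2p})$ --- this is correct, though slightly roundabout, since expansivity of $f$ on the Julia set already makes $\phi$ injective on $J\setminus V$ (the dual of the paper's observation that $\pi$ is bijective off $\bigcup_n f^{-n}(\Vertex(\mathcal{T}_f))$). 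For the other direction your reduction to a Julia edge $\tilde E$ via the out-degree-one observation for Fatou edges, the almost-disjoint inverse branches $\psi_1,\psi_2$, and the self-similarity/Cantor--Bendixson inequality $N\ge 2N-1$ followed by the overlap-at-$q$ contradiction is a nice elementary replacement for the paper's $Z(p)$ Cantor sets. What the paper's route buys over yours is the quantitative CB-rank formula feeding into $\mathscr{C}_{gr}=\mathscr{C}_t$; what your route buys is a lighter argument for the bare dichotomy, relying only on interval dynamics, repelling periodic points, and the overlap estimate $|\psi_1(\overline{\tilde E})\cap\psi_2(\overline{\tilde E})|\le1$.
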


As an immediate corollary, we have:
\begin{cor}\label{cor:cs}
Let $f$ be a \pcf polynomial whose Julia set is a dendrite.
Then $f$ has positive core entropy.
\end{cor}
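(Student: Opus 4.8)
The plan is to read this off Theorem~\ref{thm:cs}: I will show that if $\mathcal{J}_f$ is a dendrite then $\mathcal{J}_f \cap \mathcal{T}_f$ is uncountable, which by the contrapositive of Theorem~\ref{thm:cs} forces $h(f) \neq 0$, hence $h(f) > 0$ since topological entropy is always nonnegative.

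For the uncountability, I would argue as follows. When $\mathcal{J}_f$ is a dendrite, the filled Julia set $K_f$ has empty interior, so $K_f = \mathcal{J}_f$; since the Hubbard tree $\mathcal{T}_f$ is contained in $K_f$, this gives $\mathcal{J}_f \cap \mathcal{T}_f = \mathcal{T}_f$. It then remains to observe that $\mathcal{T}_f$ is a nondegenerate finite topological tree, and hence uncountable. This is where the only minor care is needed: one must check that the post-critical set $P_f$ cannot be a single point. Indeed, if $P_f = \{p\}$ then $f(p) = p$ and every critical point of $f$ maps to $p$ in one step; a short count of the multiplicities of the roots of $f(z) - p$ (whose multiplicities above $1$ are exactly the critical multiplicities, and which must sum to $d$ while accounting for all $d-1$ critical points) shows this forces $p$ itself to be a critical point, i.e.\ a superattracting fixed point, so $K_f$ would have nonempty interior, contradicting that $\mathcal{J}_f$ is a dendrite. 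Thus $|P_f| \geq 2$, so $\mathcal{T}_f$ contains a nondegenerate arc and is uncountable.

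The main (and essentially only) obstacle is this nondegeneracy bookkeeping; everything else is an immediate application of Theorem~\ref{thm:cs}. Alternatively, one may simply invoke the standard fact that the only monic centered polynomial of degree $d\geq 2$ whose Hubbard tree degenerates to a point is $z^d$, whose Julia set is a circle rather than a dendrite, and conclude as above.
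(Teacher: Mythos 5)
Your proof is correct and takes the same route as the paper intends: since $\mathcal{J}_f$ is a dendrite, $K_f=\mathcal{J}_f$, so $\mathcal{J}_f\cap\mathcal{T}_f=\mathcal{T}_f$ is a nondegenerate tree, hence uncountable, and Theorem~\ref{thm:cs} gives $h(f)>0$. The paper labels this an immediate corollary and supplies no details; your nondegeneracy check (ruling out $|P_f|=1$, which would force a superattracting fixed point and hence a bounded Fatou component) is exactly the small bookkeeping step implicitly needed, and it is carried out correctly.
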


To give a more quantitative version of Theorem \ref{thm:cs}, we investigate the Cantor-Bendixson decomposition of the intersection $\Julia_f \cap \mathcal{T}_f$ for arbitrary \pcf polynomials $f$.
This gives the notion of topological complexity $\mathscr{C}_{t}(f)$ for core entropy zero polynomials $f$.

We prove the following theorem, which immediately implies Theorem \ref{thm:cs} (see \S \ref{subsec:dg} and \S \ref{subsec:cbd} for terminologies used in the statement). 

\begin{theorem}[Cantor-Bendixson decomposition of $\Julia_f\cap \mathcal{T}_f$]\label{thm:CBDecompJuliaIntersectGraph}
	Let $f$ be a \pcf polynomial and $\mathcal{T}_f$ the Hubbard tree. The Cantor-Bendixson decomposition of the intersection $\Julia_f \cap \mathcal{T}_f$ is given by
	\begin{equation}\label{eqn:CBdecompositionof Julia Hubbard}
		\{x\in \Julia_f \cap \mathcal{T}_f~|~\depth(x)=\infty\} \sqcup \{x\in \Julia_f \cap \mathcal{T}_f~|~\depth(x)\le d\}
	\end{equation}
	where $d=\max \{ \depth(e)~|~ e\in \Edge(\mathcal{T}_f),~\depth(e)<\infty \} $. 
	
	In particular, $f$ has core entropy zero if and only if $\Julia_f \cap \mathcal{T}_f$ is a countable set if and only if $\depth(x)< \infty$ for every $x\in \Julia_f \cap \mathcal{T}_f$. 
	
	Moreover, if $f$ has core entropy zero, then we have
	\begin{equation}\label{eqn:TopComplexity and depth}
		\mathscr{C}_{t}(f):= \CBrank(\Julia_f \cap \mathcal{T}_f)=1+\max_{e\in \Edge(\mathcal{T}_f)} \depth(e).
	\end{equation}
\end{theorem}

Let $f \in \mathcal{P}_d$ be a \pcf polynomial with core entropy zero. Suppose $f(z) \neq z^d$. Let $A_f$ be the Markov matrix associated to the dynamics of $f$ on $\mathcal{T}_f$, which equals the adjacency matrix of the corresponding directed graph $\mathcal{G}_f$.
The {\em growth rate complexity} is defined by $\mathscr{C}_{gr}(f):= 1+ \alpha$, where $\alpha \in \Z$ satisfies $\| A_f^n\| \asymp  n^\alpha$. If $f(z) = z^d$, we define $\mathscr{C}_{gr}(f) = 0$.

We will show in Proposition \ref{prop:qnic} that $\alpha = \max_{e\in \Edge(\mathcal{T}_f)} \depth(e)$. 
Thus, by Theorem \ref{thm:CBDecompJuliaIntersectGraph}, we have:

\begin{cor}\label{cor:gr=t}
Let $f$ be a \pcf polynomial with core entropy zero.
Then $\mathscr{C}_{gr}(f) = \mathscr{C}_{t}(f)$.
\end{cor}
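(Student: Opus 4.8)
The plan is to read the corollary off directly from the two quantified results already established, so that the only work is to line up the definitions. First I would dispose of the degenerate case $f(z)=z^d$: here the Hubbard tree $\mathcal{T}_f$ is trivial (its only vertex, the fixed point $0$, lies in the Fatou set), so $\Julia_f\cap\mathcal{T}_f=\emptyset$ and hence $\mathscr{C}_{t}(f)=\CBrank(\Julia_f\cap\mathcal{T}_f)=0$, which matches $\mathscr{C}_{gr}(f)=0$ from the definition. So from now on I assume $f(z)\neq z^d$, in which case $\mathcal{T}_f$ carries at least one edge and $A_f$ is a nonzero nonnegative integer matrix, equal to the adjacency matrix of $\mathcal{G}_f$.

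Now invoke the machinery. Since $h(f)=0$, Theorem~\ref{thm:nic} tells us $\mathcal{G}_f$ has no intersecting cycles, so Proposition~\ref{prop:qnic} applies: it says that $\|A_f^n\|$ grows polynomially, so the integer $\alpha$ in the definition of $\mathscr{C}_{gr}(f)$ is genuinely well defined, and moreover $\alpha=\max_{e\in\Edge(\mathcal{T}_f)}\depth(e)$; thus $\mathscr{C}_{gr}(f)=1+\max_{e\in\Edge(\mathcal{T}_f)}\depth(e)$. On the other hand, because $h(f)=0$, Theorem~\ref{thm:CBDecompJuliaIntersectGraph} guarantees every edge of $\mathcal{T}_f$ has finite depth (so the maximum makes sense) and gives $\mathscr{C}_{t}(f)=\CBrank(\Julia_f\cap\mathcal{T}_f)=1+\max_{e\in\Edge(\mathcal{T}_f)}\depth(e)$. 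Comparing the two formulas yields $\mathscr{C}_{gr}(f)=\mathscr{C}_{t}(f)$, completing the proof.

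In other words, the corollary carries no genuine obstacle of its own — it is an immediate consequence of two theorems I am allowed to assume — and the real content is hidden in those: Proposition~\ref{prop:qnic}, which must show that in the absence of intersecting cycles $A_f^n$ grows \emph{exactly} polynomially (no slower and no faster, up to $\asymp$) with exponent equal to the length of the longest chain of distinct simple cycles a directed path can thread, and Theorem~\ref{thm:CBDecompJuliaIntersectGraph}, which identifies this chain length with the edge-depth function governing the Cantor--Bendixson stratification of $\Julia_f\cap\mathcal{T}_f$. The one point to be careful about when assembling the corollary is purely bookkeeping: one must confirm that $\depth(e)$ as it appears in Proposition~\ref{prop:qnic} is literally the same function, with the same normalization (e.g.\ an edge lying on no cycle and reaching no cycle has depth $0$), as the one used in Theorem~\ref{thm:CBDecompJuliaIntersectGraph}, so that the two ``$+1$''s and the two maxima match on the nose.
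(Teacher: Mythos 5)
Your proof is correct and follows essentially the same route as the paper: the corollary is read off by combining Proposition~\ref{prop:qnic} (giving $\alpha=\max_{e\in\Edge(\mathcal{T}_f)}\depth(e)$ for the growth rate) with Theorem~\ref{thm:CBDecompJuliaIntersectGraph} (giving the same quantity plus one for $\CBrank(\Julia_f\cap\mathcal{T}_f)$), after disposing of the trivial $z^d$ case. Your extra care in verifying that the $\depth$ function and the conventions for the empty-edge-set case agree is exactly the right bookkeeping to do, even though the paper leaves it implicit.
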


\subsection{Directed graph}\label{subsec:dg}
In this subsection, we shall record some basic facts about directed graphs. We then prove Theorem \ref{thm:nic}.

Let $\mathcal{G}$ be a finite directed graph.
The graph $\mathcal{G}$ is determined by its edge and vertex sets together with the map
$$
\Edge(\mathcal{G}) \longrightarrow \Vertex(\mathcal{G}) \times \Vertex(\mathcal{G})
$$
which sends an edge $e$ running from $a$ to $b$ to the ordered pair $[e] = (a,b)$.
We allow $e = (a,a)$, and we may have $e_1 = e_2$ even if $e_1 \neq e_2$. That is, we allow self-loops and multi-edges.

\subsection*{Paths, simple cycles and growth rates}
A {\em path} of {\em length n} is a sequence of edges $p = (e_1,..., e_n)$, with $e_i = (a_i, b_i)$ and $a_{i+1} = b_i$.
It is {\em closed} if $a_1 = b_n$.
A {\em simple cycle} is a closed path that never visits the same vertex twice.

For $n\ge0$, possibly $n=\infty$, we denote by $\Path_\mathcal{G}(v,n)$ the set of paths in $\mathcal{G}$ which start from $v$ and have length $n$. 
We denote by $\Path_\mathcal{G}(n)$ the set of all paths of length $n$ and by $\Path_\mathcal{G}^0(n)$ the set of all closed paths of length $n$.
We use $|\cdot |$ to denote the cardinality of sets.

\subsection*{Adjacency matrix and spectral radius}
Let $\mathcal{G}$ be a finite directed graph.
The {\em adjacency matrix} is a $|\Vertex(\mathcal{G})| \times |\Vertex(\mathcal{G})|$ matrix $A$, with entries
$$
A_{ab} = \mathrm{the~number~of~edges~from}~a~\mathrm{to}~b.
$$
Note that $A \geq 0$, i.e., every entry is nonnegative.
We define a norm by
$$
\| A \| = \sum_{a,b} | A_{ab} |.
$$
The spectral radius $\rho(A)$, defined as the maximum modulus of the complex eigenvalues of $A$, satisfies
$$
\rho(A) = \lim_{n\to \infty} \| A^n \|^{1/n}.
$$
Note that $\| A^n \| = |\Path_\mathcal{G}(n)|$, and it follows from \cite[Lemma 3.1]{McM14} that for a finite directed graph $\mathcal{G}$,
$$
\lim_{n\to \infty}|\Path_\mathcal{G}(n)|^{1/n} = \lim_{n\to \infty}|\Path_\mathcal{G}^0(n)|^{1/n}.
$$
Therefore, we have:
\begin{lem}\label{lem:srp}
Let $\mathcal{G}$ be a finite directed graph.
Then the spectral radius $\rho(A)$ of the adjacency matrix $A$ satisfies
$$
\rho(A) =  \lim_{n\to \infty}|\Path_\mathcal{G}(n)|^{1/n} = \lim_{n\to \infty}|\Path_\mathcal{G}^0(n)|^{1/n}.
$$
\end{lem}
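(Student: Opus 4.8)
The plan is to assemble three ingredients, all essentially standard. First I would recall Gelfand's spectral radius formula: for any submultiplicative norm on the space of $N\times N$ matrices, $\rho(A)=\lim_{n\to\infty}\|A^n\|^{1/n}$. The norm $\|A\|=\sum_{a,b}|A_{ab}|$ is submultiplicative — indeed $\|AB\|=\sum_{a,c}\bigl|\sum_b A_{ab}B_{bc}\bigr|\le\sum_b\bigl(\sum_a|A_{ab}|\bigr)\bigl(\sum_c|B_{bc}|\bigr)\le\|A\|\,\|B\|$ — so Gelfand's formula applies verbatim. (Alternatively, one can invoke that any two norms on a finite-dimensional vector space are equivalent, so the limit $\lim_n\|A^n\|^{1/n}$, hence its value $\rho(A)$, does not depend on the choice of norm; existence of the limit follows from Fekete's subadditive lemma applied to the sequence $\log\|A^n\|$.)

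Second, I would identify $\|A^n\|$ with $|\Path_\mathcal{G}(n)|$. By induction on $n$, the $(a,b)$-entry of $A^n$ equals the number of paths of length $n$ in $\mathcal{G}$ from $a$ to $b$: this holds for $n=1$ by the definition of the adjacency matrix, and the inductive step is the observation that a path of length $n+1$ from $a$ to $c$ is a path of length $n$ from $a$ to some vertex $b$ followed by an edge from $b$ to $c$, which matches the matrix identity $(A^{n+1})_{ac}=\sum_b (A^n)_{ab}A_{bc}$. Since every entry of $A^n$ is nonnegative, $\|A^n\|=\sum_{a,b}(A^n)_{ab}$ counts all paths of length $n$, i.e. $\|A^n\|=|\Path_\mathcal{G}(n)|$. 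Combining with the first step yields $\rho(A)=\lim_{n\to\infty}|\Path_\mathcal{G}(n)|^{1/n}$.

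Third, the remaining equality $\lim_{n\to\infty}|\Path_\mathcal{G}(n)|^{1/n}=\lim_{n\to\infty}|\Path_\mathcal{G}^0(n)|^{1/n}$ is precisely \cite[Lemma 3.1]{McM14}, which I would simply cite; its content is that restricting a long path to remain within a recurrent part of $\mathcal{G}$ costs only a subexponential factor, so closed paths already realize the exponential growth rate, and in particular the limit on the right-hand side exists.

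No step here is a genuine obstacle; the only points requiring a line of care are the submultiplicativity of the particular norm $\|\cdot\|$ used in the paper (equivalently, the appeal to equivalence of norms in finite dimension) and noting that the relevant limits exist — which for $\|A^n\|^{1/n}$ is Fekete's lemma and for the closed-path count is part of the cited statement.
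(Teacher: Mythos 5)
Your proof is correct and follows exactly the route the paper takes: Gelfand's spectral radius formula applied to the entrywise $\ell^1$ norm, the combinatorial identification $\|A^n\|=|\Path_\mathcal{G}(n)|$, and a citation of \cite[Lemma 3.1]{McM14} for the closed-path count. You simply fill in the routine verifications (submultiplicativity, the induction, Fekete) that the paper leaves implicit.
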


\subsection*{Strongly connected component}
Given two vertices $v, w \in \Vertex(\mathcal{G})$,
we write $v \ge w$ if there is a path from $v$ to $w$.
This defines a preorder on $\Vertex(\mathcal{G})$.
We write $v\simeq w$ if $v\ge w$ and $v \le w$.
Note that $v\simeq w$ if and only if $v, w$ lie on a closed directed path.

We say the finite directed graph $\mathcal{G}$ is {\em strongly connected} if $v \simeq w$ for any $v, w \in  \Vertex(\mathcal{G})$. A subgraph $\mathcal{C} \subseteq \mathcal{G}$ is a {\it strongly connected component} if it is a maximal strongly connected subgraph.
If a strongly connected component $\mathcal{C}$ is a simple cycle, then we call $\mathcal{C}$ a {\it cyclic} component. Otherwise, we call $\mathcal{C}$ a {\it non-cyclic} component.

\subsection*{Graphs having no intersecting cycles}
\begin{defn}
Let $\mathcal{G}$ be a finite directed graph. 
It is said to {\it have no intersecting cycles} if any distinct simple cycles are disjoint.
\end{defn}	

The following lemma is straightforward.

\begin{lem}\label{lem:nicscc}
Let $\mathcal{G}$ be a finite directed graph. 
Then $\mathcal{G}$ has no intersecting cycles if and only if  every strongly connected component is cyclic.
\end{lem}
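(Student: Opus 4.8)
The claim is that a finite directed graph $\mathcal{G}$ has no intersecting cycles if and only if every strongly connected component is cyclic (i.e.\ is itself a simple cycle). Both directions are elementary; the plan is to argue the contrapositive in each case, reducing everything to the combinatorics of a single strongly connected component.

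First I would prove the easy direction: if some strongly connected component $\mathcal{C}$ is non-cyclic, then $\mathcal{G}$ has intersecting cycles. Since $\mathcal{C}$ is strongly connected, every vertex and every edge of $\mathcal{C}$ lies on at least one simple cycle contained in $\mathcal{C}$ (take any edge $v\to w$; by strong connectedness there is a path $w\to v$, and extracting a simple subpath gives a simple cycle through that edge). If all these simple cycles were pairwise disjoint, then $\mathcal{C}$ — being their union and being connected as a subgraph — would have to consist of a single such cycle, contradicting non-cyclicity. Hence two distinct simple cycles in $\mathcal{C}$ share a vertex, so $\mathcal{G}$ has intersecting cycles.

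Next, the converse: if $\mathcal{G}$ has two distinct intersecting simple cycles $\gamma_1,\gamma_2$ sharing a vertex $v$, then some strongly connected component is non-cyclic. The vertices of $\gamma_1\cup\gamma_2$ all lie on closed directed paths through $v$ (go around $\gamma_i$), so by the definition of $\simeq$ they are all equivalent and thus lie in a common strongly connected component $\mathcal{C}$. It remains to check $\mathcal{C}$ is not a simple cycle: a simple cycle, viewed as a subgraph, contains no vertex of out-degree (or in-degree) exceeding $1$, whereas at the point where $\gamma_1$ and $\gamma_2$ first diverge (which exists since $\gamma_1\neq\gamma_2$ and both pass through $v$) there is a vertex with two distinct outgoing edges inside $\mathcal{C}$. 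Hence $\mathcal{C}$ is non-cyclic.

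The only mild subtlety — and the step I would be most careful about — is the bookkeeping in the easy direction: justifying that a connected union of pairwise-disjoint simple cycles which is itself strongly connected must be a single cycle. One clean way is to note that "pairwise disjoint" for simple cycles forces the subgraph they span to be a disjoint union of cycles (as graphs), which is connected only if there is exactly one of them; alternatively, observe that if $\mathcal{C}$ contained more than one simple cycle, strong connectedness would produce a path from one to another and back, and concatenating with arcs of the two cycles yields two simple cycles sharing a vertex. Either phrasing is routine, so I would simply pick the first and state it in a sentence. No hard obstacle is expected here.
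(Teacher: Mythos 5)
The paper does not actually give a proof of this lemma—it simply notes ``The following lemma is straightforward''—so there is nothing to compare against. Your proof is correct and is exactly the routine argument one would expect: the key observations (in a strongly connected component with at least one edge, every edge lies on a simple cycle contained in it; a vertex-disjoint union of simple cycles is connected iff it is a single cycle; two distinct simple cycles through a common vertex force some vertex to have two outgoing edges inside the component) are all sound, and the subtlety you flag in the easy direction is handled adequately. The only loose end, which is really a loose end in the lemma's formulation rather than in your proof, is the degenerate case of a singleton strongly connected component with no self-loop: such a component has no edges, so ``every edge lies on a simple cycle'' is vacuous and the lemma as literally stated would fail; the authors evidently intend such trivial components to count as cyclic (this is also needed for Proposition~\ref{prop:qnic}(3) to be true), or else are tacitly working with graphs in which this does not arise.
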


\subsection*{Computing growth rates}

Let $\{a_n\}_{n\ge1}$ be a sequence of positive real numbers. We say that the sequence $\{a_n\}_{n\ge1}$ grows {\it exponentially fast} with $n$ if $\liminf_{n\to \infty} \sqrt[n]{a_n}>1$ and {\it polynomially fast} with $n$ if $a_n=O(n^d)$ for some $d>0$. In particular, if there exists $C>1$ and $d\ge0$ such that $n^d/C< a_n < Cn^d$ for any sufficiently large $n>0$, we say that $\{a_n\}_{n\ge1}$ is {\it asymptotic} to $n^d$ and write $a_n \asymp n^d$.

\begin{prop}\label{prop:qnic}
	Let $\mathcal{G}$ be a finite directed graph and $v,w\in \Vertex(\mathcal{G})$.
	\begin{enumerate}
		\item If $v \ge w$, then there exists $k>0$, which can be chosen as the length of a path from $v$ to $w$, such that $|\Path_\mathcal{G}(v,n+k)| \ge |\Path_\mathcal{G}(w,n)|$ for every $n\ge0$.
		\item $|\Path_\mathcal{G}(v,n)|$ grows either exponentially or polynomially fast with $n$.
		\item $|\Path_\mathcal{G}(v,n)|$ grows exponentially fast with $n$ if and only if there is a path from $v$ to a strongly connected component that is non-cyclic.
		\item Suppose $|\Path_\mathcal{G}(v,n)| \asymp n^d$. Then $d+1$ is equal to the maximal number of disjoint cycles which can be contained in a directed path starting from $v$. Here we use the convention that $|\Path_\mathcal{G}(v,n)| \asymp n^{-1}$ if $|\Path_\mathcal{G}(v,n)|$ is eventually zero as $n$ tends to $\infty$. 
	\end{enumerate}
\end{prop}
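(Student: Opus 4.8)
The plan is to prove Proposition \ref{prop:qnic} by a structural analysis of the directed graph $\mathcal{G}$ via its decomposition into strongly connected components (SCCs). The key observation is that the condensation of $\mathcal{G}$ (the DAG whose nodes are SCCs) carries all the combinatorial information about growth rates, so counting paths from $v$ amounts to understanding how such a path threads through a chain of SCCs. Throughout, I would use Lemma \ref{lem:srp} and the elementary monotonicity fact $|\Path_\mathcal{G}(v,n)| \ge |\Path_\mathcal{G}(w,n)|$ whenever there is a path $v \to w$ (since one can prepend a fixed finite prefix to any path starting at $w$, shifting length by a constant, and $|\Path_\mathcal{G}(v,n)|$ is itself non-decreasing up to such shifts because every vertex on a non-terminal component has an outgoing edge — minor care is needed for sink vertices with no outgoing edge, but those only contribute finitely).

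First I would dispose of part (1): if $v \ge w$ via a path of length $\ell$, then concatenation gives an injection $\Path_\mathcal{G}(w,n) \hookrightarrow \Path_\mathcal{G}(v, n+\ell)$, and combined with the (eventual) monotonicity of $|\Path_\mathcal{G}(v,\cdot)|$ this yields $|\Path_\mathcal{G}(v,n)| \ge |\Path_\mathcal{G}(w,n)|$ for all large $n$; absorbing small $n$ into constants is harmless since the statement is really an asymptotic one (or one reads $\ge$ up to the finitely many exceptional values). For parts (2) and (3): decompose $\mathcal{G}$ into SCCs and note a path from $v$ visits a weakly decreasing sequence of components in the condensation order. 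If some component $\mathcal{C}$ reachable from $v$ is non-cyclic, then $\mathcal{C}$ contains two distinct simple cycles sharing a vertex (Lemma \ref{lem:nicscc}), and by splicing these two cycles in arbitrary orders one produces at least $2^{k}$ distinct closed paths through that vertex of length $O(k)$, forcing $\rho(A_\mathcal{C}) > 1$ and hence exponential growth of $|\Path_\mathcal{G}(v,n)|$ by Lemma \ref{lem:srp} and part (1). Conversely, if every reachable component is cyclic (or trivial), I would bound $|\Path_\mathcal{G}(v,n)|$ polynomially: a path of length $n$ is determined by the sequence of components it passes through (a bounded-length chain in the condensation, so boundedly many choices) together with, for each component $\mathcal{C}_i$ of period $p_i$ that it spends time $n_i$ inside, the entry point, exit point, and the residue of $n_i$ modulo $p_i$ — all bounded — so the only unbounded freedom is the composition $n = n_1 + \cdots + n_r$ with $r$ bounded, giving $|\Path_\mathcal{G}(v,n)| = O(n^{r-1})$.

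For the sharp exponent in part (4), I would show that if $|\Path_\mathcal{G}(v,n)| \asymp n^d$ then $d+1$ equals the maximal number $m$ of pairwise-disjoint cycles lying along a single directed path from $v$. For the lower bound $d \ge m-1$: take a path from $v$ meeting disjoint cycles $C_1, \dots, C_m$ in order, with connecting segments; a path of the form (prefix)$C_1^{n_1}$(segment)$C_2^{n_2}\cdots C_m^{n_m}$(suffix) of total length $n$ is genuinely distinct for distinct tuples $(n_1,\dots,n_m)$ of nonnegative integers with a fixed weighted sum, and the number of such tuples is $\asymp n^{m-1}$, so $|\Path_\mathcal{G}(v,n)| \gtrsim n^{m-1}$. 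For the upper bound $d \le m-1$: refine the counting argument from part (3). Since all reachable SCCs are cyclic, a path from $v$ is described by which cyclic components $\mathcal{C}_{i_1} \to \cdots \to \mathcal{C}_{i_r}$ it visits (in condensation order) and how long it stays in each; but two consecutive visited components whose cycles are \emph{not} disjoint in the sense required — actually, since distinct SCCs are automatically vertex-disjoint, any chain of $r$ visited non-trivial components yields $r$ pairwise-disjoint cycles along one path, so $r \le m$; with $r \le m$ bounded and all other data bounded, $|\Path_\mathcal{G}(v,n)| = O(n^{m-1})$, giving $d = m-1$.

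The main obstacle I anticipate is the bookkeeping in the upper bound of part (4): one must argue carefully that the number of ways a length-$n$ path distributes its length among the (boundedly many) non-trivial components it visits is $O(n^{m-1})$ and not larger — in particular that one cannot gain extra polynomial factors from the freedom of \emph{which} sub-chain of components is used, or from interleaving. The clean way is to fix a maximal chain of non-trivial SCCs in the condensation reachable from $v$ (there are finitely many such chains, each of length $\le m$), observe every path from $v$ lies over exactly one such chain, and then within a fixed chain of length $r \le m$ the path is determined by $r$ residues mod the periods, $r$ entry/exit vertex pairs, the fixed connecting segments, and the ordered partition $n = n_1 + \cdots + n_r + (\text{const})$; the count of the latter is $\binom{n + r - 1}{r-1} = O(n^{r-1}) = O(n^{m-1})$. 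Summing over the finitely many chains preserves the bound. This matches the lower bound and pins down $d = m - 1$, i.e., $\mathscr{C}_{gr}$-type exponent equals (maximal disjoint cycle count) $-1$, as claimed; the convention $|\Path_\mathcal{G}(v,n)| \asymp n^{-1}$ for eventually-zero sequences corresponds to $m = 0$ (no cycle reachable from $v$), which is the degenerate base case handled directly.
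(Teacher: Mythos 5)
The paper does not give a proof of Proposition~\ref{prop:qnic} at all; it simply cites \cite[Theorem 3.4]{Par20}. Your proposal supplies a self-contained argument, and the route you take --- decompose the condensation into strongly connected components, show a reachable non-cyclic component forces exponential growth, and otherwise count paths by how they distribute their length among a bounded chain of cyclic components --- is the natural and essentially canonical one for a result of this type, so the substance is correct. Two points deserve a caveat, though.

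First, part (1) as literally stated (in the paper and in your target claim) is false: take $\mathcal{G}$ with a single edge $v\to w$ and two self-loops at $w$; then $v\ge w$ but $|\Path_\mathcal{G}(v,n)|=2^{n-1}<2^n=|\Path_\mathcal{G}(w,n)|$. Your concatenation argument correctly gives $|\Path_\mathcal{G}(v,n+\ell)|\ge|\Path_\mathcal{G}(w,n)|$, which is all that is ever used (it gives $\depth(v)\ge\depth(w)$), but the patch you propose --- ``eventual monotonicity of $|\Path_\mathcal{G}(v,\cdot)|$'' --- does not close the gap, since the needed direction of monotonicity ($|\Path_\mathcal{G}(v,n)|\ge|\Path_\mathcal{G}(v,n+\ell)|$, i.e.\ eventually non-increasing) is false whenever the count grows. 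It would be cleaner to simply state and prove the shifted (equivalently, asymptotic) inequality.

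Second, the exponential-growth step is slightly loose: ``at least $2^k$ distinct closed paths through that vertex of length $O(k)$'' is not immediately a statement about closed paths of a \emph{fixed} length $n$. The tightest route is to note that if $C_1\ne C_2$ are simple cycles through a common vertex $u$ of lengths $\ell_1,\ell_2$, then the concatenations $C_1C_2$ and $C_2C_1$ are distinct closed paths through $u$ of the \emph{same} length $\ell_1+\ell_2$ (if $C_1C_2=C_2C_1$ as edge words then $C_1,C_2$ are powers of a common word, contradicting simplicity and distinctness), and then use supermultiplicativity of the count of closed paths through $u$ to get $|\Path^0_\mathcal{G}(k(\ell_1+\ell_2))|\ge 2^k$, whence $\rho(A)>1$ by Lemma~\ref{lem:srp}. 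Similarly, ``every path from $v$ lies over exactly one such [maximal] chain'' is not quite right --- a path traces a subchain, possibly a subchain of several maximal chains --- but the conclusion is unaffected since one sums the $O(n^{r-1})$ bound over all of the finitely many possible SCC-sequences (each of which visits at most $m$ non-trivial cyclic components). With these small repairs the argument is sound and gives a complete, elementary proof of the cited result.
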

\begin{proof}
	(1) is immediate. See \cite[Theorem 3.6]{Par20} for (2)-(4).
\end{proof}

We remark that for a vertex $v$ of $G$, $|\Path_\mathcal{G}(v,n)| \asymp n^{-1}$ if and only if every directed path starting from $v$ eventually arrives at a vertex with no outgoing edges.
Thus, if every vertex has an outgoing edge, then the spectral radius $\rho(A) \geq 1$.
As a corollary, we have:
\begin{cor}\label{cor:nic}
Let $\mathcal{G}$ be a finite directed graph.
Let $\rho(A)$ be the spectral radius of the adjacency matrix $A$.
Suppose that every vertex of $\mathcal{G}$ has at least one outgoing edge so that $\rho(A)\ge 1$.
Then $\rho(A) = 1$ if and only if $\mathcal{G}$ has no intersecting cycles.
\end{cor}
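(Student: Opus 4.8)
The plan is to reduce the statement to the exponential-versus-polynomial growth dichotomy for path counts and then read off the answer from Lemma \ref{lem:srp}, Lemma \ref{lem:nicscc} and Proposition \ref{prop:qnic}. First I would record the consequence of the standing hypothesis: since every vertex has an outgoing edge, $|\Path_\mathcal{G}(v,n)| \geq 1$ for every vertex $v$ and every $n$, so none of the path counts is eventually zero and the degenerate case $|\Path_\mathcal{G}(v,n)| \asymp n^{-1}$ of Proposition \ref{prop:qnic}(4) never occurs. Writing $|\Path_\mathcal{G}(n)| = \sum_{v \in \Vertex(\mathcal{G})} |\Path_\mathcal{G}(v,n)|$ as a finite sum, $|\Path_\mathcal{G}(n)|$ grows exponentially if and only if $|\Path_\mathcal{G}(v,n)|$ does so for at least one $v$; otherwise, by Proposition \ref{prop:qnic}(2), every summand grows polynomially and hence $|\Path_\mathcal{G}(n)| \asymp n^{d}$ for some $d \geq 0$.

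Next I would translate these two cases into statements about $\rho(A)$ via Lemma \ref{lem:srp}. In the polynomial case, $\rho(A) = \lim_{n} |\Path_\mathcal{G}(n)|^{1/n} = \lim_n (C n^{d})^{1/n} = 1$; in the exponential case the same limit is strictly larger than $1$. Hence $\rho(A) = 1$ if and only if no $|\Path_\mathcal{G}(v,n)|$ grows exponentially. Now apply Proposition \ref{prop:qnic}(3): some $|\Path_\mathcal{G}(v,n)|$ grows exponentially exactly when some vertex admits a directed path to a non-cyclic strongly connected component, and such a vertex exists precisely when $\mathcal{G}$ contains a non-cyclic strongly connected component at all (trivially a vertex of such a component reaches it). Finally, by Lemma \ref{lem:nicscc}, $\mathcal{G}$ has a non-cyclic strongly connected component if and only if $\mathcal{G}$ has intersecting cycles. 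Chaining these equivalences gives $\rho(A) = 1 \iff \mathcal{G}$ has no non-cyclic strongly connected component $\iff \mathcal{G}$ has no intersecting cycles, as claimed.

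The only steps needing a little care — and the closest thing to an obstacle in an otherwise bookkeeping argument — are the elementary passage between ``a vertex reaches a non-cyclic component'' and ``a non-cyclic component exists'', and the verification that polynomial growth of each $|\Path_\mathcal{G}(v,n)|$ forces $\rho(A)$ to equal $1$ rather than merely be $\le 1$; this last point is exactly where the hypothesis that every vertex has an outgoing edge is used, ruling out the $\asymp n^{-1}$ case and keeping $|\Path_\mathcal{G}(n)|$ bounded below by $|\Vertex(\mathcal{G})| \geq 1$.
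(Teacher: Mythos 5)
Your proof is correct and follows essentially the same route as the paper's: it passes through Lemma \ref{lem:srp} to translate $\rho(A)$ into growth rates of path counts, then applies Proposition \ref{prop:qnic}(3) to characterize exponential growth by the existence of a non-cyclic strongly connected component, and finally invokes Lemma \ref{lem:nicscc}. Your version is somewhat more explicit about the finite-sum decomposition $|\Path_\mathcal{G}(n)| = \sum_v |\Path_\mathcal{G}(v,n)|$ and about ruling out the degenerate $\asymp n^{-1}$ case using the outgoing-edge hypothesis, but these points are implicit in the paper's argument as well.
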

\begin{proof}
Suppose that $\rho(A) > 1$. Then by Lemma \ref{lem:srp}, $|\Path_\mathcal{G}(n)|$ grows exponentially fast.
Thus there exists a vertex $v \in \Vertex(\mathcal{G})$ so that $|\Path_\mathcal{G}(v,n)|$ grows exponentially fast.
By the statement (3) in Proposition \ref{prop:qnic}, there exists a non-cyclic strongly connected component.
By Lemma \ref{lem:nicscc}, $\mathcal{G}$ has intersecting cycles.

Conversely, suppose that $\rho(A) = 1$. Then by Lemma \ref{lem:srp}, $|\Path_\mathcal{G}(n)|$ grows sub-exponentially fast. 
Thus for any vertex $v \in \Vertex(\mathcal{G})$, $|\Path_\mathcal{G}(v,n)|$ grows  sub-exponentially fast.
By the statement (3) in Proposition \ref{prop:qnic}, all strongly connected components are cyclic.
By Lemma \ref{lem:nicscc}, $\mathcal{G}$ has no intersecting cycles.
\end{proof}

\subsection*{Level structures}
Proposition \ref{prop:qnic} allows us to give a natural level structure on vertices of a finite directed graph $\mathcal{G}$.

\begin{defn}[Depth]\label{defn:DepthDiGraph}
Let $\mathcal{G}$ be a finite directed graph. 
We define a depth function $\depth:\Vertex(\mathcal{G}) \to \Z_{\ge -1}\cup\{\infty\}$ by
\begin{itemize}
	\item $\depth(v)=d$ if $|\Path_\mathcal{G}(v,n)| \asymp n^d$ and
	\item $\depth(v)=\infty$ if $|\Path_\mathcal{G}(v,n)|$ grows exponentially fast.
\end{itemize}
\end{defn}
By definition $v \ge w$ implies $\depth(v)\ge \depth(w)$. Hence the depth function is constant on each strongly connected component.

Let $\mathcal{C}_1, \mathcal{C}_2$ be two distinct simple cycles.
We write $\mathcal{C}_1 \geq \mathcal{C}_2$ (resp. $v \geq \mathcal{C}$) if there exists a path from $\mathcal{C}_1$ to $\mathcal{C}_2$ (resp. from $v$ to $\mathcal{C}$).
By Proposition \ref{prop:qnic}-(4), we have the following lemma, which is useful to compute the depth of a vertex.

\begin{lem}\label{lem:mnsc}
Let $\mathcal{G}$ be a finite directed graph. 
Let $v \in \mathcal{G}$ be a vertex. If $\depth(v)<\infty$, then $\depth(v)$ equals the maximal number $k$ so that there exist disjoint simple cycles $\mathcal{C}_0,..., \mathcal{C}_k$ with
$$
v \geq \mathcal{C}_0 \geq \mathcal{C}_1 \geq ... \geq \mathcal{C}_k.
$$
\end{lem}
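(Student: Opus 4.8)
The plan is to obtain Lemma \ref{lem:mnsc} as a direct consequence of part (4) of Proposition \ref{prop:qnic}, by translating between directed paths starting at $v$ and $\ge$-chains of simple cycles. Recall that for simple cycles $\mathcal{C},\mathcal{C}'$ we write $\mathcal{C}\ge\mathcal{C}'$ when there is a directed path from a vertex of $\mathcal{C}$ to a vertex of $\mathcal{C}'$, and $v\ge\mathcal{C}$ when there is a directed path from $v$ to a vertex of $\mathcal{C}$; since all vertices of a simple cycle are $\ge$-equivalent, the choice of endpoints is immaterial. Assume $\depth(v)=d<\infty$. By Proposition \ref{prop:qnic}(4), $d+1$ equals the maximal number $M$ of pairwise vertex-disjoint simple cycles contained in a single directed path starting at $v$. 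Let $N$ be the largest integer such that there exist pairwise disjoint simple cycles $\mathcal{C}_0,\dots,\mathcal{C}_N$ with $v\ge\mathcal{C}_0\ge\mathcal{C}_1\ge\cdots\ge\mathcal{C}_N$. It then suffices to prove $M=N+1$, since then $\depth(v)=d=M-1=N$, which is exactly the assertion of the lemma.

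For the inequality $M\le N+1$, I would start from a directed path $p$ from $v$ containing $M$ pairwise disjoint simple cycles, and record each such cycle by the sub-interval of $p$ along which it is traversed. A simple cycle inside $p$ is traversed on a \emph{contiguous} sub-interval (the path reaches the base vertex, goes once around, and returns to it), and the set of vertices that $p$ visits on that sub-interval is exactly the vertex set of the cycle; hence two vertex-disjoint cycle occurrences lie on disjoint sub-intervals of $p$, so the $M$ occurrences are linearly ordered along $p$, say $\mathcal{C}_0,\dots,\mathcal{C}_{M-1}$. The initial segment of $p$ up to the occurrence of $\mathcal{C}_0$ is a directed path from $v$ to a vertex of $\mathcal{C}_0$, and the segment of $p$ from the end of the occurrence of $\mathcal{C}_i$ to the start of the occurrence of $\mathcal{C}_{i+1}$ is a directed path from a vertex of $\mathcal{C}_i$ to a vertex of $\mathcal{C}_{i+1}$. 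This exhibits the chain $v\ge\mathcal{C}_0\ge\cdots\ge\mathcal{C}_{M-1}$ of pairwise disjoint simple cycles, so $N+1\ge M$.

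For the reverse inequality $M\ge N+1$, I would start from disjoint simple cycles $\mathcal{C}_0,\dots,\mathcal{C}_N$ with $v\ge\mathcal{C}_0\ge\cdots\ge\mathcal{C}_N$, fix a directed path from $v$ to some $x_0\in\mathcal{C}_0$, and for each $i$ fix a directed path from some $y_i\in\mathcal{C}_i$ to some $x_{i+1}\in\mathcal{C}_{i+1}$. I then build a single directed path by concatenating, for $i=0,1,\dots,N$ in order: the incoming connector ending at $x_i$; one full traversal of $\mathcal{C}_i$ starting and ending at $x_i$; the arc of $\mathcal{C}_i$ from $x_i$ to $y_i$; and the connector from $y_i$ to $x_{i+1}$ (omitting the last two pieces when $i=N$). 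Revisiting vertices of $\mathcal{C}_i$ along that last arc is harmless, since a path need not be simple; what matters is only that each $\mathcal{C}_i$ appears in this path as a genuine simple cycle on a contiguous sub-interval, and that the $\mathcal{C}_i$ are pairwise disjoint by hypothesis. Hence a single directed path starting at $v$ contains $N+1$ pairwise disjoint simple cycles, so $M\ge N+1$. Combining the two inequalities gives $M=N+1$, hence $\depth(v)=N$, as desired.

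I expect the only delicate point to be the bookkeeping in the direction $M\le N+1$: one must check that vertex-disjointness of the cycles forces their occurrences in $p$ to sit on disjoint, hence linearly ordered, sub-intervals. This, however, reduces to the elementary observation made above — that inside a path a simple cycle is traversed contiguously, and the vertices visited on that stretch are precisely the vertices of the cycle — so the whole argument stays at the combinatorial level and simply repackages Proposition \ref{prop:qnic}(4).
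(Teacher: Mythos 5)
Your proof is correct and supplies exactly the argument the paper leaves implicit: the paper presents Lemma~\ref{lem:mnsc} as an immediate consequence of Proposition~\ref{prop:qnic}(4), and your two inequalities make precise the needed translation between ``pairwise disjoint simple cycles occurring contiguously along a single directed path from $v$'' and ``$\ge$-chains of pairwise disjoint simple cycles reachable from $v$.'' Both directions are sound (the disjointness forces the cycle occurrences to sit on disjoint, linearly ordered sub-intervals of the path, and conversely connectors plus one full traversal of each cycle assemble into a single path); aside from a small redundancy in how you phrase the concatenation in the $M\ge N+1$ direction, the argument is the same route as the paper's.
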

\begin{rmk}
In Lemma \ref{lem:mnsc}, we use the convention that if there is no simple cycle $\mathcal{C}$ with $v \geq \mathcal{C}$, then the maximal number is $-1$. 
\end{rmk}

\subsection*{Quotient graph}
We can construct a quotient directed graph $\mathcal{G}'$ by collapsing every strongly connected component to a point.
More precisely, we define two vertices $v,w \in \Vertex(\mathcal{G})$ to be equivalent $v \simeq w$ if and only if they are in the same strongly connected component. 
Then a new vertex set $\Vertex(\mathcal{G}')$ is defined as the quotient $\Vertex(\mathcal{G})/\simeq$.
Two distinct classes $[v], [w] \in \Vertex(\mathcal{G}')$ are connected by a directed edge if there exists an edge $e=(v', w')$ and $v'\in[v],w'\in[w]$.
By definition there are no directed edges connecting $[v]$ with itself, and there is at most one direct edge connecting $[v]$ to $[w]$.
This induces a quotient map
$$
	\Phi: \mathcal{G} \longrightarrow \mathcal{G}',
$$
which sends each edge to a vertex or an edge.
We say a vertex $v \in \Vertex(\mathcal{G}')$ is {\em regular} if $\Phi^{-1}(v)$ is a single point; and {\em singular} otherwise.

\begin{lem}\label{lem:nc}
For a finite directed graph $\mathcal{G}$, the quotient directed graph $\mathcal{G}'$ has no cycles.
\end{lem}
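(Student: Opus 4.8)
The statement is that the quotient graph $\mathcal{G}'$, obtained by collapsing each strongly connected component of $\mathcal{G}$ to a point, has no directed cycles. The plan is to argue by contradiction: suppose $\mathcal{G}'$ contains a directed cycle. First I would recall that the relation $v \simeq w$ (meaning $v \ge w$ and $w \ge v$) is exactly the equivalence relation whose classes are the vertex sets of the strongly connected components, so $\Vertex(\mathcal{G}') = \Vertex(\mathcal{G})/{\simeq}$ is well-defined, and a directed edge $[v] \to [w]$ in $\mathcal{G}'$ exists only when there is some edge of $\mathcal{G}$ from a representative of $[v]$ to a representative of $[w]$; in particular $[v] \to [w]$ implies $v \ge w$ in the preorder on $\Vertex(\mathcal{G})$.

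Now suppose for contradiction that $[v_0] \to [v_1] \to \cdots \to [v_k] = [v_0]$ is a directed cycle in $\mathcal{G}'$ with $k \ge 1$ and the classes $[v_0], \ldots, [v_{k-1}]$ not all equal (a genuine cycle in $\mathcal{G}'$ must pass through at least two distinct vertices, since by construction $\mathcal{G}'$ has no loops at a single vertex). Concatenating the preorder relations coming from each edge, we get $v_0 \ge v_1 \ge \cdots \ge v_k$, and since $[v_k] = [v_0]$ we may take $v_k = v_0$, giving $v_0 \ge v_1 \ge \cdots \ge v_0$. Transitivity of $\ge$ then yields $v_0 \ge v_i$ and $v_i \ge v_0$ for every $i$, hence $v_i \simeq v_0$ for all $i$, i.e.\ all the $[v_i]$ coincide. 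This contradicts the assumption that the cycle visits at least two distinct vertices of $\mathcal{G}'$. Therefore $\mathcal{G}'$ has no directed cycles.

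The argument is essentially a formal unwinding of definitions, so I do not expect a serious obstacle; the only point requiring a little care is the reduction to a cycle through at least two distinct vertices and the observation that $\mathcal{G}'$ by construction has no single-vertex loops (this is built into the definition of the quotient, where we only add an edge between \emph{distinct} classes). Once that is noted, the collapse of the chain of preorder inequalities into a single $\simeq$-class finishes the proof. A one-line alternative phrasing: the quotient graph is precisely the Hasse-type condensation of the preorder $\ge$, and the quotient of a preorder by its symmetric part is a partial order, which admits no nontrivial directed cycles.
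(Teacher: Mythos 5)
Your proof is correct and takes essentially the same approach as the paper: both derive a contradiction by showing that all the $\simeq$-classes visited by a putative cycle in $\mathcal{G}'$ must coincide. The paper phrases this by lifting the cycle to a closed path in $\mathcal{G}$ and noting every cycle lies in a single strongly connected component, whereas you unwind the same fact through the preorder $\ge$ and transitivity; your version is a touch more explicit, but the underlying argument is identical.
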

\begin{proof}
	Suppose there exists a cycle $p' = (e_1,..., e_n)$ in $\mathcal{G}'$. Then there is a cycle $p$ in $\mathcal{G}$ so that $\Phi(p)=p'$. Since every cycle belongs to a strongly connected component, $p'=\Phi(p)$ has to be a vertex. Therefore $\mathcal{G}'$ has no cycles.
\end{proof}

\subsection*{Ending component of infinite paths}
We denote by $\Comp(\mathcal{G})$ the set of strongly connected components. Define a map
\[
	\End:\Path_{\mathcal{G}}(\infty) \to \Comp(\mathcal{G})
\]
in such a way that for every infinite path $p$, $\End(p)$ is the unique strongly connected component where $p$ is eventually supported. We call $\End(p)$ the {\it ending component of $p$}. If $\End(p)$ is cyclic, then $p$ is {\em eventually periodic}. The following lemma is straightforward.

\begin{lem}\label{lem:dip}
Let $\mathcal{G}$ be a finite directed graph with no intersecting cycles. Then every infinite path in $\mathcal{G}$ is eventually periodic.
\end{lem}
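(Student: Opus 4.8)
The plan is to deduce the lemma directly from the structure theory of strongly connected components developed above. Let $p=(e_1,e_2,\dots)$ be an infinite path in $\mathcal{G}$. First I would argue that $p$ is eventually trapped inside a single strongly connected component. Among the edges of $p$, those whose two endpoints lie in \emph{distinct} strongly connected components project under $\Phi\colon\mathcal{G}\to\mathcal{G}'$ to a path in $\mathcal{G}'$; since $\mathcal{G}'$ has no cycles by Lemma \ref{lem:nc}, any path in the finite graph $\mathcal{G}'$ is finite, so there is an index $N$ after which every vertex visited by $p$ lies in a single strongly connected component $\mathcal{C}=\End(p)$.

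Next I would use the hypothesis that $\mathcal{G}$ has no intersecting cycles: by Lemma \ref{lem:nicscc} every strongly connected component is cyclic, so $\mathcal{C}$ is a simple cycle, say of length $k$. Since $\mathcal{C}$ is literally a simple cycle --- no chords, no self-loops, no repeated edges --- every vertex of $\mathcal{C}$ has exactly one outgoing edge, and any edge of $\mathcal{G}$ joining two vertices of $\mathcal{C}$ must be one of the $k$ edges of $\mathcal{C}$ (otherwise one would build a second simple cycle meeting $\mathcal{C}$). Hence the tail $(e_{N+1},e_{N+2},\dots)$ of $p$ is completely determined: $e_{n+1}$ is the unique edge of $\mathcal{C}$ leaving the terminal vertex of $e_n$. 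Therefore $e_{n+k}=e_n$ for all $n>N$, and $p$ is eventually periodic with period dividing $k$.

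I do not expect a genuine obstacle: the lemma is a bookkeeping consequence of Lemmas \ref{lem:nc} and \ref{lem:nicscc}. The one subtlety worth spelling out is why being ``eventually supported in $\mathcal{C}$'' (a statement about vertices) forces the tail of $p$ to use only the $k$ edges of $\mathcal{C}$; this is exactly the point that a cyclic component is a bare simple cycle rather than merely a subgraph containing one, and it is also where the no-intersecting-cycles hypothesis does its real work.
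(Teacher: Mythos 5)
Your argument is correct and takes the same route the paper implicitly relies on. The paper states this lemma immediately after defining $\End(p)$ as ``the unique strongly connected component where $p$ is eventually supported,'' and after noting that a cyclic $\End(p)$ forces eventual periodicity; combined with Lemma \ref{lem:nicscc}, the paper then declares the lemma ``straightforward.'' What you have done is justify the two facts the paper takes for granted: first, that an infinite path really does stabilize in a single strongly connected component (your appeal to Lemma \ref{lem:nc} and the finiteness of $\mathcal{G}'$ is exactly the right way to see this), and second, that a cyclic component is a bare simple cycle --- no chords, no extra edges between its vertices --- so that a path trapped there has no freedom at all. That second point is indeed where the no-intersecting-cycles hypothesis acts: without it a strongly connected component could contain a vertex with two outgoing in-component edges, and the conclusion would fail. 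So your proof is a faithful, more detailed version of the paper's sketch; there is no gap and no divergence in method.
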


\subsection*{Application to the core entropy}
Let $f$ be a \pcf polynomial.
Recall that the core entropy $h(f)$ is the topological entropy of the restriction of f on its Hubbard tree $\mathcal{T}_f$:
$$
	h(f):= h_{top}(f|_{\mathcal{T}_f}).
$$

Let $\mathcal{G}_f$ be the directed graph associated to $f$, with respect to some invariant vertex set $\mathcal{V}$.
Let $A_f$ be the adjacency matrix for $\mathcal{G}_f$.
Then, by \cite{MisSzl90}, the core entropy satisfies
\[
	h(f)=\left\{ \begin{array}{cl}		
		\log \rho(A_f) & \mathrm{if}~\rho(A_f)\ge 1 \\
		0 & \mathrm{if}~\rho(A_f)=0
	\end{array}\right.
\]

\begin{proof}[Proof of Theorem \ref{thm:nic}]
Note that every vertex of $\mathcal{G}_f$ has at least one outgoing edge.
Thus, by Corollary \ref{cor:nic}, we have $h(f) = 0$ if and only if $\mathcal{G}_f$ has no intersecting cycles.
\end{proof}

\subsection{Cantor-Bendixson decomposition}\label{subsec:cbd}
In this subsection, we briefly record some results in the Cantor-Bendixson theory, and refer the readers to \cite[\S 6]{Kec95} for more details.
We then use them to prove Theorem \ref{thm:CBDecompJuliaIntersectGraph}.

\begin{defn}[Cantor-Bendixson rank]
	Let $X$ be a topological space. The {\em Cantor-Bendixson derivative} $X'$ of $X$ is the complement of the isolated points, i.e., the set of accumulation points. Let $X^{0}:=X$, $X^{(1)}:=X'$, and $X^{(\lambda+1)}:=(X^{(\lambda)})'$ for any ordinal $\lambda$. The {\em Cantor-Bendixson rank} of $X$, denoted by $\CBrank(X)$, is the smallest ordinal $\lambda$ with $X^{(\lambda+1)}=X^{(\lambda)}$.
\end{defn}

\begin{theorem}[Cantor-Bendixson theorem {\cite[(6.4) and (6.11)]{Kec95}}]
	Let $X$ be a Polish space, i.e., a separable completely metrizable space. Then there exist unique disjoint subsets $C,P \subset X$ with $X=C \cup P$ such that $P$ is perfect and $C$ is countable. More precisely, $P=X^{(\lambda)}$ where $\lambda=\CBrank(X)$.
\end{theorem}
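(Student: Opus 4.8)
The plan is to run the classical transfinite-derivative argument, essentially the one in \cite[\S 6]{Kec95}. Write $A$ for the closed subset of the Polish space $X$ in question (this includes the case $A=X$, and in our applications $A=\Julia_f\cap\mathcal{T}_f$ is compact, hence Polish in the subspace topology); in particular $A$ is itself Polish. Define the transfinite Cantor--Bendixson derivatives by $A^{(0)}=A$, $A^{(\mu+1)}=(A^{(\mu)})'$, and $A^{(\lambda)}=\bigcap_{\mu<\lambda}A^{(\mu)}$ at limit ordinals $\lambda$. Since the derivative of a closed set is closed and arbitrary intersections of closed sets are closed, every $A^{(\mu)}$ is a closed subset of $X$, and the family $(A^{(\mu)})_\mu$ is decreasing in $\mu$.

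First I would show that this sequence stabilizes at a \emph{countable} ordinal. Fix a countable basis $\{U_n\}$ for $X$. Whenever $A^{(\mu+1)}\subsetneq A^{(\mu)}$, choose a point $x_\mu$ isolated in $A^{(\mu)}$ and a basic open set $U_{n(\mu)}$ with $U_{n(\mu)}\cap A^{(\mu)}=\{x_\mu\}$. If $\mu<\nu$ are two indices at which the sequence strictly drops, then $U_{n(\mu)}\neq U_{n(\nu)}$: indeed $A^{(\nu)}\subseteq A^{(\mu+1)}$ misses $U_{n(\mu)}$ entirely, while $x_\nu\in A^{(\nu)}\cap U_{n(\nu)}$. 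Hence there are at most countably many indices with a strict drop, so some countable ordinal $\lambda$ satisfies $A^{(\lambda+1)}=A^{(\lambda)}$, and this $\lambda$ is exactly $\CBrank(A)$. Put $P:=A^{(\lambda)}$ and $C:=A\setminus P$, so that $C\sqcup P=A$.

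Next I would verify the two defining properties. The set $P$ is closed and satisfies $P'=P$, so it has no isolated points and is therefore perfect. For $C$, write $C=\bigcup_{\mu<\lambda}\bigl(A^{(\mu)}\setminus A^{(\mu+1)}\bigr)$; here $A^{(\mu)}\setminus A^{(\mu+1)}$ is precisely the set of isolated points of the second countable space $A^{(\mu)}$, and such a set is countable, since the singletons of isolated points form a pairwise disjoint family of nonempty open sets and a second countable space has the countable chain condition. As $\lambda$ is countable, $C$ is a countable union of countable sets, hence countable. For uniqueness, suppose $A=P_0\sqcup C_0$ with $P_0$ perfect and $C_0$ countable; it suffices to show $P_0=P$, since then $C_0=A\setminus P_0=C$. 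By transfinite induction $P_0\subseteq A^{(\mu)}$ for all $\mu$: at successor steps, monotonicity of the derivative together with $P_0=P_0'$ gives $P_0=P_0'\subseteq(A^{(\mu)})'=A^{(\mu+1)}$, and limit steps are immediate; thus $P_0\subseteq A^{(\lambda)}=P$. Conversely $P\setminus P_0$ is an open subset of $P$ contained in $C_0$, hence countable; but $P$ is a nonempty perfect Polish space, so any nonempty open subset of it is again a nonempty Polish space with no isolated points, and such a space is uncountable by the Baire category theorem. Hence $P\setminus P_0=\emptyset$, i.e. $P_0=P$, and the identity $P=A^{(\lambda)}$ with $\lambda=\CBrank(A)$ is built into the construction.

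I expect the only genuine subtlety to be the stabilization step: it must be carried out via the explicit countable-basis bookkeeping above to keep $\lambda$ countable (this is what makes $C$ a countable union of countable sets). The uniqueness step leans on the standard fact that a nonempty perfect Polish space is uncountable, which I would simply cite from \cite[\S 6]{Kec95} rather than reprove. Since this theorem is only ever applied to the compact set $\Julia_f\cap\mathcal{T}_f$, one could alternatively do the entire argument in the compact metric setting, where second countability is automatic and all the relevant cardinality facts are classical.
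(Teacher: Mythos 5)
The paper does not prove this statement: it is the classical Cantor--Bendixson theorem, recorded as background and cited directly from \cite[\S 6]{Kec95}. Your proposal supplies the standard transfinite-derivative proof (the same one found in that reference), and I don't see a gap. The stabilization argument is handled correctly --- the injection $\mu\mapsto n(\mu)$ into a countable basis, using that $A^{(\nu)}\subseteq A^{(\mu+1)}$ misses $U_{n(\mu)}$ while $A^{(\nu)}$ meets $U_{n(\nu)}$, really does force the strictly-decreasing indices to form a countable set, and once $A^{(\mu+1)}=A^{(\mu)}$ the sequence is constant thereafter, so the least such $\mu$ is $\CBrank(A)<\omega_1$. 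The countability of $C$ then follows because each layer $A^{(\mu)}\setminus A^{(\mu+1)}$ is the set of isolated points of a second countable space (hence countable, by CCC) and there are only countably many layers. The uniqueness argument is also correct: transfinite induction gives $P_0\subseteq A^{(\lambda)}=P$, and $P\setminus P_0$ is a countable open subset of the perfect Polish space $P$, hence empty by Baire. You are right that the one nontrivial fact being imported --- that a nonempty perfect Polish space is uncountable --- is fair to cite rather than reprove. One small stylistic note: the paper's statement has a typo (``Let $X$ be a Polish space \dots\ subsets $C,P\subset A$''), and you implicitly and correctly read $A$ as the ambient Polish space; it is worth noting that the application in the paper is to the compact set $\Julia_f\cap\mathcal{T}_f$, for which, as you observe, all the cardinality facts are elementary.
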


Hence Cantor-Bendixson ranks measure the complexity of the countable components in the decomposition.

\begin{cor}
	Let $X$ be a Polish space. If $X$ is countable, then $X^{(\lambda)}=\emptyset$ for $\lambda=\CBrank(X)$.
\end{cor}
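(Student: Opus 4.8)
The plan is to deduce the corollary immediately from the Cantor--Bendixson theorem stated just above, together with the classical fact that a nonempty perfect Polish space is uncountable. Set $\lambda = \CBrank(X)$. Applying the Cantor--Bendixson theorem to the Polish space $X$ gives a decomposition $X = C \cup P$ into disjoint subsets with $C$ countable and $P = X^{(\lambda)}$ perfect; the entire content of the corollary is the assertion that this perfect part vanishes when $X$ is countable.

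First I would check that $P$ is again a Polish space. Each Cantor--Bendixson derivative $X^{(\mu)}$ is \emph{closed} in $X$: the set of accumulation points of any subset of a metric space is closed, so $X^{(\mu+1)}$ is closed, and at a limit ordinal $X^{(\mu)} = \bigcap_{\nu < \mu} X^{(\nu)}$ is an intersection of closed sets. Hence $P = X^{(\lambda)}$ is a closed subspace of the Polish space $X$ and is therefore itself Polish. Moreover $P \subseteq X$ is countable by hypothesis.

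Next I would invoke the key input, which I would prove using the Baire category theorem. Suppose toward a contradiction that $P \neq \emptyset$. Since $P$ is perfect it has no isolated points, so for every $x \in P$ the singleton $\{x\}$ is closed in $P$ and has empty interior, i.e. it is nowhere dense in $P$. If $P$ were countable, then $P = \bigcup_{x \in P}\{x\}$ would exhibit $P$ as a countable union of nowhere dense subsets of itself; but $P$ is a nonempty completely metrizable space, so by the Baire category theorem it is not meager in itself, a contradiction. (Equivalently, one may build a Cantor scheme of nonempty relatively open subsets of $P$ with shrinking diameters, yielding a continuous injection $2^{\N} \hookrightarrow P$ and hence $|P| \ge 2^{\aleph_0}$.) Since $P$ is countable, we conclude $P = \emptyset$, that is $X^{(\lambda)} = \emptyset$, as claimed.

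I do not expect any real obstacle here: the argument is a two-line consequence of results recalled immediately above, and the only step needing a moment of care is the verification that all derived sets --- including those at limit stages --- are closed, which is precisely what makes $P$ Polish and thus subject to the Baire category / perfect-set dichotomy.
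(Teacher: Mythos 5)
Your proof is correct, and it fills in exactly the standard reasoning that the paper leaves implicit (the corollary is stated in the paper without a written proof, as an immediate consequence of the Cantor--Bendixson theorem recalled just before it). Your argument --- that the perfect kernel $P = X^{(\lambda)}$ is closed, hence Polish, and that a nonempty perfect Polish space is uncountable by Baire category (or by a Cantor scheme), so a countable $X$ forces $P = \emptyset$ --- is precisely the expected justification.
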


\begin{example}
	Here are some elementary examples.
	\begin{itemize}
		\item $\CBrank(\emptyset)=0$ and $\CBrank(\mathrm{a~discrete~set})=1$.
		\item Consider $X:=\{0\} \cup \{1/n\}_{n\ge1}$. Then $X^{(1)}=\{0\}$ and $X^{(k)}=\emptyset$ for $k\ge2$. Hence $\CBrank(X)=2$.
	\end{itemize}
\end{example}

\subsection*{Semi-conjugacy $\pi:\Path_{\mathcal{G}_f}(\infty) \to \Julia_f \cap \mathcal{T}_f$}

Let $f$ be a \pcf polynomial, $\mathcal{T}_f$ be its Hubbard tree, and $\mathcal{G}_f$ be the corresponding directed graph. We equip $\Edge(\mathcal{T}_f)$ with the discrete topology, $\Edge(\mathcal{T}_f)^{\Z_{\ge0}}$ with the product topology, and $\Path_{\mathcal{G}_f}(\infty)$ with the subspace topology as a subset of $\Edge(\mathcal{T}_f)^{\Z_{\ge0}}$. Let $\sigma$ be the one-side shift on $\Path_{\mathcal{G}_f}(\infty)$, i.e., $\sigma((e_1,e_2,\dots))=(e_2,e_3,\dots)$.

\begin{prop}\label{prop:PathtoJulia}
	Let $f$ be a \pcf polynomial and $\mathcal{T}_f$ be the Hubbard tree. Let $\mathcal{G}_f$ be the associated directed graph. There is a continuous semi-conjugacy 
	\[
	\pi:(\Path_{\mathcal{G}_f}(\infty),\sigma) \to (\Julia_f \cap \mathcal{T}_f,f)
	\] 
	such that $\pi((e_0,e_1,\dots))=v$ if and only if $f^n(v)\in e_n$ for any $n\ge0$.

	For any $v\in \Julia_f \cap \mathcal{T}_f$, the fiber $\pi^{-1}(v)$ is not a singleton if and only if 
	\begin{itemize}
	\item $v\in \bigcup_{n\ge0} f^{-n}(\Vertex(\mathcal{T}_f))$, and 
	\item $\val_{\mathcal{T}_f}(f^k(v))>1$ where $k$ is the least integer with $k\ge0$ with $f^k(v)\in \Vertex(\mathcal{T}_f)$ and $\val$ denotes the valence.
	\end{itemize}
	Moreover, $|\pi^{-1}(v)|=\val_{\mathcal{T}_f}(f^k(v))$.
\end{prop}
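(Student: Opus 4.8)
The plan is to describe $\pi$ through edge itineraries and then to count the admissible codings of $v$ by a case analysis on the first time the forward orbit of $v$ meets $\mathcal{V}:=\Vertex(\mathcal{T}_f)$.

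First I would record the itinerary description of the semiconjugacy: writing $E_e$ for the edge of $\mathcal{T}_f$ coded by $e$, a path $p=(e_1,e_2,\dots)\in\Path_{\mathcal{G}_f}(\infty)$ satisfies $\pi(p)=x$ if and only if $f^{i-1}(x)\in E_{e_i}$ for every $i\ge 1$. The forward implication is immediate from the semiconjugacy relation $f^{i-1}\circ\pi=\pi\circ\sigma^{i-1}$ together with the fact that $\pi(q)$ lies in the edge coded by the first symbol of $q$; the reverse implication is the standard expansivity statement that a point of $\Julia_f\cap\mathcal{T}_f$ is determined by its edge itinerary, which is exactly what makes $\pi$ well defined. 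Consequently $\pi^{-1}(v)$ is precisely the set of sequences $(e_i)$ that are paths in $\mathcal{G}_f$ — i.e. $E_{e_{i+1}}\subseteq f(E_{e_i})$ for all $i$ — and satisfy $f^{i-1}(v)\in E_{e_i}$ for all $i$; so the problem reduces to counting, step by step, the admissible edges through $f^{i-1}(v)$.

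The count then uses three basic facts about $\mathcal{T}_f$: (i) a point outside $\mathcal{V}$ is neither a branch point nor an endpoint of $\mathcal{T}_f$ — endpoints of a Hubbard tree are postcritical, hence in $\mathcal{V}$ — so it lies in the interior of a unique edge; (ii) $f$ sends each edge homeomorphically onto a nondegenerate arc that is a union of edges, hence carries interior points of an edge to interior points of that arc and endpoints to endpoints; and (iii) $f$ is locally injective on $\mathcal{T}_f$ away from critical points, so it preserves the valence $\val_{\mathcal{T}_f}$ at every point of $\mathcal{T}_f\setminus\mathcal{V}$. With these in hand: if $f^n(v)\notin\mathcal{V}$ for all $n\ge 0$, then by (i) each $e_i$ is forced (the unique edge whose interior contains $f^{i-1}(v)$) and the sequence is automatically a path by (ii), so $\pi^{-1}(v)$ is a singleton. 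Otherwise let $k\ge 0$ be least with $w_0:=f^k(v)\in\mathcal{V}$ and put $w_j:=f^{k+j}(v)$. The symbols $e_1,\dots,e_k$ are forced as before; the next symbol $e_{k+1}$ may be any edge containing $w_0$ and (when $k\ge 1$) contained in $f(E_{e_k})$. If $k=0$ there is no second constraint and $e_1$ ranges over all $\val_{\mathcal{T}_f}(w_0)$ edges at $w_0$. If $k\ge 1$, then $f^{k-1}(v)$ lies in the interior of $E_{e_k}$, so $w_0$ lies in the interior of the arc $f(E_{e_k})$, whence $\val_{\mathcal{T}_f}(w_0)\ge 2$, while by (iii) $\val_{\mathcal{T}_f}(w_0)=\val_{\mathcal{T}_f}(f^{k-1}(v))=2$; hence again all $\val_{\mathcal{T}_f}(w_0)$ edges at $w_0$ lie in $f(E_{e_k})$ and are available for $e_{k+1}$. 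For each such choice the remaining symbols are forced: $w_0$ is an endpoint of $E_{e_{k+1}}$, so $w_1$ is an endpoint of the arc $f(E_{e_{k+1}})$, and since $w_1\in\mathcal{V}$ exactly one edge of $f(E_{e_{k+1}})$ contains $w_1$, forcing $e_{k+2}$; iterating at $w_1,w_2,\dots$ forces every later symbol, and a continuation always exists because the image of an edge contains an edge. Thus branching occurs only at position $k+1$, with $\val_{\mathcal{T}_f}(w_0)$ choices each extending to a distinct path, so $|\pi^{-1}(v)|=\val_{\mathcal{T}_f}(f^k(v))$.

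Assembling the two cases gives the statement: $\pi^{-1}(v)$ is not a singleton exactly when the orbit of $v$ meets $\mathcal{V}$ — equivalently $v\in\bigcup_{n\ge 0}f^{-n}(\Vertex(\mathcal{T}_f))$, by forward invariance of $\mathcal{V}$ — and $\val_{\mathcal{T}_f}(f^k(v))>1$ at the first hitting time $k$ (a condition automatic once $k\ge 1$), and then the fiber has cardinality $\val_{\mathcal{T}_f}(f^k(v))$. The step I expect to require the most care is the analysis at and after the first hitting time: one must verify both uniqueness of the continuation past position $k+1$ — which hinges on the vertices $w_j$ sitting at the \emph{ends} of the arcs $f(E_{e_{k+j}})$, in contrast with $f^{k-1}(v)$, which sits in the interior of $E_{e_k}$ and is responsible for the branching — and, when $k\ge 1$, the identity $\val_{\mathcal{T}_f}(f^k(v))=2$, where fact (iii) is essential (without it the fiber would only have size $2$ in general, rather than $\val_{\mathcal{T}_f}(f^k(v))$). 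A minor preliminary worth isolating is fact (i), that endpoints of $\mathcal{T}_f$ lie in $\mathcal{V}$, which legitimizes "not in $\mathcal{V}$" implying "interior of a unique edge."
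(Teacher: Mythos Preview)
Your approach is essentially the same as the paper's: both parametrize $\pi^{-1}(v)$ by admissible edge itineraries, split on whether the orbit of $v$ meets $\mathcal V$, force the symbols up to the first hitting time $k$, branch over the edges at $w_0=f^k(v)$, and then argue uniqueness thereafter. The paper packages the post-branching uniqueness via the induced map on tangent directions $(w,e)\mapsto (f(w),e')$, which is exactly your ``$w_j$ is an endpoint of the arc $f(E_{e_{k+j}})$'' observation; your write-up is simply more explicit about why each subsequent symbol is forced.

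The one place to tighten is your fact~(iii): local injectivity of $f$ at a non-vertex $x$ yields only $\val_{\mathcal T_f}(f(x))\ge \val_{\mathcal T_f}(x)$, not equality, since $\mathcal T_f$ may have additional branches at $f(x)$ that do not arise from near $x$. In the $k\ge 1$ case what you actually know is that exactly two of the edges at $w_0$ lie in the image arc $f(E_{e_k})$; if $w_0$ were a branch point, any remaining edge at $w_0$ would fail the transition constraint $E_{e_{k+1}}\subseteq f(E_{e_k})$ and would not contribute to $\pi^{-1}(v)$. The paper's proof elides the very same transition check (it writes ``let $e_k$ be any edge in $\Edge_{v_k}(\mathcal T_f)$'' without verifying it follows $e_{k-1}$ in $\mathcal G_f$), so this is a shared subtlety rather than a deviation from the paper; the pieces used downstream---finiteness of the fibers and the fact that non-singleton fibers occur only over $\bigcup_{n\ge 0} f^{-n}(\mathcal V)$---are established cleanly by your argument.
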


\begin{proof}
	We first define a continuous map $\pi$ with $\pi \circ \sigma=f \circ \pi$.
	
	Consider $\mathcal{T}_f$ as a simplicial complex whose $0$-skeleton is $\Vertex(\mathcal{T}_f)$. For $n\ge0$, we define $\mathcal{T}_f(n)$ to be the simplicial complex whose underlying space is homeomorphic to that of $\mathcal{T}_f$ and the $0$-skeleton is $f^{-n}(\Vertex(\mathcal{T}_f)) \subseteq \mathcal{T}_f$. For $n>m$, $\mathcal{T}_f(n)$ is a subdivision of $\mathcal{T}_f(m)$. We call every edge of $\mathcal{T}_f(n)$ a {\em level-$n$ edge}. 
	
	Using induction, we can show that for any path $p_n=(e_0,e_1,\dots,e_{n-1})$ of $\mathcal{G}_f$ of length $n$, there is a unique level-$n$ edge $e(p_n)$ that is injectively mapped into $e_i$ by $f^{i}$ for all $i=0, ..., n-1$. 
	
	
	Let $p_\infty=(e_0,e_1,\dots)\in \Path_{\mathcal{G}_f}(\infty)$ and let $p_n=(e_0,e_1,\dots,e_{n-1})$ be its initial subpath of length $n$. 
	With respect to a conformal metric on $(\hCbb,P_f)$, $f$ is uniformly expanding on a neighborhood of $\Julia_f$ \cite[\S 19]{MilnorBook}. Then we have
	\[
		\diam(\Julia_f \cap e(p_n)) \to 0.
	\]
	Hence the set
	\[
		\pi(p_\infty):=\Julia_f \cap \bigcap_{n\ge0} e(p_n)
	\]
	is a singleton, and the map $\pi:\Path_{\mathcal{G}_f}(\infty)\to \Julia_f \cap \mathcal{T}_f$ is continuous. The equation $f \circ \pi = \pi \circ \sigma$ is immediate from the definition.
	\medskip
	
	Next, we describe $\pi^{-1}(v)$ for $v\in \Julia_f \cap \mathcal{T}_f$ whose non-emptyness implies the surjectivity of $\pi$.
	
	If $f^n(v)\notin \Vertex(\mathcal{T}_f)$ for every $n\ge0$, then there is a unique $e_n\in \Edge(\mathcal{T}_f)$ for each $n\ge0$ so that $f^n(v) \in \interior(e_n)$. Then $\pi^{-1}(v)=\{(e_0,e_1,\dots)\}$.
	
	If $f^n(v) \in \Vertex(\mathcal{T}_f)$ for some $n\ge0$, there may be many edges having $f^n(v)$ as their endpoints. To investigate the ambiguity we consider $\Edge_w(\mathcal{T}_f)$ the set of edges incident to $w\in \Vertex(\mathcal{T}_f)$. 
	We consider any pair $(w,e)\in \Vertex(\mathcal{T}_f) \times \Edge(\mathcal{T}_f)$ with $e\in \Edge_w(\mathcal{T}_f)$ as a tangent direction at $w$. 
	Then $f$ induces a natural map on tangent directions
	\[
		Df: \{w\} \times \Edge_w(\mathcal{T}_f) \to \{f(w)\} \times \Edge_{f(w)}(\mathcal{T}_f).
	\]
	
	Let us first consider the case that $v \in \Vertex(\mathcal{T}_f)$. For any edge $e$ in $\Edge_v(\mathcal{T}_f)$, we define $e_0:=e$ and $e_i$ as the unique edge satisfying 
	$$
		Df^i((v,e_0))=(f^i(v),e_i).
	$$ 
	Then we have $\pi((e_0,e_1,\dots))=v$. This gives rise to an injection $\Edge_v(\mathcal{T}_f) \hookrightarrow \pi^{-1}(v)$. The surjectivity is trivial.

	Suppose that $v \notin \Vertex(\mathcal{T}_f)$. Let $v_i:=f^i(v)$, and let $k>0$ be the least number with $f^k(v)\in \Vertex(\mathcal{T}_f)$. Let $e_0 \in \Edge(\mathcal{T}_f)$ with $v\in e_0$ and define $e_i$ to be the unique edge containing $v_i$ for any $i<k$. 
	Let $e_k$ be any edge in $\Edge_{v_k}(\mathcal{T}_f)$. For $i>k$, define $e_i$ to be the unique edge with 
	$$
	Df^{i-k}((v_k,e_k))=(v_i,e_i).
	$$
	Then $\pi((e_0,e_1,\dots))=v$.
	Hence, there is a bijection between $\Edge_{v_k}(\mathcal{T}_f)$ and $\pi^{-1}(v)$.
\end{proof}

Now we investigate the Cantor-Bendixson rank of $\Julia_f \cap \mathcal{T}_f$. We consider $\Path_{\mathcal{G}_f}(\infty)$ first and then consider the intersection $\Julia_f \cap \mathcal{T}_f$.

\begin{defn}[Depth of edges and paths]\label{defn:dd}
	Let $f$ be a \pcf polynomial and $\mathcal{T}_f$ be the Hubbard tree. Let $\mathcal{G}_f$ be the directed graph of the Markov map $f:\mathcal{T}_f\righttoleftarrow$. We extend the depth function on the vertices of $\mathcal{G}_f$ in Definition \ref{defn:DepthDiGraph} to other objects as follows.
	\begin{itemize}
		\item (Edges) Let $e\in \Edge(\mathcal{T}_f)$, which corresponds to a vertex $v$ for $\mathcal{G}_f$. We define $\depth(e)$ as the depth of the vertex $v$.		
		\item (Paths) Let $p \in \Path_{\mathcal{G}_f}(\infty)$ be an infinite path, and let $v \in \End(p)$. We define
		\[
			\depth(p):=\depth(v),
		\]
		which is independent of the choice of $v\in \End(p)$ because $\depth$ is constant on each strongly connected component of $\mathcal{G}_f$.
		\item (Julia points) Let $x\in \Julia_f \cap \mathcal{T}_f$. We define
		\[
			\depth(x):=\max_{p\in \pi^{-1}(x)} \depth(p),
		\]
		 where $\pi:\Path_{\mathcal{G}_f}\to \Julia_f\cap \mathcal{T}_f$ is the semi-conjugacy discussed above.
	\end{itemize}
\end{defn}

\begin{example}
	To illustrate Proposition \ref{prop:PathtoJulia} and Definition \ref{defn:dd}, let us consider a polynomial $f(z) \approx - 1.3513 z^3 -2.73903 z^2$ (see Figure \ref{fig:BasilicawithBubble}). The critical point $0$ is fixed and the other critical point $- 1.3513$ is in a period 2 cycle $\{- 1.3513, -1.66717\}$.
	\begin{figure}[ht]
		\centering
			  \resizebox{0.7\linewidth}{!}{
    \def\svgwidth{\columnwidth}
\begingroup%
  \makeatletter%
  \providecommand\color[2][]{%
    \errmessage{(Inkscape) Color is used for the text in Inkscape, but the package 'color.sty' is not loaded}%
    \renewcommand\color[2][]{}%
  }%
  \providecommand\transparent[1]{%
    \errmessage{(Inkscape) Transparency is used (non-zero) for the text in Inkscape, but the package 'transparent.sty' is not loaded}%
    \renewcommand\transparent[1]{}%
  }%
  \providecommand\rotatebox[2]{#2}%
  \newcommand*\fsize{\dimexpr\f@size pt\relax}%
  \newcommand*\lineheight[1]{\fontsize{\fsize}{#1\fsize}\selectfont}%
  \ifx\svgwidth\undefined%
    \setlength{\unitlength}{578.57148418bp}%
    \ifx\svgscale\undefined%
      \relax%
    \else%
      \setlength{\unitlength}{\unitlength * \real{\svgscale}}%
    \fi%
  \else%
    \setlength{\unitlength}{\svgwidth}%
  \fi%
  \global\let\svgwidth\undefined%
  \global\let\svgscale\undefined%
  \makeatother%
  \begin{picture}(1,0.34285714)%
    \lineheight{1}%
    \setlength\tabcolsep{0pt}%
    \put(0,0){\includegraphics[width=\unitlength,page=1]{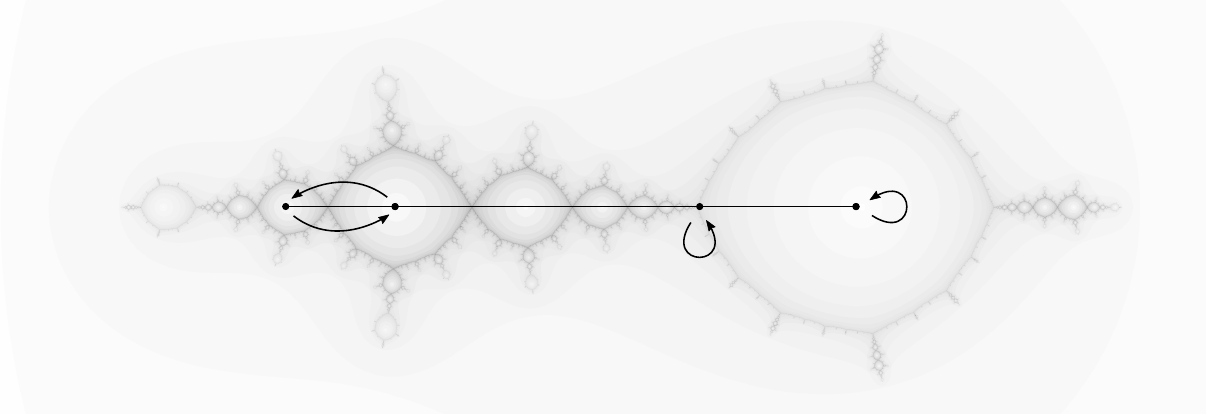}}%
    \put(0.25767755,0.1982892){\color[rgb]{0,0,0}\makebox(0,0)[lt]{\lineheight{1.25}\smash{\begin{tabular}[t]{l}$E_0$\end{tabular}}}}%
    \put(0.43251956,0.1982892){\color[rgb]{0,0,0}\makebox(0,0)[lt]{\lineheight{1.25}\smash{\begin{tabular}[t]{l}$E_1$\end{tabular}}}}%
    \put(0.62411154,0.1982892){\color[rgb]{0,0,0}\makebox(0,0)[lt]{\lineheight{1.25}\smash{\begin{tabular}[t]{l}$E_2$\end{tabular}}}}%
    \put(0.56966716,0.18185586){\color[rgb]{0,0,0}\makebox(0,0)[lt]{\lineheight{1.25}\smash{\begin{tabular}[t]{l}$v$\end{tabular}}}}%
  \end{picture}%
\endgroup%

			}
		\caption{The Julia set of $- 1.3513 z^3 -2.73903 z^2$}
		\label{fig:BasilicawithBubble}
	\end{figure}

	Let us add the Julia fixed point, denoted by $v$, which is not in the post-critical set, as a vertex in the Hubbard tree. Then we have three edges $E_0,E_1,E_2$. The associated directed graph is given in Figure \ref{fig:DGBB}.
\begin{figure}[ht]
	\begin{tikzpicture}[node distance=2
		cm,gnode/.style={draw, circle, inner sep=0pt, minimum size=.15cm, fill=black}]
		\node[gnode, label={below: $E_0$}](0) {};
		\node[gnode, label={below: $E_1$}](1) [right of=0] {};
		\node[gnode, label={below: $E_2$}](2) [right of=1] {};

		\path[-stealth,thick]
		(0) edge [out=140,in=220,looseness=15] node {} (0)
		(1) edge [out=320,in=40,looseness=15] node {} (1)
		(1) edge node {} (0)
		(2) edge [out=320,in=40,looseness=15] node {} (2);
	\end{tikzpicture}
	\caption{ The directed graph associated to the Hubbard tree in Figure \ref{fig:BasilicawithBubble}.}
	\label{fig:DGBB}
\end{figure}
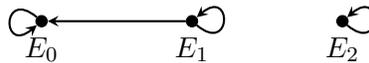

There are four possible forms of infinite paths:
\begin{itemize}
	\item $(E_0,E_0,\dots)$,
	\item $(E_1,E_1,\dots)$,
	\item $(E_1,E_1,\dots,E_1,E_0,E_0,E_0,\dots)$, and
	\item $(E_2,E_2,\dots)$.
\end{itemize}
The semi-conjugacy 	$\pi:\Path_{\mathcal{G}_f}(\infty) \to \Julia_f \cap \mathcal{T}_f$ is bijective except for
\[
	\pi((E_1,E_1,\dots))=\pi((E_2,E_2,\dots))=v.
\]
The depth of $(E_1,E_1,\dots)$ is one and the depth of $(E_2,E_2,\dots)$ is zero, so the depth of $v$ is one (see the vertex $v$ in Figure \ref{fig:BasilicawithBubble}). From the left side, we can see the depth 1 property, i.e., $v$ is a limit point of the boundaries of bounded Fatou components. From the right side, however, we can see the depth 0 property, i.e., $v$ is on the boundary of a bounded Fatou component.
\end{example}

\begin{prop}\label{prop:CBDecompofPaths}
	Let $f$ be a \pcf polynomial and $\mathcal{T}_f$ be the Hubbard tree. For the directed graph $\mathcal{G}_f$ of the Markov map $f:\mathcal{T}_f\righttoleftarrow$, let $\Pcal:=\Path_{\mathcal{G}_f}(\infty)$. Then
	\begin{equation}\label{eqn:1st of PropCBDecompofPaths}
		\Pcal^{(n)}=\{p\in \Pcal~|~ \depth(p)\ge n\},
	\end{equation}
	where $\Pcal^{(n)}$ is the $n^{th}$ Cantor-Bendixson derivative. Hence, the Cantor-Bendixson rank of $\Pcal$ is $d+1$ where
	\begin{equation}
		d:=\max \{\depth(e)~|~e\in \Edge(\mathcal{T}_f),~\depth(e)<\infty\}.
	\end{equation}
	More precisely, the Cantor-Bendixson decomposition of $\Pcal$ is given by
	\begin{equation}\label{eqn:2nd of PropCBDecompofPaths}
		\Pcal= \{ p \in \Pcal~|~ \depth(p) < \infty\} \sqcup \{p \in\Pcal~|~ \depth(p)=\infty \},
	\end{equation}
	which is also equivalent to
	\begin{equation}\label{eqn:3rd of PropCBDecompofPaths}
	\Pcal= \{ p \in \Pcal~|~ \depth(p) \le d\} \sqcup \{p \in \Pcal~|~ \depth(p)>d \}.
	\end{equation}
%
\end{prop}
\begin{proof}
	The equivalence between equations \eqref{eqn:2nd of PropCBDecompofPaths} and \eqref{eqn:3rd of PropCBDecompofPaths} follows from the fact that for each $e\in \Edge(\mathcal{T}_f)$ either $\depth(e)=\infty$ or $\depth(e)\le d$. 
	
	For any infinite directed path $p \in \Pcal$, there is a unique decomposition $p=p_1 \sqcup p_2$, which we call the {\it head-tail} decomposition of $p$, so that $p_1$ is disjoint from the ending component $\End(p)$ and $p_2$ is contained in $\End(p)$. We call $p_1$ the {\it head} and $p_2$ the {\it tail} of $p$ respectively. 
	
	 For $p \in \Pcal$, we define a subset $Z(p) \subset \Pcal$ as the collection of paths $p'$ with properties that (i) the heads of $p$ and $p'$ are the same and (ii) $\End(p)=\End(p')$. 
	
	If $\depth(p)=\infty$, then $Z(p)$ is a Cantor set containing $p$ so that $Z(p)$ survives forever when we iteratively take Cantor-Bendixson derivatives for $\Pcal$. Hence the set $\{p \in \Pcal~|~ \depth(p)=\infty\}$ is contained in the perfect set component in the Cantor-Bendixson decomposition. Then \eqref{eqn:3rd of PropCBDecompofPaths} follows from \eqref{eqn:1st of PropCBDecompofPaths}.
	
	Let us show \eqref{eqn:1st of PropCBDecompofPaths}. If $\depth(p)<\infty$, then $Z(p)=\{p\}$. 
	Suppose $\depth(p)>0$. Let $C=\End(p)$ be the ending component of $p$, which is cyclic. Then there is a cyclic strongly connected component $C'$ of $\mathcal{G}$ so that $\depth(C)=\depth(C')+1$ and there is a path $\delta$ from $C$ to $C'$. Let $p=p_1\cup p_2$ be the decomposition as above. Then $p_2$ is a path which infinitely rotates along $C$. Define $q_n$ be the concatenation $p_1 \cup C^n \cup \delta \cup C'^\infty$ where $C^n$ is the $n$-times rotation along $C$ and $C'^\infty$ is the infinite rotations along $C'$. Then $\{q_n\}$ converges to $p$ with $\depth(q_n)=\depth(p)-1$. 
	
	It is easy to show that every $p \in \Pcal$ with $\depth(p)=0$ is an isolated point of $\Pcal$. Then the equations \eqref{eqn:1st of PropCBDecompofPaths} follow from an induction argument.
\end{proof}

As a corollary of Proposition \ref{prop:CBDecompofPaths}, we have the following theorem.

\begin{theorem}[Cantor-Bendixson rank of $\Julia_f \cap \mathcal{T}_f$]\label{thm:CBrankofJuliaIntersectGraph}
	Let $f$ be a \pcf polynomial and $\mathcal{T}_f$ be the Hubbard tree. For each $k\ge 0$, we have
	\begin{equation}\label{eqn:Depth of Pts in Julia Hubb tree}
	(\Julia_f \cap \mathcal{T}_f)^{(k)}=\{x\in \Julia_f \cap \mathcal{T}_f~|~\depth(x)\ge k\}
	\end{equation}
	where $(\Julia_f \cap \mathcal{T}_f)^{(k)}$ is the $k$-th Cantor-Bendixson derivative of $\Julia_f \cap \mathcal{T}_f$. Then we have 
	\begin{multline}\label{eqn:CBRank of Julia and Hubb}
		\CBrank(\Julia_f \cap \mathcal{T}_f) = \\
			\max\{ \{0\}\cup\{1+\depth(e)~|~\depth(e)<\infty,~ e\in\Edge(\mathcal{T}_f)\} \},
	\end{multline}
	and for any $e\in \Edge(\mathcal{T}_f)$ we have
	\begin{multline}\label{eqn:CBrank of Julia and Hubb edge}
		\CBrank(\Julia_f \cap e) = \\
			\max\{ \{0\}\cup\{1+\depth(e')~|~\depth(e')<\infty,~ e\ge e'\} \}.
	\end{multline}
\end{theorem}
\begin{proof}[Proof of Theorem \ref{thm:CBrankofJuliaIntersectGraph}]
	For the semi-conjugacy $\pi:\Path_{\mathcal{G}_f}(\infty)\to \Julia_f \cap \mathcal{T}_f$ and $x\in \Julia_f \cap \mathcal{T}_f$, let $\{p_1,p_2,\dots,p_k\}=\pi^{-1}(x)$. Suppose that $\depth(x)=\max_i \depth(p_i)$ is finite. Since $\pi$ is continuous, it follows from Proposition \ref{prop:CBDecompofPaths} that $x$ is removed at the $(\depth(x)+1)^{th}$ Cantor-Bendixson derivative of $\Julia_f \cap \mathcal{T}_f$. Then the equation \eqref{eqn:Depth of Pts in Julia Hubb tree} follows. 
	
	The equation \eqref{eqn:CBrank of Julia and Hubb edge} follows from the same argument restricted to the minimal $f$-invariant subgraph of $\mathcal{T}_f$ containing $e$. 
\end{proof}

Now we are ready to prove Theorem \ref{thm:CBDecompJuliaIntersectGraph}.

\begin{proof}[Proof of Theorem \ref{thm:CBDecompJuliaIntersectGraph}]
	For $d:=\max_{e\in \Edge(\mathcal{T}_f)} \depth(e)$, it follows from Proposition \ref{prop:CBDecompofPaths} and Theorem \ref{thm:CBrankofJuliaIntersectGraph} that for any $k\ge d+1$, 
	\[
		(\Julia_f \cap \mathcal{T}_f)^{(k)}= \{x \in \Julia_f \cap \mathcal{T}_f~|~ \depth(x)=\infty\}.
	\]
	Hence we obtain the equation \eqref{eqn:CBdecompositionof Julia Hubbard}.
	The equation \eqref{eqn:TopComplexity and depth} also follows.
	
	Since $h(f)=0$ if and only if $\depth(e)<\infty$ for every $e\in \Edge(\mathcal{T}_f)$, we have that $h(f)=0$ if and only if the perfect set component in the Cantor-Bendixson decomposition is the empty set. Thus we have the equivalences in the statements.
\end{proof}

\section{The main molecule $\Mol_d$}\label{sec:mm}
For quadratic polynomials, the {\em main molecule} is the closure of the union of all hyperbolic components that can be obtained from the main cardioid through a finite chain of bifurcations.
In this section, we will define the {\em main molecule} for higher degree polynomials and discuss some subtleties that occur in higher degrees.

Recall that a degree $d$ polynomial $f(z) = c_dz^d+c_{d-1}z^{d-1}+...+c_0$ is called 
\begin{itemize}
\item {\bf monic} if $c_d = 1$, and
\item {\bf centered} if $c_{d-1} = 0$.
\end{itemize}

Let $\mathcal{P}_d \cong \C^{d-1}$ be the space of degree $d$ monic and centered polynomials.
Note that every degree $d$ polynomial is affine conjugate to a monic and centered one.

The space $\mathcal{P}_d$ is regarded as the space of {\em marked polynomials}.
More precisely, if $f \in \mathcal{P}_d$ has connected Julia set, then it has a unique B\"ottcher map whose derivative is $1$ at infinity.
We call the external ray of angle $0$ under this B\"ottcher coordinate the {\em marked external ray}.
So generically, there are $d-1$ monic and centered polynomials that are affine conjugate.
Thus, $\mathcal{P}_d$ is a branched covering of the moduli space of degree $d$ polynomials.

We use the notion of $J$-conjugacy following \cite[\S3]{McM88}.

\begin{defn}[$J$-conjugacy]\label{defn:jc}
Let $f, g\in \mathcal{P}_d$ be two polynomials. They are {\em $J$-conjugate} if there exists a map $\phi: \C \longrightarrow \C$ so that
\begin{itemize}
\item $\phi$ is quasiconformal on $\C$ and preserves the marked external rays;
\item $\phi(J_f) = J_g$, where $J_f, J_g$ are Julia sets of $f, g$; and
\item $\phi \circ f(z) = g \circ \phi(z)$ for all $z\in J_f$.
\end{itemize}
We say that 
\begin{itemize}
	\item $f$ and $g$ are {\em weakly $J$-conjugate} if $\phi: \C \longrightarrow \C$ is only assumed to be a homeomorphism, and
	\item $f$ is {\em $J$-semi-conjugate} to $g$ if $\phi: \C \longrightarrow \C$ is only assumed to be a surjective continuous map.
\end{itemize}
\end{defn}

A rational map $f$ is {\em sub-hyperbolic} if any critical point is either (i) in an attracting basin or (ii) in the Julia set $\Julia_f$ and preperiodic. A rational map is {\em hyperbolic} if every critical point is in an attracting basin.

\begin{defn}[Relative hyperbolic components]
	For $f \in \mathcal{P}_d$ a \pcf polynomial, the {\em relative hyperbolic component $\mathcal{H}_f$ of $f$} is defined by
	$$
		\mathcal{H}_f:= \{g \in \mathcal{P}_d: g \text{ is $J$-conjugate to } f\}.
	$$ 
\end{defn}
We omit the subscript $f$ when we do not need to specify $f$. Following are some properties on relative hyperbolic components.
\begin{enumerate}
\item Each \shc $\mathcal{H}$ contains a unique \pcf polynomial $f$ \cite{McM88}, and we call $f$ the {\em center} of $\mathcal{H}$.
\item If $f\in \mathcal{P}_d$ is a hyperbolic post-critically finite polynomial, then by \cite{MSS83}, the \shc $\mathcal{H}_f$ of $f$ is the usual hyperbolic component of $\mathcal{P}_d$ containing $f$.
\item If $f\in \mathcal{P}_d$ is a non-hyperbolic post-critically finite polynomial, then the \shc $\mathcal{H}_f$ consists of those sub-hyperbolic polynomials obtained by deforming the dynamics in the bounded Fatou components (see \cite{MS98}.)
In this case, the dimension of $\mathcal{H}_f$ is smaller than the dimension of $\mathcal{P}_d$ (see \S \ref{subsec:lm} for a model space of $\mathcal{H}_f$).
\item The \shc $\mathcal{H}_f$ is a singleton set if and only if the Julia set $J_f$ is a dendrite, i.e., $f$ has no bounded Fatou component. 
\item In degree $2$, if $f\in \mathcal{P}_2$ is a non-hyperbolic post-critically finite polynomial, then the Julia set $J_f$ is a {\em dendrite}, and thus $\mathcal{H}_f = \{f\}$.
\end{enumerate}

Two \shcs $\mathcal{H}$ and $\mathcal{H}'$ are said to be {\em adjacent} if $\partial{\mathcal{H}} \cap \partial{\mathcal{H}'} \neq \emptyset$.
We say a \shc $\mathcal{H}'$ has a {\em finite distance} from $\mathcal{H}$ if there exists a finite sequence of \shcs $\mathcal{H}_0 = \mathcal{H}, \mathcal{H}_1, ..., \mathcal{H}_k = \mathcal{H}'$ so that $\mathcal{H}_i$ is adjacent to $\mathcal{H}_{i+1}$.

\begin{defn}[Main molecule]
Let $\mathcal{H}_d \subseteq \mathcal{P}_d$ be the main hyperbolic component, i.e., the hyperbolic component containing $z^d$.
Let $\mathfrak{S}$ be the set of \shcs of finite distance from $\mathcal{H}_d$.
We define the degree $d$ {\em main molecule} as
$$
\Mol_d := \overline{\bigcup_{\mathcal{H}\in \mathfrak{S}} \mathcal{H}}.
$$
\end{defn}

\begin{rmk}
We remark that one can naturally replace \shcs in the definition of the main molecule by hyperbolic components and ask whether the closure is still the same as $\Mol_d$ (see Question \ref{quest:DenseHyp}).

For quadratic polynomials, this is true as $\Mol_2$ contains no non-hyperbolic \pcf polynomial, so any \shc $\mathcal{H} \subseteq \Mol_d$ is in fact a hyperbolic component.
We do not know the answer in higher degrees (see \S \ref{subsec:eg} for a discussion on some subtleties in higher degrees).

We remark that the discussion in \S \ref{subsec:eg} suggests it is more natural to consider \shcs in higher degrees (see Remark \ref{rmk:ibc}).
\end{rmk}

\subsection{A model of \shc}\label{subsec:lm}
In this subsection, we will discuss how to model relative hyperbolic components using Blaschke products. This idea was introduced in \cite{MilPoi92}, which is not published. The published article \cite{Milnor12} is a revised version of \cite{MilPoi92}.

\begin{defn}[Mapping scheme]
	A {\em mapping scheme} $\mathcal{S} = (|\mathcal{S}|, \Phi, \delta)$ consists of a finite set $|\mathcal{S}|$, whose elements are called vertices, together with a map $\Phi = \Phi_{\mathcal{S}}: |\mathcal{S}| \longrightarrow |\mathcal{S}|$, and a degree function $\delta : |\mathcal{S}| \longrightarrow \Z_{\geq 1}$, satisfying two conditions:
	\begin{itemize}
		\item (Minimality) Any vertex of degree $1$ is the iterated forward image of some vertex of degree $\geq 2$, and
		\item (Hyperbolicity) Every periodic orbit under $\Phi$ contains at least one vertex of degree $\geq 2$.
	\end{itemize}
	We define the degree of the scheme as
	$\deg(\mathcal{S}) = 1+ \sum_{s\in |\mathcal{S}|} (\delta(s)-1)$.
\end{defn}

Let $f: \D\longrightarrow \D$ be a proper holomorphic map of degree $d\geq 1$. 
It can be uniquely written as a {\em Blaschke product}
$$
f(z) = e^{i\theta}\prod_{i=0}^d \frac{z-a_i}{1-\overline{a_i}z},
$$
where $|a_i|<1$.

By the Denjoy-Wolff theorem, there is a unique non-repelling fixed point of $f$ on $\overline{\D}$, which classifies a Blaschke product $f$ into exactly three categories:
\begin{itemize}
\item $f$ is {\bf \em interior-hyperbolic} or simply {\bf \em hyperbolic} if $f$ has an attracting fixed point in $\D$,
\item $f$ is {\bf \em parabolic} if $f$ has a parabolic fixed point on $\mathbb{S}^1$, and
\item $f$ is {\bf \em boundary-hyperbolic} if $f$ has an attracting fixed point on $\mathbb{S}^1$.
\end{itemize}
The parabolic Blaschke products can be further divided into {\bf \em singly parabolic} or {\bf \em doubly parabolic} depending on the multiplicities for the parabolic fixed points.
The Julia set of a hyperbolic or a doubly parabolic Blaschke product is the circle $\mathbb{S}^1$, while the Julia set of a singly parabolic or a boundary-hyperbolic Blaschke product is a Cantor set on $\mathbb{S}^1$.

Following \cite{Milnor12}, we say that a Blaschke product $f$ is
\begin{itemize}
\item {\bf \em 1-anchored} if $f(1) = 1$,
\item {\bf \em fixed point centered} if $f(0) = 0$, and
\item {\bf \em zeros centered} if the sum
$$
a_1+... + a_d
$$
of the points of $f^{-1}(0)$ (counted with multiplicity) is equal to $0$.
\end{itemize}
We define $\BP_{d, \fc}$ and $\BP_{d, \zc}$ as the space of all 1-anchored Blaschke products of degree $d$ which are respectively fixed point centered or zeros centered.
When $d=1$, $\BP_{1, \fc} = \BP_{1, \zc}$ consists of only the identity map.

\begin{defn}[Blaschke model space]\label{defn:bms}
Let $\mathcal{S}=(|\mathcal{S}|, \Phi, \delta)$ be a mapping scheme. 
We associate the {\em Blaschke model space} $\BP^\mathcal{S}$ consisting of all proper holomorphic maps
$$
	\bp: |\mathcal{S}| \times \D \longrightarrow |\mathcal{S}| \times \D
$$
such that $\mathcal{F}$ carries each $\{s\}\times \D$ onto $\{\Phi(s)\} \times \D$ by an 1-anchored Blaschke product 
$$
	(s, z) \mapsto (\Phi(s), \bp_s(z))
$$
of degree $\delta(s)$ which is either fixed point centered or zero-centered according to whether $s$ is periodic or aperiodic under $\Phi$.

\end{defn}

\begin{defn}[Mapping schemes of sub-hyperbolic maps]
	Let $\mathcal{H}_f \subseteq \mathcal{P}_d$ be a \shc with the \pcf center $f$.
	We define the {\em mapping scheme $\mathcal{S}_f$ associated to $f$} in the following way.
	Vertices $s_U \in |\mathcal{S}_f|$ are in correspondence with bounded Fatou components $U$ which contain a critical or post-critical point.
	The associated map $F = F_f: |\mathcal{S}_f| \longrightarrow |\mathcal{S}_f|$ carries $s_U$ to $s_{f(U)}$ and the degree $\delta(s_U)$ is defined to be the degree of $f: U \longrightarrow f(U)$.
\end{defn}

	We remark that we exclude the fixed Fatou component containing $\infty$ from the mapping scheme $\mathcal{S}_f$.

\begin{example}
	See Figure \ref{fig:BHM}. For a sub-hyperbolic cubic polynomial $f(z)=z^3-\frac{3}{2}z+\frac{1}{\sqrt{2}}$, the associated mapping scheme $(|\mathcal{S}|,\Phi, \delta)$ is defined as follows:
	\begin{itemize}
		\item $|S|=\{0, \frac{1}{\sqrt{2}}\}$,
		\item $\Phi(0)=\frac{1}{\sqrt{2}}$ and $\Phi(\frac{1}{\sqrt{2}})=0$, and
		\item $\delta(0)=1$ and $\delta(\frac{1}{\sqrt{2}})=2$.
	\end{itemize}
\end{example}

	Let $f \in \mathcal{H}$ be a sub-hyperbolic polynomial and $U$ be a bounded critical or post-critical Fatou component. We can uniformize the dynamics of $f:U\longrightarrow f(U)$ to an 1-anchored fixed point or zero centered Blaschke product (see \cite[\S 5]{Milnor12} for more details).

\begin{theorem}\label{thm:HomeoToModelSp}
Let $\mathcal{H} \subseteq \mathcal{P}_{d}$ be a \shc and $\mathcal{S} = \mathcal{S}_f$ be the corresponding mapping scheme. 
Then there is a diffeomorphism $\Psi: \mathcal{H} \to \BP^\mathcal{S}$ that maps $f\in \mathcal{H}$ to its Blaschke product model $\Fcal$.
\end{theorem}

Theorem \ref{thm:HomeoToModelSp} is proven for the hyperbolic case in \cite[Theorem 5.1]{Milnor12}. Since the same argument also works for the case of relative hyperbolic components, we omit its proof. We also refer the reader to \cite[\S 6]{McM88} for the idea of uniformization of more general types of Fatou components.

There are finitely many different choices of diffeomorphisms between $\mathcal{H}_f$ and $\BP^\mathcal{S}$. These different diffeomorphisms correspond to different choices of boundary markings.

\begin{defn}[Boundary marking of sub-hyperbolic polynomials]\label{defn:BdryMarking}
	Let $f$ be a sub-hyperbolic polynomial with connected Julia set. 
	A {\em boundary marking} of $f$ is a choice of a point $q(U) \in \partial U$ for any $U \in |\mathcal{S}_f|$, i.e., any bounded critical or post-critical Fatou component, that satisfies $f(q(U)) = q(f(U))$. 
	We call $q(U)$ the {\em marked point} of $U$.
	
	Given a boundary marking $q$, we have a unique family of continuous maps $\phi_U:\mathbb{S}^1 \to \partial U$ associated to $U\in |\mathcal{S}_f|$ satisfying $f\circ \phi_U= \phi_{f(U)} \circ z^{\delta(U)}$ and $\phi_U(e^{i\cdot 0})=q(U)$. 
	By abusing notations, we also call such a family of continuous maps $\{\phi_U\}_{U\in |\mathcal{S}_f|}$ a boundary marking of $f$.
\end{defn}

Once a choice of boundary marking is made for the center $f \in \mathcal{H}_f$, we get a boundary marking for any map $g \in \mathcal{H}_f$ by tracing the marked points through quasiconformal conjugacies. Given a boundary marking $q$, the diffeomorphism $\Psi: \mathcal{H}_f \to \BP^\mathcal{S}$ is defined by uniformizing the dynamics on each post-critical Fatou component to the disk $\D$ in such a way that the marked boundary point $q(U)$ is sent to $1 \in \partial \D_{s_U}$ (see \cite[\S 5]{Milnor12} for more details).

We define markings for external Fatou components, the Fatou components containing $\infty$, in a similar manner with a slight modification.

\begin{defn}[$\infty$-markings of polynomials]\label{defn:InfMarking}
	Let $f$ be a degree-$d$ polynomial with connected Julia set. Let $\partial_\infty \C$ be the boundary at infinity of the complex plane that is homeomorphic to the circle. The boundary $\partial_\infty \C$ parametrizes the set of external angles so that $f$ induces a degree $d$ endomorphism $f:\partial_\infty \C \righttoleftarrow$. An {\em $\infty$-marking of a polynomial $f$} of degree $d$ is a homeomorphism $\phi:\mathbb{S}^1 \to \partial_\infty \C$ satisfying $\phi \circ z^d= f \circ \phi$.
	The polynomial $f$ is called {\em $\infty$-marked}, or simply {\em marked} if an $\infty$-marking is chosen.
\end{defn}

	There are ($d-1$) choices of $\infty$-markings, which correspond to the ambiguity of B\"{o}ttcher coordinates of the infinity. 
	For a monic and centered polynomial, there is a {\em canonical B\"{o}ttcher coordinate} whose derivative at the point at infinity is the identity. 
	Therefore, we can regard a monic and centered polynomial $f$ with connected Julia set a {\em marked polynomial}.


\subsection{An illustrating example}\label{subsec:eg}
In this subsection, we discuss some differences between quadratic and higher degree main molecules using the following example.
Consider a \pcf polynomial 
$$
f(z) = z^3-\frac{3}{2}z+\frac{1}{\sqrt{2}}.
$$
It has two critical points $c_1 = -\frac{1}{\sqrt{2}}$ and $c_2 = \frac{1}{\sqrt{2}}$ with the dynamics
$$
c_1 \to \sqrt{2} \to \sqrt{2} \; \; \text{ and } \;\; c_2 \to 0 \to c_2.
$$
Its Julia set is depicted in Figure \ref{fig:BHM}.
The following three properties of this example are not held for quadratic polynomials. 

\begin{figure}[ht]
  \centering
  \resizebox{0.6\linewidth}{!}{
    \def\svgwidth{\columnwidth}
\begingroup%
  \makeatletter%
  \providecommand\color[2][]{%
    \errmessage{(Inkscape) Color is used for the text in Inkscape, but the package 'color.sty' is not loaded}%
    \renewcommand\color[2][]{}%
  }%
  \providecommand\transparent[1]{%
    \errmessage{(Inkscape) Transparency is used (non-zero) for the text in Inkscape, but the package 'transparent.sty' is not loaded}%
    \renewcommand\transparent[1]{}%
  }%
  \providecommand\rotatebox[2]{#2}%
  \newcommand*\fsize{\dimexpr\f@size pt\relax}%
  \newcommand*\lineheight[1]{\fontsize{\fsize}{#1\fsize}\selectfont}%
  \ifx\svgwidth\undefined%
    \setlength{\unitlength}{388.5bp}%
    \ifx\svgscale\undefined%
      \relax%
    \else%
      \setlength{\unitlength}{\unitlength * \real{\svgscale}}%
    \fi%
  \else%
    \setlength{\unitlength}{\svgwidth}%
  \fi%
  \global\let\svgwidth\undefined%
  \global\let\svgscale\undefined%
  \makeatother%
  \begin{picture}(1,0.95559846)%
    \lineheight{1}%
    \setlength\tabcolsep{0pt}%
    \put(0,0){\includegraphics[width=\unitlength,page=1]{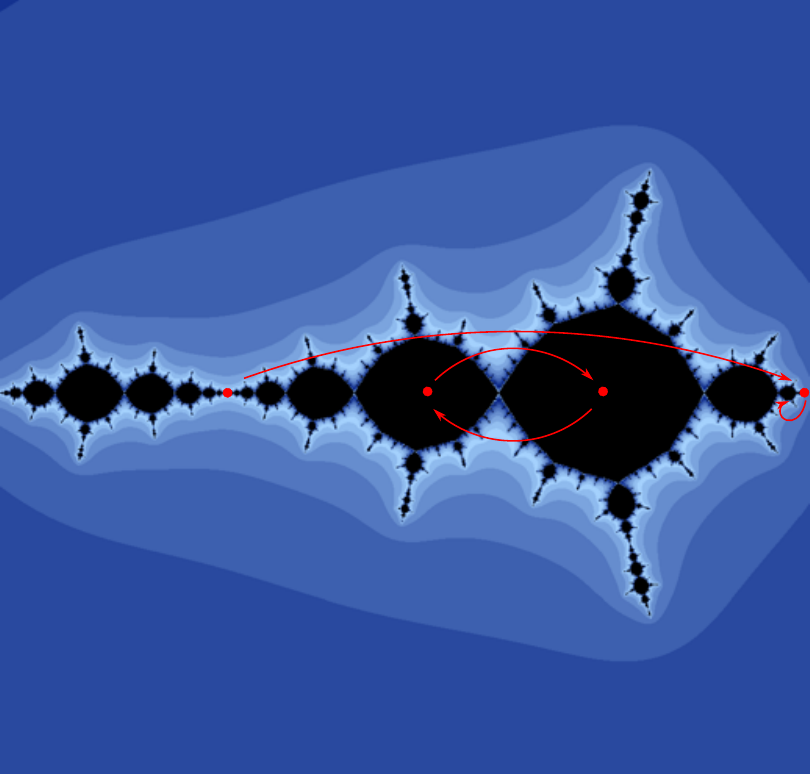}}%
    \put(0.75620744,0.46106711){\color[rgb]{0,0,0}\makebox(0,0)[lt]{\lineheight{1.25}\smash{\begin{tabular}[t]{l}$c_2$\end{tabular}}}}%
    \put(0.28569163,0.44947067){\color[rgb]{1,0,0}\makebox(0,0)[lt]{\lineheight{1.25}\smash{\begin{tabular}[t]{l}$c_1$\end{tabular}}}}%
    \put(0.75232982,0.45159653){\color[rgb]{1,0,0}\makebox(0,0)[lt]{\lineheight{1.25}\smash{\begin{tabular}[t]{l}$c_2$\end{tabular}}}}%
    \put(0,0){\includegraphics[width=\unitlength,page=2]{BHM.pdf}}%
    \put(0.61545218,0.83973613){\color[rgb]{0,0,0}\makebox(0,0)[lt]{\lineheight{1.25}\smash{\begin{tabular}[t]{l}$\mathcal{R}_1$\end{tabular}}}}%
    \put(0.59855316,0.11692592){\color[rgb]{0,0,0}\makebox(0,0)[lt]{\lineheight{1.25}\smash{\begin{tabular}[t]{l}$\mathcal{R}_2$\end{tabular}}}}%
  \end{picture}%
\endgroup%

  }
  \caption{The Julia set of $f(z) = z^3-\frac{3}{2}z+\frac{1}{\sqrt{2}}$.}
  \label{fig:BHM}
\end{figure}

{\bf Property (1):} $f$ is not on the closure of any hyperbolic component with connected Julia set.

Suppose for contradiction that there exists some hyperbolic component $\mathcal{H}$ so that $f \in \overline{\mathcal{H}}$.
Since $f$ is not hyperbolic, $f \in \partial \mathcal{H}$.
Then there exists a sequence $f_n \in \mathcal{H} \subseteq \mathcal{P}_d$ with $f_n \to f$.
Let $c_{1,n}, c_{2,n}$ be the critical points of $f_n$ with $c_{1,n} \to c_1$ and $c_{2,n} \to c_2$.

Since attracting Fatou components are stable under perturbation, $c_{2,n}$ is contained in a period $2$ Fatou component $U_n$ of $f_n$ for all $n$.
Since there is only one non-repelling periodic cycle for $f$, the Fatou component $V_n$ containing $c_{1,n}$ is eventually mapped to $U_n$. 
Let $l$ be the smallest integer so that $f_n^l (V_n) = U_n$.
Note that the sequence $f_n$ is contained in a single hyperbolic component, so $l$ does not depend on $n$.

On the other hand, we note that the external rays $\mathcal{R}_1$ and $\mathcal{R}_2$ in Figure \ref{fig:BHM} land at the same repelling fixed point, and their union separates $c_1$ from $c_2$.
This condition is stable under perturbation, and we conclude that $c_{1,n} \notin U_n$. 
By considering the $l$-times pull back of the external rays, we can similarly conclude that $c_{1,n} \notin f_n^{-l}(U_n)$.
This is a contradiction, and Property (1) follows.

{\bf Property (2):} $f$ is an accumulation point of hyperbolic \pcf polynomials.

Note that there exists a sequence of points $p_k \to \sqrt{2}$ on the real line so that $f^k(p_k) = c_2$.
One can construct a perturbation $f_k$ of $f$ satisfying $f_k(c_{1,k}) = p_{k,k}$ where $c_{1,k}$ and $p_{k,k}$ are the corresponding perturbation of the critical point $c_1$ and the point $p_k$.
Note that by construction, $f_k$ is a hyperbolic \pcf polynomial.
This phenomenon is illustrated in Figure \ref{fig:BL}.

\begin{figure}[ht]
  \centering
  \resizebox{0.6\linewidth}{!}{
    \def\svgwidth{\columnwidth}
\begingroup%
  \makeatletter%
  \providecommand\color[2][]{%
    \errmessage{(Inkscape) Color is used for the text in Inkscape, but the package 'color.sty' is not loaded}%
    \renewcommand\color[2][]{}%
  }%
  \providecommand\transparent[1]{%
    \errmessage{(Inkscape) Transparency is used (non-zero) for the text in Inkscape, but the package 'transparent.sty' is not loaded}%
    \renewcommand\transparent[1]{}%
  }%
  \providecommand\rotatebox[2]{#2}%
  \newcommand*\fsize{\dimexpr\f@size pt\relax}%
  \newcommand*\lineheight[1]{\fontsize{\fsize}{#1\fsize}\selectfont}%
  \ifx\svgwidth\undefined%
    \setlength{\unitlength}{388.5bp}%
    \ifx\svgscale\undefined%
      \relax%
    \else%
      \setlength{\unitlength}{\unitlength * \real{\svgscale}}%
    \fi%
  \else%
    \setlength{\unitlength}{\svgwidth}%
  \fi%
  \global\let\svgwidth\undefined%
  \global\let\svgscale\undefined%
  \makeatother%
  \begin{picture}(1,0.95559846)%
    \lineheight{1}%
    \setlength\tabcolsep{0pt}%
    \put(0,0){\includegraphics[width=\unitlength,page=1]{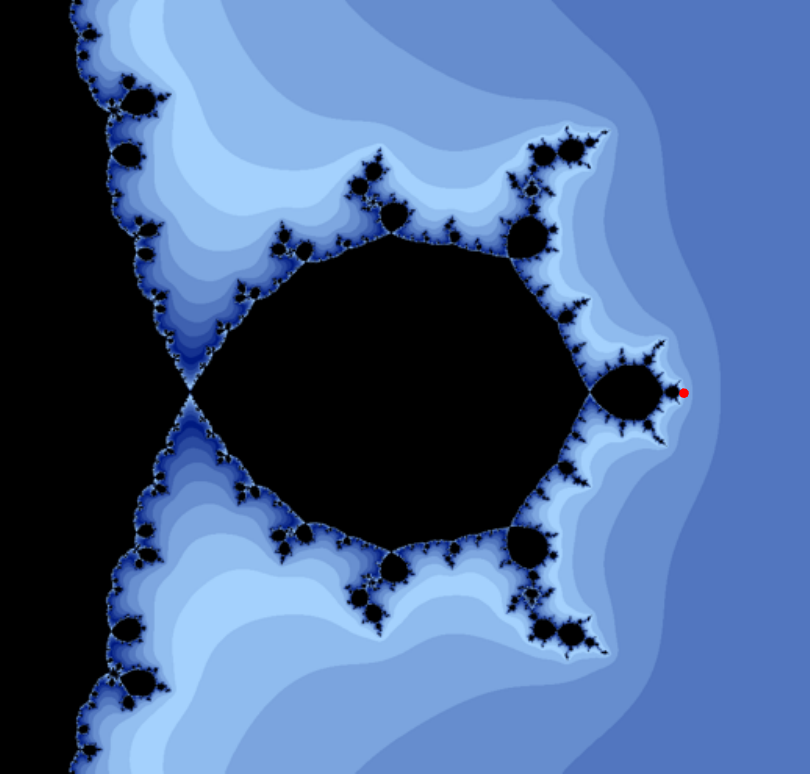}}%
    \put(0.85674348,0.45813454){\color[rgb]{1,0,0}\makebox(0,0)[lt]{\lineheight{1.25}\smash{\begin{tabular}[t]{l}$f$\end{tabular}}}}%
  \end{picture}%
\endgroup%

  }
  \caption{A 1-dimensional slice of the parameter space. A sequence of hyperbolic components converges to the `tip', which corresponds to the \pcf polynomial $f(z) = z^3-\frac{3}{2}z+\frac{1}{\sqrt{2}}$.}
  \label{fig:BL}
\end{figure}

{\bf Property (3):} The \shc $\mathcal{H}_f$ is adjacent to the main hyperbolic component $\mathcal{H}_d$.

Note that there exists a unique geometrically finite polynomial $g \in \partial \mathcal{H}_f$ that is weakly $J$-conjugate to $f$. Recall that a polynomial is {\em geometrically finite} if every critical point on its Julia set is preperiodic.
This polynomial $g$ has a unique parabolic fixed point corresponding the continuation of the common landing points of $\mathcal{R}_1$ and $\mathcal{R}_2$ in Figure \ref{fig:BHM}.
The associated pointed Hubbard tree is {\em pointed iterated-simplicial} (see \cite{L21a} or \S \ref{subsec:bc} for the definition). 
So $g\in \partial \mathcal{H}_d$ by \cite[Theorem 1.1]{L21a}.

\subsection{Bifurcation}
Let $\mathcal{H}_f$ be a relative hyperbolic component.
Unlike the quadratic case, it is known that geometrically finite polynomials $h \in \partial \mathcal{H}_f$ may have simpler Julia set, that is, the lamination $\lambda(h)$ satisfies $\lambda(h) \subsetneq \lambda(f)$ (see \cite[Appendix B]{GM93} or \cite[\S 8]{IK12}).

Thus, to describe dynamically meaningful transitions from one \shc to another, we give the following definition.

Let $\mathcal{H}$ be a \shc with center $f$.
Let $g \in \partial{\mathcal{H}}$ be a geometrically finite polynomial.
We say $g$ is a {\em root} of the \shc $\mathcal{H}$ if $g$ is weakly $J$-conjugate to $f$.
\begin{defn}[Bifurcation]\label{defn:bif}
A \shc $\mathcal{H}_f$ is said to {\em bifurcate} to another \shc $\mathcal{H}_g$ if there exists a geometrically finite polynomial $h \in \partial \mathcal{H}_f \cap \partial \mathcal{H}_g$ so that
\begin{itemize}
\item $h$ is a root of $\mathcal{H}_g$, and
\item $f$ is $J$-semi-conjugate to $h$.
\end{itemize}
\end{defn}
We remark that since the Julia set of a geometrically finite polynomial is locally connected, the second condition of Definition \ref{defn:bif} can be replaced by
$\lambda(f) \subseteq \lambda(g)$, where $\lambda(f)$ and $\lambda(g)$ are the polynomial laminations of $f$ and $g$.

\begin{defn}[Bifurcation complexity]
Let $f \in \Mol_d$ be a \pcf polynomial.
The {\em bifurcation complexity} $\mathscr{C}_b(f)$ of $f$ is the minimum number $k$ of \shcs $\mathcal{H}^0,..., \mathcal{H}^k$ so that
\begin{itemize}
\item $\mathcal{H}^0 = \mathcal{H}_d$,
\item $\mathcal{H}^k = \mathcal{H}_f$, and
\item $\mathcal{H}^i$ bifurcates to $\mathcal{H}^{i+1}$.
\end{itemize}
\end{defn}

\begin{rmk}\label{rmk:ibc}
We remark that a priori, $\mathscr{C}_b(f)$ may be infinite, i.e., $\mathcal{H}_f$ may not be obtaind by a finite chain of bifurcations from $\mathcal{H}_d$.
We will show that any \pcf polynomial $f \in \Mol_d$ has finite bifurcation complexity (see Theorem \ref{thm:A} and Theorem \ref{thm:bcbound}).
On the other hand, if we used bifurcations through hyperbolic components (instead of relative hyperbolic components) to define the bifurcation complexity, the map $f(z) = z^3-\frac{3}{2}z+\frac{1}{\sqrt{2}}$ would have bifurcation complexity $+\infty$.
Therefore, for our purposes, \shcs are more natural to consider in higher degrees.
\end{rmk}

\section{Simplicial tuning and simplicial quotient}\label{sec:ss}
In this section, we study two operations for \pcf polynomials, {\em simplicial tuning} and {\em simplicial quotient}.
The main result we will prove in this section is the following.
\begin{theorem}\label{thm:fb}
Let $f \in \mathcal{P}_d$ be a \pcf polynomial.
Then $f$ has core entropy zero if and only if $f$ is obtained by a finite sequence of simplicial tunings from $z^d$.
\end{theorem}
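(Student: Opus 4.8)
The plan is to prove the two implications separately, deriving the forward implication $\bigl(h(f)=0\Rightarrow f$ is a finite sequence of simplicial tunings of $z^d\bigr)$ from an inverse construction, the \emph{simplicial quotient}, and reading off the reverse implication from the behaviour of the Markov graph under a single simplicial tuning.

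\emph{The reverse implication.} Since the Hubbard tree of $z^d$ is a single point, $h(z^d)=0$ and its directed graph trivially has no intersecting cycles, so it suffices to check that one simplicial tuning preserves ``core entropy zero''. A simplicial tuning replaces the grand orbit of a periodic point (or a periodic cycle of bounded Fatou components), together with the incident edges, by a ``simplicial gadget'': a rotational star‑like tree map whose Markov transition matrix is conjugate to a permutation matrix, hence of spectral radius $1$. Refining to the resulting Markov partition, every simple cycle of $\mathcal{G}_g$ is either contained inside an inserted gadget or is the inflation of a simple cycle of $\mathcal{G}_f$, and distinct ones stay vertex‑disjoint; by Theorem~\ref{thm:nic} (equivalently Corollary~\ref{cor:nic}) we conclude $h(g)=0$. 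Induction on the number of tunings gives $h(f)=0$. (Alternatively one may quote the behaviour of core entropy under tuning directly.)

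\emph{The forward implication.} Assume $h(f)=0$ and $f\neq z^d$. By Theorem~\ref{thm:nic} and Lemma~\ref{lem:nicscc}, every strongly connected component of $\mathcal{G}_f$ is a simple cycle, so $\mathcal{G}_f$ is a finite acyclic diagram of simple cycles (cf.\ Lemma~\ref{lem:nc}). Choose a strongly connected component $\mathcal{C}$ maximal for the partial order $\ge$ on components; on $\mathcal{T}_f$ this corresponds to a periodic orbit of points or Fatou cycles whose surrounding edges are cyclically permuted by $f$. Here is exactly where $h(f)=0$ is used: a maximal component that were non‑cyclic would force intersecting cycles, so this local piece must be precisely a simplicial gadget. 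I would then define the \emph{simplicial quotient} $g=\mathrm{SQ}(f)$ by collapsing this gadget along its entire grand orbit to a single marked periodic point, obtaining an abstract pointed tree map. The key step — and I expect this to be \emph{the main obstacle} — is to realize $g$ as a genuine \pcf polynomial: one must present the collapsed object as an admissible pointed Hubbard tree, tracking all critical points with their local degrees, branch points, the cyclic order at each vertex, and the external marking, and then verify there is no Thurston obstruction, so that Thurston rigidity produces a unique \pcf polynomial $g$. The no‑obstruction check should reduce to the fact that collapsing a rotational gadget keeps every strongly connected component of the new directed graph cyclic, so in particular $h(g)=0$ again.

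\emph{Conclusion and termination.} By construction $f$ is recovered from $g$ by re‑inserting the gadget at the marked periodic orbit, i.e.\ $f$ is a simplicial tuning of $g=\mathrm{SQ}(f)$, so $\mathrm{SQ}$ is an inverse of simplicial tuning. Since at least one cycle's worth of edges is removed, $\mathcal{T}_g$ has strictly fewer edges than $\mathcal{T}_f$; iterating the simplicial quotient therefore produces a chain $f=f_0,f_1,\dots,f_k$ with $\mathcal{T}_{f_k}$ a single point, forcing $f_k=z^d$. Reading the chain backwards exhibits $f$ as a finite sequence of simplicial tunings of $z^d$, which completes the proof and, together with Theorem~\ref{thm:nic}, also yields the equivalence $(1)\Leftrightarrow(3)$ of Theorem~\ref{thm:A}. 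Beyond the realization step, the remaining care is purely combinatorial: making the notions of abstract pointed Hubbard tree and of ``simplicial gadget'' precise, checking that $\mathrm{SQ}$ is independent of which maximal component is collapsed and of the auxiliary invariant vertex set, and confirming that the reverse operation is literally the ``simplicial tuning'' of \S\ref{sec:ss} and \cite{L21a}.
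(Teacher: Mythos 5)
Your overall strategy is the same as the paper's: define a \emph{simplicial quotient} as an inverse of simplicial tuning, show the reverse implication by checking that tuning preserves the no-intersecting-cycles property of the Markov graph (Theorem~\ref{thm:nic}), and then iterate the quotient until reaching $z^d$. The reverse implication you give is essentially Lemma~\ref{lem:fbgr}. The divergence is in how the quotient is defined and justified. The paper does not collapse a single maximal strongly connected component and its grand orbit; instead it introduces \emph{cores}, maximal unions of bounded Fatou components that are chained together by boundary adjacency (Proposition~\ref{prop:sqc}), and it collapses \emph{every} core subtree simultaneously to a $\nu_v$-star, keeping the ideal-boundary data, cyclic order, local degrees, and the $0$-ray marking. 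That packaging makes the resulting object manifestly an expanding abstract angled Hubbard tree, so realization is immediate from Poirier's theorem — there is no Thurston-obstruction check to do. Termination is then read off from the lexicographic decrease of the pair (number of periodic bounded Fatou components, multiplicity of critical points in periodic bounded Fatou components), with Lemma~\ref{lem:nt} supplying a non-trivial core.

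The genuine gap in your proposal is exactly the step you flag: you never show that your one-SCC-at-a-time collapse produces a valid \pcf polynomial. As written, the operation is under-specified — the ``gadget'' around a single maximal cycle (plus its grand orbit) is not obviously a core subtree, so it is unclear what degree and ideal boundary data the collapsed vertex should carry, and hence why the result is again an expanding abstract Hubbard tree realizable by a polynomial; this is precisely where the paper's core-based construction earns its keep. Relatedly, your termination argument (``strictly fewer edges'') would itself need proof and depends on that unfinished construction; the paper avoids edge counting in favor of the $(k_f, m_f)$ invariant because a core replaced by a star with $\nu_v$ legs need not obviously drop the edge count. So your blueprint is in the right spirit and matches the paper's architecture, but the heart of the forward implication — why the quotient exists as a genuine polynomial — is left open, and that is the nontrivial content of \S\ref{subsec:simp}.
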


Let $f$ be a \pcf polynomial with core entropy zero.
Recall that the {\em combinatorial complexity} $\mathscr{C}_{c}(f)$ is the smallest number of simplicial tunings needed to get $f$, and the {\em growth rate complexity} $\mathscr{C}_{gr}(f):= \alpha+1$ where the Markov matrix $A_f$ has growth rate $\| A_f^n\| \asymp  n^\alpha$.
We will also prove:
\begin{theorem}\label{thm:cgrb}
Let $f\in \mathcal{P}_d$ be a \pcf polynomial with core entropy zero.
Then
$$
\mathscr{C}_{c}(f) = \mathscr{C}_{gr}(f).
$$
\end{theorem}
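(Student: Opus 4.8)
The plan is to identify both $\mathscr{C}_{c}(f)$ and $\mathscr{C}_{gr}(f)$ with a single combinatorial quantity attached to the directed graph $\mathcal{G}_f$: the maximal length of a descending chain of disjoint simple cycles. By Proposition \ref{prop:qnic} and Lemma \ref{lem:mnsc}, if $\|A_f^n\| \asymp n^\alpha$ then $\alpha = \max_{e \in \Edge(\mathcal{T}_f)} \depth(e)$, and $\depth(e)$ is the largest $k$ for which there exist disjoint simple cycles $\mathcal{C}_0 \geq \mathcal{C}_1 \geq \dots \geq \mathcal{C}_k$ reachable from $e$. Hence $\mathscr{C}_{gr}(f) = 1 + \alpha$ equals $1$ plus the maximal length of such a descending chain of disjoint cycles in $\mathcal{G}_f$ (taking the convention that an empty chain has length $-1$, so that $f(z) = z^d$, whose graph has no cycles through any edge, gets $\mathscr{C}_{gr} = 0$). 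Since $f$ has core entropy zero, Theorem \ref{thm:nic} and Lemma \ref{lem:nicscc} tell us every strongly connected component of $\mathcal{G}_f$ is a single simple cycle, so these disjoint cycles are exactly the distinct non-trivial strongly connected components, and the descending-chain condition is just reachability in the quotient graph $\mathcal{G}_f'$, which is acyclic by Lemma \ref{lem:nc}. So the target is to show $\mathscr{C}_c(f)$ equals $1$ plus the length of the longest directed path through singular vertices of $\mathcal{G}_f'$.

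The first half, $\mathscr{C}_c(f) \le \mathscr{C}_{gr}(f)$, should come from Theorem \ref{thm:fb}: $f$ is obtained from $z^d$ by simplicial tunings, and I would show that one can always choose a tuning sequence whose length is at most $\mathscr{C}_{gr}(f)$. The idea is that the simplicial quotient operation (whose existence and termination is the content of Theorem \ref{prop:fb}/\ref{thm:fb}) collapses, at each step, the ``deepest'' cyclic structures in $\mathcal{T}_f$ — roughly, the strongly connected components of $\mathcal{G}_f$ of maximal depth become simplicial (no longer contribute a cycle) after one simplicial quotient. Thus each simplicial quotient strictly decreases $\max_e \depth(e)$ by exactly $1$, and after $\alpha + 1$ steps one reaches a graph with no cycles through any edge, i.e. $z^d$. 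Concretely I would prove: (a) a single simplicial tuning changes the Markov graph in a controlled way — it inserts a new ``layer'' of cyclic components strictly below (in the $\geq$ order) the pre-existing ones — so it increases $\max_e \depth(e)$ by at most $1$; running this inductively along the tuning sequence bounds $\alpha + 1$ above by the number of tunings, giving $\mathscr{C}_{gr}(f) \le \mathscr{C}_c(f)$; and conversely (b) the simplicial quotient decreases $\max_e \depth(e)$ by exactly $1$, giving a tuning sequence of length exactly $\alpha+1$, hence $\mathscr{C}_c(f) \le \mathscr{C}_{gr}(f)$. Together these give equality.

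The main obstacle I expect is step (b): showing that the simplicial quotient decreases the depth by exactly one, rather than potentially more. This requires a careful analysis of how the Markov graph of the simplicial quotient $g$ of $f$ sits inside the Markov graph of $f$ — specifically that collapsing the simplicial structure does not accidentally merge two cyclic components that were at different depths, and does not remove an entire descending chain at once. The cleanest route is probably to set up a precise correspondence between (i) the directed path structure in $\mathcal{G}_f'$ (the acyclic quotient), (ii) the combinatorial ``tree of tuned pieces'' recording how $f$ is built from $z^d$ by iterated tunings, and (iii) the nesting structure of periodic Fatou components / small copies of Hubbard trees inside $\mathcal{T}_f$. Once this dictionary is in place, both inequalities become a matter of reading off the length of the longest chain in the same poset from two different descriptions. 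I would also need the convention-handling for the base case $f(z) = z^d$ and for edges not reaching any cycle (depth $-1$), which matches the $\asymp n^{-1}$ convention already set up in Proposition \ref{prop:qnic}, so no new definitions are required there.
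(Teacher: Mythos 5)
Your roadmap matches the paper's proof: the two halves are precisely Lemma \ref{lem:fbgr} (a simplicial tuning raises $\mathscr{C}_{gr}$ by at most one, giving $\mathscr{C}_{gr}(f)\le\mathscr{C}_c(f)$ via Corollary \ref{cor:fcz}) and Lemma \ref{lem:sbgr} (the simplicial quotient lowers $\mathscr{C}_{gr}$ by exactly one, so the quotient sequence of Theorem \ref{prop:fb} witnesses $\mathscr{C}_c(f)\le\mathscr{C}_{gr}(f)$), and you correctly flag step (b) as the technical heart. One small flip in your intuition: the simplicial quotient collapses the \emph{bottom} of the depth hierarchy --- the depth-$0$ and depth-$1$ cycles coming from internal edges of core subtrees (Lemma \ref{lem:IntEdgeF}) --- not the maximal-depth components, which survive with depth decremented by exactly one; showing that the decrement is exactly one (by lifting the terminal depth-$0$ internal edge of $\mathcal{T}_g$ to a depth-$1$ edge in $\mathcal{T}_f$) is what the proof of Lemma \ref{lem:sbgr} does.
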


\subsection{Abstract Hubbard tree and abstract Hubbard forests}
We will define the simplicial tuning in a combinatorial way by using the notions of abstract Hubbard tree and abstract Hubbard forests. We refer the readers to \cite{Poi10, Poi13} for more details on abstract Hubbard trees and forests.

Recall that for a \pcf polynomial $f$, the Hubbard tree is the regulated hull of the post-critical set (see \cite{DH85, Poi10}).
It is a finite invariant tree that gives a combinatorial description for the dynamics of $f$.
Conversely, one can define the combinatorial notion of an abstract Hubbard tree.
The following definition is from \cite{Poi10}.
\begin{defn}[Abstract Hubbard trees]\label{defn:aht1}
Let $T$ be a finite tree with vertex set $V$.
An {\em angle function} is a function $\angle_v(l, l') \in \Q/\Z$ which assigns a rational number modulo $1$ to each pair of edges $l, l'$ meeting at a common vertex $v$, so that
\begin{itemize}
\item $\angle_v(l, l') = -\angle_v(l', l)$,
\item $\angle_v(l, l') = 0$ if and only if $l = l'$, and
\item $\angle_v(l, l') + \angle_v(l', l'') = \angle_v(l, l'')$ whenever three edges $l,l',$ and $l''$ are adjacent at a vertex $v$.
\end{itemize}
A finite tree with an angle function is called and {\em angled tree}.
A {\em local degree function} on $T$ is a function $\delta(v) \in \Z_{\geq 1}$ that assigns a positive integer to each vertex.
A vertex is {\em critical} if $\delta(v) > 1$, and {\em non-critical} otherwise.

An {\it abstract Hubbard tree} is a finite angled tree $T$ with a local degree function and a continuous map $f: T \longrightarrow T$ so that
\begin{itemize}
\item $f(V) \subseteq V$,
\item $f$ sends an edge to a finite union of edges,
\item $f$ is angle preserving, i.e., $\angle_{f(v)}(f(l), f(l')) = \delta(v) \angle_v(l, l')$,
\item $f$ is expanding, i.e., if two Julia vertices $v, w \in T$ are endpoints of an edge, then $f^n(v)$ and $f^n(w)$ are not endpoints of an edge for some $n>0$, and
\item $T$ is minimal, i.e., $T$ is the minimal forward invariant tree that contains all points $s$ with $\delta(s)\geq 2$.
\end{itemize}
The {\em degree of $f: T \longrightarrow T$}, denoted by $\deg(f)$, is defined by
\begin{equation}\label{eqn:DegTreeMap}
	\deg(f) =\sum_{\delta(v)>1} \left(\delta(v)-1\right)
\end{equation}
\end{defn}

More generally, we also consider dynamical systems defined by families of polynomials.
\begin{defn}[Families of polynomials, $\infty$-markings]\label{defn:FamilyPolynomialandMarking}
Let $\mathcal{S}=(|\mathcal{S}|, \Phi, \delta)$ be a mapping scheme as in Definition \ref{defn:bms}.
A {\it family of polynomials over $\mathcal{S}$} is a map 
\[
	\Fcal:\bigcup_{s\in|\mathcal{S}|}\C(s)\to \bigcup_{s\in|\mathcal{S}|}\C(s)
\]
satisfying
	\begin{itemize}
		\item $\C(s)$ is a copy of the complex plane, and
		\item $\Fcal_s:=\Fcal|_{\C(s)}$ is a polynomial from $\C(s)$ to $\C(\Phi(s))$ of degree $\delta(s)$.
	\end{itemize}


The map $\Fcal_s$ extends to the boundaries at infinity $\partial_\infty \C(s) \to \partial_\infty \C(\Phi(S))$. An {\em $\infty$-marking of $\Fcal$} is a collection of homeomorhpisms
\[
	\left\{ \phi_s:\mathbb{S}^1 \to \partial_\infty \C(s) \right\}_{s\in|\mathcal{S}|}
\]
satisfying $\phi_{\Fcal(s)} \circ z^{\delta(s)}= \Fcal_s \circ \phi_s$. 
We will call a family of polynomials $\Fcal$ {\em marked} if an $\infty$-marking is chosen.
\end{defn}

Critical points of $\Fcal$ are defined to be points where the derivatives vanish. The post-critical set $P_\Fcal$ is the union of forward orbits of critical points. We say that $\Fcal$ is {\it \pcf}if the post-critical set $P_\Fcal$ is finite. Objects associated to complex polynomials, such as Julia sets or external rays, can be naturally extended to families of polynomials. For $s\in |\mathcal{S}|$, we denote by $\Julia_\Fcal(s)$ (resp.\@ $\mathcal{K}_\Fcal(s)$) the Julia set (resp.\@ filled Julia set) in the plane $\C(s)$.

Similar as \pcf polynomials, for a \pcf family of polynomials $\Fcal$ over $\mathcal{S}$, we define the {\em Hubbard forest $H$} as the union of regulated hull $H(s)$ of $P_\Fcal \cap \C(s)$ in $\mathcal{K}_\Fcal(s)$ where the union is taken over $s\in |\mathcal{S}|$ (see \cite{Poi13}).
That is, the Hubbard forest $H$ is an invariant set consisting of a finite union of finite trees, which gives a combinatorial description of the dynamics of $\Fcal$.

\begin{rmk}
Instead of taking the regulated hull of the post-critical set, we can also take the regulated hulls generated by any finite $\Fcal$-invariant set $S$ containing $P_\Fcal$.
The same construction gives an invariant set $H_S$ consisting of a finite union of finite trees. The Hubbard forest $H$ is minimal in a sense that any such invariant forest $H_S$ contains $H$.
We will use this construction later when we define simplicial tunings.
\end{rmk}


We define abstract Hubbard forests following \cite{Poi13}.
\begin{defn}[Abstract Hubbard forests]\label{defn:aht}
Let $\mathcal{S}=(|\mathcal{S}|, \Phi, \delta)$ be a mapping scheme. An {\it abstract Hubbard forest} is a collection of angled trees
$$
	H = \bigcup_{s\in |\mathcal{S}|} H(s),
$$
with an angle preserving map $\Fcal: H \longrightarrow H$, where each $\Fcal_s:=\Fcal|_{H(s)}=H(s) \to H(\Phi(s))$ is of degree $\delta(s)$. 
The degree $\deg(\Fcal_s)$ is defined as the sum of $(\delta(v)-1)$ as the equation \eqref{eqn:DegTreeMap}.
The angle structures of Hubbard forests are defined in the same way as the angle structures of Hubbard trees in Definition \ref{defn:aht1}.
We additionally require that 
\begin{itemize}
\item $\Fcal$ is expanding, i.e., if two Julia vertices $v, w \in H$ are endpoints of an edge, then $\Fcal^n(v)$ and $\Fcal^n(w)$ are not endpoints of one edge for some $n>0$, and
\item $H$ is minimal, i.e., $H$ is the minimal forward invariant forest that contains all points $v$ with $\delta(v)\geq 2$.
\end{itemize}
\end{defn}
We remark that an abstract Hubbard tree is an abstract Hubbard forest with one connected component.

Abstract Hubbard trees and abstract Hubbard forests give combinatorial models for \pcf or \pcf families of polynomials. More precisely, by \cite[Theorem 1.1]{Poi10} and \cite[Theorem 1.2]{Poi13}, we have the following theorem.
\begin{theorem}\label{thm:ra}
An abstract Hubbard tree (resp.\@ forest) is the Hubbard tree (resp. forest) of a post-critically finite polynomial (resp. a family of post-critically finite polynomials). Moreover, the polynomial (resp. a family of polynomials) is unique up to affine conjugacy.
\end{theorem}



There is a notion of markings of abstract Hubbard trees that emulate parameterizations of external rays landing at points on polynomial Hubbard trees \cite[\S 9]{Poi10}. It also has ($d-1$)-choices of ambiguity. The use of marking allows us to have the one-to-one correspondence in the following theorem.
\begin{theorem}\label{thm:ram}
A marked abstract Hubbard tree (resp.\@ forest) is the Hubbard tree (resp.\@ forest) of a unique marked \pcf polynomial $f$ (resp.\@ marked \pcf family of polynomials) up to affine conjugacy.
\end{theorem}

\subsection{Simplicial tuning}\label{subsec:sthf}
{\em Tuning} is an operation introduced by Douady and Hubbard in \cite{DH85} for quadratic polynomials.
The concept can be easily generalized to higher degree polynomials or even rational maps (see \cite[\S 3]{Pil94} and \cite{IK12}).

\begin{defn}[Simplicial Hubbard trees and forests]
Let $f: T \longrightarrow T$ be a map on a finite tree $T$ and $\Fcal: H \longrightarrow H$ be a map on a finite union of finite trees $H = \bigcup T_i$.
We say $f$ (resp.\@ $\Fcal$) is {\em simplicial} if there exists a finite simplicial structure on $T$ (resp.\@ $H$) so that $f$ (resp.\@ $\Fcal$) sends an edge of $T$ (resp.\@ $H$) to an edge of $T$ (resp.\@ $H$).

Let $f$ be a \pcf polynomial.
We say $f$ has a {\em simplicial Hubbard tree} if $f: \mathcal{T}_f \longrightarrow \mathcal{T}_f$ is simplicial.

Let $\Fcal$ be a \pcf family of polynomials over $\mathcal{S}$.
We say $\Fcal$ has a {\em simplicial Hubbard forest} if $\Fcal: H_\Fcal \longrightarrow H_\Fcal$ is simplicial.
\end{defn}


\subsection*{Tuning}
We first define tunings for abstract Hubbard trees in a combinatorial way. Then, by Theorem \ref{thm:ra}, we define the tuning of polynomials as the polynomials corresponding to the tuned abstract Hubbard trees. Figure \ref{fig:Tuning} describes how we tune Hubbard trees.

\begin{figure}[h!]
	\centering
	\begin{subfigure}[b]{0.3\textwidth}
		\centering
		\def\svgwidth{0.8\textwidth}
\begingroup%
  \makeatletter%
  \providecommand\color[2][]{%
    \errmessage{(Inkscape) Color is used for the text in Inkscape, but the package 'color.sty' is not loaded}%
    \renewcommand\color[2][]{}%
  }%
  \providecommand\transparent[1]{%
    \errmessage{(Inkscape) Transparency is used (non-zero) for the text in Inkscape, but the package 'transparent.sty' is not loaded}%
    \renewcommand\transparent[1]{}%
  }%
  \providecommand\rotatebox[2]{#2}%
  \newcommand*\fsize{\dimexpr\f@size pt\relax}%
  \newcommand*\lineheight[1]{\fontsize{\fsize}{#1\fsize}\selectfont}%
  \ifx\svgwidth\undefined%
    \setlength{\unitlength}{393.75bp}%
    \ifx\svgscale\undefined%
      \relax%
    \else%
      \setlength{\unitlength}{\unitlength * \real{\svgscale}}%
    \fi%
  \else%
    \setlength{\unitlength}{\svgwidth}%
  \fi%
  \global\let\svgwidth\undefined%
  \global\let\svgscale\undefined%
  \makeatother%
  \begin{picture}(1,1.01333332)%
    \lineheight{1}%
    \setlength\tabcolsep{0pt}%
    \put(0,0){\includegraphics[width=\unitlength,page=1]{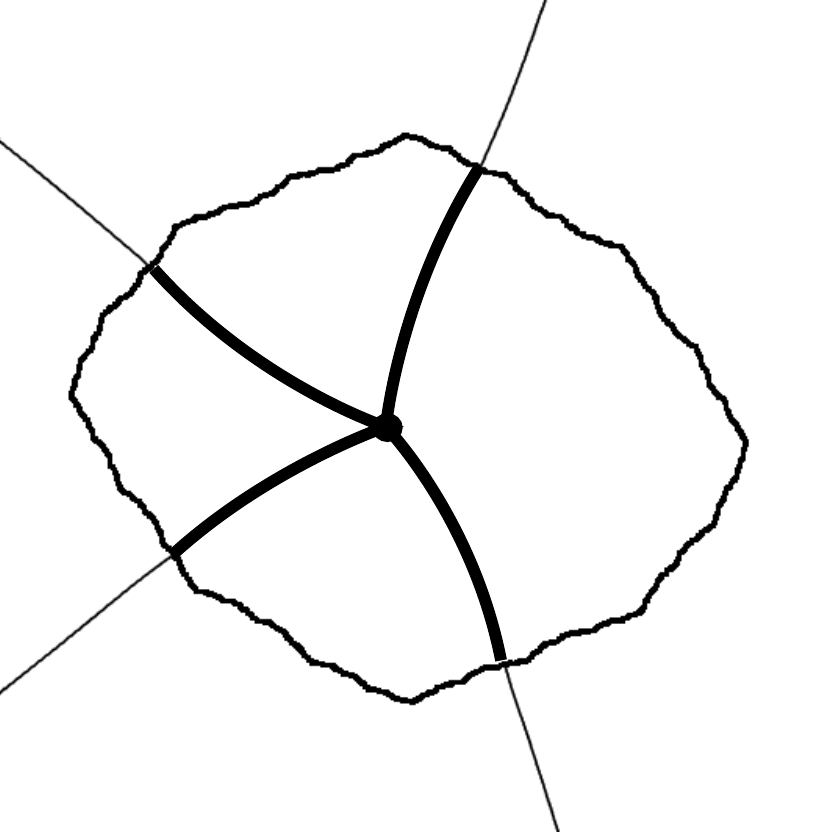}}%
    \put(0.49154543,0.48917405){\color[rgb]{0,0,0}\makebox(0,0)[lt]{\lineheight{1.25}\smash{\begin{tabular}[t]{l}$v$\end{tabular}}}}%
  \end{picture}%
\endgroup%

		\caption*{$\mathcal{T}_v$}
	\end{subfigure}
	\begin{subfigure}[b]{0.3\textwidth}
		\centering
		\def\svgwidth{0.8\textwidth}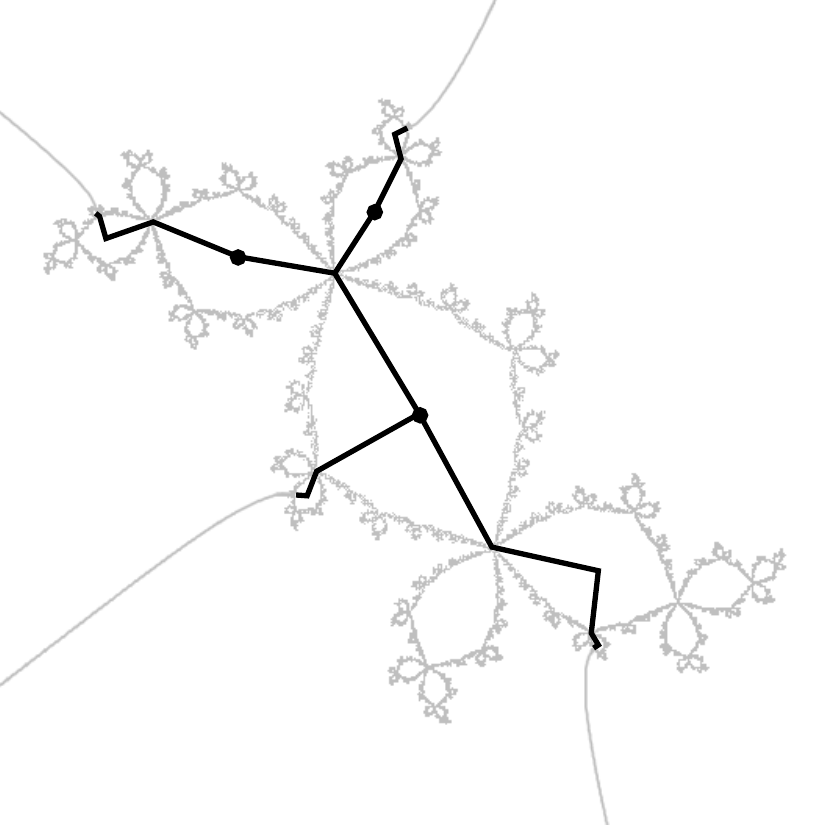
		\caption*{$\mathcal{T}_{v,new}$}
	\end{subfigure}
	\begin{subfigure}[b]{0.3\textwidth}
		\centering
		\def\svgwidth{0.8\textwidth}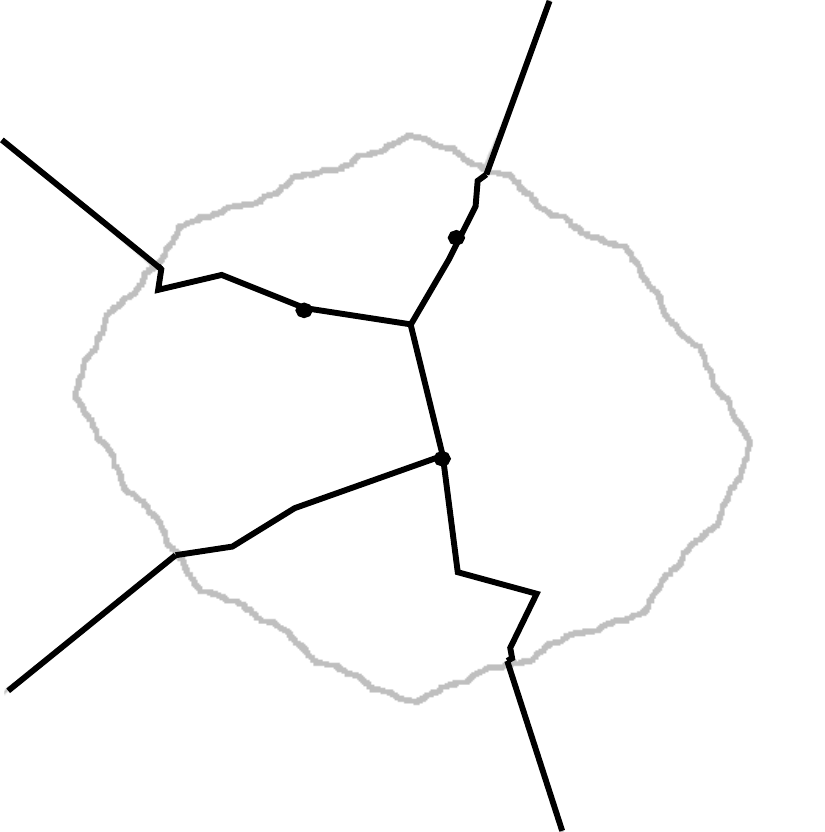
		\caption*{$\mathcal{T}_f$}
	\end{subfigure}
	\caption{Local picture of tuning a \pcf polynomial $g$ with a \pcf family of polynomials $\Fcal$ whose first return map is the Rabbit polynomial, where $\phi_v^{-1}(\partial \mathcal{T}_v)=\{e^{2\pi i \cdot 1/5},e^{2\pi i \cdot 2/5},e^{2\pi i \cdot 3/5},e^{2\pi i \cdot 4/5}\}$}
	\label{fig:Tuning}
\end{figure}

Suppose $g$ is a post-critically finite polynomial having at least one periodic critical point. Let $\mathcal{S}=(|\mathcal{S}|, \Phi, \delta)$ be a mapping scheme associated to $g$. Assume that we have a boundary marking $\phi$ for $g$ (see Definition \ref{defn:BdryMarking}). Suppose that $\Fcal$ is a \pcf family of polynomials over $\mathcal{S}$ and an $\infty$-marking of $\Fcal$ is given.

Let $\mathcal{V}^F$ be the set of branch Fatou points, critical and post-critical Fatou points on $\mathcal{T}_g$. The superscript F stands for Fatou, not the map $\Fcal$.

First let us suppose that $v \in \mathcal{V}^F$ is critical or post-critical.
Let $\mathcal{T}_v = U_v \cap \mathcal{T}_g$, where $U_v$ is the Fatou component of $g$ containing $v$.
Note that the boundary marking determines a map 
\[
	\phi_{v}: \mathbb{S}^1 \longrightarrow \partial U_v
\]
which sends $e^{i\cdot 0} \in \mathbb{S}^1$ to the marked point.

Recall that the $\infty$-marking of $\Fcal$ uniquely determines the external angles in the complex plane $\C(v)$ (see  Definition \ref{defn:FamilyPolynomialandMarking}). 
We define $\mathcal{T}_{v, new}$ by the regulated hull generated by the union of the Hubbard tree $H(v)$ and the landing points of external rays of angles in $\phi_v^{-1}(\partial \mathcal{T}_v)$. Recall that the boundary $\partial \mathcal{T}_{v}$ is the set of endpoints of the tree $\mathcal{T}_{v}$. Then we define
\[
	\Phi_v:\partial T_v \to T_{v,new}
\]
in such a way that for any $x\in \partial T_v$, $\Phi_v(x)$ is the landing point of the external ray of angle $\phi_v^{-1}(x)$ in the complex plane $\C(v)$.

Now suppose that $v \in \mathcal{V}^F$ is not critical or post-critical.
Then $v$ is a branch Fatou point, and $v$ has local degree $1$.
Let $l$ be the smallest integer so that $g^l(v)$ is a critical point. 
Then $g^l: U_v \longrightarrow U_{g^l(v)}$ is a biholomorphism.
It follows from $g^l(v)$ being a critical Fatou point that there is a boundary marking $\phi_{g^l(v)}: \mathbb{S}^1 \longrightarrow \partial U_{g^l(v)}$.
Since $g^l: U_v \longrightarrow U_{g^l(v)}$ has degree $1$, we have the inverse $g^{-l}: U_{g^l(v)} \longrightarrow U_v$, which extends to the boundaries. Then we define a marking $\phi_v: \mathbb{S}^1 \to \partial U_v$ for $v$ by $\phi_v= g^{-l} \circ \phi_{g^l(v)}$, i.e.,
\begin{align*}
\phi_v: \mathbb{S}^1 &\longrightarrow \partial U_v\\
x &\mapsto  g^{-l} \circ \phi_{g^l(v)}(x).
\end{align*}

Since $v \notin |S_g|$, we construct $\mathcal{T}_{v,new}$ using the complex plane $\C(g^l(v))$. More precisely, for $\mathcal{T}_v:=U_v \cap \mathcal{T}_g$, we define $\mathcal{T}_{v,new}$ as the regulated hull generated by the landing points of rays of angles in $\phi_v^{-1}(\partial \mathcal{T}_v)$ in the complex plane $\C(g^l(v))$. The map $\Phi_v : \partial T_v \longhookrightarrow T_{v, new}$ is again defined as the landing points of the external rays of corresponding angles.

Define a tree
$$
	\mathcal{T}_f = \left( \mathcal{T}_g - \bigcup_{v\in\mathcal{V}^F} \Int(\mathcal{T}_v) \right) \coprod_{\{\Phi_v|v\in\mathcal{V}^F \}} \left(\bigcup_{v\in\mathcal{V}^F} \mathcal{T}_{v,new}\right)
$$ 
by removing $\mathcal{T}_v$'s and gluing $\mathcal{T}_{v,new}$'s using the identifications $\Phi_v : \partial T_v \longhookrightarrow T_{v, new}$. See Figure \ref{fig:Tuning}. It is easy to verify the following:
\begin{itemize}
	\item the dynamics of $g$ and $\Fcal$ induce a dynamics on $\mathcal{T}_f$, which we denote by $f$. More precisely 
	$$
	f(x) = \begin{cases}
	g(x) & \text{for } x \in \mathcal{T}_g - \bigcup_{v\in\mathcal{V}^F} \Int(\mathcal{T}_v)\\
	\Fcal(x) & \text{for } x\in \bigcup_{v\in\mathcal{V}^F} \mathcal{T}_{v,new}
	\end{cases},
	$$
	\item the angle structures of $\mathcal{T}_g$ and $H_\mathcal{F}$ induce the angle structure on $\mathcal{T}_f$ and $f$ is angle preserving, and
	\item $f$ is expanding, in the sense of Definition \ref{defn:aht}.
\end{itemize}
Lastly, we replace $\mathcal{T}_f$ by the minimal $f$-invariant subtree, which could be $\mathcal{T}_f$, that contains all the critical points. By Theorem \ref{thm:ra} we may assume $f$ is a \pcf polynomial and $\mathcal{T}_f$ is the Hubbard tree of $f$. 

\begin{defn}[Tuning, Simplicial tuning]
We call $f$ in the above construction the {\em tuning of $g$ with $\Fcal$}. 
If $\Fcal$ is simplicial, then we say $f$ is a {\em simplicial tuning} of $g$.
\end{defn}

\begin{defn}[Internal and external edges]\label{defn:IntExtEdges}
Let $f$ be a tuning of $g$ with $\Fcal$.
An edge $E$ of $\mathcal{T}_g$ (resp.\@ of $\mathcal{T}_f$) is called {\em internal} if $E \subseteq \mathcal{T}_v$ (resp.\@ $E \subseteq \mathcal{T}_{v,new}$) for some $v \in \mathcal{V}^F$, and {\em external} otherwise.
\end{defn}

By construction, external edges of $\mathcal{T}_g$ and $\mathcal{T}_f$ are in bijective correspondence with each other.

\begin{rmk}
In many cases, the Hubbard tree $\mathcal{T}_f$ can be simply constructed from $\mathcal{T}_g$ by replacing each branch Fatou point, or critical and post-critical Fatou point by a tree. However this is not always the case because some additional identification may be required (see Figure \ref{fig:TuneByBasilicas}).

\begin{figure}[h!]
	\centering
	\def\svgwidth{0.5\textwidth}
\begingroup%
  \makeatletter%
  \providecommand\color[2][]{%
    \errmessage{(Inkscape) Color is used for the text in Inkscape, but the package 'color.sty' is not loaded}%
    \renewcommand\color[2][]{}%
  }%
  \providecommand\transparent[1]{%
    \errmessage{(Inkscape) Transparency is used (non-zero) for the text in Inkscape, but the package 'transparent.sty' is not loaded}%
    \renewcommand\transparent[1]{}%
  }%
  \providecommand\rotatebox[2]{#2}%
  \newcommand*\fsize{\dimexpr\f@size pt\relax}%
  \newcommand*\lineheight[1]{\fontsize{\fsize}{#1\fsize}\selectfont}%
  \ifx\svgwidth\undefined%
    \setlength{\unitlength}{206.6150382bp}%
    \ifx\svgscale\undefined%
      \relax%
    \else%
      \setlength{\unitlength}{\unitlength * \real{\svgscale}}%
    \fi%
  \else%
    \setlength{\unitlength}{\svgwidth}%
  \fi%
  \global\let\svgwidth\undefined%
  \global\let\svgscale\undefined%
  \makeatother%
  \begin{picture}(1,0.38676585)%
    \lineheight{1}%
    \setlength\tabcolsep{0pt}%
    \put(0,0){\includegraphics[width=\unitlength,page=1]{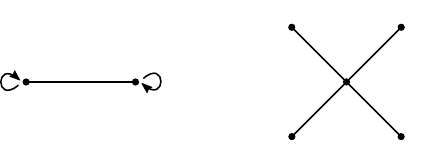}}%
    \put(-0.00391789,0.11926576){\color[rgb]{0,0,0}\makebox(0,0)[lt]{\lineheight{1.25}\smash{\begin{tabular}[t]{l}$3$\end{tabular}}}}%
    \put(0.32484144,0.11927034){\color[rgb]{0,0,0}\makebox(0,0)[lt]{\lineheight{1.25}\smash{\begin{tabular}[t]{l}$3$\end{tabular}}}}%
    \put(0,0){\includegraphics[width=\unitlength,page=2]{TuneByBasilicas.pdf}}%
    \put(0.65075808,0.01037222){\color[rgb]{0,0,0}\makebox(0,0)[lt]{\lineheight{1.25}\smash{\begin{tabular}[t]{l}$2$\end{tabular}}}}%
    \put(0.90485542,0.01037903){\color[rgb]{0,0,0}\makebox(0,0)[lt]{\lineheight{1.25}\smash{\begin{tabular}[t]{l}$2$\end{tabular}}}}%
    \put(0.90485876,0.34148133){\color[rgb]{0,0,0}\makebox(0,0)[lt]{\lineheight{1.25}\smash{\begin{tabular}[t]{l}$2$\end{tabular}}}}%
    \put(0.65076083,0.34148437){\color[rgb]{0,0,0}\makebox(0,0)[lt]{\lineheight{1.25}\smash{\begin{tabular}[t]{l}$2$\end{tabular}}}}%
  \end{picture}%
\endgroup%

	\caption{The quartic polynomial $f$ for the right Hubbard tree is obtained by replacing each of the cubic fixed points of the quartic polynomial $g$ for the left Hubbard tree by the bitransitive cubic Basilica. }
	\label{fig:TuneByBasilicas}
\end{figure}
\end{rmk}

We remark that if $\mathcal{T}_f$ is simplicial, then any edge of $\mathcal{T}_f$ has depth $0$ in the sense of Definition \ref{defn:dd}.

	

\begin{lem}\label{lem:0or1}
	Let $f$ be a \pcf polynomial with a simplicial Hubbard tree $\mathcal{T}_f$. Then the following hold.
	\begin{enumerate}
		\item Every biaccessible point in the Julia set $\Julia_f$ is on the boundary of a bounded Fatou component.
		\item Let $M$ be a finite $f$-invariant subset of the Julia set and $\mathcal{T}'$ be the regulated hull of $\mathcal{T}_f \cup M$. Then any edge of $\mathcal{T}'$ has depth 0 or 1. Moreover, an edge $e$ of $\mathcal{T}'$ has depth $1$ if and only if the boundary of $e$ contains an accumulation point, i.e., a point that is not on the boundary of a bounded Fatou component.
	\end{enumerate}
\end{lem}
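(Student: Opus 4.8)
The plan is to analyze the directed graph $\mathcal{G}'$ associated to the Markov map $f:\mathcal{T}'\to\mathcal{T}'$ and show that its simple cycles are all of a very restricted type, using the simpliciality hypothesis on $\mathcal{T}_f$ together with Lemma \ref{lem:mnsc} (which computes $\depth(e)$ as the maximal length of a nested chain of disjoint simple cycles reachable from $e$). First I would set up notation: since $\mathcal{T}_f$ is simplicial, every edge of $\mathcal{T}_f$ maps under $f$ to a single edge of $\mathcal{T}_f$, so in $\mathcal{G}_f$ every vertex has exactly one outgoing edge; hence the strongly connected components of $\mathcal{G}_f$ are cyclic (or singletons), every edge of $\mathcal{T}_f$ has depth $0$, and $f$ permutes the edges eventually periodically. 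Now $\mathcal{T}'$ refines $\mathcal{T}_f$ by adding the finitely many points of $M$ (and their regulated hull), so each edge of $\mathcal{T}'$ lies inside a unique edge of $\mathcal{T}_f$, and the external/internal bookkeeping is straightforward. The key point is that an edge $e$ of $\mathcal{T}'$ sitting inside an edge $E$ of $\mathcal{T}_f$ maps, under $f$, into $f(E)$, which is a single edge $E'$ of $\mathcal{T}_f$; so $f(e)$ is a union of edges of $\mathcal{T}'$ all lying inside the \emph{one} edge $E'$. This means that in $\mathcal{G}'$, the "coarse" data (which edge of $\mathcal{T}_f$ one is in) evolves exactly as in $\mathcal{G}_f$, hence eventually periodically with period dividing the order of the edge-permutation of $f$ on $\mathcal{T}_f$.

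Next I would classify the simple cycles of $\mathcal{G}'$. Let $\mathcal{C}$ be a simple cycle in $\mathcal{G}'$; projecting to $\mathcal{G}_f$ it must land in a single cyclic component $\mathcal{C}_0$ (a periodic cycle of edges $E_0\to E_1\to\cdots\to E_{p-1}\to E_0$ of $\mathcal{T}_f$), because $\mathcal{G}_f$ has no intersecting cycles and the projection of a closed path is a closed path. So $\mathcal{C}$ lives inside the union $E_0\cup\cdots\cup E_{p-1}$ of periodic edges of $\mathcal{T}_f$, i.e. $f^p$ maps the finitely many sub-edges of $E_0$ into $E_0$ and we are reduced to studying $f^p$ restricted to a single periodic edge $E=E_0$ subdivided by points of (the relevant piece of) $M$. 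Here I would use the Fatou/Julia structure: the interior of the periodic edge $E$ of the simplicial tree $\mathcal{T}_f$ contains a boundary arc of a periodic bounded Fatou component (its endpoints being Fatou boundary points or branch points), and the added Julia points in $M\cap E$ are preimages of points whose forward orbit is periodic in the Julia set. One shows $f^p\colon E\to E$ is, up to the finite subdivision, an expanding-type self-map that is "monotone toward a Fatou piece": there is at most one sub-edge whose far endpoint is a tip (a Julia point not on any bounded Fatou component), and iterating $f^p$ pushes everything toward the Fatou boundary arc. Consequently the directed graph of $f^p$ on the sub-edges of $E$ has at most one non-trivial "tail" feeding into a fixed (Fatou-adjacent) sub-edge; its simple cycles are all loops at single vertices, and at most one vertex (the tip-adjacent one, if present) has depth $1$, all others having depth $0$.

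From this local analysis, Lemma \ref{lem:mnsc} finishes the job: any nested chain $e\ge\mathcal{C}_0\ge\mathcal{C}_1\ge\cdots\ge\mathcal{C}_k$ of disjoint simple cycles in $\mathcal{G}'$ has length at most $1$, because once one enters a periodic-edge block the only further simple cycle reachable is a loop of depth $0$ in the same block, and distinct blocks are not comparable (the coarse dynamics on $\mathcal{G}_f$ is eventually periodic with cyclic components, so there is no path from one $\mathcal{G}_f$-cycle to another). Hence $\depth(e)\le 1$ for every edge $e$ of $\mathcal{T}'$. For the "moreover" clause: if $\partial e$ contains a tip point $x$, then the sub-edge at $x$ is carried by $f^p$ toward the Fatou arc but the tip itself never becomes a Fatou boundary point, forcing a strict nesting $e\ge\mathcal{C}\ge\mathcal{C}'$ of a loop over a loop, so $\depth(e)=1$; conversely, if every endpoint of $e$ and of all edges reachable from it is a Fatou boundary point, then each such edge is eventually periodic landing directly on a Fatou-adjacent fixed sub-edge, giving only a single loop in any reachable chain, so $\depth(e)=0$. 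The main obstacle I anticipate is making the "monotone toward a Fatou piece" claim precise: one must rule out, inside a single periodic edge of the simplicial tree after adding Julia points, the appearance of two disjoint loops one feeding the other, which amounts to checking that the combinatorics of $f^p$ on that edge (a branch-point-free arc with one attached Fatou arc) cannot create intersecting or stacked cycles beyond depth $1$ — this should follow from the fact that $f^p$ on such an arc is conjugate to a tent-like/monotone model dictated by the single Fatou boundary arc it contains, but it requires a careful case analysis of whether the arc hits a critical point and how $M$ subdivides it.
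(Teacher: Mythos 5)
There is a genuine gap, and it appears right at the start of your reduction: you assert that ``$\mathcal{T}'$ refines $\mathcal{T}_f$ by adding the finitely many points of $M$ (and their regulated hull), so each edge of $\mathcal{T}'$ lies inside a unique edge of $\mathcal{T}_f$.'' This is false. The set $M$ consists of Julia points that in general do \emph{not} lie on $\mathcal{T}_f$, so the regulated hull $\mathcal{T}'$ of $\mathcal{T}_f \cup M$ is a strict extension of $\mathcal{T}_f$: it contains new subtrees (``spikes'') hanging off $\mathcal{T}_f$ toward the points of $M$. Edges of $\mathcal{T}'$ lying in $\mathcal{T}'\setminus\mathcal{T}_f$ are not subdivisions of edges of $\mathcal{T}_f$ and have no well-defined ``coarse'' image in $\mathcal{G}_f$. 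Consequently the step ``a simple cycle in $\mathcal{G}'$ projects to a cyclic component of $\mathcal{G}_f$, hence lives inside a union of periodic edges of $\mathcal{T}_f$'' fails precisely for the cycles that matter: if $x\in M$ is a periodic tip, the spike edge adjacent to $x$ is itself periodic and gives a simple loop in $\mathcal{G}'$ with no counterpart in $\mathcal{G}_f$. Those spike-edge loops are exactly the source of depth $1$, and your argument does not see them. The later ``local analysis of $f^p$ on a periodic edge of $\mathcal{T}_f$'' inherits the same confusion, since tips are ends of $\mathcal{T}'$, not points in the interior of an edge of $\mathcal{T}_f$.

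For comparison, the paper's proof is much more direct and avoids the graph-projection strategy entirely. It first notes that edges of $\mathcal{T}_f$ have depth $0$, then takes an edge $e=[v,w]\subset\mathcal{T}'\setminus\mathcal{T}_f$ and splits into two cases. If both $v,w$ lie on boundaries of bounded Fatou components, the edge is eventually absorbed into $\mathcal{T}_f$, so $\depth(e)=0$. If $v$ is a tip, then $v$ is an end of $\mathcal{T}'$ (so $w$ is not a tip), one may assume $v$ periodic, and there is a sequence $v_n\to v$ on $[v,w]$ of Fatou boundary points; each subarc $[v_n,w]$ is eventually mapped into $\mathcal{T}_f$, so every pre-periodic point of $e$ except $v$ eventually lands in $\mathcal{T}_f$, giving $\depth(e)=1$. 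If you want to keep a graph-theoretic flavor, the fix would be to stop treating $\mathcal{G}'$ as a decoration of $\mathcal{G}_f$ and instead argue directly that any simple cycle of $\mathcal{G}'$ supported outside $\mathcal{T}_f$ is a spike-edge loop attached to a periodic tip, that from such a loop one can only reach edges of depth $0$ (since all other points of the spike fall into $\mathcal{T}_f$), and that there is no path from one spike loop to another --- but this is essentially reproducing the paper's pointwise argument in different language.
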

\begin{proof}
	
	(1) By the expanding property of Hubbard trees, every edge of $\mathcal{T}_f$ is either an internal ray or the union of two internal rays of bounded Fatou components. This is also true for $f^{-n}(\mathcal{T}_f)$ for any $n>0$. Then for any two bounded Fatou components $U$ and $V$, there is a chain of bounded Fatou components $U_0=U, U_1,U_2,\dots,U_k=V$ so that every consecutive pair $U_i$ and $U_{i+1}$ is adjacent at a pre-periodic biaccessible point.
	
	Suppose $v \in \Julia_f$ is a biaccessible point. Let $D_1,D_2,\dots,D_k$ be connected components of the complement of external rays landing at $v$. Note that every $D_i$ contains bounded Fatou components. Then the existence of an adjacent chain in the previous paragraph implies that $v$ is also on the boundary of a bounded Fatou component. 
	
	(2) Note that any edge of $\mathcal{T}_f$ has depth $0$.
	Let $e = [v,w]$ be an edge of $\mathcal{T}' - \mathcal{T}_f$.
	
	Suppose $e=[v, w]$ does not contain any accumulation points. Then $e$ is a finite union of internal rays of bounded Fatou components, which are eventually periodic. So $e$ has depth $0$.
	
	Suppose $e$ contains an accumulation point. By (1), only endpoints of $e$ can be accumulation points. Suppose $v$ is an accumulation point. (1) also implies that $v$ is an endpoint of $\mathcal{T}'$. Then $w$ is not an endpoint of $\mathcal{T}'$, so $w$ is not an accumulation point.
	Without loss of generality, we assume that $v$ is a periodic point.
	Note that there exists a sequence $v_n \in [v,w]$ where $v_n$ is a boundary point of a bounded Fatou component with $v_n \to v$.
	Thus $[v_n, w]$ is eventually mapped into $\mathcal{T}_f$ for all $n$.
	Therefore, every pre-periodic point in $e$ other than $v$ is eventually mapped into $\mathcal{T}_f$, so $e$ has depth $1$.
\end{proof}

	We also remark that by Lemma \ref{lem:0or1}, even if $\mathcal{F}$ is simplicial, the tree $\mathcal{T}_{v,new}$ we defined above may not be simplicial. We may obtain edges of depth one when we enlarge the simplicial tree $H(v)$ by taking the regulated hull of $H(v)$ union the landing points of external rays of angles in $\phi_v^{-1}(\partial\mathcal{T}_v)$.

\subsection*{Core entropy of simplicial tuning}
\begin{lem}\label{lem:fbgr}
Let $g$ be a \pcf polynomial with core entropy zero.
Let $f$ be a simplicial tuning of $g$.
Then $f$ has core entropy zero. Moreover, we have $\mathscr{C}_{gr}(g) \leq \mathscr{C}_{gr}(f) \leq \mathscr{C}_{gr}(g)+1$.
\end{lem}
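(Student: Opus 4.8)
The plan is to read off both the vanishing of the core entropy and the growth rate of $f$ from the combinatorial structure of the directed graph $\mathcal{G}_f$, compared with $\mathcal{G}_g$, using that $\mathscr{C}_{gr}(\cdot)=1+\max_{e}\depth(e)$ and the characterization of depth in Lemma \ref{lem:mnsc}. First I would record the structure of $\mathcal{G}_f$. The edges of $\mathcal{T}_f$ are of two kinds: external edges, in bijection with the external edges of $\mathcal{T}_g$, and internal edges, lying in the trees $\mathcal{T}_{v,new}$. Since $f$ maps each $\mathcal{T}_{v,new}$ into $\mathcal{T}_{\Phi(v),new}$ (where $\Phi$ is the scheme map), an internal edge never covers an external edge; hence every simple cycle of $\mathcal{G}_f$ is either entirely external or entirely internal, and a directed path that enters the internal part stays there. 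On external edges the dynamics of $f$ coincides with that of $g$, so the sub-digraph of $\mathcal{G}_f$ on external edges is canonically isomorphic to the sub-digraph of $\mathcal{G}_g$ on external edges, and --- tracking how $f(E')$ enters a tree $\mathcal{T}_{v,new}$ in the same combinatorial pattern as $g(E)$ enters $\mathcal{T}_v$ --- an external simple cycle of $\mathcal{G}_f$ has an arrow into the internal part if and only if the corresponding external cycle of $\mathcal{G}_g$ does; in that case it reaches an internal simple cycle, because $\mathcal{T}_v$ carries at least one periodic internal edge, and so does $\mathcal{T}_{v,new}$. I will also use two inputs: (a) the Fatou dynamics of $g$ is simplicial, so every internal edge of $\mathcal{T}_g$ has depth $0$ in $\mathcal{G}_g$ (remark preceding Lemma \ref{lem:0or1}); and (b) since each $\mathcal{T}_{v,new}$ is the regulated hull of the simplicial tree $H(v)$ together with a finite forward-invariant set of Julia points, Lemma \ref{lem:0or1} shows every internal edge of $\mathcal{T}_f$ has depth $0$ or $1$ in $\mathcal{G}_f$.

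From (b) and the fact that $\mathcal{G}_g$ has no intersecting cycles (since $h(g)=0$, Theorem \ref{thm:nic}) I would deduce that $\mathcal{G}_f$ has no intersecting cycles: two intersecting simple cycles of $\mathcal{G}_f$ share a vertex hence are of the same type; two internal ones are impossible because a graph with all depths finite has no intersecting cycles (Corollary \ref{cor:nic}), and two external ones would push forward to intersecting cycles of $\mathcal{G}_g$. Therefore $h(f)=0$ by Theorem \ref{thm:nic}, which also shows all edges of $\mathcal{T}_f$ have finite depth.

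For the growth rate bounds write $\alpha_g=\mathscr{C}_{gr}(g)-1$ and $\alpha_f=\mathscr{C}_{gr}(f)-1$, the maximal depths of edges of $\mathcal{T}_g$ and $\mathcal{T}_f$ (with the convention $\alpha=-1$ when the tree is trivial, i.e.\@ for $z^d$). For $\alpha_f\le\alpha_g+1$: an internal edge of $\mathcal{T}_f$ has depth $\le 1\le \alpha_g+1$ when $g\neq z^d$, and when $g=z^d$ one has $\mathcal{T}_f=H$ simplicial, so all depths are $0=\alpha_g+1$. For an external edge $E'$, a maximal chain of disjoint cycles below $E'$ (Lemma \ref{lem:mnsc}) consists of an external prefix $\mathcal{C}_0\ge\dots\ge\mathcal{C}_j$ followed by an internal suffix $\mathcal{C}_{j+1}\ge\dots\ge\mathcal{C}_k$ with $k-j\le 2$ by (b); if $k>j$, the corresponding external prefix in $\mathcal{G}_g$ reaches into the internal part, giving a chain of length $j+2$ there, so $\alpha_g\ge j+1$ and $\depth(E')=k\le j+2\le\alpha_g+1$; if $k=j$, transporting to $\mathcal{G}_g$ gives $\alpha_g\ge j=\depth(E')$. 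For $\alpha_f\ge\alpha_g$: if $\alpha_g\le 0$ this is immediate since every edge of a nonempty Hubbard tree has depth $\ge 0$; if $\alpha_g\ge 1$, take an edge $E$ of $\mathcal{T}_g$ of depth $\alpha_g$ with witnessing chain $\mathcal{C}_0\ge\dots\ge\mathcal{C}_{\alpha_g}$. By (a) an internal cycle reaches no disjoint cycle, so $E$ and $\mathcal{C}_0,\dots,\mathcal{C}_{\alpha_g-1}$ are all external; transport these to $\mathcal{G}_f$, and if $\mathcal{C}_{\alpha_g}$ was internal replace it by an internal cycle of $\mathcal{G}_f$ reached via the entry-point correspondence. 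This yields a chain of $\alpha_g+1$ disjoint cycles below the external edge corresponding to $E$, so $\alpha_f\ge\alpha_g$. Combining, $\alpha_g\le\alpha_f\le\alpha_g+1$, i.e.\@ $\mathscr{C}_{gr}(g)\le\mathscr{C}_{gr}(f)\le\mathscr{C}_{gr}(g)+1$.

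The main obstacle I expect is the precise bookkeeping in the first paragraph: verifying that the external sub-digraph of $\mathcal{G}_f$ is literally the external sub-digraph of $\mathcal{G}_g$, and the ``entry-point'' correspondence (an external cycle reaches the internal part of $\mathcal{G}_f$ exactly when it does so in $\mathcal{G}_g$), which requires a careful comparison of how the images $f(E')$ of external edges meet the replaced trees $\mathcal{T}_{v,new}$ versus how $g(E)$ meets $\mathcal{T}_v$ --- including the possibility that some external edge of $\mathcal{T}_g$ gets collapsed in $\mathcal{T}_f$. Once this structural dictionary is set up, the core-entropy statement and the depth estimates via Lemma \ref{lem:mnsc} are routine.
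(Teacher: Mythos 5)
Your proposal is correct and takes essentially the same approach as the paper's own proof: split $\mathcal{G}_f$ into the external subgraph (isomorphic to $\mathcal{G}_g^{ext}$) and the internal subgraph (depths at most $1$ by Lemma \ref{lem:0or1}, versus depth $0$ for $\mathcal{G}_g^{int}$), then compare. The paper compresses the final growth-rate comparison into a single ``Hence''; you spell out the chain-counting via Lemma \ref{lem:mnsc} together with the entry-point correspondence between external cycles of $\mathcal{G}_f$ and $\mathcal{G}_g$ reaching the respective internal parts, which is indeed the content implicit in that step.
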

\begin{proof}
Let $\mathcal{G}_f$ and $\mathcal{G}_g$ be the corresponding directed graphs for $f$ and $g$ respectively.
Since $g$ has core entropy zero, $\mathcal{G}_g$ has no intersecting cycles.
By the construction of simplicial tuning, new edges are mapped to new edges. The dynamics among new edges is equivalent to the dynamics on a Hubbard tree of $g$, which has zero entropy. Hence $\mathcal{G}_f$ also has no intersecting cycles, so $f$ has core entropy zero by Theorem \ref{thm:nic}.

Let $\mathcal{G}_f^{ext}$ (resp.\@ $\mathcal{G}_f^{int}$) be the subgraph of $\mathcal{G}_f$ consisting of external (resp.\@ internal) edges of $\mathcal{T}_f$. Subgraphs $\mathcal{G}_g^{ext}$ and $\mathcal{G}_g^{int}$ of $\mathcal{G}_g$ are similarly defined. 

From the bijective correspondence between external edges of $\mathcal{T}_f$ and external edges of $\mathcal{T}_g$, we know that two subgraphs $\mathcal{G}_f^{ext}$ and $\mathcal{G}_g^{ext}$ are isomorphic as directed graphs.
On the other hand, any vertex in $\mathcal{G}_g^{int}$ has depth 0, and any vertex in $\mathcal{G}_f^{int}$ has depth 0 or 1 by Lemma \ref{lem:0or1}. Hence we have
\[
\mathscr{C}_{gr}(f)=\mathscr{C}_{gr}(g)~\mathrm{or}~ \mathscr{C}_{gr}(g)+1.
\]
\end{proof}

As a corollary, we have one implication of Theorem \ref{thm:fb} and one inequality of Theorem \ref{thm:cgrb}.
\begin{cor}\label{cor:fcz}
Let $f \in \mathcal{P}_d$ be a \pcf polynomial.
Suppose that $f$ is obtained by a finite sequence of simplicial tunings from $z^d$.
Then $f$ has core entropy zero.
Moreover, $\mathscr{C}_{c}(f) \geq \mathscr{C}_{gr}(f)$.
\end{cor}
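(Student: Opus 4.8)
The plan is to induct on the length of a chain of simplicial tunings realizing $f$, using Lemma \ref{lem:fbgr} as the inductive step. Concretely, suppose $g_0 = z^d, g_1, \dots, g_k = f$ is a sequence in which each $g_{i+1}$ is a simplicial tuning of $g_i$. For the base case I would observe that $z^d$ has a degenerate Hubbard tree (a single point), so it has core entropy zero, and $\mathscr{C}_{gr}(z^d) = 0$ by definition. For the inductive step, assuming $g_i$ has core entropy zero, Lemma \ref{lem:fbgr} gives that $g_{i+1}$ has core entropy zero as well, together with the two-sided estimate $\mathscr{C}_{gr}(g_i) \le \mathscr{C}_{gr}(g_{i+1}) \le \mathscr{C}_{gr}(g_i)+1$.

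Running the induction to $i = k$ shows that $f = g_k$ has core entropy zero, which is the first assertion of the corollary. For the quantitative part, telescoping the upper bounds from Lemma \ref{lem:fbgr} along the chain yields
$$
\mathscr{C}_{gr}(f) = \mathscr{C}_{gr}(g_k) \le \mathscr{C}_{gr}(g_0) + k = k.
$$
Finally I would take the chain to have minimal length, that is, $k = \mathscr{C}_{c}(f)$; since the displayed inequality holds for \emph{every} realizing chain, it holds in particular for a shortest one, giving $\mathscr{C}_{gr}(f) \le \mathscr{C}_{c}(f)$, which is exactly the claimed ``moreover''.

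Since the whole argument is a bookkeeping wrapper around Lemma \ref{lem:fbgr}, I do not expect a genuine obstacle; the only points requiring a moment's care are verifying the base-case data for $z^d$ and noting that $\mathscr{C}_{c}(f)$ is by definition the infimum over realizing chains, so that an upper bound valid for all chains transfers to it. The actual content — that simplicial tuning preserves the ``no intersecting cycles'' property and raises the polynomial growth exponent by at most one — has already been isolated in Lemma \ref{lem:fbgr}, which in turn rests on the isomorphism of the external subgraphs and the depth bound of Lemma \ref{lem:0or1}.
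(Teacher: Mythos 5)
Your argument is correct and is exactly the route the paper intends: the corollary is stated immediately after Lemma \ref{lem:fbgr} as an immediate consequence, and your induction on the chain length together with the telescoping of the bound $\mathscr{C}_{gr}(g_{i+1})\le \mathscr{C}_{gr}(g_i)+1$ from that lemma (applied to a minimal-length chain) is the standard unpacking of ``immediate corollary.''
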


\subsection{The simplicial quotient}\label{subsec:simp}
In this subsection, we will discuss simplicial quotients, which can be thought of as an inverse operation of simplicial tunings.
Our goal is to prove: 
\begin{theorem}\label{prop:fb}
Let $f$ be a \pcf polynomial with core entropy zero.
Then there exists a finite sequence $f_0 = f, f_1,..., f_k$ such that 
\begin{itemize}
\item $f_k(z) = z^d$, and
\item $f_{i+1}$ is the simplicial quotient of $f_i$.
\end{itemize}
\end{theorem}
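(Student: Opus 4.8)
The plan is to construct the simplicial quotient as an explicit collapsing operation on the Hubbard tree, and then argue that iterating it terminates at $z^d$ when the core entropy vanishes. The key point, already isolated in Lemma \ref{lem:mnsc} and Proposition \ref{prop:qnic}, is that $h(f)=0$ forces every strongly connected component of $\mathcal{G}_f$ to be a single simple cycle, and these cycles are partially ordered by $\geq$ with finite chains; the maximal element(s) of this order will be what we collapse. So first I would define, for a \pcf $f$ with $h(f)=0$, a distinguished collection of \emph{top} cyclic components of $\mathcal{G}_f$ — cycles $\mathcal{C}$ such that there is no other cycle $\mathcal{C}'$ with $\mathcal{C}' > \mathcal{C}$, equivalently edges of maximal depth among non-leaf edges — and show each such cycle is realized by a periodic cycle of edges of $\mathcal{T}_f$ on which $f$ acts as a simplicial (edge-to-edge) homeomorphism, hence bounds a bounded Fatou cycle. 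This uses the structure theory: a simple cycle in $\mathcal{G}_f$ on which no larger cycle maps corresponds to a sub-Hubbard-tree that is itself simplicial, so it is (a copy of) the Hubbard forest of some lower-degree \pcf family of polynomials $\Fcal$ — concretely, $f$ restricted to the filled Julia sets attached along that cyclic arc system.

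Second, I would \textbf{define the simplicial quotient} $g$ of $f$ by collapsing, for each such top simplicial piece $\mathcal{T}_v$, the whole piece to a single Fatou-type vertex, turning the period-$p$ cycle of Fatou components into a period-$p$ cycle of ``points'' with a prescribed degree function (the degree of $f$ on each collapsed component), and checking that the resulting angled tree $\mathcal{T}_g$ is still expanding in the sense of \cite{Poi10}, hence is the Hubbard tree of an honest \pcf polynomial $g$. The verification that $g$ is well-defined and \pcf is the routine-but-careful part: one must check the marking and angle data descend, that no two distinct pieces need to be glued in a way that obstructs simpliciality of the quotient (cf.\ the subtlety in Figure \ref{fig:TuneByBasilicas}, which here goes the easy direction since we are collapsing, not expanding), and that the post-critical set remains finite. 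Then by the construction of tuning in \S\ref{subsec:sthf}, $f$ is recovered as a simplicial tuning of $g$ with the family $\Fcal$ given by the collapsed pieces — this is essentially the definition of the quotient as the inverse operation — so $g$ is indeed ``the simplicial quotient of $f$''.

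Third, \textbf{to prove termination}, I would set up a well-founded complexity measure on \pcf polynomials with $h(f)=0$ that strictly decreases under simplicial quotient. The natural candidate is the pair $\bigl(\max_e \depth(e),\ \#\{\text{cyclic components of maximal depth}\}\bigr)$ ordered lexicographically, or more robustly the total number of non-leaf edges of $\mathcal{T}_f$ (which drops because each collapsed simplicial piece contains at least one edge and is replaced by a single vertex). Either way, passing to the simplicial quotient removes all cyclic components of top depth and introduces none of equal or larger depth — the external part of the graph $\mathcal{G}_g$ embeds in $\mathcal{G}_f$ after the collapse and only loses the top cycles — so after finitely many steps we reach a map whose graph has no cycles through any non-leaf edge at all, i.e.\ whose Hubbard tree is a point with the trivial scheme, which forces the map to be $z^d$. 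Assembling: $f_0=f$, and $f_{i+1}$ the simplicial quotient of $f_i$, gives the finite sequence $f_0,\dots,f_k$ with $f_k=z^d$.

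\textbf{The main obstacle} I anticipate is the second step — pinning down the simplicial quotient precisely enough that the quotient is manifestly a \pcf polynomial and that $f$ is genuinely a \emph{simplicial} tuning of it. The two delicate points are (i) ensuring that after collapsing a top cyclic piece the angle/marking structure on $\mathcal{T}_g$ is still that of a polynomial, i.e.\ the expanding condition of \cite{Poi10} survives the collapse (it should, because we only merge Julia vertices that the dynamics was already identifying asymptotically, but this needs an argument), and (ii) handling the interaction when several top pieces are nested or adjacent, so that the collapse is performed on a maximal antichain of cyclic components simultaneously and the bijection between external edges of $\mathcal{T}_f$ and $\mathcal{T}_g$ is clean; the monotonicity of depth under the quotient, needed for termination, then follows from the isomorphism of external subgraphs established exactly as in the proof of Lemma \ref{lem:fbgr}.
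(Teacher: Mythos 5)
Your proposal correctly identifies the overall shape of the argument—iterate a collapsing operation on the Hubbard tree and argue termination via a decreasing complexity—but there is a basic directional error in how you relate cycles of $\mathcal{G}_f$ to the Fatou structure, and that error derails the construction. You claim that the \emph{top} cyclic components of $\mathcal{G}_f$ (maximal in the $\geq$ order, i.e.\ no cycle maps to them, i.e.\ edges of \emph{maximal} depth) are the ones on which $f$ acts edge-to-edge and hence bound Fatou cycles. It is exactly the opposite: a simple cycle $E_1 \to \cdots \to E_k \to E_1$ in $\mathcal{G}_f$ satisfies $f(E_i)=E_{i+1}$ (edge-to-edge) for every $i$ if and only if the cycle has depth $0$; if the depth is positive, some $f(E_j)$ strictly covers $E_{j+1}$, which is precisely what creates the outgoing path to a lower cycle. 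The periodic edges of $\mathcal{T}_f$ hugging the bounded Fatou components therefore live on the \emph{depth-zero} (sink) cycles, and "collapsing a maximal-depth cycle to a Fatou-type vertex" is not a well-defined operation because $f$ does not map that cycle of edges homeomorphically onto itself. Moreover, even after reversing the direction, the paper's simplicial quotient is not a purely graph-theoretic collapse of cyclic components: it collapses the \emph{cores} of $f$—closures of maximal chains of Fatou components joined by boundary adjacency (Proposition~\ref{prop:sqc})—and a core subtree can contain edges of depth $1$ as well as $0$ (Lemma~\ref{lem:IntEdgeF}), so it is genuinely more than a depth-$0$ cyclic component of $\mathcal{G}_f$.

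The second, independent gap is in the termination step. You assert that "the total number of non-leaf edges drops because each collapsed simplicial piece contains at least one edge," but this presupposes there is a nontrivial piece to collapse. The hard and essential step in the paper's proof is precisely Lemma~\ref{lem:nt}: if $h(f)=0$ and $f\neq z^d$ then $f$ has at least one \emph{nontrivial} core, i.e.\ two Fatou components with touching closures. Without this, every core could be a single Fatou component, the simplicial quotient would be the identity, and the iteration would never terminate. The paper proves this by collapsing all core subtrees, using $h(\tilde f)=0$ to extract a periodic external edge in the quotient tree, and deriving a contradiction with the expansion property of Hubbard trees; nothing in your proposal anticipates or replaces this. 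The paper's termination measure is then the lexicographic pair $(k_f, -m_f)$ of periodic Fatou components and critical points therein (Lemma~\ref{lem:dq}), which strictly decreases because of Lemma~\ref{lem:nt}; a depth-based measure like yours does also work (cf.\ Lemma~\ref{lem:sbgr}), but only after nontriviality has been established.
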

Simplicial quotients will be defined in Definition \ref{defn:sq}.

\begin{figure}[h!]
	\centering
	\begin{subfigure}[b]{0.45\textwidth}
		\centering
		\includegraphics[width=\textwidth]{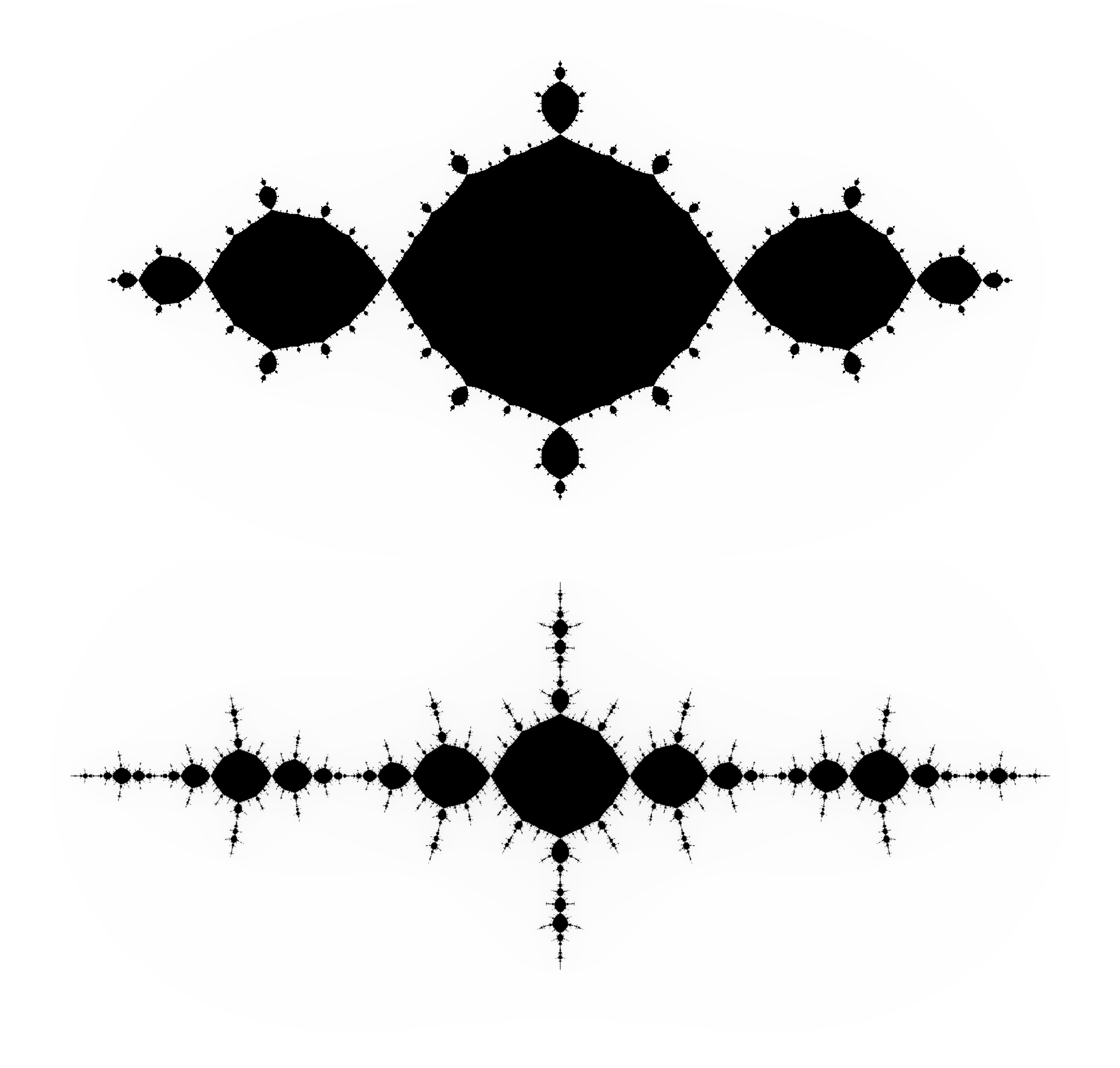}
		\caption{}
	\end{subfigure}
	\hspace{5pt}
	\begin{subfigure}[b]{0.45\textwidth}
		\centering
		\includegraphics[width=\textwidth]{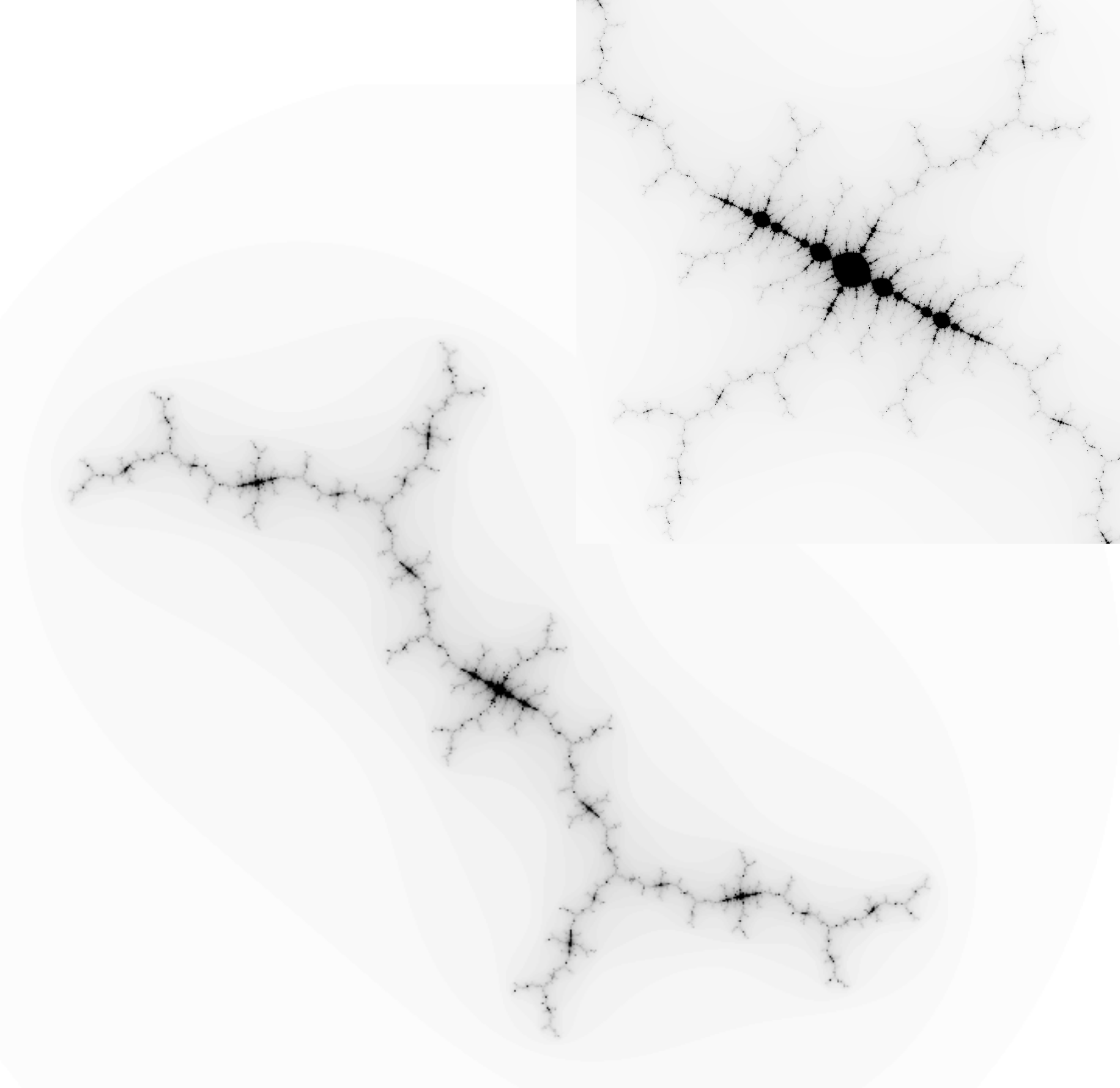}
		\caption{}
	\end{subfigure}
	\newline
	\begin{subfigure}[b]{0.45\textwidth}
		\centering
		\includegraphics[width=\textwidth]{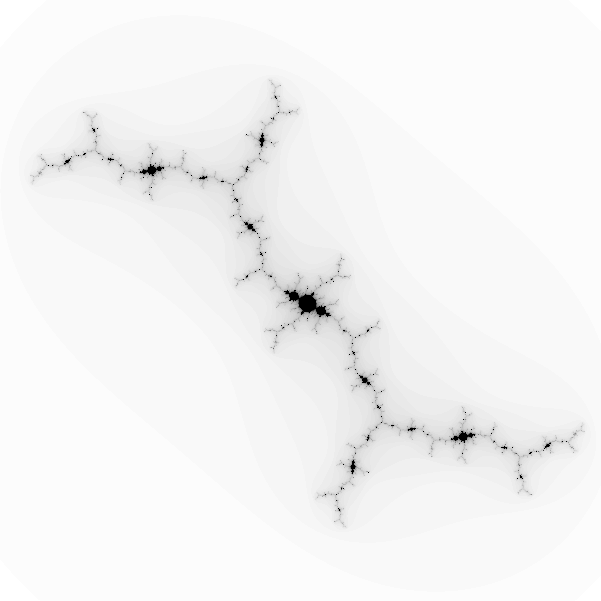}
		\caption{}
	\end{subfigure}
	\hspace{5pt}
	\begin{subfigure}[b]{0.45\textwidth}
		\centering
		\includegraphics[width=\textwidth]{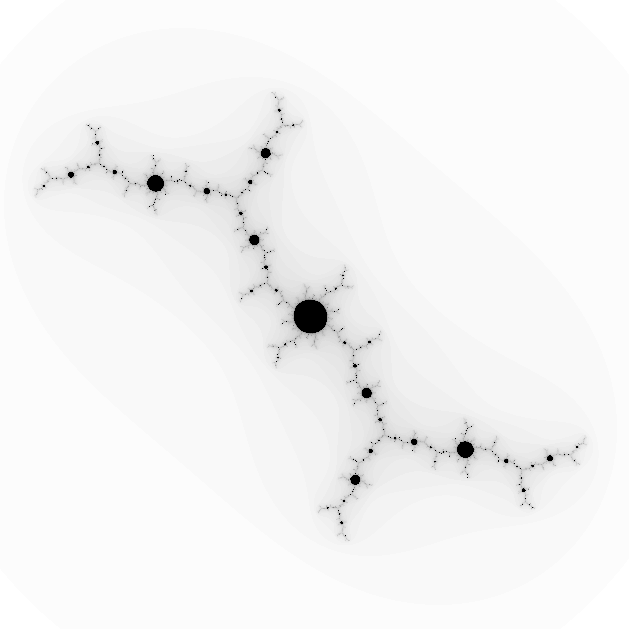}
		\caption{}
	\end{subfigure}
	\caption{(A) Basilica (above, $z^2-1$) and the doubled Basilica(below, $z^2-1.3107$). (B) Kokopelli tuned with the doubled Basilica ($z^2-0.15652 + 1.03225 i$) and the zoomed-in image near 0. (C) Kokopelli tuned with Basilica ($z^2-0.16267+1.037123$). (D) Kokopelli ($z^2-0.15652 + 1.03224$).\protect\linebreak
	Cores of (B) and (C) are both homeomorphic to the Julia set of the Basilicas. (C) is the simplicial quotient of (B), and (D) is the simplicial quotient of (C).}
\end{figure}

\subsection*{Cores of periodic Fatou components}
Let $f$ be a \pcf polynomial and $\mathcal{K}^0$ be the closure of the union of periodic Fatou components. Let
$\mathcal{K}^0_1,\mathcal{K}^0_2,\dots,\mathcal{K}^0_l$ be the connected components of $\mathcal{K}^0$. We inductively define $\mathcal{K}^n_i$ to be the connected component of $f^{-1}(\mathcal{K}^{n-1})$ containing $\mathcal{K}^{n-1}_i$ and define $\mathcal{K}^n$ to be the union of $\mathcal{K}^n_1,\mathcal{K}^n_2,\dots,\mathcal{K}^n_l$. It is also inductively obtained that $\mathcal{K}^n_1,\mathcal{K}^n_2,\dots,\mathcal{K}^n_l$ are pairwise disjoint for any $n\ge0$.

By construction, for any $i\in\{1,2,\dots,l\},$ $\{\mathcal{K}^n_i\}_{n\ge0}$ is an increasing sequence. 
\begin{defn}[Cores]
We use the notations defined above. For any $i\in \{1,2,\dots,l\}$, we call $\mathcal{K}^\circ_i := \bigcup_n \mathcal{K}^n_i$ a {\em periodic pre-core} and define a {\em periodic core} by
$$
\mathcal{K}_i:= \overline{\mathcal{K}^\circ_i}.
$$ 
We define the {\em accumulation set} of a core $\mathcal{K}_i$ by
$$
\mathcal{K}^\omega_i = \mathcal{K}_i - \mathcal{K}^\circ_i.
$$

For any periodic core $\mathcal{K}_i$ and any $n\ge1$, we call a connected component $\mathcal{K}^\circ$ of $f^{-n}(\mathcal{K}_i^\circ)$ that is not a periodic pre-core a {\em pre-periodic pre-core} of $f$. Then we call $\mathcal{K}:=\overline{\mathcal{K}^\circ}$ a pre-periodic core and  $\mathcal{K}^\omega:= \mathcal{K} - \mathcal{K}^\omega$ the accumulation set of $\mathcal{K}$.

By a {\em core}, we mean a periodic or pre-periodic core.
\end{defn}

The following proposition is immediate from the construction.
\begin{prop}\label{prop:sqc}
	Let $f$ be a \pcf polynomial. Then two bounded Fatou components $U$ and $V$ are in the same core if and only if there is a finite sequence of bounded Fatou components $U=U_0,U_1,\dots,U_k=V$ so that $\overline{U}_{i-1} \cap \overline{U}_{i}\neq \emptyset$ for any $i\in\{1,\dots,k\}$.
\end{prop}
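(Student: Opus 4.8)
\textit{Proof proposal.}
The plan is to show that the two relations ``$U$ and $V$ lie in the same core'' and ``$U$ and $V$ are joined by a finite chain of bounded Fatou components with successively intersecting closures'' are the \emph{same} equivalence relation on the set of bounded Fatou components. Both are clearly equivalence relations: the former because each bounded Fatou component lies in a unique core, and the latter because it is by definition the transitive closure of the adjacency relation $\overline{U}\cap\overline{V}\neq\emptyset$. Before doing anything else I would record two elementary structural facts, which are the only real inputs. First, $f^{-1}(\overline{W})=\overline{f^{-1}(W)}$ for any bounded Fatou component $W$ (since $f$ is open and proper), so $f^{-1}$ of a finite union of closures of bounded Fatou components is again such a finite union; iterating, each stage $\mathcal{K}^m_i$ of a periodic core is a finite union of closures of bounded Fatou components, and more generally the $\circ$-part of \emph{any} core is an increasing union of such finite unions. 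Second, it follows that $\mathcal{K}^\circ$ equals the union of the closures of the bounded Fatou components it contains, and that if a bounded Fatou component $W$ satisfies $W\subseteq\mathcal{K}^\circ$, then already $\overline{W}\subseteq\mathcal{K}^\circ$ and $\overline{W}$ appears in one of the stages.

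For the implication ``adjacent $\Longrightarrow$ same core'' I would induct on the preperiod $N$ of the pair $U,V$, noting that $\overline{U}\cap\overline{V}\neq\emptyset$ forces $\overline{f(U)}\cap\overline{f(V)}\neq\emptyset$. If $N=0$, then $\overline{U}\cup\overline{V}$ is a connected subset of the closure of the union of periodic Fatou components, hence lies in a single connected component $\mathcal{K}^0_i$, so $U,V$ lie in the periodic core $\mathcal{K}_i$. If $N\geq 1$, then by induction $f(U)$ and $f(V)$ lie in a common core $\mathcal{M}$; unwinding the definitions, the core of $U$ (resp.\ $V$) is the closure of the component $D_U$ (resp.\ $D_V$) of $f^{-1}(\mathcal{M}^\circ)$ containing $U$ (resp.\ $V$). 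Picking $x\in\partial U\cap\partial V$, the second structural fact (applied with $\mathcal{K}^\circ=D_U$, resp.\ $D_V$) gives $x\in\overline{U}\subseteq D_U$ and $x\in\overline{V}\subseteq D_V$; since distinct components of $f^{-1}(\mathcal{M}^\circ)$ are disjoint, $D_U=D_V$, so $U$ and $V$ lie in the same core. Transitivity of ``same core'' upgrades this to arbitrary chains. This direction is essentially formal.

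For the converse ``same core $\Longrightarrow$ chain'' I would again induct on the preperiod. In the periodic case $U,V$ lie in a common stage $\mathcal{K}^m_i$, which is a \emph{connected} finite union of continua, so its intersection graph (vertices the Fatou components, edges the intersecting pairs) is connected and yields the chain. In general one expresses $\mathcal{K}^\circ$ as the connected increasing union of its finite-union-of-Fatou-closure stages; by the structural facts a chain that starts inside $\mathcal{K}^\circ$ stays inside $\mathcal{K}^\circ$, so it suffices to prove that the bounded Fatou components contained in the connected set $\mathcal{K}^\circ$ are all chain-equivalent. Here the argument genuinely uses geometry rather than bookkeeping, because a priori an individual stage need not be connected, so chain-connectedness does not follow from connectedness of the union alone. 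The plan is to invoke the local connectivity of the core $\mathcal{K}$ (established for periodic cores through the expanding condition, exactly as for Julia sets of \pcf rational maps, and then pulled back to all cores) to exclude the degenerate possibility that a sequence of Fatou components in one chain-class accumulates at a boundary point of a Fatou component in another class; once this is ruled out, the partition of $\mathcal{K}^\circ$ into chain-classes is a partition into relatively clopen sets, and connectedness of $\mathcal{K}^\circ$ forces a single class.

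I expect this last step — using local connectivity of cores to control how Fatou components can accumulate along the Julia part of a core, and thereby to pass from connectedness of $\mathcal{K}^\circ$ to chain-connectedness of the Fatou components inside it — to be the main obstacle. The inductive scheme over preperiods, the reduction to periodic cores, and the point-set manipulations with finite unions of closures of bounded Fatou components should all be routine.
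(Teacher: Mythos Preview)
The paper gives no proof of this proposition at all; it simply records it as ``following from the construction.'' So your write-up is already far more detailed than what the paper supplies, and the overall architecture is sound: the ``chain $\Rightarrow$ same core'' direction really is formal once you know that distinct cores have disjoint $\circ$-parts and that $\overline{W}\subseteq\mathcal{K}^\circ$ whenever $W\subseteq\mathcal{K}^\circ$ (your induction on preperiod also works, though it is heavier than necessary). For ``same core $\Rightarrow$ chain'' in the \emph{periodic} case your argument is exactly right: each stage $\mathcal{K}^m_i$ is, by definition, a \emph{connected} finite union of closed Jordan disks, and the intersection graph of such a union is connected.

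The one genuine gap is in the non-periodic case. Your plan is to use local connectivity of the core $\mathcal{K}$ to show that chain-classes are relatively clopen in $\mathcal{K}^\circ$, by excluding accumulation of one chain-class on the boundary of a Fatou component in another. But a small connected neighbourhood of a boundary point $p\in\partial W$ taken in $\mathcal{K}$ may well pass through the accumulation set $\mathcal{K}^\omega=\mathcal{K}\setminus\mathcal{K}^\circ$, and once it does there is no reason it cannot connect two different chain-classes of $\mathcal{K}^\circ$ through $\mathcal{K}^\omega$; so local connectivity of $\mathcal{K}$ by itself does not give the clopen conclusion you want. A much shorter route avoids this entirely: since each $\mathcal{K}^m_i$ is a locally connected continuum, every connected component of $f^{-n}(\mathcal{K}^m_i)$ surjects onto $\mathcal{K}^m_i$ under $f^n$; hence each component of $f^{-n}(\mathcal{K}^{m+1}_i)$ contains at least one component of $f^{-n}(\mathcal{K}^m_i)$, so the number of components is non-increasing in $m$ and eventually stabilises. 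It follows that every connected component $D$ of $f^{-n}(\mathcal{K}^\circ_i)$ is itself an increasing union of \emph{connected} finite unions of closures of Fatou components, and then your periodic-case argument applies verbatim to $D$. With this replacement the proof is complete and entirely elementary---no appeal to local connectivity of the full core is needed.
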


In the above notations, there are $l$ periodic cores $\mathcal{K}_1, \mathcal{K}_2,\dots,\mathcal{K}_l$ such that for $i \neq j$, $\mathcal{K}^\circ_i$ is disjoint from $\mathcal{K}^\circ_j$ (though $\mathcal{K}_i$ may intersect $\mathcal{K}_j$).
Thus, for every Fatou component $U$, there exists a unique core $\mathcal{K}$ containing $U$.

Let $\mathcal{K}$ be a core.
We define the degree of $\mathcal{K}$ by
$$
\deg(\mathcal{K}) := 1+ \sum_{c \in \mathcal{K}^{\circ}} (\deg(f|_c) -1).
$$

One can show that $\mathcal{K}$ is locally connected. Puzzle pieces defined by equipotential curves and eventually periodic external rays shrink, and the intersection of each puzzle piece with $\mathcal{K}$ is connected. We omit details and refer the reader to \cite{OvS09}.

Hence there is a Riemann mapping $\phi:\hCbb\setminus \D \to \hCbb\setminus \mathcal{K}$, which continuously extends to the boundary $\phi:\partial \D \to \partial \mathcal{K}$ by the Carath\'{e}odory theorem \cite[\S 17]{MilnorBook}. This domain circle $\partial \D$ is called the {\em ideal boundary} of $\C \setminus \mathcal{K}$ and we denote it by $I(\C\setminus \mathcal{K})$.

The map $f: \mathcal{K} \longrightarrow f(\mathcal{K})$ induces a degree $\deg(\mathcal{K})$ covering map
$$
f_*:I(\C \setminus \mathcal{K}) \longrightarrow I(\C \setminus f(\mathcal{K})).
$$

We also remark that $\mathcal{K}$ may not be a connected component of $f^{-1}(f(\mathcal{K}))$.
This happens if there exists a critical point of $f$ on the accumulation set
$\mathcal{K}^\omega$.
By construction, if $\mathcal{K}, \mathcal{K}'$ are two distinct cores, then $\mathcal{K}^\circ$ and ${\mathcal{K}'}^\circ$ are disjoint.


\subsection*{Construction of simplicial quotients}
For a \pcf polynomial $f$, the {\em simplicial quotient} of $f$ is roughly speaking a \pcf polynomial $g$ obtained by replacing each core $\mathcal{K}$ of $f$ with the closure of a new Fatou component $\overline{U}$ of degree $\deg(\mathcal{K})$.
A precise construction can be achieved by constructing a desired Hubbard tree in a combinatorial way and then applying Theorem \ref{thm:ra} as follows.

Let $\mathcal{T}_f$ be the Hubbard tree of $f$.
Let $v$ be a Fatou vertex of $\mathcal{T}_f$.
We define a {\em core subtree} $\mathcal{T}_v$ for $v$ by
$$
\mathcal{T}_v = \mathcal{T}_f \cap \mathcal{K}_v,
$$
where $\mathcal{K}_v$ is the core containing the Fatou component $U_v$ associated to $v$. Since $\mathcal{T}_f$ and $\mathcal{K}_f$ are convex with respect to regulated arcs, the intersection $\mathcal{T}_v$ is indeed a tree.
For different Fatou vertices $v$ and $w$, the core subtrees $\mathcal{T}_v$ and $\mathcal{T}_w$ are same, disjoint, or intersecting at a single point.

Let $\mathcal{T}'$ be the closure of a connected component $\mathcal{T}_f - \mathcal{T}_v$.
Then $\mathcal{T}'$ determines a point on the ideal boundary $x_{\mathcal{T}'} \in I(\C \setminus \mathcal{K}_v)$.
Similarly, if $a\in \partial \mathcal{T}_v \cap \partial \mathcal{T}_f$, then it determines a point on the ideal boundary $x_{a} \in I(\C \setminus \mathcal{K}_v)$ as well. Let $X_v \subseteq I(\C \setminus \mathcal{K}_v)$ be the collection of all these ideal boundary points,
and let $\nu_v = \#X_v$ be its cardinality.

A tree is called a {\em star} if only one vertex, called the center, has valence greater than one.
A {\em $k$-star} is a star whose center has valence $k$.
We construct a new tree by removing $\Int(\mathcal{T}_v)$ and glue back a $\nu_v$-star $\mathcal{T}_{v, new}$ using the ideal boundary information.

We obtain a new tree $\mathcal{T}_g$ by performing the above surgery at all Fatou vertices of $\mathcal{T}_f$. The map of $f$ on $\mathcal{T}_f$ naturally induces a map $g$ on $\mathcal{T}_g$.
Indeed,
\begin{itemize}
\item if $w \notin \bigcup \mathcal{T}_{v, new}$, then $g(w) = f(w)$,
\item if $w$ is the center of $\mathcal{T}_{v, new}$, then we define $g(w)$ to be the center of $\mathcal{T}_{f(v), new}$, and
\item if $w\in \partial \mathcal{T}_{v,new}$, then $g(w) \in \partial \mathcal{T}_{f(v), new}$ is determined by the induced map between the ideal boundaries $f_*: I(\C \setminus \mathcal{K}_v) \longrightarrow I(\C \setminus \mathcal{K}_{f(v)})$.
\end{itemize}
The map $g$ is then extended on the edges $[w,w']$ so that 
$$
g([w,w']) =[g(w), g(w')].
$$ 
Inherited from the local degree on $\mathcal{T}_f$, we remark that the local degrees of a vertex of $\mathcal{T}_g$ can be naturally defined, and a vertex is called critical if it has local degree $\geq 2$.
By removing some endpoints and the corresponding edges of $\mathcal{T}_g$ if necessary, we may assume $\mathcal{T}_g$ is the convex hull of critical and post-critical points.
The ideal boundary also gives the angle structure on the new tree $\mathcal{T}_g$. More precisely, angles at the center of stars are defined from the ideal boundaries and angles at the other vertices are induced from angles of $\mathcal{T}_f$.
Therefore, $\mathcal{T}_g$ is an abstract Hubbard tree.
The marking for $\mathcal{T}_f$ induces a marking for $\mathcal{T}_g$.
Thus, by Theorem \ref{thm:ram}, there exists a unique \pcf polynomial $g \in \mathcal{P}_d$ realizing this abstract Hubbard tree $\mathcal{T}_g$. 

\begin{defn}[Simplicial quotients]\label{defn:sq}
Let $f$ be a \pcf polynomial with Hubbard tree $\mathcal{T}_f$.
Let $\mathcal{T}_g$ be the marked abstract Hubbard tree constructed as above.
We call the corresponding \pcf polynomial $g \in \mathcal{P}_d$ the {\em simplicial quotient} of $f$.
\end{defn}

It is clear from the construction that $f$ is a simplicial tuning of $g$. We have the following bijection
\[
	\{\mathrm{cores~of~} f\} \longleftrightarrow  \{\mathrm{bounded~Fatou~components~of~} g\}.
\]
This induces a natural surjection between the sets of critical and post-critical points 
\[
	q:\Crit(f) \cup P_f \rightarrow \Crit(g) \cup P_g
\]
which sends all the critical or post-critical points in $\mathcal{K}^\circ$ for each core $\mathcal{K}$ of $f$ to the center of the corresponding Fatou component of $g$. 
The following lemma is immediate from these correspondences.

\begin{lem}\label{lem:PostCritPtisAccPt}
	Let $g$ be the simplicial quotient of a \pcf polynomial $f$ and $q:\Crit(f) \cup P_f \rightarrow \Crit(g) \cup P_g$ be the induced surjection on the sets of critical and post-critical points. If $x \in \Julia_g \cap (\Crit(g)\cup P_g)$, then every point in $q^{-1}(x)$ is in the accumulation set of a core of $f$. 
\end{lem}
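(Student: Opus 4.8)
The plan is to follow a point of $q^{-1}(x)$ through the explicit construction of the simplicial quotient and show that it is trapped between a core and its accumulation set. First I would use the one-directional fact built into the construction: $q$ carries $\mathcal{K}^\circ\cap(\Crit(f)\cup P_f)$ onto the center of the bounded Fatou component of $g$ corresponding to the core $\mathcal{K}$, and every such center lies in the Fatou set $\C\setminus\Julia_g$. Since, in addition, every bounded Fatou component of $f$ is contained in $\mathcal{K}^\circ$ for its own core $\mathcal{K}$, it follows that for $x\in\Julia_g$ the fiber $q^{-1}(x)$ meets no set $\mathcal{K}^\circ$, and in particular $q^{-1}(x)\subseteq\Julia_f$. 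Therefore it is enough to prove that every $y\in q^{-1}(x)$ lies in \emph{some} core $\mathcal{K}$; combining this with $y\notin\mathcal{K}^\circ$ gives $y\in\mathcal{K}\setminus\mathcal{K}^\circ=\mathcal{K}^\omega$, which is the assertion.

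To do this I would locate $x$ inside $\mathcal{T}_g$. Being a critical or post-critical point of $g$ that avoids all Fatou components of $g$, the point $x$ is a vertex of $\mathcal{T}_g$ that is not the center of any glued-in star $\mathcal{T}_{v,new}$; hence either (i) $x$ is an endpoint of some star $\mathcal{T}_{v,new}$, or (ii) $x$ is a vertex inherited from $\mathcal{T}_f$ lying outside the interior of every core subtree $\mathcal{T}_v$. Case (i) is quick: then $x$ is an ideal boundary point of $I(\C\setminus\mathcal{K}_v)$, determined either by a complementary branch of $\mathcal{T}_f-\mathcal{T}_v$ or by a point $a\in\partial\mathcal{T}_v\cap\partial\mathcal{T}_f$, and tracing the bijection between this boundary data and $q^{-1}(x)$ shows that $y\in\partial\mathcal{K}_v\subseteq\mathcal{K}_v$, so we conclude by the previous paragraph.

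The substantive case is (ii), and I expect it to be the main obstacle. Here $q$ is injective near $y$, so $y=x$ is simultaneously a critical or post-critical point of $f$ (its local degree being inherited) lying in $\Julia_f$ away from the interiors of all core subtrees, and a critical or post-critical point of $g$. The plan is to show that $y$ nonetheless lies in the closure of some core: since we may assume $f$ has a bounded Fatou component, these are dense in $\Julia_f$, so $y$ is a limit of bounded Fatou components of $f$; using the equivariance $q\circ f=g\circ q$ on $\Crit(f)\cup P_f$ one reduces to the case that $y$ is periodic, and then one shows that the cycle of $y$ must bound a periodic core $\mathcal{K}$ --- for otherwise this repelling cycle, being a cyclic strongly connected component of $\mathcal{G}_f$ not collapsed by the quotient, would prevent $x$ from surviving into the reduced tree $\mathcal{T}_g$. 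With $y\in\mathcal{K}$ and the first step in hand we obtain $y\in\mathcal{K}^\omega$. The delicate point throughout case (ii) is that one must genuinely use that $g$ is the \emph{simplicial quotient}, i.e.\ the maximally core-collapsed model, rather than an arbitrary simplicial predecessor of $f$, and I would expect the argument to rely on the analysis of cyclic components of $\mathcal{G}_f$ developed in Section~\ref{sec:hd}.
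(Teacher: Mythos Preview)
The paper does not give a case analysis at all: it records this lemma as an immediate consequence of the bijection between cores of $f$ and bounded Fatou components of $g$, together with the description of $q$ (points of $\mathcal{K}^\circ$ go to Fatou centers, hence to $\C\setminus\Julia_g$). Your first paragraph reproduces exactly this reasoning and is correct; so is your treatment of case~(i), where the relevant $y$ is a boundary point of some core subtree $\mathcal{T}_v$ and hence lies in $\mathcal{K}_v$.

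The gap is in your case~(ii). Your proposed mechanism --- that a periodic Julia point $y$ of $f$ not bounding any periodic core would, as ``a cyclic strongly connected component of $\mathcal{G}_f$ not collapsed by the quotient, prevent $x$ from surviving into the reduced tree $\mathcal{T}_g$'' --- does not work. Nothing in the construction removes such a $y$: it is a vertex of $\mathcal{T}_f$ lying outside every $\Int(\mathcal{T}_v)$, so it persists verbatim as a vertex of $\mathcal{T}_g$ with its local degree inherited; its $g$--orbit coincides with its $f$--orbit as long as it stays outside the core subtrees and in particular never reaches a star center. So $y$ survives as a point of $\Julia_g\cap(\Crit(g)\cup P_g)$ --- precisely the point $x$ you are trying to rule out --- and your argument is circular. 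The appeal to density of bounded Fatou components in $\Julia_f$ does not help either: being a limit of Fatou components from many different cores is not the same as lying in a single core $\mathcal{K}$, which is what you need to conclude $y\in\mathcal{K}^\omega$.

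In practice this does not matter for the paper: the only place Lemma~\ref{lem:PostCritPtisAccPt} is invoked is in Case~(1) of the proof of Lemma~\ref{lem:sbgr}, and there the point $\tilde y$ is by construction a boundary point of a core subtree $\mathcal{T}_{v,f}$, i.e.\ already in your case~(i). So the paper's one-line argument is adequate for its application, and your more ambitious attempt to cover case~(ii) is both unnecessary here and, as sketched, incomplete.
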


By considering $f$ as a simplicial tuning of $g$, we can use the definitions of internal and external edges in Definition \ref{defn:IntExtEdges}. Internal edges of simplicial quotients are periodic or preperiodic.

\begin{lem}[Depths of internal edges of $g$]\label{lem:IntEdgeG}
	Let $g$ be the simplicial quotient of a \pcf polynomial $f$. 
	Then the following are equivalent:
	\begin{itemize}
		\item an edge $e$ of $\mathcal{T}_g$ is an internal edge.
		\item $e$ is periodic or preperiodic.
		\item $\depth(e)=0$.
	\end{itemize}
\end{lem}
\begin{proof}
	Every internal edge is an edge of a star, so it is periodic or preperiodic. By definition every periodic or preperiodic edge has depth 0. Suppose $e$ is an external edge. The endpoints of $e$ are Julia points. If $e$ has depth 0, then $e$ consists of preimages of periodic edges. It follows that $e$ has to be contained in a core of $f$, which contradicts the assumption that $e$ is an external edge.
\end{proof}
The same proof as Lemma \ref{lem:0or1} gives:
\begin{lem}[Depths of internal edges of $f$]\label{lem:IntEdgeF}
	Let $f$ be a \pcf polynomial and $\mathcal{T}_f$ be its Hubbard tree. Let $v$ be a Fatou vertex in $\mathcal{T}_f$. Let $\mathcal{T}_{v}$ be the core subtree of $\mathcal{T}_f$ whose core is $\mathcal{K}$, and let $e$ be an edge of $\mathcal{T}_v$. If any endpoint of $e$ is not a boundary point of $\mathcal{T}_v$, then $\depth(e)=0$. If $e$ has an endpoint $w$ that is a boundary point of $\mathcal{T}_v$, then
	\begin{itemize}
		\item $\depth(e)=0$ if and only if $w\in \mathcal{K}^\circ$, and
		\item $\depth(e)=1$ if and only if $w\in \mathcal{K}^\omega$.
	\end{itemize}
\end{lem}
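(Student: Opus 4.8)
The plan is to follow the proof of Lemma~\ref{lem:0or1}, under the dictionary in which the simplicial Hubbard tree $\mathcal{T}_f$ of that lemma is replaced by the ``spine'' $\mathcal{T}_f\cap\mathcal{K}^{\circ}$ of the core, and the finite invariant set of tip points there is replaced by the accumulation set $\mathcal{K}^{\omega}=\mathcal{K}-\mathcal{K}^{\circ}$. Since every core is preperiodic to a periodic one, we may assume $\mathcal{K}$ is periodic; the role played by local connectivity of the filled Julia set in Lemma~\ref{lem:0or1} is played here by local connectivity of $\mathcal{K}$. We also record, exactly as tip points are preperiodic in Lemma~\ref{lem:0or1}, that since the vertex set underlying $\mathcal{G}_f$ is finite and forward invariant, every endpoint of an edge $e$ of $\mathcal{T}_v$ is preperiodic.

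The first step is to check that every edge of the spine $\mathcal{T}_f\cap\mathcal{K}^{\circ}$ has depth $0$. For a periodic Fatou component $U$, the intersection $\mathcal{T}_f\cap\overline{U}$ is a finite tree made of radii of $U$ (running from the center $c_U$ to the finitely many points at which the rest of $\mathcal{T}_f$, or a post-critical point inside $U$, is attached), and the first return map of $U$ is a power map in internal coordinates; hence it carries $\mathcal{T}_f\cap\overline{U}$ into itself mapping each edge to a single edge, i.e.\@ with out-degree $1$. The same holds on $\mathcal{T}_f\cap\overline{V}$ for any preimage component $V$ of a periodic $U$, and such a piece is mapped into the finite forest $\bigcup_{U\ \mathrm{periodic}}(\mathcal{T}_f\cap\overline{U})$ in finitely many steps. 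Thus the subgraph of $\mathcal{G}_f$ spanned by the spine edges has out-degree $1$ at every vertex, so $|\Path_{\mathcal{G}_f}(e,n)|$ is bounded and $\depth(e)=0$. This is the analogue of the observation in Lemma~\ref{lem:0or1} that the edges of a simplicial Hubbard tree have depth $0$.

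Granting this, the two cases run as in the proof of Lemma~\ref{lem:0or1}. First suppose both endpoints of $e$ lie on boundaries of bounded Fatou components of $\mathcal{K}$; by Proposition~\ref{prop:sqc} this is automatic when some endpoint of $e$ is not an end of $\mathcal{T}_v$, and it also holds when an endpoint is an end lying in $\mathcal{K}^{\circ}$. Then both endpoints are preperiodic and, after finitely many iterates, lie on boundaries of periodic Fatou components inside one connected component of $\mathcal{K}^{0}$, so by the regulated convexity of that component the arc $f^{m}(e)$ lies in the spine and $\depth(e)=0$ by the first step. Now suppose $e$ has an endpoint $w$ that is an end of $\mathcal{T}_v$ with $w\in\mathcal{K}^{\omega}$. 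Replacing $w$ by a forward iterate, assume $w$ periodic, and pick $w_{n}\to w$ along $e$ with each $w_{n}$ on the boundary of a bounded Fatou component of $\mathcal{K}$ (possible since $w\in\overline{\mathcal{K}^{\circ}}$ and $\mathcal{K}^{\circ}\cap\Julia_f$ is the union of such boundaries); by the previous case every subarc $[w_{n},v]$ of $e$ has depth $0$, so every preperiodic point of $e$ other than $w$ is eventually mapped to a depth-$0$ edge, while the expanding property of Hubbard trees forces $f^{\mathrm{per}(w)}(e)\supsetneq e$. Hence in $\mathcal{G}_f$ the vertex $e$ carries a self-loop and admits a path into a cyclic component contained in the spine, giving disjoint simple cycles $\mathcal{C}_{0}\ge\mathcal{C}_{1}$ with $e\ge\mathcal{C}_{0}$ and no longer such chain (the spine has only depth-$0$ edges), so $\depth(e)=1$ by Lemma~\ref{lem:mnsc}. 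Since each endpoint of $e$ lies in exactly one of $\mathcal{K}^{\circ},\mathcal{K}^{\omega}$ and the points of $\mathcal{T}_v\cap\mathcal{K}^{\omega}$ are ends of $\mathcal{T}_v$, the displayed equivalences follow.

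I expect the first step to be the main obstacle: one has to make precise the combinatorial structure inside a core, namely that the intersections of $\mathcal{T}_f$ with the closures of periodic Fatou components and their preimages are finite trees on which $f$ acts simplicially with out-degree $1$, so that $\mathcal{T}_f\cap\mathcal{K}^{\circ}$ is an increasing union of such trees with only (pre)periodic edges; and, in the first case above, that an arc between two boundary points of bounded Fatou components of $\mathcal{K}$ is genuinely absorbed by this spine after finitely many iterates. This is where the regulated convexity of $\mathcal{K}$ and Proposition~\ref{prop:sqc} are used; the remainder is a routine transcription of the proof of Lemma~\ref{lem:0or1}.
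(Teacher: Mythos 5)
Your proposal is correct and follows the paper's approach exactly: the paper proves Lemma \ref{lem:IntEdgeF} with the single sentence ``The same proof as Lemma \ref{lem:0or1} gives,'' and your transcription under the dictionary spine $\mathcal{T}_f\cap\mathcal{K}^\circ \leftrightarrow$ simplicial $\mathcal{T}_f$, $\mathcal{K}^\omega \leftrightarrow$ tip points is precisely the intended argument. You also correctly identify the first step (that the spine is an $f$-invariant union of simplicial pieces, so its edges have depth $0$) as the one place requiring new work beyond a verbatim repeat of Lemma \ref{lem:0or1}.
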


\begin{prop}\label{prop:SimpliPoly}
	Let $f(z)\neq z^d$ be a \pcf polynomial. Then the following are equivalent:
	\begin{enumerate}
		\item $\mathcal{T}_f$ is simplicial.
		\item $\mathscr{C}_{gr}(f)=1$.
		\item $f$ has only one core.
		\item The simplicial quotient of $f$ is $z^d$.
	\end{enumerate}
\end{prop}
\begin{proof}
Suppose $\mathcal{T}_f$ is simplicial. Then by adding some vertices if necessary, every edge is either periodic or eventually mapped to a periodic edge. Therefore, $\mathscr{C}_{gr}(f)=1$. Conversely, if $\mathscr{C}_{gr}(f)=1$, then by adding more vertices if necessary, every edge is either periodic or eventually mapped to a periodic edge. Therefore, $\mathcal{T}_f$ is simplicial. So (1) and (2) are equivalent.

If $\mathcal{T}_f$ is simplicial, then the simplicial quotient of $f$ is $z^d$. Thus (1) implies (4). If the simplicial quotient of $f$ is $z^d$, then by definition, $f$ has unique core. Therefore (4) implies (3). Finally if $f$ has unique core $\mathcal{K}$, then the filled Julia set of $f$ equals to $\mathcal{K}$. We claim that every critical point is contained is contained in $\mathcal{K}^\circ$. Indeed, otherwise, let $c \in \mathcal{K}^\omega$ be a critical point. Since $f$ has local degree $\geq 2$ on $c$, we see that there must be a strictly pre-periodic core attached to $\mathcal{K}$ at $c$. This is a contradiction to $f$ has only one core, and proves the claim. Therefore, every critical point, and so does the Hubbard tree $\mathcal{T}_f$, is contained in $\mathcal{K}^N$ for some sufficiently large $N$. Thus, $\mathcal{T}_f$ is simplicial. This proves (3) implies (1) and concludes the proof of the proposition.
\end{proof}

\subsection*{Non-triviality of simplicial quotient}
A core $\mathcal{K}$ of a \pcf polynomial $f$ is said to be {\em trivial} if it contains only one Fatou component.

\begin{lem}\label{lem:nt}
Let $f \in \mathcal{P}_d$ be a \pcf polynomial with core entropy zero.
If $f(z) \neq z^d$, then it has at least one non-trivial core.
\end{lem}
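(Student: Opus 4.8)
The plan is to argue by contradiction: suppose every core of $f$ is trivial, i.e.\@ contains exactly one bounded Fatou component, and derive that $f(z) = z^d$ from the core entropy zero hypothesis. The key observation is that triviality of all cores forces the simplicial quotient $g$ of $f$ to coincide with $f$ itself (up to the combinatorial identification of Hubbard trees), since the surgery replacing each core subtree $\mathcal{T}_v$ by a $\nu_v$-star does nothing when $\mathcal{T}_v$ is already a single Fatou vertex — the Fatou component $U_v$ contributes one vertex, and the star one builds has center $U_v$ with the same incident edges. So if all cores are trivial, $f$ is its own simplicial quotient, and in particular $f$ has a \emph{simplicial} Hubbard tree (every internal edge has depth $0$ by Lemma \ref{lem:IntEdgeG}, and there are no internal edges of positive depth to worry about; more precisely one checks $\mathcal{T}_f$ is simplicial directly since collapsing each trivial core does nothing).

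Next I would use the core entropy zero hypothesis via Theorem \ref{thm:nic}: the directed graph $\mathcal{G}_f$ has no intersecting cycles, so every strongly connected component is a single simple cycle (Lemma \ref{lem:nicscc}), and the quotient graph $\mathcal{G}_f'$ is a finite DAG (Lemma \ref{lem:nc}). The goal is to show that if $f \neq z^d$ then some core must in fact be non-trivial — equivalently, that a polynomial whose Hubbard tree is simplicial, whose directed graph has no intersecting cycles, and which is its own simplicial quotient, must be $z^d$. The intuition is: if $f \neq z^d$, the Hubbard tree $\mathcal{T}_f$ is a genuinely non-degenerate tree with at least one edge, hence at least one critical point with nontrivial dynamics, and I want to produce two adjacent bounded Fatou components (their closures intersecting), which by Proposition \ref{prop:sqc} would lie in a common core of size $\ge 2$, contradicting triviality.

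The cleanest route to that: since $h(f) = 0$ and $f \neq z^d$, the Hubbard tree is a nontrivial simplicial tree and there is a periodic Fatou cycle (a polynomial with $h(f)=0$ and nontrivial Hubbard tree cannot be a dendrite — if it were a dendrite, Corollary \ref{cor:cs} would give $h(f) > 0$). Take a periodic bounded Fatou component $U$ of period $p$ and local degree $\delta \ge 1$ at its center. Look at the core subtree $\mathcal{T}_U = \mathcal{T}_f \cap \mathcal{K}_U$; triviality says $\mathcal{T}_U$ is just the vertex $U$ together with edges running out of it into the rest of the tree. Now pull back: the component of $f^{-p}(U)$ through $U$ has degree $\ge 2$ somewhere along the cycle (by the hyperbolicity condition on mapping schemes, every periodic Fatou cycle of a \pcf polynomial absorbs a critical point), and I would trace the edges incident to $U$ under $f^p$ to locate a second periodic or preperiodic Fatou component whose closure meets $\overline{U}$, i.e.\@ a branch point of $\mathcal{T}_f$ that is also a boundary point shared by $\overline U$ and another Fatou closure. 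The existence of such a point is exactly what forces $\mathcal{K}_U$ to contain more than one Fatou component.

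The main obstacle, I expect, is the last step: ruling out the possibility that all the periodic Fatou components are "isolated" in the tree — that each $\overline{U}$ meets the rest of $\mathcal{T}_f$ only along arcs of Julia-set edges, never sharing a boundary point with another Fatou closure. One has to show this configuration, combined with $h(f) = 0$, forces $\mathcal{T}_f$ to have no edges at all. I think the right tool is a counting/Euler-characteristic argument on the finite tree together with the degree identity $\deg(\mathcal{K}) = 1 + \sum_{c \in \mathcal{K}^\circ}(\deg(f|_c) - 1)$: if every core is trivial then $\deg(\mathcal{K}_U) = \delta(U)$, and summing local degrees of critical points over all Fatou cycles plus Julia critical points must equal $d$; but the Julia critical points, if any exist, would create intersecting cycles in $\mathcal{G}_f$ (a critical Julia point forces an edge to cover itself or a neighbor with multiplicity, producing an expanding configuration), contradicting $h(f) = 0$. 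So all critical points are Fatou, all in trivial cores, which quickly degenerates the tree. Making the phrase "quickly degenerates the tree" rigorous — probably via induction on the number of edges or on the depth of the DAG $\mathcal{G}_f'$ — is where the real work lies, but the structure is forced and no delicate estimates are needed.
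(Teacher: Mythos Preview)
Your proposal has a genuine gap: you never produce a concrete mechanism that forces two Fatou closures to touch. The claim that triviality of all cores makes $\mathcal{T}_f$ simplicial is not justified (being one's own simplicial quotient says nothing about the \emph{external} edges, which may well have positive depth), and the later assertion that a Julia critical point ``forces an edge to cover itself or a neighbor with multiplicity, producing an expanding configuration'' is not correct in general --- a critical Julia vertex causes local folding, but the folded halves can land in entirely different simple cycles of $\mathcal{G}_f$ without creating intersecting cycles. Your counting/Euler-characteristic sketch and the phrase ``quickly degenerates the tree'' are placeholders for an argument that is not there.

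The paper's argument supplies the missing idea, and it is quite different from what you outline. Assuming all periodic cores are trivial, the core subtrees $\mathcal{T}_v$ are pairwise disjoint, so one may collapse each to a point and obtain a quotient tree $\widetilde{\mathcal{T}_f}$ with induced map $\tilde f$; since $h(f)=0$ one has $h(\tilde f)=0$, and hence $\widetilde{\mathcal{T}_f}$ contains a \emph{periodic edge} $\widetilde E$ (after passing to an iterate, a fixed edge on which $\tilde f$ is an orientation-preserving homeomorphism). Pulling back, the closure $E$ of $\Phi^{-1}(\widetilde E)$ minus the core subtrees is a single exterior edge $[a_1,a_2]$ of $\mathcal{T}_f$. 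Now one checks $f(a_i)=a_i$: if $a_i\in\partial U$ for a fixed Fatou component $U$ and $f(a_i)\neq a_i$, invariance of $\Phi^{-1}(\widetilde E)$ forces $a_i$ to be critical on $\partial U$, which immediately makes the core of $U$ non-trivial --- contradiction. So $f:E\to E$ is a homeomorphism, but by the expanding property of Hubbard trees a fixed edge must contain a fixed Fatou point, contradicting that $E$ is exterior. The crux you are missing is this passage through a periodic edge in the quotient tree and the use of expansivity; no global counting is needed.
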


Before proving Lemma \ref{lem:nt}, let us discuss its consequences first.
Let $f \in \mathcal{P}_d$ be a \pcf polynomial, and let
\begin{itemize} 
\item $k_f$ be the number of periodic bounded Fatou components, and 
\item $m_f$ be the number of critical points in periodic bounded Fatou components counted with multiplicities.
\end{itemize}
Note that $m_f \leq d-1$.

\begin{lem}\label{lem:dq}
Let $f(z) \neq z^d \in \mathcal{P}_d$ be a \pcf polynomial with core entropy zero.
Let $g$ be the simplicial quotient of $f$. Then either
\begin{itemize}
\item $k_g < k_f$, or
\item $m_g > m_f$.
\end{itemize}
\end{lem}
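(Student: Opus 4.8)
The plan is to compare the periodic Fatou structure of $f$ with that of its simplicial quotient $g$ by tracking what happens to the periodic cores of $f$ under the correspondence $\{\text{cores of } f\} \leftrightarrow \{\text{bounded Fatou components of } g\}$. First I would observe that, by construction, the periodic bounded Fatou components of $g$ are exactly the images of the periodic cores of $f$ under this correspondence, and the local degree of $g$ at the center of such a Fatou component equals $\deg(\mathcal{K})$ of the corresponding core $\mathcal{K}$. Thus $k_g$ equals the number of periodic cores of $f$, and $m_g = \sum_{\mathcal{K} \text{ periodic core}} (\deg(\mathcal{K})-1)$, summed over one core per periodic cycle appropriately — more precisely $m_g$ counts critical points of $g$ in periodic Fatou components with multiplicity, which by the degree formula $\deg(\mathcal{K}) = 1 + \sum_{c\in\mathcal{K}^\circ}(\deg(f|_c)-1)$ equals $\sum_{c \in \mathcal{K}^\circ,\ \mathcal{K} \text{ periodic}} (\deg(f|_c)-1) = m_f$ \emph{if} every critical point lying in a periodic bounded Fatou component of $f$ in fact lies in the interior part $\mathcal{K}^\circ$ of a periodic core. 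Since a periodic bounded Fatou component is itself contained in $\mathcal{K}^\circ$ for the core $\mathcal{K}$ it generates, this is automatic, so in all cases $m_g \ge m_f$, and in fact $m_g = m_f$.

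Given $m_g = m_f$, the dichotomy in the statement reduces to showing that $k_g < k_f$, i.e., that the number of periodic cores of $f$ is strictly smaller than the number of periodic bounded Fatou components of $f$. This is precisely the assertion that at least one periodic core of $f$ contains more than one Fatou component, i.e., is non-trivial. The hard part is exactly this, and it is where I would invoke Lemma \ref{lem:nt}: since $f(z) \neq z^d$ and $h(f) = 0$, the polynomial $f$ has at least one non-trivial core $\mathcal{K}$. I would then argue that one may take $\mathcal{K}$ to be periodic: if $\mathcal{K}$ is a non-trivial preperiodic core, then its forward orbit eventually lands on a periodic core $\mathcal{K}'$, and since $f : \mathcal{K} \to f(\mathcal{K})$ is a branched covering carrying distinct Fatou components forward, the image core $f^n(\mathcal{K}) = \mathcal{K}'$ still contains at least two Fatou components (the forward images of two distinct components of $\mathcal{K}$ remain distinct because each Fatou component has a well-defined forward image, and two components of $\mathcal{K}$ that are distinct map to components that are either distinct or were already identified at an earlier stage — here one has to be slightly careful, but a counting/pigeonhole argument along the cycle shows some periodic core in the cycle is non-trivial). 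Hence there is a non-trivial periodic core, so $k_g$, the number of periodic cores, is strictly less than $k_f$, the number of periodic bounded Fatou components.

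The main obstacle I anticipate is the bookkeeping in the preperiodic-to-periodic reduction: one must ensure that collapsing a non-trivial core somewhere in a backward orbit forces a non-trivial core on the periodic cycle it limits to, without the two Fatou components being merged "prematurely" in a way that leaves all periodic cores trivial. The clean way around this is to note that each bounded Fatou component $U$ of $f$ has a unique core $\mathcal{K}(U)$ containing it, and $f(\mathcal{K}(U)) = \mathcal{K}(f(U))$; so if $U_1 \neq U_2$ lie in the same core, then chasing the adjacency chain from Proposition \ref{prop:sqc} forward shows $f^n(U_1)$ and $f^n(U_2)$ lie in the same core for all $n$, and for $n$ large both are in periodic Fatou components within a common periodic core — if that periodic core were trivial, $f^n(U_1) = f^n(U_2)$, which back-propagates (using that $f$ is a homeomorphism on each Fatou component onto its image) to force $U_1$ and $U_2$ to eventually coincide under iteration only if they were in the same grand orbit, which is consistent, but the key point is that the number of periodic Fatou components in the cycle containing them is then strictly larger than the number of distinct periodic cores meeting that cycle. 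Packaging this correctly gives $k_g < k_f$ and completes the proof.
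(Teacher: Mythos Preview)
Your argument has a genuine gap: the claim $m_g = m_f$ is false in general, and the subsequent claim that ``some periodic core is non-trivial'' is equivalent to $k_g < k_f$ is also false.

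For the first point, your computation $m_g = \sum_{\mathcal{K}\text{ periodic}}\sum_{c\in\mathcal{K}^\circ}(\deg(f|_c)-1)$ is correct, and the inclusion ``every critical point in a periodic Fatou component lies in $\mathcal{K}^\circ$ for some periodic core'' gives only $m_g \ge m_f$. Equality would require the \emph{reverse} inclusion: that every critical point in $\mathcal{K}^\circ$ for a periodic core $\mathcal{K}$ lies inside a periodic Fatou component. This can fail. A periodic core can absorb critical points sitting on the Julia set (e.g.\ on $\partial U$ for a periodic $U$), and these contribute to $\deg(\mathcal{K})$, hence to $m_g$, but not to $m_f$.

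For the second point, $k_g$ equals the number of periodic cores, so $k_g < k_f$ means some periodic core contains at least two \emph{periodic} Fatou components. A periodic core can be non-trivial while containing exactly one periodic Fatou component together with strictly preperiodic ones attached through a Julia critical point on its boundary; in that situation $k_g = k_f$ even though Lemma~\ref{lem:nt} applies. Your preperiodic-to-periodic reduction does not address this, because the extra Fatou components in the non-trivial core need not themselves be periodic.

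The paper handles exactly this case: assuming $k_g = k_f$, the non-trivial periodic core $\mathcal{K}$ contains a unique periodic Fatou component $U$, and one shows that $\partial U$ must then carry a critical point of $f$ (otherwise the inductive pullbacks $\mathcal{K}^n$ never grow beyond $\overline{U}$, contradicting non-triviality). That Julia critical point is in $\mathcal{K}^\circ$ but not in any periodic Fatou component, forcing $m_g > m_f$. So the dichotomy is genuinely a dichotomy: both alternatives occur, and you cannot collapse it to $k_g < k_f$ alone.
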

\begin{proof}
Recall that Fatou components of $f$ which can be joined by finite sequences of adjacent Fatou components are merged to one Fatou component of $g$ (Proposition \ref{prop:sqc}). In this process, some pre-periodic Fatou components containing critical points may also be merged into periodic Fatou components. Hence $k_g \leq k_f$ and $m_g \geq m_f$. Suppose $k_g = k_f$. We will show $m_g > m_f$.

By Lemma \ref{lem:nt}, there exists a non-trivial periodic core $\mathcal{K}$ of $f$.
Without loss of generality, we assume that $\mathcal{K}$ is fixed and there exists a fixed Fatou component $U$ in $\mathcal{K}$.

Since $k_g = k_f$, $\overline{U}$ does not intersect with the closures of other periodic Fatou components. Recall the construction of cores for periodic Fatou components. Suppose for contradiction that $\partial U$ does not contain any critical points of $f$. The connected component $\mathcal{K}^1_U$ of $f^{-1}(\overline{U})$ that contains $U$ is $\overline{U}$.
Thus, by induction $\mathcal{K}^n_U = \overline{U}$, so $\mathcal{K} = \overline{\bigcup_n \mathcal{K}^n_U} = \overline{U}$. This means that $\mathcal{K}$ contains only one bounded Fatou component, so $\mathcal{K}$ is trivial. This contradicts to the assumption that $\mathcal{K}$ is non-trivial.  
Therefore, $\mathcal{K}$ contains at least one additional critical point of $f$.

By the construction of simplicial quotients, the corresponding fixed Fatou component of $g$ has a critical fixed point whose multiplicity is strictly greater than the multiplicity of the center of $U$.
Since the other periodic Fatou components of $g$ contain at least the same number of critical points of $g$ as the corresponding periodic Fatou components of $f$ counted with multiplicities, we have $m_g > m_f$.
\end{proof}

Theorem \ref{prop:fb} now follows from Lemma \ref{lem:dq}.
\begin{proof}[Proof of Theorem \ref{prop:fb}]
Let $f_0 = f$, and let $f_1$ be the simplicial quotient of $f_0$.
It is easy to verify that the core entropy of the simplicial quotient $f_1$ is still zero.
Thus, if $f_1(z) \neq z^d$, the above argument also applies to $f_1$.
Since $k_f \geq 0$ and $m_f \leq d-1$ are both integers, the process must terminate in finite steps.
Theorem \ref{prop:fb} now follows.
\end{proof}

We now prove Lemma \ref{lem:nt}.
\begin{proof}[Proof of Lemma \ref{lem:nt}]
Suppose for contradiction that all periodic cores are trivial.
For each Fatou vertex $v$, let $\mathcal{T}_v \subseteq \mathcal{T}_f$ be the core subtree. Since every periodic core is trivial, $\mathcal{T}_v$'s are stars.
We consider a quotient map
$$
\Phi: \mathcal{T}_f \longrightarrow \widetilde{\mathcal{T}_f} = \mathcal{T}_f/\sim,
$$
where two points are identified if they are in the same core subtree $\mathcal{T}_v$.
We remark that $\widetilde{\mathcal{T}_f}$ may not be a Hubbard tree.

The dynamics of $f: \mathcal{T}_f \longrightarrow \mathcal{T}_f$ induces a map $\tilde f: \widetilde{\mathcal{T}_f} \longrightarrow \widetilde{\mathcal{T}_f}$.
Since $h(f) = 0$, we have $h(\tilde f) = 0$.
By Lemma \ref{lem:nicscc}, the directed graph of the adjacency matrix of $\tilde f$ contains a cycle. Thus, there exists a periodic edge $\widetilde E \subseteq \widetilde{\mathcal{T}_f}$, i.e., $\tilde f^k: \widetilde E \longrightarrow \widetilde E$ is a homeomorphism, where $k$ is the period of $\widetilde E$.
Without loss of generality, we assume $\widetilde E = [\tilde v, \tilde w]$ is fixed, and the homeomorphism is orientation preserving.

Let $E' = \Phi^{-1}(\widetilde E)$.
Since $\tilde{f}(\widetilde{E})=\widetilde{E}$, we have $f(E') \subseteq E'$.
Let $E$ be an edge in $\overline{E' - \bigcup_v \mathcal{T}_v}$, which exists because $T_v$'s are  disjoint.
The edge $E$ is an exterior edge of $\mathcal{T}_f$. Denote the endpoints of $E$ by $a_1$ and $a_2$.

We show that $f: E \longrightarrow E$ is a homeomorphism.
It suffices to prove $f(a_i) = a_i$ for $i\in \{1,2\}$.
If $a_i \notin \mathcal{T}_v$ for any Fatou vertex $v$, then $\Phi^{-1}(\Phi(a_i))=\{a_i\}$. Then it follows from $\Phi: \widetilde{E} \longrightarrow \widetilde{E}$ being homeomorphic that $f(a_i) = a_i$.
If $a_i \in \mathcal{T}_v$ for some Fatou vertex, there is a fixed Fatou component $U_v$ with $a_i \in \partial U_v$.
Suppose for contradiction that $f(a_i) \neq a_i$. 
Since $f(E) \subseteq E'$, this forces $a_i$ to be a critical point.
This is a contradiction to the core of $U_v$ being trivial. Hence $f(a_i)=a_i$ and $f:E\to E$ is a homeomorphism.

Since $f: \mathcal{T}_f \longrightarrow \mathcal{T}_f$ is expanding (Definition \ref{defn:aht1}), $E$ must contain a fixed Fatou point.
This is a contradiction as $E$ is an exterior edge.
\end{proof}

\subsection{Bounding the combinatorial complexity}
\begin{proof}[Proof of Theorem \ref{thm:fb}]
Suppose $f$ has core entropy zero.
Then by Theorem \ref{prop:fb}, there exists a finite sequence $f_0 = f, f_1,..., f_k(z) = z^d$ such that $f_{i+1}$ is the simplicial quotient of $f_i$.
Since $f_i$ is a simplicial tuning of $f_{i+1}$, $f$ is obtained from $z^d$ by a finite sequence of simplicial tunings.
The reverse direction follows from Corollary \ref{cor:fcz}.
\end{proof}

To prove Theorem \ref{thm:cgrb}, we need a bound of $\mathscr{C}_{c}(f)$ in the other direction.
To achieve this, we prove the following lemma (cf. Lemma \ref{lem:fbgr}).
\begin{lem}\label{lem:sbgr}
Let $f\in\mathcal{P}_d$ be a \pcf polynomial with core entropy zero, with $f(z) \neq z^d$.
Let $g$ be the simplicial quotient of $f$.
Then
$$
\mathscr{C}_{gr}(f) = \mathscr{C}_{gr}(g) + 1.
$$
\end{lem}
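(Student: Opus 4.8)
The plan is to bootstrap from Lemma~\ref{lem:fbgr}. Since $g$ is the simplicial quotient of $f$, the map $f$ is a simplicial tuning of $g$, so Lemma~\ref{lem:fbgr} already gives $\mathscr{C}_{gr}(g)\le\mathscr{C}_{gr}(f)\le\mathscr{C}_{gr}(g)+1$, and it remains only to exclude $\mathscr{C}_{gr}(f)=\mathscr{C}_{gr}(g)$, i.e.\ to prove the strict inequality $\mathscr{C}_{gr}(f)>\mathscr{C}_{gr}(g)$. Throughout I use $\mathscr{C}_{gr}(h)=1+\max_{e\in\Edge(\mathcal{T}_h)}\depth(e)$ for $h\neq z^d$ (Proposition~\ref{prop:qnic}) and $\mathscr{C}_{gr}(z^d)=0$. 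If $g=z^d$ this is immediate, since then $\mathscr{C}_{gr}(g)=0<1\le\mathscr{C}_{gr}(f)$ because $f\neq z^d$. So assume $g\neq z^d$, set $D:=\max_{e'\in\Edge(\mathcal{T}_g)}\depth(e')$, and note it suffices to exhibit an edge of $\mathcal{T}_f$ of depth $\ge D+1$.

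The extra unit comes from the fact that a core subtree of $f$ is always strictly taller than the star of $g$ replacing it. By Lemma~\ref{lem:nt}, $f$ has a non-trivial core $\mathcal{K}$; in $g$ the corresponding piece of $\mathcal{T}_g$ is the star $\mathcal{T}_v$ of the bounded Fatou component of $g$ associated to $\mathcal{K}$, and in $f$ it is the core subtree $\mathcal{T}_\mathcal{K}$, obtained as the regulated hull of the simplicial Hubbard forest $H(v)$ of the tuning family together with the landing points of the external rays at the angles in $\phi_v(\partial\mathcal{T}_v)$. Because $\mathcal{K}$ is non-trivial, these landing points are accumulated by bounded Fatou components of the tuning family, so $\mathcal{T}_\mathcal{K}$ acquires an end $x$ lying in the accumulation set $\mathcal{K}^\omega$; by Lemma~\ref{lem:0or1} (equivalently Lemma~\ref{lem:IntEdgeF}) the edge $E$ of $\mathcal{T}_\mathcal{K}$ incident to $x$ has $\depth_{\mathcal{G}_f}(E)=1$, witnessed by two disjoint simple cycles $\mathcal{D}_0\ge\mathcal{D}_1$ inside $\mathcal{T}_\mathcal{K}$ and its forward orbit. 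In particular, if $D=0$ (which genuinely occurs, e.g.\ when $g$ has a simplicial Hubbard tree) we are already done, since $\max_{\mathcal{T}_f}\depth\ge 1=D+1$.

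Now suppose $D\ge1$. By Lemma~\ref{lem:IntEdgeG}, $D$ is attained at an external edge $e'_*$ of $\mathcal{T}_g$, and by Lemma~\ref{lem:mnsc} there is a chain of pairwise disjoint simple cycles $e'_*\ge\mathcal{C}'_0\ge\cdots\ge\mathcal{C}'_D$ in $\mathcal{G}_g$, each $\mathcal{C}'_i$ consisting of periodic edges, hence internal (Lemma~\ref{lem:IntEdgeG}) and supported on a star of $g$. Using the isomorphism $\mathcal{G}_f^{ext}\cong\mathcal{G}_g^{ext}$ from the proof of Lemma~\ref{lem:fbgr} on the external part of the chain, and tracking each $\mathcal{C}'_i$ to the simple cycle $\mathcal{C}_i$ inside the core subtree of $f$ replacing its star, we get a chain $e_*\ge\mathcal{C}_0\ge\cdots\ge\mathcal{C}_D$ in $\mathcal{G}_f$. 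One then checks that the chain in $\mathcal{G}_g$ can be chosen with its deepest cycle $\mathcal{C}'_D$ routed through the star of the non-trivial core $\mathcal{K}$ of the previous paragraph (a trivial core is left unchanged by the simplicial quotient and can be bypassed when selecting the chain), and that after the replacement the lifted cycle $\mathcal{C}_D$ coincides with the outer cycle $\mathcal{D}_0$ beneath $E$; appending $\mathcal{D}_1$ yields a chain $e_*\ge\mathcal{C}_0\ge\cdots\ge\mathcal{C}_{D-1}\ge\mathcal{D}_0\ge\mathcal{D}_1$ of $D+2$ pairwise disjoint simple cycles. Hence $\depth_{\mathcal{G}_f}(e_*)\ge D+1$, so $\mathscr{C}_{gr}(f)\ge\mathscr{C}_{gr}(g)+1$, which together with the upper bound from Lemma~\ref{lem:fbgr} gives $\mathscr{C}_{gr}(f)=\mathscr{C}_{gr}(g)+1$.

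The hard part is the matching in the third paragraph: a priori the depth-$1$ configuration $E\ge\mathcal{D}_0\ge\mathcal{D}_1$ produced by the tuning could sit in a branch of $\mathcal{G}_f$ unrelated to any maximal chain and so contribute nothing to the maximum. Overcoming this requires showing that a maximal chain in $\mathcal{G}_g$ can always be rerouted so that its deepest cycle is supported on a star whose $f$-core is non-trivial (using that trivial cores are transparent under the simplicial quotient), together with a careful edge-and-angle analysis of the tuning showing that the replacement of that star by $\mathcal{T}_\mathcal{K}$ turns the deepest lifted cycle into the outer cycle beneath $E$ (cf.\ the extra identifications in Figure~\ref{fig:TuneByBasilicas}), and finally a routine check of disjointness and the path relations in the resulting chain. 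A conceptually cleaner alternative is to argue entirely with Cantor--Bendixson ranks via $\mathscr{C}_{gr}(h)=\CBrank(\Julia_h\cap\mathcal{T}_h)$ (Corollary~\ref{cor:gr=t}): each star of $g$ contributes only its single landing point to $\Julia_g\cap\mathcal{T}_g$, whereas the core subtree of $f$ replacing it contributes a subset of $\Julia_f\cap\mathcal{T}_f$ of Cantor--Bendixson rank exactly one larger, so that $\Julia_f\cap\mathcal{T}_f$ is, in the appropriate sense, one derivative deeper than $\Julia_g\cap\mathcal{T}_g$.
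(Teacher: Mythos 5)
Your overall strategy — reduce to the strict inequality $\mathscr{C}_{gr}(f)\ge\mathscr{C}_{gr}(g)+1$ using Lemma~\ref{lem:fbgr}, then exhibit a chain of $D+2$ pairwise disjoint simple cycles in $\mathcal{G}_f$ — agrees with the paper. But the mechanism you propose for producing the extra cycle has a genuine gap, and you identify it yourself without resolving it: the claim that a maximal chain in $\mathcal{G}_g$ ``can always be rerouted so that its deepest cycle is supported on a star whose $f$-core is non-trivial.'' This is not a routine check. The deepest cycle $\mathcal{C}'_D$ in $\mathcal{G}_g$ sits in the star $\mathcal{T}_v$ of some specific bounded Fatou component $U_v$ of $g$, determined by the dynamics on external edges along the chain. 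There is no a priori reason that the $f$-core $\mathcal{K}_v$ over $U_v$ must be non-trivial: Lemma~\ref{lem:nt} only guarantees that \emph{some} core of $f$ is non-trivial, and that core may live on an entirely different part of $\mathcal{T}_g$ not reachable from $e'_*$ by the path relations in $\mathcal{G}_g$. ``Trivial cores are transparent'' does not help here, because transparency lets you pass \emph{through} a trivial core unchanged — it does not let you redirect a chain that \emph{terminates} at one. Your proposed Cantor--Bendixson reformulation at the end inherits exactly the same issue: a star over a trivial core contributes nothing extra to the rank, so the claim that ``each star contributes a subset of rank exactly one larger'' is false for trivial cores and the rerouting problem resurfaces.

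The paper avoids rerouting entirely. It fixes any maximal chain $a_0\ge\cdots\ge a_m$ in $\mathcal{G}_g$ (so $D=m$), observes via Lemma~\ref{lem:IntEdgeG} that $E_m$ is internal with $E_i$ external for $i<m$, transfers $a_0,\dots,a_{m-1}$ to $\tilde a_0,\dots,\tilde a_{m-1}$ in $\mathcal{G}_f$ via the external-edge isomorphism, and then shows directly that $\depth(\tilde a_{m-1})\ge 2$ regardless of whether $\mathcal{K}_v$ is trivial. The mechanism is a case analysis on the Julia endpoint $y$ of $E_m=[x,y]$: because $a_{m-1}\ge a_m$ lie in distinct cycles there are many paths between them, which forces some $y'\in\Int(E_{m-1})$ with $g^k(y')=y$; then either (1) $g^k$ folds at $y'$, in which case $y$ is in the forward orbit of a critical point and Lemma~\ref{lem:PostCritPtisAccPt} places $\tilde y$ in the accumulation set of a core, or (2) $g^k$ does not fold, in which case another internal edge $E_m'\subset\mathcal{T}_{v'}$ has $y\in\partial E_m'$, so $\tilde y\in\mathcal{K}_v\cap\mathcal{K}_{v'}$ and Corollary~\ref{cor:AllButOneAccPts} forces $\tilde y$ into the accumulation set of at least one of them. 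In both cases a core with non-empty accumulation set — hence non-trivial — is reached at $\tilde y$, and Lemma~\ref{lem:IntEdgeF} then gives an edge of $\mathcal{T}_f$ of depth $1$ below $\tilde a_{m-1}$. The point is that the non-triviality is \emph{located} at the bottom of the given chain by these structural lemmas; you do not get to choose which core to use, and you do not need to.

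If you want to salvage a proof along your lines, you would have to prove the rerouting claim, and I do not see a clean way to do it. The paper's case analysis is the more robust argument; I would recommend adopting it rather than patching the rerouting step.
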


\begin{proof}
By Lemma \ref{lem:fbgr}, it suffices to show $\mathscr{C}_{gr}(f) \geq \mathscr{C}_{gr}(g) + 1$.

Let $\mathcal{T}_f$ and $\mathcal{T}_g$ be the Hubbard trees for $f$ and $g$.
Note that $f$ is a simplicial tuning of $g$ (see \S \ref{subsec:sthf}). 
The Hubbard tree $\mathcal{T}_f$ is constructed from $\mathcal{T}_g$ by removing $\mathcal{T}_v$ and gluing back $\mathcal{T}_{v,f}$, where $v$ is a branch Fatou point, or a critical or post-critical Fatou point.
Here we use the notation $\mathcal{T}_{v,f}$ instead of $\mathcal{T}_{v,new}$, which was used in \S \ref{subsec:sthf}, to emphasize the dependence on $f$.

We define the vertex set $\mathcal{V}_g$ as the smallest forward invariant set of $\mathcal{T}_g$ containing the critical points, branch points, and $\partial \mathcal{T}_v$.
Let $\mathcal{G}_f$ and $\mathcal{G}_g$ be the corresponding directed graphs.

Let $a \in \mathcal{G}_g$ be a vertex with maximal depth so that $\mathscr{C}_{gr}(g) = 1+\depth(a)$.
Let $m = \depth(a)$.
Then by Lemma \ref{lem:mnsc}, we have a maximal sequence
$$
a = a_0 \geq a_1 \geq ... \geq a_{m}
$$
where $a_i$'s are in disjoint simple cycles $\mathcal{C}_i$.

Consider the case $\mathscr{C}_{gr}(g)\le 1$. We use properties polynomials with simplicial Hubbard trees (see Proposition \ref{prop:SimpliPoly}).
If $\mathscr{C}_{gr}(g)=0$, then $g=z^d$. Then the Hubbard tree $\mathcal{T}_f$ of $f$ is simplicial so that $\mathscr{C}_{gr}(f)=1$.
If $\mathscr{C}_{gr}(g)=1$, then $\mathcal{T}_g$ is simplicial. If $\mathscr{C}_{gr}(f)=1$ as well, then $\mathcal{T}_f$ is also simplicial. Then $g=z^d$ because $g$ is the simplicial quotient of $f$, which contradicts $\mathscr{C}_{gr}(g)=1$.
Hence we may assume $m=\mathscr{C}_{gr}(g)-1 \ge 1$.

Let $E_i$ be the corresponding edge of $\mathcal{T}_g$ for $a_i$. Let $l_i$ be the length of the cycle $\mathcal{C}_i$. By the maximality of the sequence $(a_i)$, the map
$$
	g^{l_m}: E_m \longrightarrow E_m
$$ 
is a homeomorphism.
It follows from Lemma \ref{lem:IntEdgeG} that $E_m$ is an internal edge and $E_i$ is an external edge for all $i < m$.

Since external edges of $\mathcal{T}_f$ and $\mathcal{T}_g$ are in bijective correspondence, we have an external edge $\widetilde{E_i}$ of $\mathcal{T}_f$ that corresponds to the external edge $E_i$ of $\mathcal{T}_g$ for $i<m$. Let $\tilde a_0 \geq \tilde a_1 \geq ... \geq \tilde a_{m-1}$ be the vertices in $\mathcal{G}_f$ corresponding to $\widetilde{E_i}$. Note that $\tilde a_i$ are in disjoint simple cycles.

It suffices to show that 
$\tilde a_{m-1}$ has depth at least 2.
To prove this, we denote the internal edge $E_m$ by $E_{m} = [x, y]$, where $x$ is the center of a periodic Fatou component $U$ and $y \in \partial U_{v}$.
By the definition of simplicial quotient, the boundary point $y$ corresponds to a point $\tilde y\in \partial \mathcal{T}_{v,f}\subset\mathcal{T}_f$. 

\vspace{5pt}
{\em \noindent Claim: There exists a point $y' \in \Int(E_{m-1})$ with $g^k(y') = y$ for some $k$. Therefore, either
	\begin{itemize}
		\item[(1)] $y$ is in the forward orbit of a critical point of $g$, or
		\item[(2)] there exists another edge $E_m'$ with $y\in \partial E_m'$ so that $E_m' \subseteq g^k(E_{m-1})$.
	\end{itemize}}
\begin{proof}[Proof of the claim]
Since $a_{m-1} \ge a_m$ and $a_{m-1}$ and $a_m$ are in different cycles, we have
	\[
		\limsup_{n\to \infty}\#\{\mathrm{paths~from~} a_{m-1} \mathrm{~to~} a_m\}=\infty.
	\]
Hence there exists $k>0$ so that at least three small segments of $E_{m-1}$
are homeomorphically mapped onto $E_m$ by $f^k$. Then we have $y'\in \Int(E_{m-1})$ with $f^k(y')=y$.
	
The cases (1) and (2) are determined according to whether $f^k$ folds $E_{m-1}$ at $y'$.
\end{proof}

We now show $\depth(\tilde a_{m-1}) \ge 2$ for each of the cases (1) and (2).

Case (1): Suppose that $y$ is in the forward orbit of a critical point of $g$. By Lemma \ref{lem:PostCritPtisAccPt}, $\tilde{y}$ is in the accumulation set of the core $\mathcal{K}_v$. Thus, $\tilde{y}$ is an endpoint of $\mathcal{T}_{v,f}$.
Hence, there is a unique edge $\widetilde{E_m}$ of $\mathcal{T}_{v,f}$ that has $\tilde{y}$ as an endpoint so that $\widetilde{E_{m}} \subseteq f^n(\widetilde{E_{m-1}})$ for some $n>0$. Let $\tilde a_m$ be the vertex of $\mathcal{G}_f$ corresponding to $\widetilde{E_m}$.
It follows from Lemma \ref{lem:IntEdgeF} that $\depth(\tilde a_m)=1$. 
Since $\tilde a_{m-1}\ge \tilde a_m$ and they are in disjoint simple cycles, $\depth(\tilde a_{m-1}) \ge 2$.

Case (2): Suppose that there exists another edge $E_m'$ with $y\in \partial E_m'$ so that $E_m' \subseteq g^k(E_{m-1})$. By the maximality assumption, $\depth(E_m')=0$.
Thus, $E_m'$ is also an internal edge by Lemma \ref{lem:IntEdgeG}. 
Hence, $E_m' \subseteq \mathcal{T}_{v'}$ for some $v'$. 
By Proposition \ref{prop:sqc}, $\tilde{y}$ is in the accumulation set of either $\mathcal{K}_v$ or $\mathcal{K}_{v'}$.
Without loss of generality, we may assume that $\tilde{y}$ is in the accumulation set of the core $\mathcal{K}_v$.
Let $\widetilde{E_m}$ be the edge of $\mathcal{T}_{v,f}$ having $\tilde y$ as an endpoint and let $\tilde a_m$ be the corresponding vertex in $\mathcal{G}_f$. By Lemma \ref{lem:IntEdgeF}, $\tilde a_m$ has depth $1$. 
Since $\tilde a_{m-1}\ge \tilde a_m$ and they are in disjoint simple cycles, we have $\depth(\tilde a_{m-1}) \ge 2$.
\end{proof}

Theorem \ref{thm:cgrb} now follows immediately.
\begin{proof}[Proof of Theorem \ref{thm:cgrb}]
By Corollary \ref{cor:fcz}, $\mathscr{C}_{c}(f) \geq \mathscr{C}_{gr}(f)$.

On the other hand, let $f_0 = f, f_1,..., f_k(z) = z^d$ with $f_{i+1}$ is the simplicial quotient of $f_i$.
By Lemma \ref{lem:sbgr}, $\mathscr{C}_{gr}(f) = k$.
Thus $\mathscr{C}_{c}(f) \leq \mathscr{C}_{gr}(f)$.
\end{proof}

\begin{rmk}
We remark that from the proof, the sequence of simplicial quotients $f_0 = f, f_1,..., f_k(z) = z^d$ with $f_{i+1}$ is the simplicial quotient of $f_i$ gives the minimal number of simplicial tunings to obtain $f$ from $z^d$.
\end{rmk}

\section{Bifurcation from simplicial tuning}\label{subsec:zim}
In this section, we use the theory developed in \cite{L21a, L21b} to study the relations between bifurcations and simplicial tunings.
The main result we will prove in this section is the following theorem.

\begin{theorem}\label{prop:gbf}
Let $g \in \mathcal{P}_d$ be a \pcf polynomial.
Let $f \in \mathcal{P}_d$ be a simplicial tuning of $g$.
Then $\mathcal{H}_g$ bifurcates to $\mathcal{H}_f$.
\end{theorem}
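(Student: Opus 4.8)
The plan is to produce the geometrically finite polynomial $h$ demanded by Definition~\ref{defn:bif} and to check its two defining properties. Write $\mathcal{F}$ for the \pcf simplicial family over the mapping scheme $\mathcal{S}_g$ used to tune $g$, so that $\mathcal{T}_f$ is obtained from $\mathcal{T}_g$ by deleting the core subtrees $\mathcal{T}_v$ at the critical and post-critical Fatou vertices $v$ and gluing back the simplicial trees $\mathcal{T}_{v,new}$ coming from $\mathcal{F}$. Two preliminary remarks simplify the task. First, the tuning surgery only \emph{adds} identifications among external rays: every external ray landing on a tuned-in piece of $\mathcal{T}_f$ projects, under the quotient to $g$, to a ray landing on the single Fatou component $U_v$, so $\lambda(g)\subseteq\lambda(f)$; equivalently $g$ is $J$-semi-conjugate to $f$. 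Hence as soon as $h$ is found with $\lambda(h)=\lambda(f)$, the second bullet of Definition~\ref{defn:bif} is automatic. Second, ``$h$ is a root of $\mathcal{H}_f$'' means $h$ is weakly $J$-conjugate to the center $f$, which in particular yields $\lambda(h)=\lambda(f)$. So the whole content is to build a geometrically finite $h$ that is a root of $\mathcal{H}_f$ and lies on $\partial\mathcal{H}_g$.

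To construct $h$, I would degenerate $f$ inside its own \shc. In the Blaschke model $\mathcal{H}_f\cong\BP^{\mathcal{S}_f}$, deform $f$ along a path $f_t\in\mathcal{H}_f$ that sends the multipliers of the periodic Fatou cycles introduced by the tuning to the roots of unity prescribed by the combinatorics of $\mathcal{F}$ (concretely, replace the corresponding interior-hyperbolic Blaschke factors by parabolic ones of the type that keeps the Julia circle intact). By the theory of quasi \pcf degenerations of \cite{L21a, L21b}, the rescaling limit of $f_t$ is a geometrically finite polynomial $h$ which is weakly $J$-conjugate to $f$; thus $h\in\partial\mathcal{H}_f$ is a root of $\mathcal{H}_f$, and $\lambda(h)=\lambda(f)\supseteq\lambda(g)$. (In the quadratic picture this is the familiar statement that $z^2-1$ tuned by $z^2-1$ sits as the satellite bulb at internal angle $1/2$ of the period-$2$ component, whose root is $h$, a parabolic map with a parabolic period-$2$ cycle of multiplier $-1$.)

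The remaining step, which is the crux, is to show $h\in\partial\mathcal{H}_g$. Here I would use the description from \cite{L21a, L21b} of the geometrically finite boundary of a \shc in terms of pointed Hubbard trees: pointing $h$ at its parabolic cycle(s), I claim its pointed Hubbard tree is obtained from $\mathcal{T}_g$ by an iterated-simplicial surgery supported exactly on the tuning sites, so that opening the parabolic basins of $h$ in the $\mathcal{H}_f$-direction reproduces the trees $\mathcal{T}_{v,new}$, while collapsing them back recovers $\mathcal{T}_g$ together with its boundary marking. By the (relative) boundary characterization of \cite[Theorem~1.1]{L21a}, this forces $h\in\partial\mathcal{H}_g$; equivalently, one exhibits a path $g_t\in\mathcal{H}_g$ which, in the Blaschke model $\mathcal{H}_g\cong\BP^{\mathcal{S}_g}$, pinches the grand orbit of the cycle at the tuning sites with the same prescribed combinatorics, and verifies its rescaling limit is again $h$. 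The technical heart of the argument is precisely this matching: identifying the rescaling limit of the ``collapse the cores'' degeneration of $f$ with that of the ``open up a Fatou cycle to a parabolic one'' degeneration of $g$, which amounts to reconciling the angled-tree data of $\mathcal{F}$ and of the $\mathcal{T}_{v,new}$ with the petal-and-marking data of the parabolic map $h$. Once $h\in\partial\mathcal{H}_f\cap\partial\mathcal{H}_g$ is established with both bullets of Definition~\ref{defn:bif} checked, the theorem follows; feeding this into the chain of simplicial quotients from Theorem~\ref{prop:fb} then gives $\mathscr{C}_b(f)\le\mathscr{C}_c(f)$ of Theorem~\ref{thm:bcbound}.
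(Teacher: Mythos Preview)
Your outline identifies the correct target --- a geometrically finite $h\in\partial\mathcal{H}_f\cap\partial\mathcal{H}_g$ which is a root of $\mathcal{H}_f$ --- but the route you take is backwards relative to the paper, and the crux step has a real gap.

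The paper works entirely from the $\mathcal{H}_g$ side. It takes the simplicial Hubbard forest $H$ over $\mathcal{S}_g$ that encodes the tuning data $\mathcal{F}$, modifies it by an \emph{admissible splitting} at every periodic Julia branch point to obtain an admissible angled forest map $(F:(\mathcal{T},\mathcal{P})\to(\mathcal{T},\mathcal{P}),\delta,\alpha)$, and then invokes the realization theorem (Theorem~\ref{thm:rs}) to produce a quasi \pcf sequence $\bp_n\in\BP^{\mathcal{S}_g}$ realizing it. This sequence corresponds to $g_n\in\mathcal{H}_g$, and the argument of \cite[\S 6]{L21a} shows $g_n\to g_\infty$ with $g_\infty$ a root of $\mathcal{H}_f$. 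So $h=g_\infty$ lies on $\partial\mathcal{H}_g$ by construction, and the hard part is identifying the limit as a root of $\mathcal{H}_f$ --- not the other way around.

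Your approach instead degenerates inside $\mathcal{H}_f$ to obtain a root $h$, then tries to argue $h\in\partial\mathcal{H}_g$. The citation of \cite[Theorem~1.1]{L21a} here is misplaced: that result characterizes $\partial\mathcal{H}_d$, the boundary of the \emph{main} hyperbolic component, via pointed iterated-simplicial Hubbard trees; it does not give a boundary characterization for an arbitrary \shc $\mathcal{H}_g$. Your parenthetical ``(relative)'' flags exactly the missing generalization. The alternative you then sketch --- ``exhibit a path $g_t\in\mathcal{H}_g$ which pinches the grand orbit of the cycle at the tuning sites and verify its rescaling limit is again $h$'' --- is precisely what the paper does, and is where all the content lies. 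Executing it requires the admissible-splitting construction (which you do not mention) to deal with periodic Julia branch points in $H$, since without it the forest map is not admissible and Theorem~\ref{thm:rs} does not apply. Your ``matching'' paragraph correctly senses that reconciling the angled-tree data is the technical heart, but stops short of doing it.
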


The idea is to construct a sequence $g_n$ of {\em quasi \pcf degenerations} in $\mathcal{H}_g$ that converges to a root $g_\infty$ of $\mathcal{H}_f$, which will be defined in this section.
These quasi \pcf degenerations are combinatorially parameterized by quasi invariant forests and allow us to change the dynamics on the bounded Fatou component of $g$.
As a corollary, we have:

\begin{cor}\label{cor:gbf}
Let $f \in \mathcal{P}_d$ be obtained by a finite sequence of simplicial tunings from $z^d$.
Then $f \in \Mol_d$.
\end{cor}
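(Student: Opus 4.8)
The plan is a straightforward induction on the number $k$ of simplicial tunings needed to produce $f$ from $z^d$, carried out at the level of relative hyperbolic components rather than individual polynomials. First I would reduce to the sharper statement: \emph{if $f$ is obtained from $z^d$ by a finite sequence of simplicial tunings, then $\mathcal{H}_f \in \mathfrak{S}$}, i.e.\ $\mathcal{H}_f$ is of finite distance from $\mathcal{H}_d$. This suffices because a simplicial tuning of a \pcf polynomial is again \pcf (this is part of the construction in \S\ref{subsec:sthf}, where the tuned tree $\mathcal{T}_f$ is verified to be an expanding abstract Hubbard tree), and every \pcf polynomial is the center of its relative hyperbolic component by remark (1) of \S\ref{sec:mm}; hence $f\in \mathcal{H}_f \subseteq \bigcup_{\mathcal{H}\in\mathfrak{S}}\mathcal{H}\subseteq \Mol_d$ once $\mathcal{H}_f\in\mathfrak{S}$ is known.

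For the base case $k=0$ we have $f(z)=z^d$, so $\mathcal{H}_f=\mathcal{H}_d$, which is at distance $0$ from itself and hence lies in $\mathfrak{S}$. For the inductive step, suppose $f$ is obtained from $z^d$ by $k\ge 1$ simplicial tunings, and write $f$ as a simplicial tuning of a \pcf polynomial $g$ obtained from $z^d$ by $k-1$ simplicial tunings. By the induction hypothesis $\mathcal{H}_g\in\mathfrak{S}$, so there is a finite chain of adjacent relative hyperbolic components $\mathcal{H}_d=\mathcal{H}_0,\mathcal{H}_1,\dots,\mathcal{H}_{k-1}=\mathcal{H}_g$. By Theorem \ref{prop:gbf}, $\mathcal{H}_g$ bifurcates to $\mathcal{H}_f$, so by Definition \ref{defn:bif} there exists a geometrically finite polynomial $h\in\partial\mathcal{H}_g\cap\partial\mathcal{H}_f$. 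In particular $\partial\mathcal{H}_g\cap\partial\mathcal{H}_f\neq\emptyset$, so $\mathcal{H}_g$ and $\mathcal{H}_f$ are adjacent, and appending $\mathcal{H}_f$ to the chain shows that $\mathcal{H}_f$ is of finite distance from $\mathcal{H}_d$, i.e.\ $\mathcal{H}_f\in\mathfrak{S}$. This completes the induction.

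The statement is essentially a formal consequence of Theorem \ref{prop:gbf}; the only points requiring care are (i) running the induction through the centers of relative hyperbolic components, which relies on the fact that a simplicial tuning of a \pcf polynomial is again \pcf and is the center of $\mathcal{H}_f$, and (ii) observing that the conclusion of ``bifurcate''—a common boundary point—is precisely the definition of ``adjacent,'' so that each bifurcation step contributes one edge to the chain realizing finite distance. All the genuine work lies in Theorem \ref{prop:gbf} and the quasi-\pcf degeneration theory of \cite{L21a, L21b} behind it, which is taken as given here; I do not expect any substantive obstacle in deducing the corollary.
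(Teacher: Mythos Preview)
Your proof is correct and follows the same approach as the paper, which simply states the result as an immediate corollary of Theorem \ref{prop:gbf} without spelling out the induction. Your explicit argument—inducting on the number of simplicial tunings and using that bifurcation implies adjacency—is exactly the intended deduction.
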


Recall that the {\em bifurcation complexity} $\mathscr{C}_{b}(f)$ is the smallest number of \shcs one needs to bifurcate to arrive at $\mathcal{H}_f$ from $\mathcal{H}_d$, and the {\em combinatorial complexity} $\mathscr{C}_{c}(f)$ is the smallest number of simplicial tunings needed to get $f$. 
Here we allow $\mathscr{C}_{b}(f)$ and $\mathscr{C}_{c}(f)$ to be $+\infty$ if there are no finite chains of bifurcation or simplicial tunings. There is an inequality between these two complexities.

\begin{theorem}\label{thm:bcbound}
Let $f \in \mathcal{P}_d$ be a \pcf polynomial.
Then
$$
\mathscr{C}_{b}(f) \leq \mathscr{C}_{c}(f).
$$
Moreover, the bound is sharp, i.e., there exists a degree $d$ \pcf polynomial $f$ with $\mathscr{C}_{c}(f) = \mathscr{C}_{b}(f)$.
\end{theorem}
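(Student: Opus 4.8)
The plan is to get the inequality $\mathscr{C}_b(f)\le \mathscr{C}_c(f)$ essentially for free from Theorem \ref{prop:gbf}, and then to verify sharpness on an explicit small example. First, if $\mathscr{C}_c(f)=+\infty$ there is nothing to prove, so suppose $\mathscr{C}_c(f)=k<\infty$. By definition of the combinatorial complexity there is a chain
\[
z^d=g_0,\ g_1,\ \dots,\ g_k=f
\]
in which $g_{i+1}$ is a simplicial tuning of $g_i$ for $0\le i\le k-1$. I would then apply Theorem \ref{prop:gbf} to each consecutive pair to conclude that $\mathcal{H}_{g_i}$ bifurcates to $\mathcal{H}_{g_{i+1}}$. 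Since $\mathcal{H}_{g_0}=\mathcal{H}_d$ and $g_k=f$, this produces a sequence of $k$ bifurcations from $\mathcal{H}_d$ to $\mathcal{H}_f$, so $\mathscr{C}_b(f)\le k=\mathscr{C}_c(f)$; in particular $\mathscr{C}_c(f)<\infty$ forces $\mathscr{C}_b(f)<\infty$.

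For the sharpness statement it suffices to exhibit a single degree $d$ \pcf polynomial with $\mathscr{C}_c(f)\le \mathscr{C}_b(f)$, and I would take $f$ obtained from $z^d$ by one non-trivial simplicial tuning --- for instance a degree-$d$ generalized Basilica, say $f(z)=z^d+c$ with $c^{d-1}=-1$, whose Hubbard tree is a simplicial arc and whose critical point lies in a $2$-cycle. Since $f\neq z^d$ we have $\mathscr{C}_c(f)\ge 1$, and since $f$ is by construction a simplicial tuning of $z^d$ we have $\mathscr{C}_c(f)\le 1$, so $\mathscr{C}_c(f)=1$. As $z^d$ is the unique \pcf polynomial in $\mathcal{H}_d$ (Remark (1) of \S\ref{sec:mm}), $\mathcal{H}_f\neq \mathcal{H}_d$, hence $\mathscr{C}_b(f)\ge 1$; combined with the inequality just proved this yields $\mathscr{C}_b(f)=1=\mathscr{C}_c(f)$.

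If one wants examples of arbitrarily large complexity rather than just one, I would iterate this period-$2$ tuning: letting $f_k$ be the $k$-fold iterate of the construction, each simplicial quotient undoes exactly one doubling, so $\mathscr{C}_c(f_k)=k$ by the remark following Theorem \ref{thm:cgrb}. When $d=2$ the companion bound $\mathscr{C}_c(h)\le (d-1)\mathscr{C}_b(h)$ of Theorem \ref{thm:bcboundr} then already forces $\mathscr{C}_b=\mathscr{C}_c$ for \emph{every} $h$, and for $d>2$ one checks directly that a single bifurcation along a period-doubling cascade cannot realize more than one doubling, so again $\mathscr{C}_b(f_k)=k$. The genuinely substantive ingredient throughout is Theorem \ref{prop:gbf}, which rests on the quasi-\pcf degeneration theory of \cite{L21a, L21b}; once that is granted the inequality is formal, and the only point in the sharpness argument requiring real care is confirming that the period-doubling family cannot be reached by fewer bifurcations than simplicial tunings --- which for the depth-$1$ example above is immediate and for the iterated family reduces to the elementary observation about parabolic degenerations just mentioned.
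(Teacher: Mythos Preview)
Your argument is correct and matches the paper's own proof almost verbatim: derive the inequality by turning a minimal simplicial-tuning chain into a bifurcation chain via Theorem~\ref{prop:gbf}, and verify sharpness with any $f\neq z^d$ having $\mathscr{C}_c(f)=1$ (the paper states this abstractly; your generalized Basilica is a valid concrete instance). Your final paragraph on iterated period-doubling examples is extra content not needed for the theorem, and the claim for $d>2$ that ``a single bifurcation cannot realize more than one doubling'' is asserted without proof---drop it or supply an actual argument, since the theorem only asks for one example.
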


\subsection*{Quasi \pcf degeneration}\label{sec:bst}
Let us briefly summarize the theory of quasi \pcf degeneration. We refer the reader to \cite{L21a, L21b} for more details.

Let $\mathcal{S}=(|\mathcal{S}|, \Phi, \delta)$ be a mapping scheme.
Consider the Blaschke model space $\BP^\mathcal{S}$, which is defined in Definition \ref{defn:bms}.
We first define an extended metric, a metric allowing $\infty$ distances, $d_{\mathcal{S}}$ on $|\mathcal{S}| \times \D$ by
\begin{itemize}
\item $d_{\mathcal{S}}(x,y) = \infty$ if $x \in s\times \D$ and $y\in t\times\D$ with $s \neq t$, and
\item $d_{\mathcal{S}}(x,y) = d_{s\times \D}(x, y)$ if $x, y\in s\times \D$,
\end{itemize}
where $d_{s\times \D}$ is the hyperbolic metric on the unit disk.

We are interested in a particular degeneration in $\BP^\mathcal{S}$, called {\em quasi \pcf degeneration}.
\begin{defn}[Quasi \pcf degeneration]\label{defn:qpcfbm}
Let $\mathcal{S}$ be a mapping scheme of degree $d$ and $\BP^\mathcal{S}$ be the corresponding Blaschke model space.
Let $\{\bp_n\}$ be a sequence in $\BP^\mathcal{S}$.
For $K>0$, $\bp_n$ is said to be {\em $K$-quasi post-critically finite} if we can label the critical points by $c_{1,n},..., c_{2d-2, n}$, and for any sequence $\{c_{i,n}\}_n$ of critical points, there exist $l_i$ and $q_i$, called {\em quasi pre-periods} and {\em quasi periods} respectively, such that for any $n>0$ we have
$$
d_{\mathcal{S}}(\bp_n^{l_i}(c_{i,n}), \bp_n^{l_i+q_i}(c_{i,n})) \leq K.
$$
We say $\{\bp_n\}$ is {\em quasi post-critically finite} if it is $K$-quasi post-critically finite for some $K>0$.
\end{defn}

\subsection*{Quasi invariant forests}
A {\em ribbon structure} on a finite tree is a choice of planar embedding up to isotopy.
A ribbon structure can also be defined as the assignment of a cyclic ordering of the edges incident to each vertex.
A {\em ribbon finite tree} is a finite tree with a ribbon structure, and an isomorphism between ribbon finite trees is an isomorphism between finite trees that preserves the ribbon structures.
A {\em marked finite tree} $(\mathcal{T}, \mathcal{P})$ is a finite tree with a subset $\mathcal{P} \subseteq \mathcal{V}$ of the vertex set $\mathcal{V}$ for $\mathcal{T}$.
%

In \cite[\S 3]{L21b}, for any quasi post-critically finite sequence $\{\bp_n \in \BP^\mathcal{S}\}$, a sequence of marked ribbon forests
$$
(\mathcal{T}_n, \mathcal{P}_n):= \bigcup_{s\in |\mathcal{S}|} (\mathcal{T}_{s,n}, \mathcal{P}_{s,n})
$$
is constructed, where each $\mathcal{T}_{s,n} \subseteq s\times \D$ is a finite ribbon tree. 
We call $(\mathcal{T}_n, \mathcal{P}_n)$'s the {\em quasi-invariant forests} for the sequence $\{\bp_n\}$.
They capture all interesting dynamical features of the sequence $\{\bp_n\}$, which can be modeled by simplicial maps, as described in the following theorem.

\begin{theorem}[\cite{L21b} Theorem 3.2]\label{thm:qit}
Let $\mathcal{S} = (|\mathcal{S}|, \Phi, \delta)$ be a mapping scheme and $\{\bp_n\}$ be a quasi post-critically finite sequence in $\BP^\mathcal{S}$. 
After passing to a subsequence, there exist a constant $K > 0$, a marked ribbon forest $(\mathcal{T}, \mathcal{P})=\bigcup_{s\in |\mathcal{S}|} (\mathcal{T}_s, \mathcal{P}_s)$ with vertex set $\mathcal{V}=\bigcup_{s\in |\mathcal{S}|}\mathcal{V}_s$, where $(\mathcal{T}_s, \mathcal{P}_s)$ is a marked ribbon finite tree, a simplicial map 
$$
F = \bigcup_{s\in |\mathcal{S}|} F_s:  (\mathcal{T}, \mathcal{P})=\bigcup_{s\in |\mathcal{S}|} (\mathcal{T}_s, \mathcal{P}_s) \longrightarrow (\mathcal{T}, \mathcal{P})
$$ with
$$
F_s: (\mathcal{T}_s, \mathcal{P}_s) \longrightarrow (\mathcal{T}_{\Phi(s)}, \mathcal{P}_{\Phi(s)}),
$$
and a sequence of isomorphisms
$$
\phi_{n}: (\mathcal{T}, \mathcal{P}) \longrightarrow (\mathcal{T}_n, \mathcal{P}_n)
$$
where $(\mathcal{T}_n, \mathcal{P}_n)$'s are the quasi-invariant forests for $\{\bp_n\}$
such that the following properties hold.
\begin{itemize}
\item (Degenerating vertices.) If $v_1\neq v_2 \in \mathcal{V}$, then 
$$
d_\mathcal{S}(\phi_n(v_1), \phi_n(v_2)) \to \infty.
$$
\item (Geodesic edges.) If $E =[v_1,v_2] \subseteq \mathcal{T}$ is an edge, then the corresponding edge $\phi_n(E) \subseteq \mathcal{T}_n$ is a hyperbolic geodesic segment connecting $\phi_n(v_1)$ and $\phi_n(v_2)$.
\item (Critically approximating.) Any critical points of $\bp_n$ are within $K$ distance from the vertex set $\mathcal{V}_n:= \phi_n(\mathcal{V})$ of $\mathcal{T}_n$.
\item (Quasi-invariance on vertices.)
If $v\in \mathcal{V}$, then
$$
d_\mathcal{S}(\bp_n(\phi_n(v)), \phi_n(F(v))) \leq K \text{ for all } n.
$$

\item (Quasi-invariance on edges.) If $E\subseteq \mathcal{T}$ is an edge and $x_n \in \phi_n(E)$, then there exists $y_n \in \phi_n(F(E))$ so that
$$
d_\mathcal{S}(\bp_n(x_n), y_n) \leq K \text{ for all } n.
$$
If $E$ is a periodic edge of period $q$, then 
$$
d_\mathcal{S}(\bp_n^q(x_n), x_n) \leq K \text{ for all } n.
$$
\end{itemize}
\end{theorem}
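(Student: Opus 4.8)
\emph{Plan.} The statement is a compactness (geometric limit) theorem, proved in detail in \cite[\S 3]{L21b}; here is the strategy I would follow. The abstract marked ribbon forest $(\mathcal{T},\mathcal{P})$ together with $F$ will be obtained as the stable combinatorial limit of the hyperbolic-geodesic spanning forests of the (quasi-)post-critical points of $\bp_n$, and $\phi_n$ will be the tautological identification of this abstract model with the concrete forests. First I would fix, for each $n$, the finite set $P_n \subseteq |\mathcal{S}| \times \D$ consisting of the critical points $c_{i,n}$ of $\bp_n$ together with the initial orbit segments $\bp_n^{j}(c_{i,n})$ for $0 \le j \le l_i + q_i$. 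By the quasi \pcf hypothesis (Definition \ref{defn:qpcfbm}) the cardinality of $P_n$ is bounded independently of $n$, and for each $i$ the points $\bp_n^{l_i}(c_{i,n})$ and $\bp_n^{l_i + q_i}(c_{i,n})$ lie within $d_{\mathcal{S}}$-distance $K_0$ of one another.

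Next I would build the concrete forest. Inside each fiber $s \times \D$, form the union of the hyperbolic geodesic segments joining the points of $P_n$ in that fiber; this is a finite tree, hence a forest $\mathcal{T}_n$ over $|\mathcal{S}|$. I then enlarge its vertex set by adjoining boundedly many auxiliary points — branch points of the geodesic forest, and finitely many $\bp_n$-preimages of existing vertices — so that the resulting marked ribbon forest $(\mathcal{T}_n,\mathcal{P}_n)$ (with ribbon structure inherited from the planar embedding $s \times \D \hookrightarrow \C$, and $\mathcal{P}_n$ the distinguished vertices coming from the $1$-anchoring of $\bp_n$) is \emph{forward quasi-invariant}: $\bp_n$ sends each vertex to within bounded distance of a vertex and each edge to within bounded distance of an edge-path, and admits a simplicial model of the correct degree. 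Since for each $n$ the isomorphism type of $(\mathcal{T}_n,\mathcal{P}_n)$ with its ribbon structure, together with the type of its simplicial self-map, ranges over a finite set, I pass to a subsequence along which both are constant; this common model is $(\mathcal{T},\mathcal{P})$ and $F$, and $\phi_n$ is the induced isomorphism. Passing to a further subsequence, for each pair $v_1 \ne v_2 \in \mathcal{V}$ the quantity $d_{\mathcal{S}}(\phi_n(v_1),\phi_n(v_2))$ either stays bounded or tends to $\infty$; collapsing each cluster of vertices with uniformly bounded mutual distances to a single abstract vertex, I may assume the degenerating-vertices bullet holds, and the geodesic-edges bullet is then immediate.

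The remaining bullets come from two soft inputs: the Schwarz--Pick lemma (each $\bp_s$ is $1$-Lipschitz for the hyperbolic metric, hence does not increase the length of an edge) and the Gromov hyperbolicity of $\D$ (being negatively curved it has uniformly thin triangles; consequently any path from $a$ to $b$ of length at most $d(a,b)+C$ stays within a neighborhood of the geodesic $[a,b]$ whose size depends only on $C$, and any ``$C$-almost-minimizer'' of $x \mapsto d(a,x)+d(x,b)$ is boundedly close to $[a,b]$). Concretely: $\bp_n(\phi_n(v))$ is a post-critical point, hence near a vertex, giving quasi-invariance on vertices; the image of an edge $\phi_n(E)$ is a path no longer than $\phi_n(E)$ with endpoints near vertices, hence boundedly close to an edge-path, giving quasi-invariance on edges; holomorphy makes each $\bp_n$ orientation preserving, so $F$ respects the ribbon structures; and for a periodic edge $E=[v_1,v_2]$ of period $q$, combining $d_{\mathcal{S}}(\bp_n^{q}(a_n),a_n)\le K$ and $d_{\mathcal{S}}(\bp_n^{q}(b_n),b_n)\le K$ (vertex quasi-invariance at the endpoints $a_n,b_n$ of $\phi_n(E)$) with Schwarz--Pick pins $\bp_n^{q}(x_n)$ to within bounded distance of $x_n$ for every $x_n \in \phi_n(E)$.

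The main obstacle — the only part that is not formal — is the boundedness claim in the second step: one must show that \emph{finitely many} auxiliary vertices, in number bounded independently of $n$, already make $(\mathcal{T}_n,\mathcal{P}_n)$ forward quasi-invariant with a genuine simplicial model. This is exactly where the quasi \pcf hypothesis is essential: without a uniform bound on how far the critical orbits are from being periodic, $\bp_n$ could fold the spanning forest in ever more intricate ways and no bounded-size invariant model would exist. Making this precise requires the hyperbolic estimates above together with a careful analysis of where $\bp_n$ can fold an edge — only near a critical point, which by construction is near a vertex — as carried out in \cite[\S 3]{L21b}.
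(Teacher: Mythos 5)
The theorem you are asked to prove is not actually proved in this paper: it is cited verbatim from \cite[Theorem~3.2]{L21b}, so there is no ``paper's own proof'' against which to compare. Your proposal is an honest strategy sketch, and at the level of a sketch it is consistent with what a compactness theorem of this type would require, and you correctly flag where all the real work lies (the uniform bound on the number of auxiliary vertices needed to make the spanning forest forward quasi-invariant and simplicial); this is indeed the crux of the argument in \cite[\S 3]{L21b}.

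Two points deserve correction even at the sketch level. First, you identify the marked vertices $\mathcal{P}_n$ as ``the distinguished vertices coming from the $1$-anchoring of $\bp_n$.'' The $1$-anchoring is a normalization on $\partial\D$ and does not mark a vertex in the disk. Per the remark following the theorem in the paper, for each periodic $s\in|\mathcal{S}|$ the marked point $p_s$ is the unique attracting periodic point of $\bp_n$ in $s\times\D$ (which, for fixed-point-centered periodic Blaschke products, sits at the origin), and the marking on non-periodic $s$ is obtained by pullback; this is what the dynamical admissibility conditions in \S\ref{subsec:zim} depend on. Second, the claim that, after collapsing clusters of boundedly-spaced vertices in the abstract model, ``the geodesic-edges bullet is then immediate'' is too quick: if you collapse in the abstract model while leaving $\mathcal{T}_n$ as the geodesic spanning forest of the original point set, then the image $\phi_n(E)$ of an abstract edge may be a concatenation of several geodesic segments rather than a single geodesic. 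One must instead rebuild $\mathcal{T}_n$ as the geodesic spanning forest of one representative per cluster, and re-verify the degree/vertex count and quasi-invariance for this new forest before passing to a subsequence with stable combinatorial type. Neither issue is fatal to the plan, but both need to be fixed before the sketch can be called a proof outline of the cited theorem.
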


\begin{rmk}
By the construction in \cite[\S 3]{L21b}, if $s \in |\mathcal{S}|$ is periodic under $\Phi$, then $\mathcal{P}_s$ consists of a single point $p_s$, and $\phi_n(p_s) \in \{s\} \times \D$ is the unique attracting periodic point in $\{s\} \times \D$ for $\bp_n$.
\end{rmk}

\subsection*{Rescaling limits}
Let $v \in \mathcal{V}_s \subseteq \mathcal{V}$. 
We define a {\em normalization at $v$} or a {\em coordinate at $v$} as a sequence $\{M_{v,n}\in \Isom(\mathbb{D},d_{hyp})\}_n$ so that 
$$
\phi_n(v) = (s, M_{v,n}(0)).
$$
Here $(\mathbb{D},d_{hyp})$ denotes the unit disk with the hyperbolic metric. Note that different choices for the sequence $\{M_{v,n}\}_n$ differ by pre-composing with rotations that fix $0$, which form a compact group.
We define a map 
\begin{align*}
\hat M_{v,n} : \D &\longrightarrow \{s\} \times \D\\
z &\mapsto (s, M_{v,n}(z)).
\end{align*}

Let us fix such a normalization $\{M_{v,n}\}_n$ for each vertex $v\in \mathcal{V}$.
To distinguish normalizations at different vertices, we use $\D_v$ to denote the disk associated to the normalization at $v$.
By the quasi-invariance on vertices in Theorem \ref{thm:qit}, we have
$$
d_\mathcal{S}(\bp_n(\phi_n(v)), \phi_n(F(v))) \leq K
$$ 
for some constant $K$.
Thus, $d_\D(\hat M_{F(v),n}^{-1}\circ \bp_n \circ \hat M_{v,n}(0), 0) \leq K$.
Therefore, by \cite[Proposition 2.3]{L21a}, after possibly passing to a subsequence, the sequence of proper maps
$$
\hat M_{F(v),n}^{-1}\circ \bp_n \circ \hat M_{v,n}: \D_v \longrightarrow \D_{F(v)}
$$
converges compactly to a proper holomorphic map
$$
\rl_v = \rl_{v\to F(v)}: \D_v \longrightarrow \D_{F(v)}.
$$
We call this map $\rl_v$ the {\em rescaling limit} of $\bp_n$ at $v$ and denote its degree by $\delta(v)$.
We remark there are exactly $\delta(v) -1$ critical points of $\bp_n$ counted with multiplicity that stay within a uniform $d_\mathcal{S}$-distance from $\phi_n(v)$.
By critically approximating property in Theorem \ref{thm:qit}, we have for any $s\in |\mathcal{S}|$,
$$
\delta(s) = 1 + \sum_{v\in \mathcal{V}_s} (\delta(v)-1).
$$ 

A vertex $v \in \mathcal{V}$ is critical if $\delta(v) \geq 2$.
A vertex is called {\em a Fatou point} if it is eventually mapped to a critical periodic orbit, and is called a {\em Julia point} otherwise.

\subsection*{Angled forest map}
To give a combinatorial description of quasi-invariant forests, we introduce the notion of angled forest here.
This is a straightforward generalization of angled tree maps in \cite[\S 3]{L21a}, to which we refer the readers for more details and comparisons with abstract Hubbard trees.

Let $m_d: \mathbb{S}^1\longrightarrow \mathbb{S}^1$ denote the multiplication by $d$ map where $\mathbb{S}^1$ is identified with $\R/\Z$.
By our convention, $m_1$ is the identity map.
For $d\geq 2$, $m_d$ gives a topological model of the dynamics on the Julia set of a degree $d$ hyperbolic and a doubly parabolic Blaschke product (see \S \ref{subsec:lm} for definitions).

To set up a framework that also works for singly parabolic or boundary-hyperbolic Blaschke products uniformly, we consider an extended circle $\mathbb{S}^{1}_{d}$, which is naturally regarded as {\em cyclically ordered set} (see \cite[\S 2]{McM09}).
As a set, 
$\mathbb{S}^{1}_{d}$ is constructed from $\mathbb{S}^1$ by adding (formal symbols) $x^-, x^+$ for any point $x$ in the backward orbit of $0$ under $m_d$ for $d\geq 2$.
The cyclic ordering on $\mathbb{S}^{1}_{d}$ is defined so that $x^-$ (or $x^+$) is regarded as a point infinitesimally smaller than $x$ (or bigger than $x$ respectively) in the standard identification of $\mathbb{S}^1 = \R/\Z$.
Note that $\mathbb{S}^1$ naturally embeds into $\mathbb{S}^{1}_{d}$.
We call a point in the image of this embedding a {\em regular point}.
We use the convention that $\mathbb{S}^1_1 = \mathbb{S}^1$.

Given any integer $k \geq 1$, the map $m_k$ naturally extends to
$m_k: \mathbb{S}^1_{d} \longrightarrow \mathbb{S}^1_{d}$ 
by setting
$m_k (x^\pm) = m_k(x)^\pm$.
This is well-defined because if $x$ is in the backward orbit of $0$ under $m_d$, $m_k(x)$ is also in the backward orbit of $0$ under $m_d$.

If $f$ is a degree $d$ boundary-hyperbolic Blaschke product, i.e., $f$ has an attracting fixed point $a$ on the circle, 
the Julia set $J$ of $f$ is a Cantor set on $\mathbb{S}^1$. Note that by Denjoy-Wolff Theorem, $a$ is the unique attracting fixed point (see \S \ref{subsec:lm}).
The complement $\mathbb{S}^1 - J$ consists of countably many intervals, which are all eventually mapped to the unique interval $I \subseteq \mathbb{S}^1$ that contains the attracting fixed point $a$. The boundary $\partial I$ consists of two repelling fixed points of $f$.
Let $\mathcal{O}(a)$ be the backward orbit of the attracting fixed point $a$.
Then there exists bijective map $\eta_f: \mathbb{S}^{1}_{d} \longrightarrow J(f) \cup \mathcal{O}(a)$ which preserves the cyclic ordering so that
$f \circ \eta_f = \eta_f \circ m_d$.
Note that $\eta_f(0) = a$, and $\eta_f(0^\pm) = \partial I$ (see Figure \ref{fig:BHB}).


\begin{figure}[ht]
	\centering
		{\footnotesize
		\def\svgwidth{0.45\textwidth}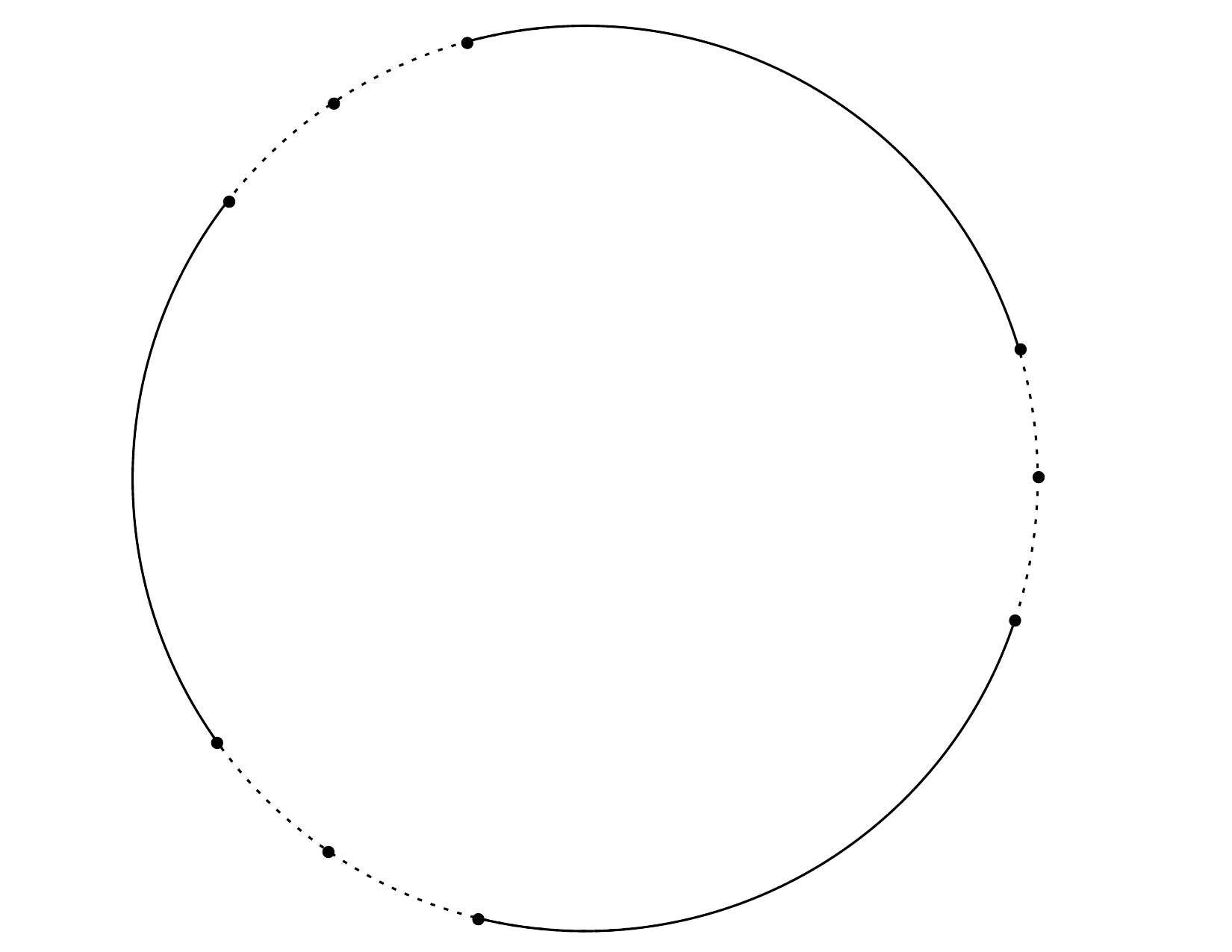
		}
	\caption{An illustration of the conjugacy $\eta_f$ for a degree $3$ boundary-hyperbolic Blaschke product. The Julia set is a Cantor set, constructed by removing the backward orbits of the interval $I$.}
	\label{fig:BHB}
\end{figure}

To model the dynamics of the pullbacks, we construct for any integer $D\ge1$ the set $\mathbb{S}^{1}_{d,D}$ by adding $x^-, x^+$ to $\mathbb{S}^1$ if $m_D(x)$ is in the backward orbit of $0$ under $m_d$, and the cyclic ordering is constructed in the same way.
Note that by this construction, $m_D:\mathbb{S}^{1}_{d,DD'} \longrightarrow \mathbb{S}^{1}_{d,D'}$ is a degree $D$ covering between cyclically ordered sets (see \cite[\S 2]{McM09} for detailed definitions).
Note that $\mathbb{S}^{1}_{d,1} = \mathbb{S}^{1}_{d}$.

Let $\mathcal{S} = (|\mathcal{S}|, \Phi, \delta)$ be a mapping scheme.
We define a {\em forest map} modeled on $\mathcal{S}$ as 
\begin{itemize}
\item a marked ribbon forest $(\mathcal{T}, \mathcal{P})=\bigcup_{s\in |\mathcal{S}|} (\mathcal{T}_s, \mathcal{P}_s)$, where $(\mathcal{T}_s, \mathcal{P}_s)$ is a marked ribbon finite tree such that $\mathcal{P}_s = \{p_s\}$ for any periodic point $s \in |\mathcal{S}|$, and 
\item a simplicial map 
$$
F = \bigcup_{s\in |\mathcal{S}|} F_s:  (\mathcal{T}, \mathcal{P})=\bigcup_{s\in |\mathcal{S}|} (\mathcal{T}_s, \mathcal{P}_s) \longrightarrow (\mathcal{T}, \mathcal{P})
$$
which is a union of simplicial maps 
$$
F_s: (\mathcal{T}_s, \mathcal{P}_s) \longrightarrow (\mathcal{T}_{\Phi(s)}, \mathcal{P}_{\Phi(s)}).
$$
\end{itemize}

Let $\mathcal{V}$ be the vertex set of $\mathcal{T}$ and $\mathcal{V}_s:=\mathcal{V} \cap \mathcal{T}_s$. We define the {\em local degree function} $\delta: \mathcal{V} \longrightarrow \Z_{\geq 1}$ which assigns an integer $\delta(v) \geq 1$ to each vertex $v\in \mathcal{V}$.
We say that $\delta$ is {\em compatible} with $\mathcal{S}$ if for any $s \in |\mathcal{S}|$,
$$
\delta(s) = 1 + \sum_{v\in \mathcal{V}_s} (\delta(v)-1).
$$

If $v \in \mathcal{V}$ has pre-period $l$ and period $q$. We define the {\em cumulative degree}
$$
\Delta(v) := \delta({F^l(v)}) \delta({F^{l+1}(v)}) ... \delta({F^{l+q-1}(v)}),
$$
and the cumulative pre-periodic degree
$$
\Delta_{pre}(v) = \delta(v) \delta({F(v)})... \delta({F^{l-1}(v)}).
$$
We use the convention that $\Delta_{pre}(v) = 1$ for all periodic vertices.

We define an {\em angle function} $\alpha$ at $v$ as an injective map
$$
\alpha_v: T_v\mathcal{T}\hookrightarrow \mathbb{S}^1_{\Delta(v), \Delta_{pre}(v)}.
$$
We say $\alpha$ is {\em regular} at $v$ if $\alpha_v(T_v\mathcal{T}) \subseteq \mathbb{S}^1 \subseteq \mathbb{S}^1_{\Delta(v), \Delta_{pre}(v)}$.

We say an angle function $\alpha$ is {\em compatible (with the forest map)} if for any $v \in \mathcal{V}$ the function $\alpha_v$ satisfies the following three compatibility conditions. 
\begin{enumerate}
\item $\alpha_v$ is {\em cyclically compatible} if $x_1, x_2, x_3 \in T_v\mathcal{T}$ are clockwise oriented, then $\alpha_v(x_1), \alpha_v(x_2), \alpha_v(x_3)$ are also clockwise oriented. 
\item $\alpha_v$ is {\em dynamically compatible} if
\begin{itemize}
\item when $v= \p_s$ for some periodic $s \in |\mathcal{S}|$ and $\Delta(\p_s) = 1$, there exists a rigid rotation $R$, which is necessarily a rational rotation, so that
$
R \circ \alpha_{\p_s} = \alpha_{\p_s} \circ DF^q|_{T_{\p_s}\mathcal{T}}
$, where $q$ is the period of $s$,
\item otherwise, 
$
m_{\delta(v)} \circ \alpha_v = \alpha_{f(v)} \circ DF|_{T_v\mathcal{T}}
$.
\end{itemize}

\item $\alpha_v$ is {\em $\p_s$-compatible} if $v \in \mathcal{V}_s - \{\p_s\}$ is periodic and $x \in T_v\mathcal{T}$ is the tangent vector in the direction of $\p_s$, then $\alpha_v(x) = 0$.
\end{enumerate}

We remark that if $v \in \mathcal{V}_s$ is periodic of period $q$, then $s$ is periodic with period dividing $q$.
Let $x\in T_v\mathcal{T}$ be the tangent vector in the direction of $\p_s$. 
Then $D_vF^q(x) = x$ as $F$ is simplicial.
Thus condition (3) is compatible with the dynamics.

\begin{defn}[Angled forest maps]
An {\em angled forest map} modeled on the mapping scheme $\mathcal{S} = (|\mathcal{S}|, \Phi, \delta)$ is a triple 
$$
(F: (\mathcal{T}, \mathcal{P}) \rightarrow (\mathcal{T}, \mathcal{P}), \delta, \alpha = \{\alpha_v\})
$$ 
of a forest map modeled on $\mathcal{S}$ together with a compatible local degree function $\delta$ and a compatible angle function $\alpha$ which is regular at $p_s$ for any periodic $s \in |\mathcal{S}|$.
\end{defn}

\subsection*{Realizing angled forest map}
Let $F: (\mathcal{T}, \mathcal{P}) \rightarrow (\mathcal{T}, \mathcal{P})$ be the model of quasi-invariant forests for a quasi \pcf degeneration $\bp_n$.
A compatible local degree function $\delta$ is also constructed.

To define a compatible angle function, we note that if $v$ is a periodic Fatou point of period $q$, then
$$
\rl_v^q := \rl_{F^{q-1}(v)\to v} \circ ... \circ \rl_{v\to F(v)} : \D_v \longrightarrow \D_{v}
$$
is a proper holomorphic self map of the disk $\D_v$ with degree $\Delta_v$.
Then $\rl_v^q$ is a Blaschke product.
Let $J \cup \mathcal{O} \subseteq \mathbb{S}^1$ be the union of the Julia set of $\rl_v^q$ and the backward orbits of the attracting fixed point on $\mathbb{S}^1$. Note that $\mathcal{O}$ is empty unless $\rl_v^q$ is boundary-hyperbolic.
We have a semiconjugacy between cyclically ordered sets 
$$
\eta: \mathbb{S}^{1}_{\Delta_v} \left( = \mathbb{S}^1_{\Delta_v, \Delta_{v,pre}} \right) \longrightarrow J \cup \mathcal{O},
$$ 
whose fiber is either a singleton set, or $\{x, x^+\}, \{x^-, x\}, \{x^-,x, x^+\}$.
We remark that when $J = \mathbb{S}^1$, such a semiconjugacy is not unique, and any two such conjugacies are differed by an element of the automorphism group $\Z/(\Delta_v-1)$ of $m_{\Delta_v}$.
This ambiguity is resolved by using the {\em $p_s$-compatible condition} if $v \in \mathcal{V}_s -\{p_s\}$, and by the {\em anchored convention} if $v = p_s$ (see \cite[Definition 3.3]{L21a}).

We define a {\em regular inverse}  
$$
\eta^{-1}: J \cup \mathcal{O} \longrightarrow \mathbb{S}^1_{\Delta_v, \Delta_{v,pre}}
$$
as a section of $\eta$ that takes the regular value if the fiber contains more than one element.
Each tangent vector in $T_v\mathcal{T}$ corresponds to a point on $J \cup \mathcal{O}$ (see \cite[\S 3]{L21a}).
Thus, the regular inverse $\eta^{-1}$ gives a natural angle function
$$
\alpha_v: T_v\mathcal{T} \longrightarrow \mathbb{S}^1_{\Delta_v, \Delta_{v,pre}}.
$$

Angles at periodic Julia points $v$ are not canonical. We artificially assign the angles $\{\frac{i}{\nu}~|~ i=0,\dots, \nu-1\}$ according to their cyclic order where $\nu$ is the valence at $v$.
By pulling back, we can also construct an angle function
$\alpha_v$ for any pre-periodic vertex $v\in \mathcal{V}$, and one can verify that this angle function is compatible.

Therefore, given a quasi \pcf sequence $\bp_n \in \BP^\mathcal{S}$, after passing to a subsequence, we can associate an angled forest map $(F: (\mathcal{T}, \mathcal{P}) \rightarrow (\mathcal{T}, \mathcal{P}), \delta, \alpha)$.
We say such an angled forest map is {\em realized} by quasi \pcf degenerations.

\subsection*{Admissible angled forest map}
Let $(F: (\mathcal{T}, \mathcal{P}) \rightarrow (\mathcal{T}, \mathcal{P}), \delta, \alpha)$ be an angled forest map modeled on $\mathcal{S}$.
We now describe a sufficient condition for realization. 

Let $s \in |\mathcal{S}|$ be a periodic point, and let $p_s$ be the marked point in $\mathcal{V}_s$.
A periodic vertex $v$ is said to be {\em attached to $p_s$} if $[\p_s, v)$ contains no Fatou point.
Here $[\p_s,v)$ is the path in $\mathcal{T}$ that connects $\p_s$ and $v$ with the boundary point $v$ removed.
The {\em core} $\mathcal{T}_s^{C} \subseteq \mathcal{T}_s$ is defined as the convex hull of all periodic vertices attached to $p_s$.
Since $F$ is simplicial, any vertex in $\mathcal{T}_s^C$ is periodic.
Note that if $\delta(\p_s) \geq 2$, then $\mathcal{T}^C_s = \{\p_s\}$.

Recall that a tree $T$ is a star if there exists a unique vertex in $T$ that is not an endpoint. By convention, we consider a single vertex as a star with no endpoints.
The core $\mathcal{T}^C_s$ is said to be {\em critically star-shaped} if
\begin{itemize}
\item $\mathcal{T}^C_s$ is a star with center $\p_s$,
\item every endpoint of $\mathcal{T}^C_s$ is a periodic Fatou point, and
\item for any vertex $v \in \mathcal{T}^C_s$, the angle function $\alpha_v$ is regular at $v$.
\end{itemize}

We remark that if $\delta(\p_s) \geq 2$, then $\mathcal{T}^C _s= \{\p_s\}$ and the conditions are trivially satisfied.
If $\delta(\p_s) =1$, these conditions give a way to `normalize' the dynamics at $\p_s$.

\begin{defn}\label{defn:adm}
Let $(F: (\mathcal{T}, \mathcal{P}) \rightarrow (\mathcal{T}, \mathcal{P}), \delta, \alpha)$ be an angled forest map modeled on $\mathcal{S}$.
It is {\em admissible} if for any periodic point $s \in |\mathcal{S}|$,
\begin{itemize}
\item the core $\mathcal{T}_s^C$ is critically star-shaped, and
\item every periodic branch point in $\mathcal{V}_s$ other than $\p_s$ is a Fatou point.
\end{itemize}
\end{defn}

We remark that Definition \ref{defn:adm} is a generalization of admissible angled tree maps in \cite[\S 3]{L21a}. See the reference for a discussion on necessities of the conditions.
A similar inductive argument as in \cite[Theorem 4.1]{L21a} on the degree of the mapping scheme $\mathcal{S}$ yields:

\begin{theorem}\label{thm:rs}
Let $(F: (\mathcal{T}, \mathcal{P}) \rightarrow (\mathcal{T}, \mathcal{P}), \delta, \alpha)$ be an admissible angled forest map modeled on $\mathcal{S}$.
Then it is realizable.
\end{theorem}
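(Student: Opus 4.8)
The plan is to run the inductive realization argument of \cite[Theorem 4.1]{L21a}, now carrying the induction on the degree $\deg(\mathcal{S}) = 1 + \sum_{s \in |\mathcal{S}|}(\delta(s)-1)$ rather than on the degree of a single polynomial. Passing from one angled tree to an angled forest $\mathcal{T} = \bigcup_{s} \mathcal{T}_s$ changes only the bookkeeping: the aperiodic part of $\mathcal{S}$ is handled by pulling back along $\Phi$, so the real content lives over the periodic cycles, exactly as in the tree case of \cite{L21a}.

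For the base case one treats the smallest admissible configurations, where $\deg(\mathcal{S}) = 2$ and hence each periodic cycle carries a single critical point of local degree $2$ while every aperiodic vertex has local degree $1$. By admissibility each core $\mathcal{T}_s^C$ is then either the single point $\p_s$ (if $\delta(\p_s) = 2$) or a star with one critical Fatou end, so one builds $\bp_n \in \BP^{\mathcal{S}}$ by fixing a suitable $1$-anchored degree-$2$ Blaschke product near the critical cycle and extending along the remaining simplicial edges by pulling back through hyperbolic geodesic segments whose lengths are taken to $\infty$; this realizes the prescribed angles and is the forest version of the base case of \cite[Theorem 4.1]{L21a}.

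For the inductive step I would select a periodic Fatou vertex $v$ sitting at an end of some core $\mathcal{T}^C_s$ whose return map has rescaling limit $\rl_v^{q}$ of degree $\Delta(v) \ge 2$, and split off the sub-forest map obtained by pruning, along the whole $F$-orbit of $v$, the configuration of edges and vertices carried by the corresponding Fatou disks. The two admissibility hypotheses --- that $\mathcal{T}^C_s$ is critically star-shaped, and that every periodic branch point other than the marked points $\p_s$ is a Fatou point --- are precisely what guarantees that this pruning yields an angled forest map modeled on a mapping scheme $\mathcal{S}'$ with $\deg(\mathcal{S}') < \deg(\mathcal{S})$ which is again admissible. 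The inductive hypothesis produces a quasi \pcf sequence $\bp'_n \in \BP^{\mathcal{S}'}$ realizing it, together with its quasi-invariant forest data. One then fixes, near $v$, a degree-$\Delta(v)$ Blaschke product whose circle dynamics has the combinatorics encoded by $\alpha_v$ (this is the step where the $\p_s$-compatible condition and the anchored convention make the local model unique), grafts it back into $\bp'_n$ across a collar of modulus tending to $\infty$, transports the result along the aperiodic part of $\mathcal{S}$ by pullback, and checks that the resulting $\bp_n \in \BP^{\mathcal{S}}$ satisfy the conclusions of Theorem \ref{thm:qit}. Compatibility of $\alpha$ then forces the angled forest map associated to $\bp_n$ to be the given one.

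The main obstacle is this grafting-and-verification step: one must perform the gluing so that each $\bp_n$ is an honest proper holomorphic self-map of $|\mathcal{S}| \times \D$ of the correct local degrees in the correct $1$-anchored, centered normalization, while simultaneously keeping all critical points within bounded $d_{\mathcal{S}}$-distance of the vertex set, making the periodic edges quasi-invariant with a constant uniform in $n$, and realizing the prescribed tangent angles at the critical cores. Controlling all of these at once is the content of the quantitative estimates of \cite[\S 4]{L21a}; the forest case requires only that these estimates be applied cycle-by-cycle and then propagated along $\Phi$, so no genuinely new analytic input is needed beyond careful organization.
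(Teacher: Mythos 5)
Your proposal takes the same route as the paper. The paper's entire proof of Theorem \ref{thm:rs} is the single remark that a similar inductive argument to \cite[Theorem 4.1]{L21a}, run on the degree of the mapping scheme $\mathcal{S}$, gives the result; your elaboration of the base case, the pruning-and-grafting inductive step, and the quantitative verification is a faithful unpacking of exactly that outline.
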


\subsection*{Admissible splitting on Hubbard forest}
Let $F:H \longrightarrow H$ be a simplicial Hubbard forest modeled on a mapping scheme $\mathcal{S}$.
We define a marked set $P$ on $H$ as follows.
Let $s \in |\mathcal{S}|$ be a periodic point of period $q$.
We choose a period $q$ point $p_s \in H_s$ (see \cite[Corollary 3.9]{Poi10} for existence), and define $P_s = \{p_s\}$.
We assume that the choice is compatible, i.e., $p_{\Phi(s)} = F(p_s)$.
If $s$ is strictly pre-periodic point, we inductively define $P_s = F^{-1}(P_{\Phi(s)}) \subseteq H_s$.
Let $P := \bigcup_{s\in |\mathcal{S}|} P_s$.

Thus, $F: (H, P) \longrightarrow (H, P)$ is a forest map modeled on $\mathcal{S}$.
The Hubbard forest also comes with a compatible local degree function $\delta$, and angles between any pair of adjacent edges incident at a vertex, which is compatible with the dynamics (see \cite{Poi13}).
Thus, to construct an angle function, we simply need to specify the angle $0$ at each vertex.
We do this using the $p_s$-compatible condition and the anchored convention (see \cite[Definition 3.3]{L21a}).
Therefore, $(F: (H, P) \longrightarrow (H, P), \delta, \alpha)$ is an angled forest map.

The core of $H$ is critically star-shaped.
Indeed, if $\delta(\p_s) \geq 2$, then this is vacuously true.
Otherwise, each adjacent vertex $v$ to $\p_s$ is a periodic Fatou point by the expanding property of Hubbard forests.

On the other hand, $H$ may contain many periodic Julia branch points.
In the following, we introduce an operation on these branch points, which is called a {\em split modification}, to get an admissible angled tree map.

We remark that all the angle functions for $H$ are regular.
It is at this modification stage that we need to introduce non-regular points in the extended circle.

Let $v \in \mathcal{V}_s -\{\p_s\}$ be a periodic Julia branch point.
After passing to an iterate, we may assume that $v$ is fixed.
Let $S$ be the subtree, which is a star, consisting of all vertices adjacent to $v$.
Let $a_0$ be the vertex in $S$ corresponding to the direction associated to $\p_s$, and label the other vertices with $a_1,..., a_m$ in counterclockwise order.
Since $F$ is simplicial on $H$ and fixes $\p$, $a_0$ is fixed, and thus all $a_i$ are fixed.
By the expanding property, each $a_i$ is a fixed Fatou point for $i=0, 1,..., m$.

Let us modify $H_s$ locally within $S$.
\begin{enumerate}
	\item We begin with removing the interior of $S$.
	\item On the first level, we choose $k_1 \in \{1,..., m\}$, connect $a_0$ with $a_{k_1}$, and define the level of $a_{k_1}$ by one.
	\item On the second level, we choose $k_{2,1} \in \{1,..., k_1-1\}$ and $k_{2,2} \in \{k_1+1,..., m\}$, connect $a_{k_1}$ with $a_{k_{2,1}}$ and $a_{k_{2,2}}$, and define the levels of $a_{k_{2,1}}$ and $a_{k_{2,2}}$ by two.
	\item Inductively, $k_1, k_{2,1}, k_{2,2}$ divide the set $\{1,..., m\}$ into $4$ subsets (some subset may be empty), and we proceed as above for each of the subinterval.
\end{enumerate}
The trees $\tilde{S}$ that can be constructed in this way will be called {\em admissible splitting} (see Figure \ref{fig:M}).
For an admissible splitting, the level for each vertex $a_i$ that we assigned above is equal to the edge distance between $a_i$ and $a_0$ for $i\in\{0,1,\dots,m\}$. Every edge connects a level $k$ to a level $k+1$ vertex for some $k\ge1$.


\begin{figure}[ht]
	\centering
	\begin{subfigure}{1\textwidth}
		\centering
		{
			\def\svgwidth{0.3\textwidth}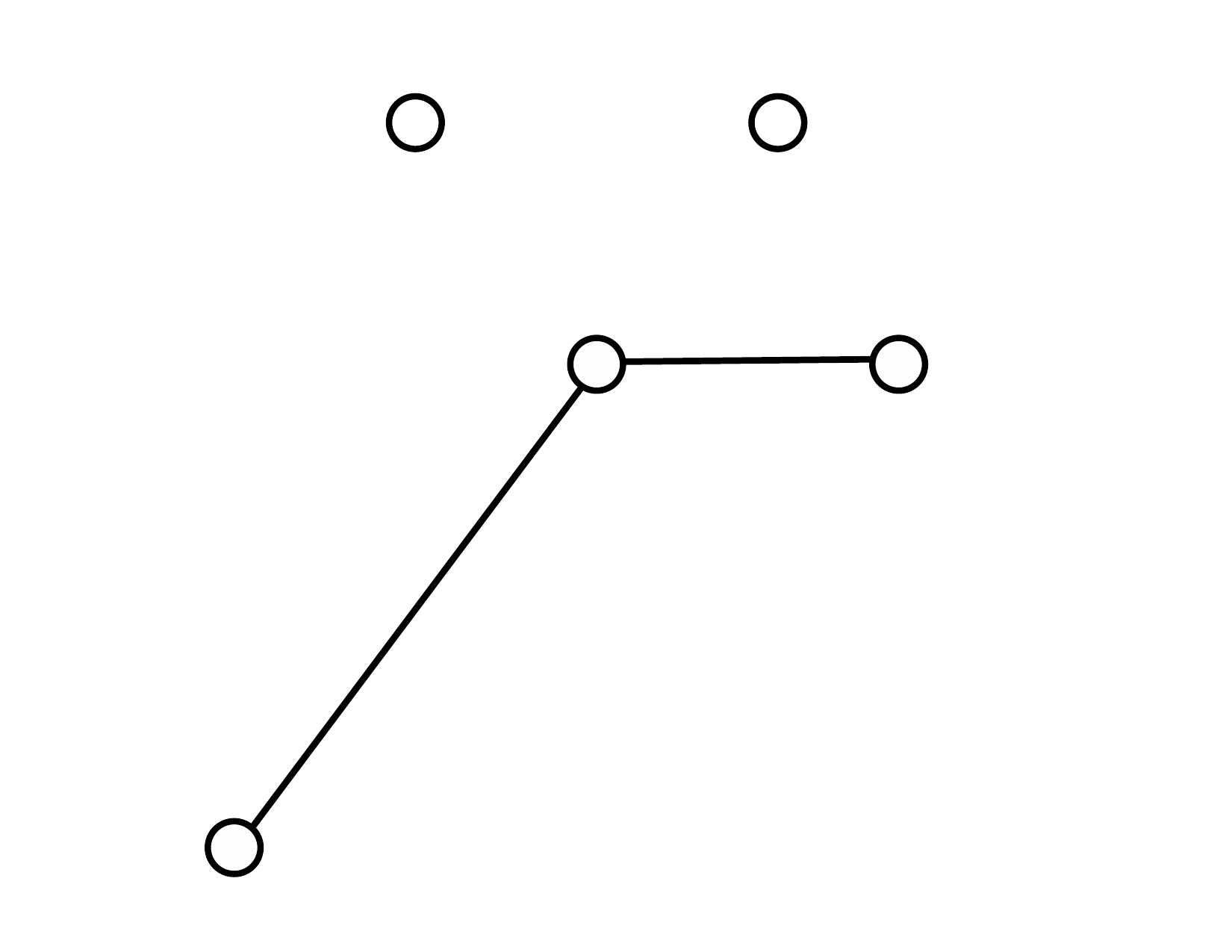
			\def\svgwidth{0.3\textwidth}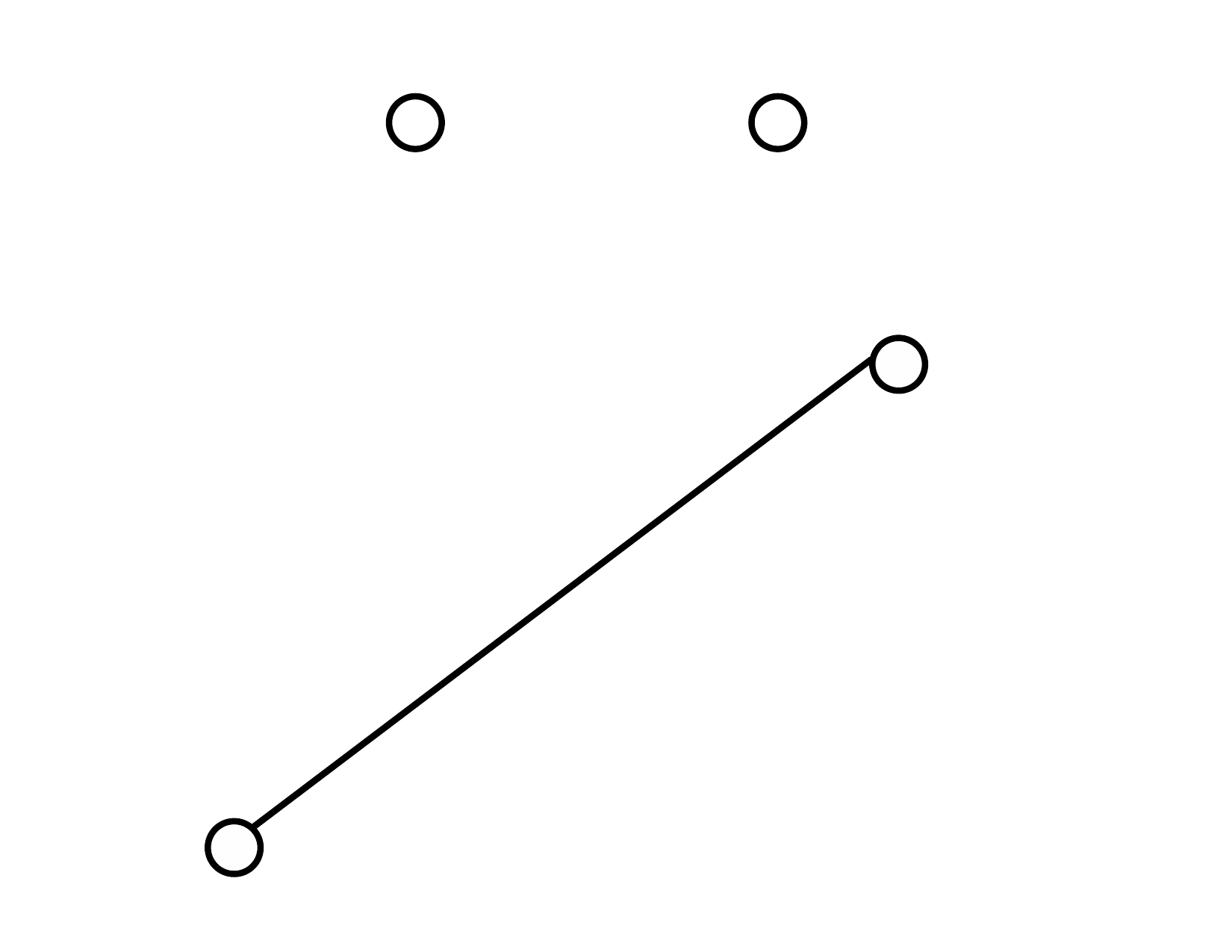
			\def\svgwidth{0.3\textwidth}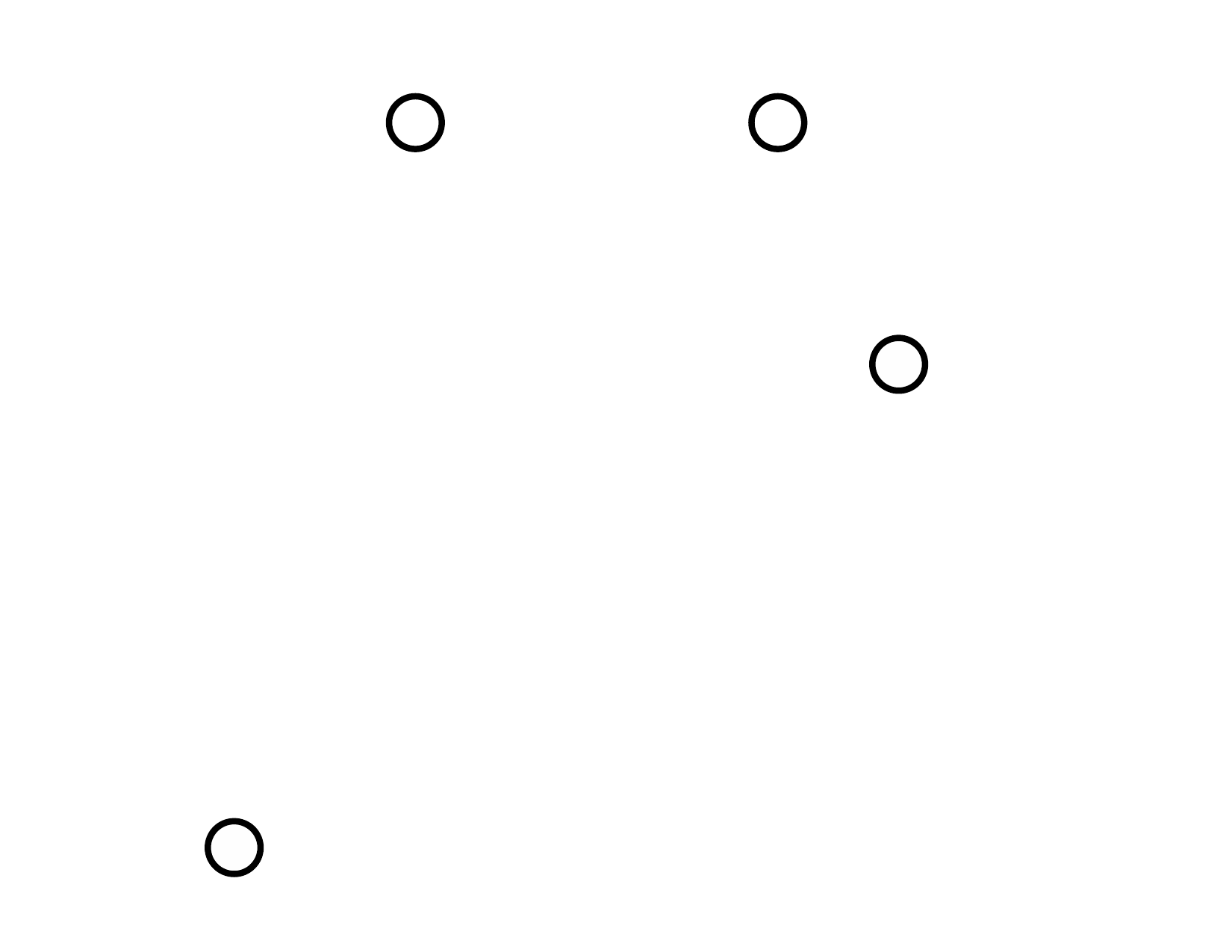
		}
		\caption{A star-shaped neighborhood of a periodic Julia branch point $v$ with two different admissible splittings with angles specified. In the first figure, the angles of tangent vectors at $v$ toward $a_i$'s are $i/5$.}
	\end{subfigure}
	\begin{subfigure}{1\textwidth}
		\centering
		{
			\def\svgwidth{0.3\textwidth}
\begingroup%
  \makeatletter%
  \providecommand\color[2][]{%
    \errmessage{(Inkscape) Color is used for the text in Inkscape, but the package 'color.sty' is not loaded}%
    \renewcommand\color[2][]{}%
  }%
  \providecommand\transparent[1]{%
    \errmessage{(Inkscape) Transparency is used (non-zero) for the text in Inkscape, but the package 'transparent.sty' is not loaded}%
    \renewcommand\transparent[1]{}%
  }%
  \providecommand\rotatebox[2]{#2}%
  \newcommand*\fsize{\dimexpr\f@size pt\relax}%
  \newcommand*\lineheight[1]{\fontsize{\fsize}{#1\fsize}\selectfont}%
  \ifx\svgwidth\undefined%
    \setlength{\unitlength}{792bp}%
    \ifx\svgscale\undefined%
      \relax%
    \else%
      \setlength{\unitlength}{\unitlength * \real{\svgscale}}%
    \fi%
  \else%
    \setlength{\unitlength}{\svgwidth}%
  \fi%
  \global\let\svgwidth\undefined%
  \global\let\svgscale\undefined%
  \makeatother%
  \begin{picture}(1,0.77272727)%
    \lineheight{1}%
    \setlength\tabcolsep{0pt}%
    \put(0,0){\includegraphics[width=\unitlength,page=1]{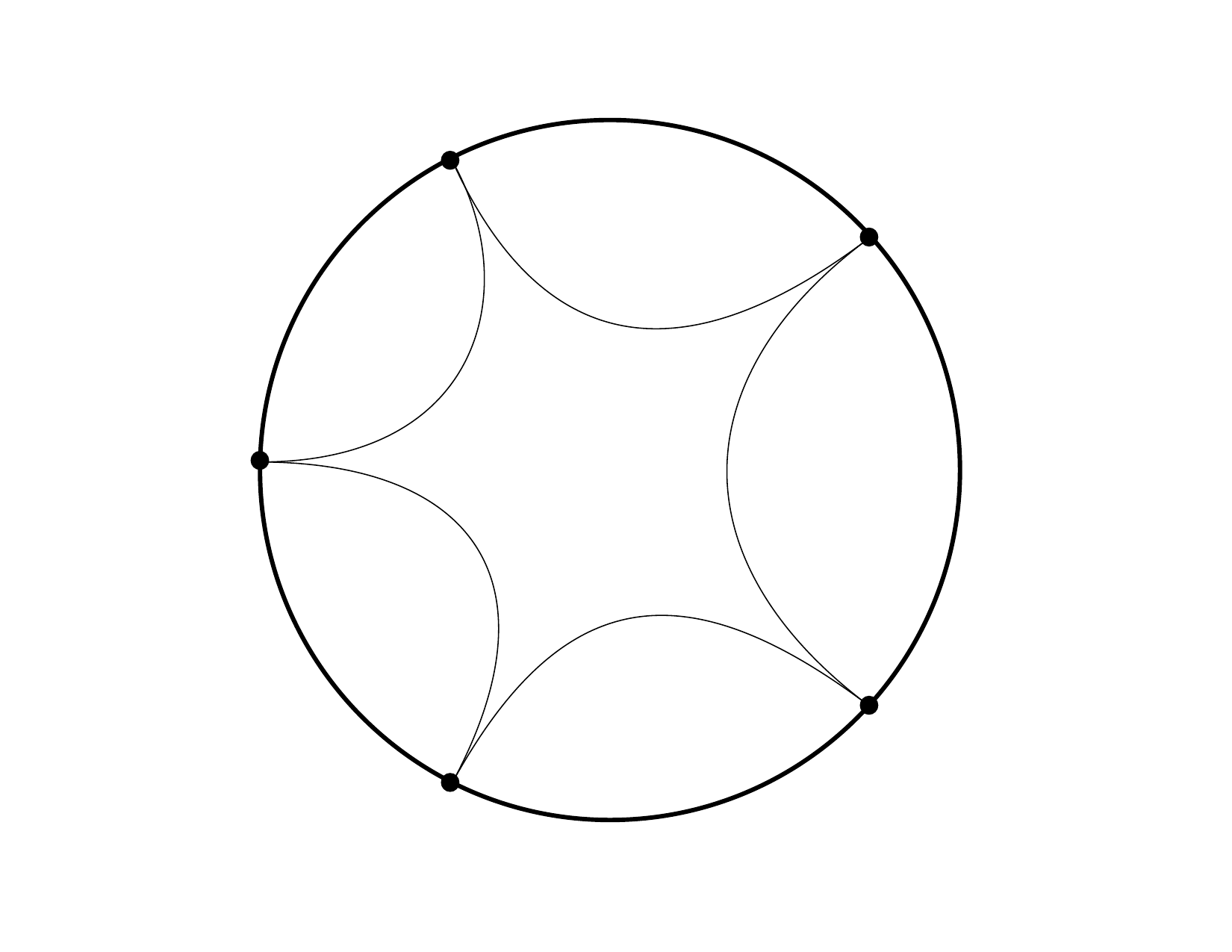}}%
    \put(0.05114024,0.35330933){\color[rgb]{0,0,0}\makebox(0,0)[lt]{\lineheight{1.25}\smash{\begin{tabular}[t]{l}{$A_{40}$}\end{tabular}}}}%
    \put(0.30211838,0.04025614){\color[rgb]{0,0,0}\makebox(0,0)[lt]{\lineheight{1.25}\smash{\begin{tabular}[t]{l}{$A_{01}$}\end{tabular}}}}%
    \put(0.69753348,0.12913013){\color[rgb]{0,0,0}\makebox(0,0)[lt]{\lineheight{1.25}\smash{\begin{tabular}[t]{l}{$A_{12}$}\end{tabular}}}}%
    \put(0.70327295,0.59150604){\color[rgb]{0,0,0}\makebox(0,0)[lt]{\lineheight{1.25}\smash{\begin{tabular}[t]{l}{$A_{23}$}\end{tabular}}}}%
    \put(0.25520522,0.68229325){\color[rgb]{0,0,0}\makebox(0,0)[lt]{\lineheight{1.25}\smash{\begin{tabular}[t]{l}{$A_{34}$}\end{tabular}}}}%
  \end{picture}%
\endgroup%

			\def\svgwidth{0.3\textwidth}
\begingroup%
  \makeatletter%
  \providecommand\color[2][]{%
    \errmessage{(Inkscape) Color is used for the text in Inkscape, but the package 'color.sty' is not loaded}%
    \renewcommand\color[2][]{}%
  }%
  \providecommand\transparent[1]{%
    \errmessage{(Inkscape) Transparency is used (non-zero) for the text in Inkscape, but the package 'transparent.sty' is not loaded}%
    \renewcommand\transparent[1]{}%
  }%
  \providecommand\rotatebox[2]{#2}%
  \newcommand*\fsize{\dimexpr\f@size pt\relax}%
  \newcommand*\lineheight[1]{\fontsize{\fsize}{#1\fsize}\selectfont}%
  \ifx\svgwidth\undefined%
    \setlength{\unitlength}{792bp}%
    \ifx\svgscale\undefined%
      \relax%
    \else%
      \setlength{\unitlength}{\unitlength * \real{\svgscale}}%
    \fi%
  \else%
    \setlength{\unitlength}{\svgwidth}%
  \fi%
  \global\let\svgwidth\undefined%
  \global\let\svgscale\undefined%
  \makeatother%
  \begin{picture}(1,0.77272727)%
    \lineheight{1}%
    \setlength\tabcolsep{0pt}%
    \put(0,0){\includegraphics[width=\unitlength,page=1]{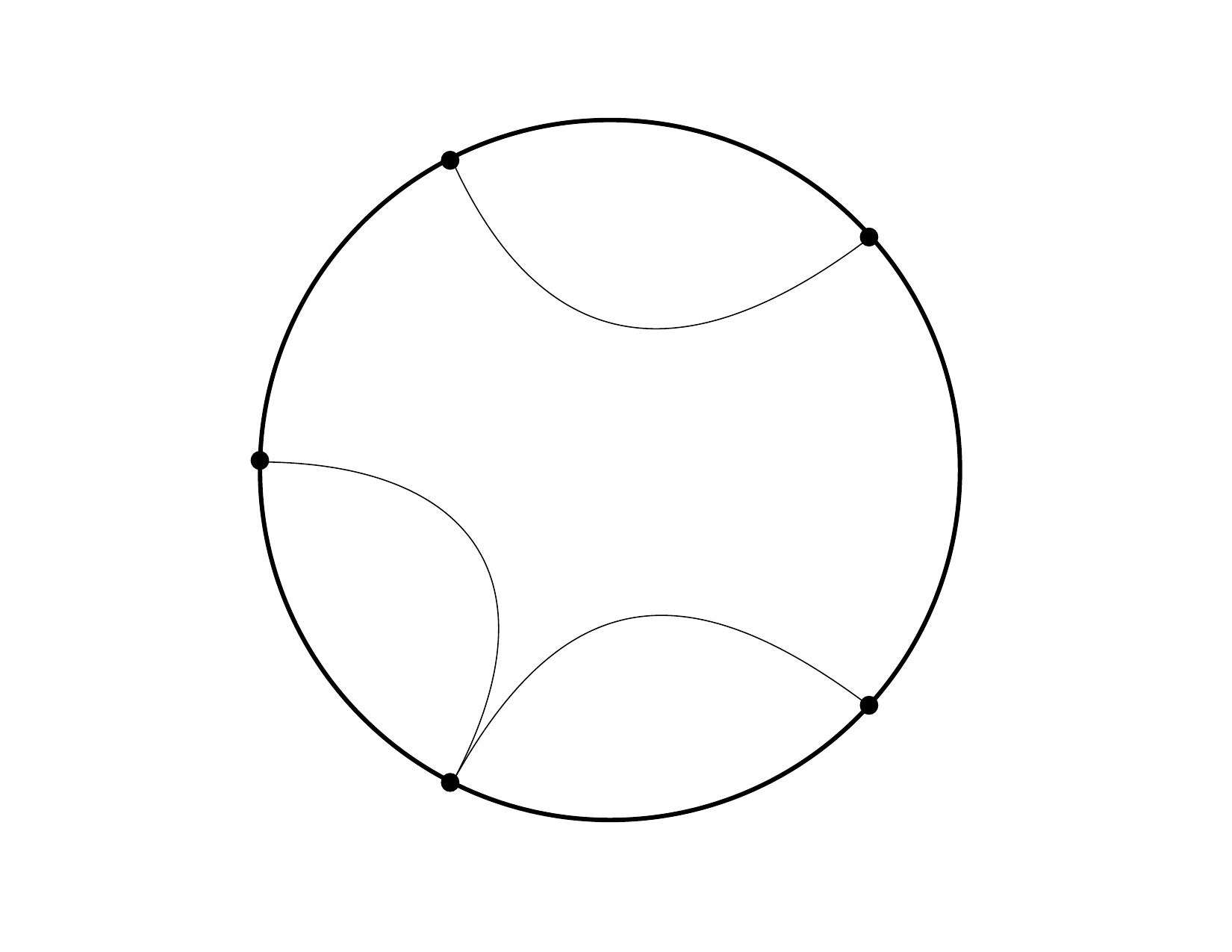}}%
    \put(0,0){\includegraphics[width=\unitlength,page=2]{L2.pdf}}%
    
    \put(0.05114024,0.35330933){\color[rgb]{0,0,0}\makebox(0,0)[lt]{\lineheight{1.25}\smash{\begin{tabular}[t]{l}{$A_{40}$}\end{tabular}}}}%
    \put(0.30211838,0.04025614){\color[rgb]{0,0,0}\makebox(0,0)[lt]{\lineheight{1.25}\smash{\begin{tabular}[t]{l}{$A_{01}$}\end{tabular}}}}%
    \put(0.69753348,0.12913013){\color[rgb]{0,0,0}\makebox(0,0)[lt]{\lineheight{1.25}\smash{\begin{tabular}[t]{l}{$A_{12}$}\end{tabular}}}}%
    \put(0.70327295,0.59150604){\color[rgb]{0,0,0}\makebox(0,0)[lt]{\lineheight{1.25}\smash{\begin{tabular}[t]{l}{$A_{23}$}\end{tabular}}}}%
    \put(0.25520522,0.68229325){\color[rgb]{0,0,0}\makebox(0,0)[lt]{\lineheight{1.25}\smash{\begin{tabular}[t]{l}{$A_{34}$}\end{tabular}}}}%

  \end{picture}%
\endgroup%

			\def\svgwidth{0.3\textwidth}
\begingroup%
  \makeatletter%
  \providecommand\color[2][]{%
    \errmessage{(Inkscape) Color is used for the text in Inkscape, but the package 'color.sty' is not loaded}%
    \renewcommand\color[2][]{}%
  }%
  \providecommand\transparent[1]{%
    \errmessage{(Inkscape) Transparency is used (non-zero) for the text in Inkscape, but the package 'transparent.sty' is not loaded}%
    \renewcommand\transparent[1]{}%
  }%
  \providecommand\rotatebox[2]{#2}%
  \newcommand*\fsize{\dimexpr\f@size pt\relax}%
  \newcommand*\lineheight[1]{\fontsize{\fsize}{#1\fsize}\selectfont}%
  \ifx\svgwidth\undefined%
    \setlength{\unitlength}{792bp}%
    \ifx\svgscale\undefined%
      \relax%
    \else%
      \setlength{\unitlength}{\unitlength * \real{\svgscale}}%
    \fi%
  \else%
    \setlength{\unitlength}{\svgwidth}%
  \fi%
  \global\let\svgwidth\undefined%
  \global\let\svgscale\undefined%
  \makeatother%
  \begin{picture}(1,0.77272727)%
    \lineheight{1}%
    \setlength\tabcolsep{0pt}%
    \put(0,0){\includegraphics[width=\unitlength,page=1]{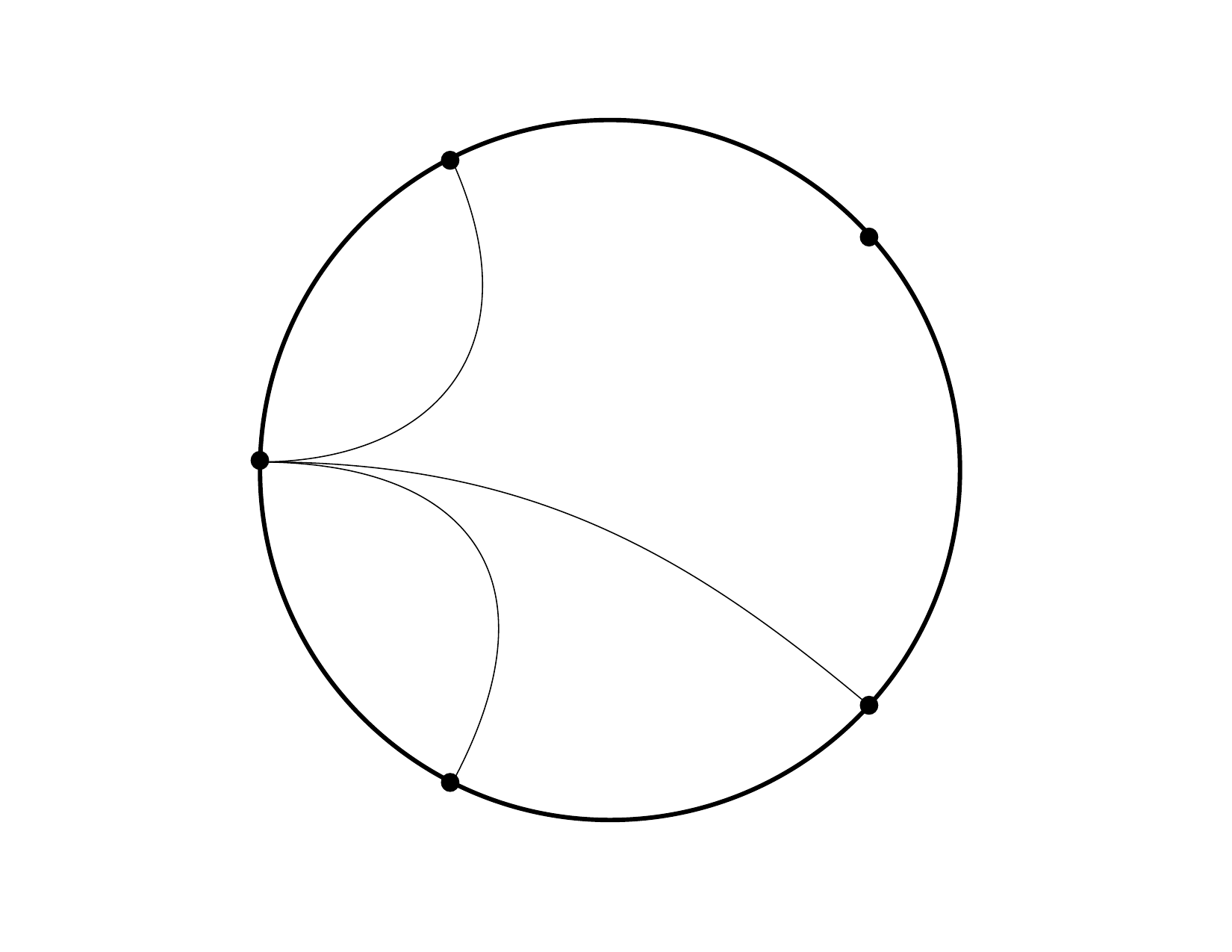}}%
    \put(0,0){\includegraphics[width=\unitlength,page=2]{L3.pdf}}%
    
    \put(0.05114024,0.35330933){\color[rgb]{0,0,0}\makebox(0,0)[lt]{\lineheight{1.25}\smash{\begin{tabular}[t]{l}{$A_{40}$}\end{tabular}}}}%
	\put(0.30211838,0.04025614){\color[rgb]{0,0,0}\makebox(0,0)[lt]{\lineheight{1.25}\smash{\begin{tabular}[t]{l}{$A_{01}$}\end{tabular}}}}%
	\put(0.69753348,0.12913013){\color[rgb]{0,0,0}\makebox(0,0)[lt]{\lineheight{1.25}\smash{\begin{tabular}[t]{l}{$A_{12}$}\end{tabular}}}}%
	\put(0.70327295,0.59150604){\color[rgb]{0,0,0}\makebox(0,0)[lt]{\lineheight{1.25}\smash{\begin{tabular}[t]{l}{$A_{23}$}\end{tabular}}}}%
	\put(0.25520522,0.68229325){\color[rgb]{0,0,0}\makebox(0,0)[lt]{\lineheight{1.25}\smash{\begin{tabular}[t]{l}{$A_{34}$}\end{tabular}}}}%

  \end{picture}%
\endgroup%

		}
		\caption{The corresponding dual laminations generating the same equivalence relations on $\mathbb{S}^1$.}
	\end{subfigure}
	\caption{The split modification and dual laminations.}
	\label{fig:M}
\end{figure}


The dynamics are modified in $S$ so that each edge of $\tilde{S}$ is fixed.
The local degree function $\delta$ is defined to be the same as it was before the modification.
The angle function at $a_i$ is modified with the following rule (see Figure \ref{fig:M}):
\begin{itemize}
\item If $a_ia_j$ is an edge where $a_j$ is closer to $a_0$ than $a_i$, we set the angle of the tangent direction corresponding to $a_j$ to be $0$.
\item If $a_ia_j$ is an edge where $a_j$ is further to $a_0$ than $a_i$, we set the angle of the tangent direction corresponding to $a_j$ to be $0^+$ if $j<i$ and $0^-$ if $j>i$.
\item The other angles remain the same.
\end{itemize}
We also modify $H$ on the backward orbits of vertices in $S$ by pullback (see \cite[\S 6]{L21a} for more details).

After such modifications for all periodic Julia branch points and their backward orbits, we obtain an angled forest map that can be easily demonstrated to be admissible.

\subsection*{Degenerating to a root of $\mathcal{H}_f$}
We are ready to prove Theorem \ref{prop:gbf}.
\begin{proof}[Proof of Theorem \ref{prop:gbf}]
Let $H$ be a simplicial Hubbard forest modeled on $\mathcal{S}$ associated to the simplicial tuning $f$ of $g$.
Let $(F: (\mathcal{T}, \mathcal{P}) \rightarrow (\mathcal{T}, \mathcal{P}), \delta, \alpha)$ be an admissible splitting of the simplicial Hubbard forest $H$, constructed as above.
Then by Theorem \ref{thm:rs}, there exists a quasi \pcf sequence $\bp_n \in \BP^\mathcal{S}$ realizing this angled forest map.
This sequence $\bp_n$ corresponds to a sequence of polynomials
$g_n \in \mathcal{H}_g$.
Using the same argument as in \cite[\S 6]{L21a}, we can prove that after passing to a subsequence, $g_n$ converges to a geometrically finite polynomial $g_\infty \in \mathcal{P}_n$ which is a root of $\mathcal{H}_f$.
This proves Theorem \ref{prop:gbf}.
\end{proof}

\begin{proof}[Proof of Theorem \ref{thm:bcbound}]
Suppose $f_0= f,..., f_k(z) = z^d$ where $f_i$ is a simplicial tuning of $f_{i+1}$.
Then $\mathcal{H}_{f_{i+1}}$ bifurcates to $\mathcal{H}_{f_i}$.
Therefore, $\mathscr{C}_{b}(f) \leq \mathscr{C }_{c}(f)$.

To prove the sharpness, we note that if $f \in \mathcal{P}_d$ is any \pcf polynomial so that $\mathscr{C }_{c}(f) = 1$ and $f(z) \neq z^d$, then $\mathscr{C}_{b}(f) = \mathscr{C}_{c}(f) = 1$.
\end{proof}

\section{Maps in $\Mol_d$ have core entropy zero}\label{subsec:miz}
In this section, we show that any \pcf polynomial in $\Mol_d$ has core entropy zero.
\begin{theorem}\label{prop:miz}
Let $f$ be a \pcf polynomial in the main molecule $\Mol_d$. Then $f$ has core entropy zero.
\end{theorem}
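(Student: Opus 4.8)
The plan is to extract the statement from the machinery already in place, reducing the question to the combinatorial criterion of Theorem~\ref{thm:nic} (core entropy zero $\iff$ no intersecting cycles in $\mathcal{G}_f$) and running the argument by tracking how external rays that co-land change as one passes between adjacent \shcs. First, observe that the core entropy is a function of the rational lamination alone: two maps in the same \shc are $J$-conjugate, so the dynamical system $f|_{\mathcal{T}_f}$ — whose entropy is carried by the external edges of the Hubbard tree, the internal ones being preperiodic and of depth $0$ — is topologically the same; thus $h$ is constant on each \shc. Since $h$ extends to a continuous function on the connectedness locus \cite{GT21} and $\bigcup_{\mathcal{H}\in\mathfrak{S}}\mathcal{H}$ is dense in $\Mol_d$ by definition, it suffices to prove that $h$ vanishes at the center of every $\mathcal{H}\in\mathfrak{S}$; continuity then forces $h\equiv 0$ on all of $\Mol_d$, in particular at every \pcf polynomial there. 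In fact the argument will yield the sharper conclusion (Theorem~\ref{thm:npm}) that every \pcf $f\in\Mol_d$ lies in some $\mathcal{H}\in\mathfrak{S}$ and has $\mathscr{C}_b(f)<\infty$.

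Next I would induct on the minimal length $k$ of an adjacency chain $\mathcal{H}_d=\mathcal{H}^0,\mathcal{H}^1,\dots,\mathcal{H}^k=\mathcal{H}$ realizing $\mathcal{H}\in\mathfrak{S}$. The base case $k=0$ is $z^d$, whose Hubbard tree is a point and whose directed graph has no cycles. For the step, let $g$ be the center of $\mathcal{H}=\mathcal{H}^{k-1}$ with $h(g)=0$, let $\mathcal{H}'=\mathcal{H}^k$ be adjacent with center $g'$, and fix a common boundary point $h_0\in\partial\mathcal{H}\cap\partial\mathcal{H}'$. Because co-landing of external rays is a closed condition under degeneration, the rational lamination only grows as one approaches a boundary point, so $\lambda(g)\subseteq\lambda(h_0)$ and $\lambda(g')\subseteq\lambda(h_0)$. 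The crux is to show that passing through $h_0$ does not create intersecting cycles in $\mathcal{G}_{g'}$. Using the deformation theory of \cite{L21a,L21b}, the change of the co-landing pattern across $h_0$ is controlled: $\mathcal{T}_{g'}$ is obtained from $\mathcal{T}_g$ either by inserting families of Fatou gaps (a simplicial tuning) or by collapsing them (a simplicial quotient), possibly after replacing $h_0$ by a further common boundary point. A simplicial tuning preserves the ``no intersecting cycles'' property by Lemma~\ref{lem:fbgr}, and a simplicial quotient can only delete cycles; in either direction $\mathcal{G}_{g'}$ has no intersecting cycles, so $h(g')=0$ by Theorem~\ref{thm:nic}. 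This closes the induction.

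Equivalently — and this is the cleaner way to organize the write-up — one upgrades the chain of adjacencies to a chain of bifurcations: feeding the degenerating sequence of \shc-centers into the quasi \pcf degeneration theory of \S\ref{subsec:zim} produces a quasi-invariant forest whose admissible simplicial structure exhibits $g$ as a simplicial tuning of a strictly simpler \pcf polynomial, and hence the adjacency chain as a bifurcation chain from $z^d$. Consequently $\mathscr{C}_b(f)<\infty$ for every \pcf $f\in\Mol_d$, and then Theorem~\ref{thm:bcboundr} gives $\mathscr{C}_c(f)\le (d-1)\,\mathscr{C}_b(f)<\infty$, so $f$ is obtained from $z^d$ by finitely many simplicial tunings and therefore $h(f)=0$ by Corollary~\ref{cor:fcz}. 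This simultaneously reproves Theorem~\ref{thm:fb} from the $\Mol_d$ side and establishes Theorem~\ref{thm:npm}.

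The main obstacle is precisely the control of the transition across a common boundary point $h_0$ that is \emph{not} geometrically finite: as flagged just before Definition~\ref{defn:bif}, two \shcs could a priori meet along a geometrically infinite piece of their boundaries in some exotic way, and one must rule out that such a meeting injects an intersecting pair of cycles into the Hubbard-tree graph (equivalently, a Cantor set into $\Julia_{g'}\cap\mathcal{T}_{g'}$). The resolution is the rescaling/quasi-invariant-forest analysis: since $h_0$ is approached by \pcf centers, one extracts a quasi \pcf degenerating subsequence, and the resulting simplicial forest map pins down the co-landing pattern on the Julia part of the Hubbard tree to be that of a simplicial tuning of a strictly simpler map, leaving no room for a new intersecting cycle to appear in a single step. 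A secondary technical point is the constancy of core entropy on each \shc (needed to reduce to centers) and the continuity of \cite{GT21} (needed to pass from the centers to all of $\Mol_d$), both of which are bookkeeping once the lamination-only dependence of $f|_{\mathcal{T}_f}$ is isolated.
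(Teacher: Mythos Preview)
Your proposal has two genuine gaps, and the paper's proof takes a different route precisely to avoid them.

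\textbf{The continuity shortcut does not close.} You write that $h$ extends to a continuous function on the connectedness locus by \cite{GT21}, so vanishing on the centers of $\mathcal{H}\in\mathfrak{S}$ forces vanishing on all of $\Mol_d$. But \cite{GT21} gives continuity on the space of \emph{post-critically finite} polynomials, not on the connectedness locus. A \pcf $f\in\Mol_d$ is a limit of points $f_n\in\bigcup_{\mathcal{H}\in\mathfrak{S}}\mathcal{H}$, and these $f_n$ are merely sub-hyperbolic, not \pcf. You would need to replace $f_n$ by the \pcf centers $g_n$ of their \shcs, but nothing guarantees $g_n\to f$: the paper explicitly flags (just after the statement of Proposition~\ref{prop:azce}) that it is unknown whether the diameters of those \shcs shrink to zero. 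So the continuity argument is exactly what the paper cannot use, and your reduction to centers does not go through.

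\textbf{Adjacency cannot be upgraded to bifurcation for free.} Your inductive step assumes that across any common boundary point $h_0$ the lamination changes by a simplicial tuning or quotient, and your ``cleaner'' reformulation asserts that the quasi \pcf degeneration theory of \S\ref{subsec:zim} turns an adjacency chain into a bifurcation chain. Neither is justified. The lamination need \emph{not} grow monotonically toward the boundary: as noted just before Definition~\ref{defn:bif} (with reference to \cite[Appendix~B]{GM93}), a geometrically finite $h_0\in\partial\mathcal{H}_f$ can satisfy $\lambda(h_0)\subsetneq\lambda(f)$. Worse, $h_0$ may be geometrically infinite, and whether two \shcs can meet only at such a point is exactly the unknown that motivates the restrictive Definition~\ref{defn:bif} of bifurcation. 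The machinery of \S\ref{subsec:zim} runs in the direction (simplicial tuning $\Rightarrow$ bifurcation), not the reverse; it gives no control over an arbitrary adjacency. Invoking Theorem~\ref{thm:bcboundr} at the end is then circular, since $\mathscr{C}_b(f)<\infty$ presupposes a bifurcation chain, which is what you were trying to produce.

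\textbf{What the paper actually does.} The proof of Theorem~\ref{prop:miz} avoids both issues by arguing the contrapositive via stability of \emph{repelling} landing patterns. If $h(f)>0$, the intersecting cycles in $\mathcal{G}_f$ manufacture (Lemmas~\ref{lem:ld}--\ref{lem:ul}) a Cantor set of periodic points in $\mathcal{T}_f$, witnessed by infinitely many periodic leaves in $\lambda(f)$ whose closure is uncountable. These leaves land at repelling periodic points, so by \cite[Lemma~B.4]{GM93} all but finitely many persist across any common boundary point $h_0$ into $\lambda(g)$; hence $\mathcal{J}_g\cap\mathcal{T}_g$ is uncountable and $h(g)>0$ by Theorem~\ref{thm:cs}. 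This gives Proposition~\ref{prop:bzce} (adjacency preserves the zero/positive dichotomy) without ever identifying $h_0$ or invoking bifurcation. For the accumulation step, Lemma~\ref{lem:uncl} shows the same Cantor family of leaves is stable under small perturbation in $\mathcal{P}_d$, so nearby $f_n$ have uncountably many separating leaves; since $f_n$ and its \pcf center $g_n$ share the same lamination, $h(g_n)>0$, contradicting the inductive part. No continuity of $h$ and no structural control of $h_0$ are needed.
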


Recall that $\Mol_d = \overline{\bigcup_{\mathcal{H}\in \mathfrak{S}} \mathcal{H}}$.
We will breakdown the proof into two steps.
\begin{enumerate}
\item Firstly, we show that if $f$ is a \pcf polynomial in $\bigcup_{\mathcal{H}\in \mathfrak{S}} \mathcal{H}$, then $f$ has core entropy zero (see \S \ref{subsubsec:b}).
This is proved by induction.
\item In the second step, we show that if $f$ is a \pcf polynomial in $\Mol_d - \bigcup_{\mathcal{H}\in \mathfrak{S}} \mathcal{H}$, then $f$ has core entropy zero (see \S \ref{subsubsec:a}).
\end{enumerate}

We also established the following inequality (cf. Theorem \ref{thm:bcbound}).
\begin{theorem}\label{thm:bcboundr}
Let $f$ be a \pcf polynomial in the main molecule $\Mol_d$.
Then
$$
\mathscr{C}_{c}(f) \leq (d-1)\mathscr{C}_{b}(f).
$$
Moreover, the bound is sharp, i.e., there exists a degree $d$ \pcf polynomial $f$ with $\mathscr{C}_{c}(f) = (d-1)\mathscr{C}_{b}(f)$.
\end{theorem}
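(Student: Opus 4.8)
The plan is to reduce the inequality to a single lemma and then telescope. The lemma I would establish is: \emph{if $g,g' \in \mathcal{P}_d$ are \pcf centers of \shcs and $\mathcal{H}_g$ bifurcates to $\mathcal{H}_{g'}$, then $g'$ is obtained from $g$ by at most $d-1$ simplicial tunings.} Granting this, suppose $\mathscr{C}_b(f) = k < \infty$ and fix a minimal chain $\mathcal{H}_d = \mathcal{H}^0, \mathcal{H}^1, \dots, \mathcal{H}^k = \mathcal{H}_f$ in which $\mathcal{H}^i$ bifurcates to $\mathcal{H}^{i+1}$; let $g_i$ denote the center of $\mathcal{H}^i$, so $g_0(z) = z^d$ and $g_k = f$. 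Appending the at most $d-1$ simplicial tunings from $g_i$ to $g_{i+1}$ to a minimal simplicial tuning sequence from $z^d$ to $g_i$ yields $\mathscr{C}_c(g_{i+1}) \leq \mathscr{C}_c(g_i) + (d-1)$, and since $\mathscr{C}_c(g_0) = 0$, an induction on $i$ gives $\mathscr{C}_c(f) \leq k(d-1) = (d-1)\,\mathscr{C}_b(f)$. When $\mathscr{C}_b(f) = \infty$ the inequality is vacuous.

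To prove the lemma I would run the quasi \pcf degeneration machinery of \S\ref{subsec:zim}, essentially reversing the proof of Theorem \ref{prop:gbf}. Since $\mathcal{H}_g$ bifurcates to $\mathcal{H}_{g'}$, there is a geometrically finite $h \in \partial\mathcal{H}_g \cap \partial\mathcal{H}_{g'}$ that is a root of $\mathcal{H}_{g'}$ and with $\lambda(g) \subseteq \lambda(g')$. A sequence in $\mathcal{H}_g$ converging to $h$, transported to the Blaschke model $\BP^{\mathcal{S}_g}$, is quasi \pcf, and Theorem \ref{thm:qit} associates to it (after an admissible splitting if needed) an angled quasi-invariant forest $F\colon (\mathcal{T},\mathcal{P}) \to (\mathcal{T},\mathcal{P})$. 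Because $h$ is a root of $\mathcal{H}_{g'}$ and $\lambda(h) = \lambda(g')$, this forest together with the de-pinching of the parabolic cycles of $h$ records precisely how $\mathcal{T}_{g'}$ is built from $\mathcal{T}_g$, and by the structure theory for adjacencies of the main hyperbolic component in \cite[Theorem 1.1]{L21a} (pointed iterated-simplicial trees) this surgery decomposes as a nested sequence of simplicial insertions; that is, $g'$ is obtained from $g$ by a sequence of simplicial tunings. It then remains to bound the height of this nesting by $d-1$. For this I would argue, exactly as in the proof of Lemma \ref{lem:nt} and the dichotomy of Lemma \ref{lem:dq}, that a layer all of whose inserted cores are trivial may be removed; hence each genuinely needed layer contains at least one new bounded Fatou component carrying a critical point, and since there are only $d-1$ critical points in total there are at most $d-1$ layers. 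Making precise ``each new layer consumes a critical point'' inside the quasi-invariant forest formalism --- and verifying that these nested simplicial insertions are genuinely realized by simplicial tunings of polynomials in $\mathcal{P}_d$ --- is the step I expect to be the main obstacle.

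For sharpness it suffices, for each $d \geq 2$, to produce a \pcf polynomial $f$ of degree $d$ with $\mathscr{C}_b(f) = 1$ and $\mathscr{C}_c(f) = d-1$, since then $(d-1)\,\mathscr{C}_b(f) = d-1 = \mathscr{C}_c(f)$. The prototype is the cubic example of \S\ref{subsec:eg}, $z \mapsto z^3 - \tfrac32 z + \tfrac{1}{\sqrt{2}}$, in which one critical point sits in a period-$2$ (degree $2$) bounded Fatou cycle and the other is a Julia critical point eventually fixed at a repelling fixed point, so that $\mathcal{G}_f$ has exactly two disjoint simple cycles forming a chain of length $2$; one checks $\mathscr{C}_c(f) = \mathscr{C}_{gr}(f) = 2$ directly, while $\mathscr{C}_b(f) = 1$ by Property (3) in \S\ref{subsec:eg} (together with the triviality of $\lambda(z^d)$). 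For general $d$ I would construct an abstract angled Hubbard tree in which one critical point lies in a period-$2$ degree-$2$ Fatou cycle and the remaining $d-2$ critical points are ``stacked'' along a regulated arc, the $j$-th of them a Julia critical point landing on a repelling fixed tip one level deeper than the $(j-1)$-st, so that $\mathcal{G}_f$ contains a chain $\mathcal{C}_0 \geq \mathcal{C}_1 \geq \dots \geq \mathcal{C}_{d-2}$ of $d-1$ disjoint simple cycles. By Lemma \ref{lem:mnsc} together with Theorem \ref{thm:cgrb} this gives $\mathscr{C}_c(f) = \mathscr{C}_{gr}(f) = d-1$. One then verifies that the tree is expanding, hence realized by some $f \in \mathcal{P}_d$ by \cite{Poi10}, and that the geometrically finite polynomial $h \in \partial\mathcal{H}_f$ obtained by converting the relevant repelling tips to parabolic points is a root of $\mathcal{H}_f$ whose pointed Hubbard tree is pointed iterated-simplicial; thus $\mathcal{H}_f$ is adjacent to $\mathcal{H}_d$ by \cite[Theorem 1.1]{L21a}, and since $z^d$ is trivially $J$-semi-conjugate to $h$, $\mathcal{H}_d$ bifurcates to $\mathcal{H}_f$, giving $\mathscr{C}_b(f) = 1$.
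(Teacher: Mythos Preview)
Your high-level strategy --- reduce to the lemma ``one bifurcation costs at most $d-1$ simplicial tunings'' and telescope --- is exactly the paper's. The gap is in the lemma itself, precisely where you flag it. Your proposed mechanism (``each genuine layer contains a new bounded Fatou component carrying a critical point, and there are only $d-1$ critical points'') is not quite right and the tools you reach for, Lemma~\ref{lem:nt} and Lemma~\ref{lem:dq}, go the wrong way: they control the \emph{simplicial quotient} process (passing to simpler maps), not the nesting depth of simplicial tunings produced by a single bifurcation. The paper closes this gap with the two results in \S\ref{subsec:bc} that you did not invoke: Proposition~\ref{prop:is}, which says that for each periodic Fatou component $U$ of $g$ the inserted tree $(\mathcal{T}_{g',U},p)$ is \emph{pointed} iterated-simplicial, and Lemma~\ref{lem:fs}, which bounds the length of any pointed iterated-simplicial chain by $d-1$. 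The invariant that does the work is $\mult_{go}(p)$, the total critical multiplicity in the grand orbit of the marked fixed point $p$; it starts at most $d-1$ and drops by at least one at each pointed simplicial tuning. This is the precise version of your ``consumes a critical point'' intuition, and it is the pointed structure (tracking where the attracting periodic point degenerates under the quasi-\pcf sequence) that makes it go through.

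Your sharpness construction is genuinely different from the paper's and is a nice alternative. For $d=3$ your use of the example of \S\ref{subsec:eg} is correct: with edges $E_1=[c_1,0]$, $E_2=[0,c_2]$, $E_3=[c_2,\sqrt{2}]$ one has $E_1\to E_3$, $E_2\to E_2$, $E_3\to E_2,E_3$, so the maximal depth is $1$ and $\mathscr{C}_c(f)=\mathscr{C}_{gr}(f)=2$, while Property~(3) gives $\mathscr{C}_b(f)=1$. The paper instead builds, for each $d$, a purely Fatou example: a length-$(d-1)$ chain of pointed simplicial tunings starting from the trivial tree (illustrated for $d=4$ in Figure~\ref{fig:EV}), and then invokes \cite[Corollary~1.2]{L21a} to get $\mathscr{C}_b(f)=1$ directly, with $\mathscr{C}_c(f)=\mathscr{C}_t(f)=d-1$ read off from the directed graph (Figure~\ref{fig:DG}). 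Your generalization with stacked Julia critical points should also work, but you would still need to verify that the resulting pointed Hubbard tree is pointed iterated-simplicial so that \cite[Theorem~1.1]{L21a} applies; the paper's example makes this immediate by construction.
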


We remark that a posteriori, our proof gives rise to the following theorem.

\begin{theorem}\label{thm:npm}
There are no \pcf polynomials in 
$$
\Mol_d - \bigcup_{\mathcal{H}\in \mathfrak{S}} \mathcal{H}.
$$
\end{theorem}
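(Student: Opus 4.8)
The plan is to obtain Theorem~\ref{thm:npm} as a formal consequence of the equivalences set up for Theorem~\ref{thm:A} together with Theorem~\ref{prop:gbf} (equivalently, Theorem~\ref{thm:bcbound}); once those are available no genuinely new argument is needed. Let $f$ be a \pcf polynomial with $f \in \Mol_d$. The goal is to show that $f$ already lies in $\bigcup_{\mathcal{H}\in\mathfrak{S}}\mathcal{H}$, equivalently that its relative hyperbolic component $\mathcal{H}_f$ is of finite distance from $\mathcal{H}_d$, i.e.\ $\mathcal{H}_f\in\mathfrak{S}$; since $f$ is the center of $\mathcal{H}_f$ (see \S\ref{sec:mm}), this places $f$ in the open part of the molecule.

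The steps, in order, are as follows. First I would invoke Theorem~\ref{prop:miz} to get $h(f)=0$; the crucial point here is that the proof of Theorem~\ref{prop:miz} treats every \pcf point of $\Mol_d$, including the \emph{a priori} larger accumulation set $\Mol_d - \bigcup_{\mathcal{H}\in\mathfrak{S}}\mathcal{H}$, so it applies to our $f$ unconditionally. Next, by Theorem~\ref{thm:fb} (i.e.\ (1)$\Longleftrightarrow$(3) of Theorem~\ref{thm:A}), together with Theorem~\ref{prop:fb}, the vanishing $h(f)=0$ produces a finite chain
\[
f = f_0,\ f_1,\ \dots,\ f_k(z)=z^d
\]
of successive simplicial quotients, so that each $f_i$ is a simplicial tuning of $f_{i+1}$. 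Applying Theorem~\ref{prop:gbf} to each consecutive pair, $\mathcal{H}_{f_{i+1}}$ bifurcates to $\mathcal{H}_{f_i}$; by Definition~\ref{defn:bif} the bifurcation occurs at a geometrically finite polynomial lying in $\partial\mathcal{H}_{f_{i+1}}\cap\partial\mathcal{H}_{f_i}$, so these two relative hyperbolic components are adjacent. Reading the chain from $f_k$ back to $f_0$ exhibits
$\mathcal{H}_d=\mathcal{H}_{f_k},\,\mathcal{H}_{f_{k-1}},\,\dots,\,\mathcal{H}_{f_0}=\mathcal{H}_f$
as a finite sequence of pairwise adjacent relative hyperbolic components; hence $\mathcal{H}_f\in\mathfrak{S}$ and $f\in\mathcal{H}_f\subseteq\bigcup_{\mathcal{H}\in\mathfrak{S}}\mathcal{H}$, so no \pcf polynomial of $\Mol_d$ can sit in the complement $\Mol_d-\bigcup_{\mathcal{H}\in\mathfrak{S}}\mathcal{H}$. (One may phrase the last stretch more quickly via Theorem~\ref{thm:bcbound}: $h(f)=0$ gives $\mathscr{C}_c(f)<\infty$, hence $\mathscr{C}_b(f)\le\mathscr{C}_c(f)<\infty$, which is precisely a finite bifurcation chain from $\mathcal{H}_d$ to $\mathcal{H}_f$ and therefore a finite chain of adjacencies.)

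The only subtlety — and the place where all the actual work lives — is avoiding circularity: this deduction is legitimate exactly because Theorem~\ref{prop:miz}, and in particular the step in \S\ref{subsubsec:a} handling \pcf points of the accumulation set, is proved \emph{independently} of Theorem~\ref{thm:npm}, via the analysis of how the external rays landing at a common Julia point vary under deformation of the parameter. So I expect no new obstacle in the proof of Theorem~\ref{thm:npm} itself; the statement is the payoff of assembling the chain $h(f)=0 \Rightarrow$ finite simplicial tunings $\Rightarrow$ finite bifurcations $\Rightarrow \mathcal{H}_f\in\mathfrak{S}$, and I would also record in the same breath the sharper conclusion (referenced in Remark~\ref{rmk:mt}) that every such $f$ is reached from $z^d$ by finitely many bifurcations.
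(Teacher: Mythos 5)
Your proposal is correct and takes essentially the same route as the paper: invoke Theorem~\ref{prop:miz} to get $h(f)=0$, then combine Theorem~\ref{prop:fb} (finite chain of simplicial quotients) with Theorem~\ref{prop:gbf} (simplicial tuning implies bifurcation, hence adjacency) to conclude $\mathcal{H}_f\in\mathfrak{S}$. Your additional remark on non-circularity — that the accumulation-set case of Theorem~\ref{prop:miz} (Proposition~\ref{prop:azce}) is established independently via the lamination/external-ray perturbation argument — correctly identifies the point that makes the deduction legitimate.
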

\begin{proof}
Let $f \in \Mol_d$ be a \pcf polynomial. 
By Theorem \ref{prop:miz}, $f$ has core entropy zero.
By Theorem \ref{prop:fb} and Theorem \ref{prop:gbf}, $\mathcal{H}_f$ is obtained from the main hyperbolic component $\mathcal{H}_d$ through a finite chain of bifurcations.
Thus, $f \in\bigcup_{\mathcal{H}\in \mathfrak{S}} \mathcal{H}$.
\end{proof}

\subsection{Polynomials with positive core entropy}
In this subsection, we investigate properties of polynomials with positive core entropy.

\begin{lem}\label{lem:ld}
	Let $f \in \mathcal{P}_d$ be a \pcf polynomial with positive core entropy and $\mathcal{T}_f$ denote the Hubbard tree of $f$.
	Then there exist two integers $k_1, k_2$ and an edge $E \subseteq \mathcal{T}_f$ containing two subintervals $E^1, E^2$ with disjoint interiors so that for $i = 1,2$,
	$$
	f^{k_i}: E^i \longrightarrow E
	$$
	is a homeomorphism.
\end{lem}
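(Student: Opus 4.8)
The plan is to read off the two subintervals from the combinatorics of the transition graph $\mathcal{G}_f$. Since $f$ has positive core entropy, $\mathcal{G}_f$ has intersecting cycles, so by Lemma~\ref{lem:nicscc} some strongly connected component $\mathcal{C}$ of $\mathcal{G}_f$ is non-cyclic. A finite strongly connected directed graph in which every vertex has out-degree $\le 1$ is a single simple cycle; hence $\mathcal{C}$ contains a vertex $x$ with at least two outgoing edges lying in $\mathcal{C}$, and since $\mathcal{G}_f$ has no multiple edges these two edges go to two \emph{distinct} vertices $w_1\neq w_2$. As $\mathcal{C}$ is strongly connected, each $w_i$ lies on a directed path inside $\mathcal{C}$ back to $x$; prepending the edge $x\to w_i$ yields a closed path $\gamma_i=(x\to w_i\to\cdots\to x)$ of some length $k_i\ge 1$, and the two closed paths use distinct first edges. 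Let $E:=E_x$ be the edge of $\mathcal{T}_f$ corresponding to $x$; this is the edge the construction will use (one may also phrase this by re-choosing a pair of intersecting simple cycles and a common vertex so that the two cycles leave it along different directed edges, by cyclically reindexing at the first place where the two cycles diverge; this takes the role of the vertex $a_1^1$ in the paragraph preceding the lemma).

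Next I would record the elementary Markov facts that make the construction run. For any directed edge $a\to b$ of $\mathcal{G}_f$, the map $f$ is injective on the edge $E_a$ and $E_b\subseteq f(E_a)$; moreover $f(E_a)$ covers $E_b$ exactly once because every critical point of $f$ is a vertex of $\mathcal{T}_f$. Hence $f^{-1}(E_b)\cap E_a$ is a single nondegenerate subinterval $I_{a\to b}\subseteq E_a$, and $f$ restricts to a homeomorphism $I_{a\to b}\to E_b$. If $a\to b$ and $a\to b'$ are distinct edges out of $a$, then $b\neq b'$, so $E_b$ and $E_{b'}$ are distinct edges of $\mathcal{T}_f$ and thus have disjoint interiors; since $f|_{E_a}$ is injective, $I_{a\to b}$ and $I_{a\to b'}$ also have disjoint interiors. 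Applying this at $x$ with $b=w_1$, $b'=w_2$ gives subintervals $I_1:=I_{x\to w_1}$ and $I_2:=I_{x\to w_2}$ of $E$ with disjoint interiors.

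Then I would propagate these subintervals backwards along the two closed paths. Fix $i\in\{1,2\}$ and write $\gamma_i$ as $x=c_0\to c_1\to\cdots\to c_{k_i}=x$ with $c_1=w_i$. Set $F_{k_i}:=E_x$ and, descending on $j$, put $F_j:=(f|_{E_{c_j}})^{-1}(F_{j+1})$; inductively $F_{j+1}\subseteq E_{c_{j+1}}\subseteq f(E_{c_j})$, so each $F_j$ is a nondegenerate subinterval of $E_{c_j}$ with $F_j\subseteq I_{c_j\to c_{j+1}}$ and $f$ a homeomorphism $F_j\to F_{j+1}$. Composing, $f^{k_i}$ is a homeomorphism from $E^i:=F_0$ onto $E_x=E$, and $E^i\subseteq I_{x\to w_i}=I_i$. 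Since $I_1$ and $I_2$ have disjoint interiors, so do $E^1\subseteq I_1$ and $E^2\subseteq I_2$, which is exactly the assertion of the lemma.

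The only genuine content beyond bookkeeping is the first step: locating a vertex $x$ whose edge $E$ admits two \emph{disjoint} subintervals returning homeomorphically onto $E$, which forces one to make the two cycles diverge at their common vertex (equivalently, to find a branch vertex inside a non-cyclic strongly connected component). Once $x$, $E$ and the two closed paths are fixed, the rest is the standard fact that a Markov map restricted to a closed path of its transition graph has an inverse branch carrying the source edge homeomorphically back onto itself.
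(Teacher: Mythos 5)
Your proof is correct and takes essentially the same approach as the paper: locate a common vertex of two intersecting simple cycles (equivalently, a vertex of out-degree at least two inside a non-cyclic strongly connected component), let $E$ be the corresponding Hubbard-tree edge, and pull back along the two cycles via inverse branches of the Markov map. You are more careful than the paper's terse presentation at one point worth keeping: the paper just takes two intersecting simple cycles with $a_1^1 = a_1^2$ and states the lemma, but if $a_2^1 = a_2^2$ the two pullback intervals would not have disjoint interiors, so one must in addition choose the common vertex to be a point where the cycles \emph{diverge}; your observation that $\mathcal{G}_f$ has no multiple edges (which the paper records when defining $\mathcal{G}_f$) is exactly what makes $w_1 \neq w_2$ and hence gives the disjointness of $E^1$ and $E^2$.
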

\begin{proof}
	By Theorem \ref{thm:nic}, the directed graph $\mathcal{G}_f$ has intersecting cycles.
	Let $C^1: a_1^1 \to a_2^1 \to \cdots \to a_{k_1}^1 \to a_1^1$ and $C^2: a_1^2 \to a_2^2 \to \cdots \to a_{k_2}^2 \to a_1^2$ be two intersecting simple cycles with $a_1^1 = a_1^2$. Recall that paths of length $n$ in $\mathcal{G}_f$ are in 1-1 correspondence with level-$n$ subedges of $\mathcal{T}_f$ (see the proof of Proposition \ref{prop:PathtoJulia}).
	Let $E \subseteq \mathcal{T}_f$ be the corresponding edge for $a_1^1$. Then each cycle $C^i$ corresponds to the desired subinterval $E^i$. See Figure \ref{fig:aP} for an example.
\end{proof}

\begin{figure}[ht]
  \centering
  \resizebox{0.45\linewidth}{!}{
    \def\svgwidth{\columnwidth}
\begingroup%
  \makeatletter%
  \providecommand\color[2][]{%
    \errmessage{(Inkscape) Color is used for the text in Inkscape, but the package 'color.sty' is not loaded}%
    \renewcommand\color[2][]{}%
  }%
  \providecommand\transparent[1]{%
    \errmessage{(Inkscape) Transparency is used (non-zero) for the text in Inkscape, but the package 'transparent.sty' is not loaded}%
    \renewcommand\transparent[1]{}%
  }%
  \providecommand\rotatebox[2]{#2}%
  \newcommand*\fsize{\dimexpr\f@size pt\relax}%
  \newcommand*\lineheight[1]{\fontsize{\fsize}{#1\fsize}\selectfont}%
  \ifx\svgwidth\undefined%
    \setlength{\unitlength}{792bp}%
    \ifx\svgscale\undefined%
      \relax%
    \else%
      \setlength{\unitlength}{\unitlength * \real{\svgscale}}%
    \fi%
  \else%
    \setlength{\unitlength}{\svgwidth}%
  \fi%
  \global\let\svgwidth\undefined%
  \global\let\svgscale\undefined%
  \makeatother%
  \begin{picture}(1,0.31818182)%
    \lineheight{1}%
    \setlength\tabcolsep{0pt}%
    \put(0,0){\includegraphics[width=\unitlength,page=1]{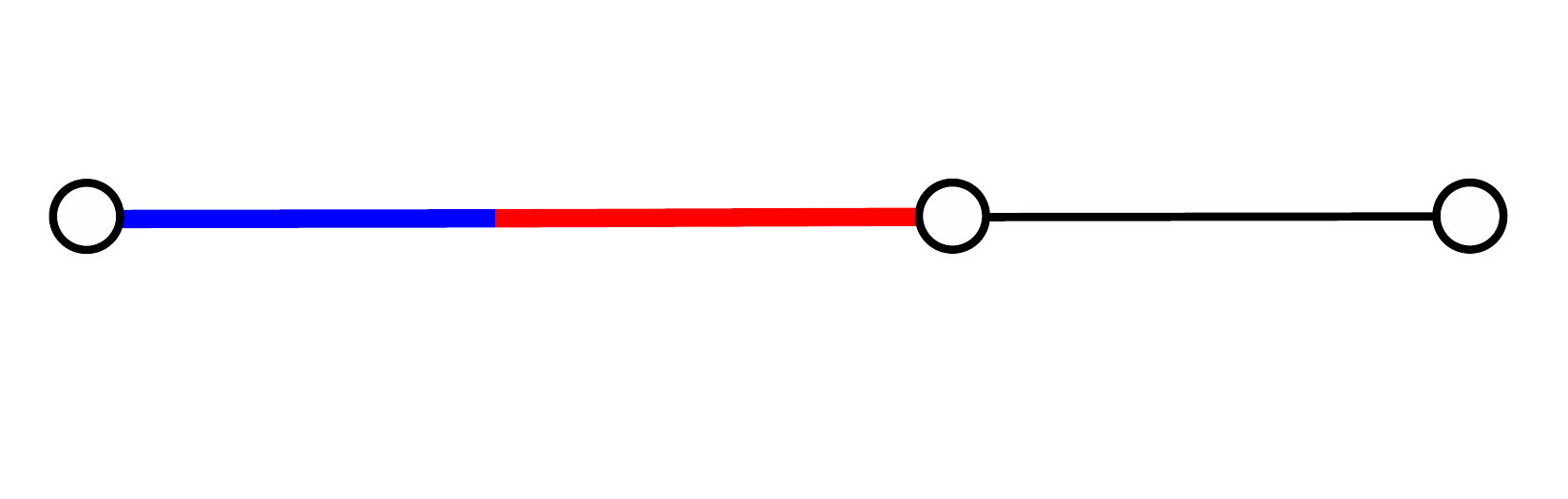}}%
    \put(0.29439709,0.12784085){\color[rgb]{0,0,0}\makebox(0,0)[lt]{\lineheight{1.25}\smash{\begin{tabular}[t]{l}{\Large$E_1$}\end{tabular}}}}%
    \put(0.76179504,0.13055118){\color[rgb]{0,0,0}\makebox(0,0)[lt]{\lineheight{1.25}\smash{\begin{tabular}[t]{l}{\Large$E_2$}\end{tabular}}}}%
  \end{picture}%
\endgroup%

  }
  \caption{The Hubbard tree of the airplane polynomial $f$. Note that  $f(E_1) = E_1 \cup E_2$ and $f(E_2) = E_1$. Let $a_i$ be the corresponding vertices in $\mathcal{G}_f$. We have intersecting cycles $a_1 \to a_1$ and $a_1 \to a_2 \to a_1$. The blue and red subintervals are mapped homeomorphically to $E_1$ by $f^2$ and $f$.}
  \label{fig:aP}
\end{figure}

\subsection*{Lamination and rational lamination}
A {\em lamination} $\lambda \subseteq \mathbb{S}^1 \times \mathbb{S}^1$ is an equivalence relation on the circle $\mathbb{S}^1=\R/\Z$ such that the convex hulls of distinct equivalence classes are disjoint.
We consider convex hulls of equivalence classes using the hyperbolic metric on $\D$.
A {\em leaf} $l$ of $\lambda$ is a pair $(t,t') \in \lambda$ with $t \neq t'$.
We shall identify a leaf $l$ with the hyperbolic geodesic in $\D$ connecting $t$ and $t'$.

Suppose $f\in \mathcal{P}_d$ has connected Julia set.
The {\em rational lamination of $f$}, denoted by $\lambda_{\Q}(f)$, is defined by $t\sim t'$ if $t = t'$ or if $t$ and $t'$ are rational and the external rays $R_t$ and $R_{t'}$ land at the same point in the Julia set \cite[\S 6.4]{McM94}.
When $f$ has locally connected Julia set, the {\em lamination of $f$}, denoted by $\lambda(f)$, is defined by $t\sim t'$ if the external rays $R_t$ and $R_{t'}$ land at the same point in the Julia set.
Note that in this case, $\lambda(f)$ is the closure of $\lambda_\Q(f)$, and provides a topological model of the Julia set in the sense that $\mathcal{J}_f = \mathbb{S}^1 / \lambda(f)$.

Note that the non-escaping set of the system $f^{k_1}|_{E^1}$ and $f^{k_2}|_{E^2}$ in Lemma \ref{lem:ld} contains a Cantor set with a dense subset of repelling periodic points of $f$.
Thus, we have:
\begin{lem}\label{lem:ul}
Let $f \in \mathcal{P}_d$ be a \pcf polynomial with positive core entropy.
The lamination $\lambda(f)$ contains infinitely many periodic leaves $\{l_i: i \in \N\}$ so that
$\overline{\bigcup_i l_i}$ contains uncountably many leaves.
\end{lem}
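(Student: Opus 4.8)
The plan is to upgrade the two-branch horseshoe furnished by Lemma~\ref{lem:ld} into an infinite family of periodic leaves, and then to extract uncountably many further leaves of $\lambda(f)$ as monotone limits of these. Write $\psi_i:=f^{k_i}|_{E^i}$, so $\psi_i\colon E^i\to E$ is a homeomorphism onto a subinterval of the edge $E\subseteq\mathcal{T}_f$, with $E^1$ and $E^2$ having disjoint interiors. As observed just above, the non-escaping set of the pair $\psi_1,\psi_2$ contains a Cantor set $\Lambda$, which we may take to lie in the interior of the arc $E$ (discard the at most two endpoints of $E$), on which the repelling periodic points of $f$ are dense; in particular $\Lambda\subseteq\mathcal{J}_f$. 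Every $x\in\Lambda$ is an interior point of the edge $E$, hence a cut point of $\mathcal{T}_f$; since $\mathcal{T}_f$ is the regulated hull of the post-critical set inside $\mathcal{K}_f$ and $\mathcal{K}_f$ is locally connected, $x$ is a cut point of $\mathcal{J}_f$, so the equivalence class $\Theta_x:=\pi^{-1}(x)$ under the semiconjugacy $\pi\colon\mathbb{S}^1\to\mathcal{J}_f=\mathbb{S}^1/\lambda(f)$ is finite with at least two elements, and one can pick $t(x),t'(x)\in\Theta_x$ whose rays, together with $x$, separate the two local directions of $E$ at $x$. Set $l_x:=\{t(x),t'(x)\}$, a leaf of $\lambda(f)$. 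If $x$ is periodic, then $f$ permutes the finitely many rays landing at $x$, so a suitable iterate of $m_d$ fixes $l_x$ and $l_x$ is a periodic leaf. Enumerating the infinitely many periodic points of $f$ in $\Lambda$ as $x_1,x_2,\dots$ and setting $l_i:=l_{x_i}$ gives the first assertion of the lemma.

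For the second assertion, the structural point is that these leaves separate \emph{coherently} along $E$: orient $E$; then the leaves $l_i$ are pairwise disjoint (distinct leaves of $\lambda(f)$), and for $x_i<x_j$ the leaf $l_j$ lies in the half-disk bounded by $l_i$ on the side away from the initial endpoint of $E$. Let $Y$ be the set of points of $\Lambda$ that are two-sided accumulation points of $\Lambda$; the complement of $Y$ in $\Lambda$ consists of endpoints of the complementary gaps, hence is countable, so $Y$ is uncountable. Fix $y\in Y$ that is not the top point of $\Lambda$. Density of periodic points lets us choose periodic $x_{i_n}\in\Lambda$ with $x_{i_n}\uparrow y$ along $E$; by coherence the leaves $l_{i_n}$ are nested, so their endpoints converge and the limit $l_y$ is a leaf or a point. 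It is a genuine (nondegenerate) leaf: fixing $q\in\Lambda$ with $q>y$ and an angle $s_1\in\Theta_q$, both $s_1$ and any angle $\theta\in\Theta_y$ lie in the far half-disk of every $l_{i_n}$ (since $y$ and $q$ are above each $x_{i_n}$), and $\theta\neq s_1$ as $y\neq q$; hence the intersection of the closed far half-disks carries two distinct boundary angles, so $l_y$ is nondegenerate, lies in $\overline{\bigcup_i l_i}$, and separates the two sides of $E$ at $y$. Finally, if $y<y'$ in $Y$, then for periodic $w\in\Lambda$ with $y<w$ the leaf $l_y$ lies in the closed ``lower'' half-disk of $l_w$, while for periodic $w$ with $y<w<y'$ the leaf $l_{y'}$ lies in the closed ``upper'' half-disk of $l_w$; so $l_y=l_{y'}$ would force $l_y=l_w$ for infinitely many distinct such $w$, which is absurd. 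Thus $\{\,l_y:y\in Y\,\}$ is an uncountable family of leaves contained in $\overline{\bigcup_i l_i}$, and the lemma follows.

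The step to be carried out with real care is the lamination bookkeeping of the second paragraph: coherence and periodicity of the separating leaves $l_i$, and above all nondegeneracy and pairwise distinctness of the monotone limits $l_y$. I would set this up once and for all through the Carath\'eodory semiconjugacy $\pi\colon\mathbb{S}^1\to\mathcal{J}_f$ (available since the \pcf map $f$ has locally connected Julia set and $\lambda(f)=\overline{\lambda_\Q(f)}$ is closed), systematically tracking the two complementary arcs of each separating leaf and using that $\pi$ is injective off the countable set of cut points. A routine preliminary to record is that an interior point of a Hubbard-tree edge lying in $\mathcal{J}_f$ is a cut point of $\mathcal{J}_f$ at which at least two rays land. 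One should also note the degenerate case $E^1\cup E^2=E$: there the non-escaping set is all of $E$, one simply takes $\Lambda$ to be any Cantor subset of its interior, and the rest of the argument goes through verbatim.
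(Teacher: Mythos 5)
Your proposal is correct and follows exactly the route the paper intends: the paper gives only a one‑sentence justification (the non‑escaping Cantor set of $f^{k_1}|_{E^1}$, $f^{k_2}|_{E^2}$ has dense repelling periodic points), and your write-up supplies the missing lamination bookkeeping — passing from repelling periodic cut points in $\Int E$ to periodic separating leaves via the Carath\'eodory semiconjugacy, then extracting uncountably many nondegenerate limit leaves from two-sided accumulation points of $\Lambda$. The coherence/nestedness, nondegeneracy, and pairwise-distinctness arguments all check out, and the degenerate case $E^1\cup E^2=E$ is handled appropriately.
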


\subsection*{Perturbation of polynomials with positive core entropy}
Let $f \in \mathcal{P}_d$ be a \pcf polynomial with $h(f) >0$.
Then by Lemma \ref{lem:ld}, we have an edge $E \subseteq \mathcal{T}_f$ and two subintervals $E^1, E^2$ with disjoint interiors so that
$$
	f^{k_i}: E^i \longrightarrow E
$$
is a homeomorphism for $i=1,2$.
By taking $f^{2k_i}: f^{-k_i}|_{E \to E^i}(E^i) \longrightarrow E$ if necessary, we may assume that the homeomorphism is orientation preserving.
Similarly, by taking some further iterates if necessary, we may also assume that $E^1, E^2$ are disjoint and contained in $\Int(E)$.
Let $x_i \in E^i$ be a fixed point of $f^{k_i}$.
Let $I = [x_1, x_2] \subseteq \Int(E)$, and $I_i:= f^{-k_i}|_{E \to E^i}(I) \subseteq I$.
Denote $I_1 = [x_1, y_1]$ and $I_2 = [y_2, x_2]$. 
Note that $x_i$ is a repelling periodic point.
Since $x_i$ is contained in the interior of $E$, the orbit of $y_i$ avoids critical points of $f$.

\begin{figure}[ht]
	\centering
	\resizebox{0.9\linewidth}{!}{
    \def\svgwidth{\columnwidth}
    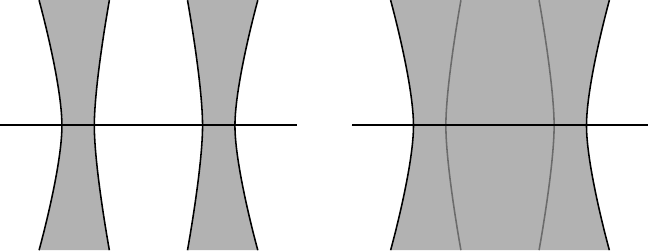

	}
	\caption{}
	\label{fig:CantorLami}
\end{figure}

Let $\mathcal{R}(\theta^\pm_{x_i})$ and $\mathcal{R}(\theta^\pm_{y_i})$ be the {\em left} and {\em right supporting external rays} of $I_i$ landing at $x_i$ and $y_i$ respectively. This means that $\mathcal{R}(\theta^\pm_{x_i})$ are external rays landing at $x_i$, and the component of $\C - \overline{\mathcal{R}(\theta^+_{x_i})} \cup \overline{\mathcal{R}(\theta^-_{x_i})}$ containing $I_i$ contains no other external rays landing at $x_i$.
Let $\mathcal{U}_I$ be the component of 
$$
\C - \overline{\mathcal{R}(\theta^+_{x_1}) \cup \mathcal{R}(\theta^-_{x_1})} -  \overline{\mathcal{R}(\theta^+_{x_2}) \cup \mathcal{R}(\theta^-_{x_2})}
$$
that contains $\Int(I)$.
Similarly, let $\mathcal{U}_{I_i}$ be the component of 
$$
\C - \overline{\mathcal{R}(\theta^+_{x_i}) \cup \mathcal{R}(\theta^-_{x_i})} -  \overline{\mathcal{R}(\theta^+_{y_i}) \cup \mathcal{R}(\theta^-_{y_i})}
$$
that contains $\Int(I_i)$, $i=1,2$.
Note that 
$$
f^{k_i}: \mathcal{U}_{I_i} \longrightarrow \mathcal{U}_I
$$
is a conformal map. See Figure \ref{fig:CantorLami}.

For any polynomial $g$ with connected Julia set that is sufficiently close to $f$, the corresponding external rays $\mathcal{R}_g(\theta^\pm_{x_i})$ and $\mathcal{R}_g(\theta^\pm_{y_i})$ for $g$ land at the continuation of  periodic or pre-periodic points $x_{i,g}$ and $y_{i,g}$ for $g$ respectively (\cite[Appendix B]{GM93}). 
Denote $\mathcal{U}_{I_i, g}$ and $\mathcal{U}_{I, g}$ be the corresponding sets for $g$.
Then
$$
g^{k_i}: \mathcal{U}_{I_i, g} \longrightarrow \mathcal{U}_{I,g}
$$
is a conformal map.
By inductively taking inverse images of $\overline{\mathcal{R}_g(\theta^+_{x_i}) \cup \mathcal{R}_g(\theta^-_{x_i})}$ and $\overline{\mathcal{R}_g(\theta^+_{y_i}) \cup \mathcal{R}_g(\theta^-_{y_i})}$ under the two maps $g^{k_1}$ and $g^{k_2}$, we obtain infinitely many leaves separating $x_{i,g}$, and thus the post-critical set of $g$.
Suppose $g$ has locally connected Julia set. 
Then by taking closure, we have the following:

\begin{lem}\label{lem:uncl}
Let $f \in \mathcal{P}_d$ be a \pcf polynomial with positive core entropy.
Then for any polynomial $g \in \mathcal{P}_d$ with connected and locally connected Julia set that is sufficiently close to $f$, the lamination $\lambda(g)$ contains uncountably many leaves.
\end{lem}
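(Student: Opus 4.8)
The plan is to transport to $g$ the two-branch self-similar structure that the construction above produces for $f$, and then to run, for $g$ in place of $f$, the argument behind Lemma~\ref{lem:ul}. First I would fix a neighbourhood $\mathcal{N}$ of $f$ in $\mathcal{P}_d$ on which the whole configuration persists: since $x_i$ is a repelling periodic point and $y_i$ is strictly preperiodic with orbit avoiding the critical points, \cite[Appendix B]{GM93} guarantees that for every $g\in\mathcal{N}$ with connected Julia set the rays $\mathcal{R}_g(\theta^\pm_{x_i}),\mathcal{R}_g(\theta^\pm_{y_i})$ land at the continuations $x_{i,g},y_{i,g}$, so that the domains $\mathcal{U}_{I_1,g},\mathcal{U}_{I_2,g}\subseteq\mathcal{U}_{I,g}$ are defined and $g^{k_i}\colon\mathcal{U}_{I_i,g}\to\mathcal{U}_{I,g}$ is conformal; replacing $k_1,k_2$ by a common multiple if necessary (uniformly on $\mathcal{N}$), I may moreover assume $\overline{\mathcal{U}_{I_1,g}}$ and $\overline{\mathcal{U}_{I_2,g}}$ are disjoint and contained in $\mathcal{U}_{I,g}$. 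Writing $\psi_i:=(g^{k_i}|_{\mathcal{U}_{I_i,g}})^{-1}$, and $\psi_w:=\psi_{w_1}\circ\dots\circ\psi_{w_n}$ for a finite word $w=w_1\cdots w_n$ over $\{1,2\}$, the pair $(\psi_1,\psi_2)$ is then a conformal iterated function system with two branches and disjoint images.

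Second, I would verify that this system contracts and that its limit set lies in $J_g$. On external angles $g^{k_i}$ acts by $m_d^{\,k_i}$, and since $g^{k_i}|_{\mathcal{U}_{I_i,g}}$ is injective it maps the arc of external angles of $\mathcal{U}_{I_i,g}$ bijectively onto that of $\mathcal{U}_{I,g}$; hence the external-angle width of $\psi_w(\mathcal{U}_{I,g})$ equals $d^{-(k_{w_1}+\dots+k_{w_n})}$ times that of $\mathcal{U}_{I,g}$, and tends to $0$ as the length of $w$ grows. As $J_g$ is locally connected the Carath\'{e}odory map is uniformly continuous, so $\diam\overline{\psi_w(\mathcal{U}_{I,g})}\to0$; consequently the non-escaping set $\mathcal{C}_g:=\bigcap_n\bigcup_{|w|=n}\overline{\psi_w(\mathcal{U}_{I,g})}$ is a Cantor set, parametrised by itineraries $\omega\in\{1,2\}^{\mathbb{N}}$, and $\mathcal{C}_g\subseteq J_g$, since a point of $\mathcal{C}_g$ cannot lie in a Fatou component: the shrinking pieces $\psi_w(\mathcal{U}_{I,g})$ always meet $J_g$, their boundaries containing external rays landing on $J_g$.

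The final, and hardest, step is to prove that all but countably many $\xi\in\mathcal{C}_g$ are biaccessible; granting this, the lemma follows at once, because each such $\xi$ then carries a non-degenerate leaf $\ell_\xi\in\lambda(g)$, distinct points give distinct leaves, and $\mathcal{C}_g$ is uncountable. To establish biaccessibility I would use self-similarity exactly as one does for $f$: the set $K_g:=\overline{\mathcal{U}_{I,g}}\cap J_g$ is cut off from the rest of $J_g$ by the two boundary leaves $b_1=(\theta^-_{x_1},\theta^+_{x_1})$, $b_2=(\theta^-_{x_2},\theta^+_{x_2})$ of $\mathcal{U}_{I,g}$ — it meets $J_g\setminus K_g$ only at $x_{1,g},x_{2,g}$ — and it satisfies $K_g\supseteq\psi_1(K_g)\sqcup\psi_2(K_g)$ with the two pieces disjoint; iterating this binary subdivision, the point $\xi\in\mathcal{C}_g$ with itinerary $\omega$ is, for every $n$, the common boundary point of the two disjoint pieces $\overline{\psi_{\omega_1\cdots\omega_n 1}(K_g)}$ and $\overline{\psi_{\omega_1\cdots\omega_n 2}(K_g)}$ inside $\overline{\psi_{\omega_1\cdots\omega_n}(K_g)}$, so, for all $\omega$ outside a countable exceptional set of eventual endpoints and branch points, $\xi$ separates $K_g$, hence separates $J_g$, hence is the landing point of at least two external rays. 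The genuine obstacle is making this last argument rigorous, i.e.\ controlling the external rays landing near $\mathcal{C}_g$ through the perturbation beyond the (pre)periodic ones supplied by \cite[Appendix B]{GM93}; but this is precisely the difficulty already faced in proving Lemma~\ref{lem:ul} for $f$, where $K_g$ is replaced by an edge of the Hubbard tree all of whose interior points are biaccessible.
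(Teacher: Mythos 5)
Your first two paragraphs match the paper's setup: the conformal IFS $(\psi_1,\psi_2)$ on $\mathcal{U}_{I,g}$, the persistence of the ray-pair configuration under perturbation via \cite[Appendix B]{GM93}, and the contraction measured in external angle. The error is in the third paragraph. You assert that the point $\xi$ with itinerary $\omega$ is ``the common boundary point of the two disjoint pieces $\overline{\psi_{\omega_1\cdots\omega_n 1}(K_g)}$ and $\overline{\psi_{\omega_1\cdots\omega_n 2}(K_g)}$''. But you arranged earlier that $\overline{\mathcal{U}_{I_1,g}}$ and $\overline{\mathcal{U}_{I_2,g}}$ are disjoint, so these two closed sets share no boundary point; moreover $\xi$ lies \emph{inside} exactly one of them (the one indexed by $\omega_{n+1}$), not between them. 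The separation argument therefore does not get off the ground, and biaccessibility of $\xi$ is not established.

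The paper avoids this difficulty by never asking whether the limit Cantor set is biaccessible. Instead it pulls back the ray-pairs at $x_{i,g}$ and $y_{i,g}$ under $\psi_1,\psi_2$, obtaining directly a countable, tree-indexed family of genuine leaves of $\lambda(g)$, and then uses that $\lambda(g)$ is closed when $J_g$ is locally connected: the closure of this family, which is uncountable, is contained in $\lambda(g)$. What makes the accumulation leaves nondegenerate --- and what would also rescue your biaccessibility claim --- is that the belt $\psi_w(\mathcal{U}_{I,g})$ has its external angles in two arcs of $\mathbb{S}^1$ on opposite sides of the arc $I$, and this two-arc structure is preserved under $\psi_1,\psi_2$; so for an infinite word $\omega$ the two arcs shrink to two \emph{distinct} angles $\theta^+_\omega\ne\theta^-_\omega$, whose rays both land at $\xi_\omega$ by nestedness and local connectivity. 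It is this two-sidedness of the belt, not a ``common boundary'' between the disjoint children, that produces the leaf $(\theta^+_\omega,\theta^-_\omega)$ at each limit point.
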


\subsection{\Pcf polynomials via adjacency}\label{subsubsec:b}
\begin{prop}\label{prop:icez}
Let $f \in \bigcup_{\mathcal{H}\in \mathfrak{S}} \mathcal{H}$ be a \pcf polynomial.
Then $f$ has core entropy zero.
\end{prop}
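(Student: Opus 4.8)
The plan is to reduce to centers and then propagate a lamination invariant along an adjacency chain back to $\mathcal{H}_d$. Since each relative hyperbolic component contains a unique \pcf polynomial (its center), it suffices to prove that the center of every $\mathcal{H}\in\mathfrak{S}$ has core entropy zero, so I would argue by contradiction, assuming $f$ is a center with $h(f)>0$.

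The invariant I would track is the rational lamination. For $\mathcal{H}\in\mathfrak{S}$ the rational lamination $\lambda_{\Q}(\cdot)$ is constant on $\mathcal{H}$ — any two maps of $\mathcal{H}$ are $J$-conjugate by a marking-preserving quasiconformal homeomorphism, and such conjugacies preserve co-landing patterns of rational rays (see \cite[\S 3]{McM88}) — so write $\lambda_{\Q}(\mathcal{H})$ for this common value, and call $\mathcal{H}$ \emph{rich} if $\lambda_{\Q}(\mathcal{H})$ contains infinitely many periodic leaves. By Lemma \ref{lem:ul}, $h(f)>0$ forces $\lambda(f)$, hence $\lambda_{\Q}(f)$, to contain infinitely many periodic leaves, so $\mathcal{H}_f$ is rich. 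On the other hand the center of $\mathcal{H}_d$ is $z^d$, whose Julia set is the round circle: $\lambda_{\Q}(z^d)$ has no leaves at all, so $\mathcal{H}_d$ is not rich. The whole argument then rests on the transfer claim: \textbf{if $\mathcal{H},\mathcal{H}'\in\mathfrak{S}$ are adjacent and $\mathcal{H}'$ is rich, then $\mathcal{H}$ is rich, and moreover a cofinite subfamily of the periodic leaves witnessing richness of $\mathcal{H}'$ still witnesses richness of $\mathcal{H}$.} Granting this, I take a chain $\mathcal{H}_d=\mathcal{H}_0,\dots,\mathcal{H}_k=\mathcal{H}_f$ of consecutively adjacent relative hyperbolic components and apply the transfer $k$ times: the infinite family of periodic leaves for $\mathcal{H}_f$ loses only finitely many leaves at each of the $k$ steps, so an infinite subfamily survives in $\lambda_{\Q}(\mathcal{H}_d)$, contradicting that $\lambda_{\Q}(z^d)$ is empty.

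To prove the transfer claim I would fix $h_0\in\partial\mathcal{H}\cap\partial\mathcal{H}'$; as $h_0$ lies in the connectedness locus, $\lambda_{\Q}(h_0)$ is defined. Let $\{l_i\}_{i\in\N}$ be infinitely many periodic leaves in $\lambda_{\Q}(\mathcal{H}')$; since every map of $\mathcal{H}'$ is sub-hyperbolic, each $l_i$ co-lands at a \emph{repelling} periodic point, lying on a cycle $\mathcal{C}_i$ whose combinatorics is the same for all maps in $\mathcal{H}'$. Choosing $g'_n\in\mathcal{H}'$ with $g'_n\to h_0$, the multipliers of $\mathcal{C}_i$ along $g'_n$ converge to the multiplier of a periodic cycle of $h_0$; since $h_0$ has at most $d-1$ non-repelling cycles and only finitely many rays land at any one periodic point, all but finitely many of the $l_i$ have their $\mathcal{C}_i$ converging to a repelling cycle of $h_0$, and for those the stability of ray-landing at repelling periodic points (\cite[Appendix B]{GM93}, the same input used for Lemma \ref{lem:uncl}) gives $l_i\in\lambda_{\Q}(h_0)$. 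Finally, taking $g_n\in\mathcal{H}$ with $g_n\to h_0$, the same stability applied at $h_0$ gives $l_i\in\lambda_{\Q}(g_n)$ for all large $n$; since $\lambda_{\Q}(g_n)=\lambda_{\Q}(\mathcal{H})$ is independent of $n$, every such $l_i$ lies in $\lambda_{\Q}(\mathcal{H})$, which proves the transfer claim with the stated cofiniteness.

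The main obstacle is the transfer claim. The subtle points are: showing that only finitely many periodic leaves are destroyed when passing to $h_0$ — this is where the Fatou--Shishikura bound on non-repelling cycles and the finiteness of rays landing at a periodic point enter — and verifying that ray-landing stability applies in both directions at $h_0$, from $\mathcal{H}'$ up to $h_0$ and from $h_0$ down into $\mathcal{H}$. Working with the rational lamination throughout (rather than with $\lambda(f)$ directly) is what makes the boundary analysis clean: it avoids having to know that $h_0$ has locally connected Julia set, and the constancy of $\lambda_{\Q}$ on a relative hyperbolic component removes any need for uniform control in $n$. If one prefers, the first half of the transfer argument can instead be extracted from Lemma \ref{lem:uncl}.
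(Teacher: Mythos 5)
Your proposal is correct and follows essentially the same approach as the paper: its proof proceeds via Proposition \ref{prop:bzce}, which transfers the periodic leaves of Lemma \ref{lem:ul} through a boundary point $h_0 \in \partial\mathcal{H}_f\cap\partial\mathcal{H}_g$ using \cite[Lemma B.4]{GM93} and the finiteness of parabolic cycles, exactly as in your transfer claim. The only cosmetic difference is that you reach the contradiction at $z^d$ by observing its rational lamination is trivial, whereas the paper concludes positive entropy at each step by feeding the uncountability part of Lemma \ref{lem:ul} into Theorem \ref{thm:cs} and then inducts.
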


Proposition \ref{prop:icez} immediately follows from the following proposition.

\begin{prop}\label{prop:bzce}
Let $f, g$ be \pcf polynomials so that $\mathcal{H}_f$ and $\mathcal{H}_g$ are adjacent.
Then $f$ has core entropy zero if and only if $g$ has core entropy zero.
\end{prop}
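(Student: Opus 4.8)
The plan is to argue by contradiction using the lamination deformation lemmas (Lemmas \ref{lem:ul}--\ref{lem:uncl}) together with the fact that laminations only get larger along boundaries of relative hyperbolic components. Suppose, without loss of generality, that $f$ has positive core entropy and $g$ has core entropy zero, while $\mathcal{H}_f$ and $\mathcal{H}_g$ are adjacent; let $h \in \partial\mathcal{H}_f \cap \partial\mathcal{H}_g$. The idea is that the ``uncountably many leaves'' witnessed by $f$ (via its intersecting cycles) must persist in a neighborhood of $f$ inside $\mathcal{P}_d$ among polynomials with locally connected Julia set, hence in particular they should be reflected in the rational lamination of $h$, and then be inherited by polynomials in $\mathcal{H}_g$ — contradicting the fact that a simplicial-type map $g$ with core entropy zero has only countably many leaves in its lamination (consistent with $\Julia_g \cap \mathcal{T}_g$ being countable by Theorems \ref{thm:nic} and \ref{thm:CBDecompJuliaIntersectGraph}).

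Concretely, first I would run the perturbation argument of Lemma \ref{lem:uncl}: the edge $E$ and subintervals $E^1, E^2$ with $f^{k_i}: E^i \to E$ homeomorphisms give rise, for $f$, to the conformal maps $f^{k_i}: \mathcal{U}_{I_i} \to \mathcal{U}_I$, and these structures — being defined by landing patterns of finitely many external rays at repelling (pre)periodic points — persist under perturbation by \cite[Appendix B]{GM93}. The key point is to transport this to $h$: since $h \in \partial\mathcal{H}_f$, one should show (using stability of repelling cycles and their landing rays, as the Julia set of $h$ contains a copy of a neighborhood of the relevant part of $J_f$) that $\lambda_{\Q}(h) \supseteq \lambda_{\Q}(f)$ restricted to the relevant periodic leaves, so $h$ still admits the nested conformal return maps $h^{k_i}$ on the continued domains $\mathcal{U}_{I_i,h}$. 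Taking inverse images under these two maps produces infinitely many periodic leaves in $\lambda_{\Q}(h)$ whose closure contains uncountably many leaves. Then, because $h \in \partial\mathcal{H}_g$, the rational lamination only grows as one moves from $h$ toward the center $g$ (laminations are monotone along paths into a relative hyperbolic component, cf.\ the discussion preceding Definition \ref{defn:bif} and \cite{Kiwi01, IK12}), so $\lambda_{\Q}(g) \supseteq \lambda_{\Q}(h)$; in particular $\lambda(g) = \overline{\lambda_{\Q}(g)}$ contains uncountably many leaves. On the other hand, a \pcf polynomial $g$ of core entropy zero has $\Julia_g \cap \mathcal{T}_g$ countable (Theorem \ref{thm:nic}, Theorem \ref{thm:CBDecompJuliaIntersectGraph}), and the biaccessible points of $J_g$ are exactly the images of the points where two leaves of $\lambda(g)$ share an endpoint; a core-entropy-zero Hubbard tree forces the set of such biaccessible points — and hence the set of leaves — to be countable. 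This contradiction finishes the proof, and running it in the reverse direction (assuming $f$ has core entropy zero, $g$ positive) is symmetric, giving the equivalence.

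The main obstacle I expect is the step asserting $\lambda_{\Q}(f) \subseteq \lambda_{\Q}(h)$ and $\lambda_{\Q}(h) \subseteq \lambda_{\Q}(g)$, i.e.\ the precise monotonicity of rational laminations along the boundary of a relative hyperbolic component. For $h \in \partial\mathcal{H}_f$ this is essentially a statement that when we specialize from $f$ to a nearby $h$ no identifications of rational rays are destroyed — which is true for landing patterns at repelling periodic points but needs care at parabolic points appearing on $\partial\mathcal{H}_f$ (where rays can split, so one wants the inclusion in the direction where $h$'s lamination is at least as large as $f$'s). This is exactly the subtlety flagged in the paragraph before Definition \ref{defn:bif} (``$\lambda(h) \subsetneq \lambda(f)$'' can happen), so one must be careful about which of $f, h$ sits ``inside'' — the robust statement is that the center of a relative hyperbolic component has the largest lamination and boundary points have smaller ones, so the correct chain is $\lambda(f) \supseteq \lambda(h)$ when $h \in \partial\mathcal{H}_f$ but $\lambda(h) \subseteq \lambda(g)$ when $h \in \partial\mathcal{H}_g$. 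This means I should instead push the uncountability from $f$ down to $h$ only through those leaves that survive (the periodic leaves anchored at repelling cycles persist, and their closure is still uncountable by a Baire/transitivity argument on the return maps $h^{k_i}$), and then up from $h$ to $g$. Making this persistence-of-uncountably-many-periodic-leaves argument rigorous under specialization is the technical heart; everything else is bookkeeping with the directed-graph and Cantor--Bendixson results already established.
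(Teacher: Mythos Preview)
Your approach is essentially the paper's, and it works; the confusion in your last paragraph is unnecessary. The paper's execution is cleaner in two ways. First, it uses Lemma \ref{lem:ul} rather than Lemma \ref{lem:uncl}: instead of rebuilding the conformal return maps $h^{k_i}$ at the boundary point $h$ and running a Baire/transitivity argument there, it simply tracks the countable family $\{l_i\}$ of periodic leaves of $\lambda(f)$, notes that $h$ has only finitely many parabolic cycles, so by \cite[Lemma B.4]{GM93} all but finitely many of the $l_i$ land at repelling periodic points of $h$ and hence persist in $\lambda_{\Q}(h)$; those in turn persist in $\lambda(g_n)=\lambda(g)$ for $g_n\in\mathcal{H}_g$ near $h$. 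Second, the uncountability survives trivially: the closure of $\{l_i\}$ minus finitely many leaves equals the closure of $\{l_i\}$, which already contains uncountably many leaves by Lemma \ref{lem:ul}. In particular, no general monotonicity statement $\lambda_{\Q}(f)\subseteq\lambda_{\Q}(h)$ or $\lambda_{\Q}(h)\subseteq\lambda_{\Q}(g)$ is ever needed---only the stability of external rays landing at \emph{repelling} periodic points, applied once from $\mathcal{H}_f$ to $h$ and once from $h$ to $\mathcal{H}_g$, with the finitely many problematic (parabolic) leaves simply discarded.
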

\begin{proof}
Suppose that $f$ has positive core entropy.
Let $h \in \overline{\mathcal{H}_f} \cap \overline{\mathcal{H}_g}$.
Let $h = \lim f_n$ with $f_n \in \mathcal{H}_f$ and let $\{l_i: i \in \N\}$ be the collection of leaves in $\lambda(f) (= \lambda(f_n))$ in Lemma \ref{lem:ul}.
Since there are only finitely many parabolic periodic points for $h$, by \cite[Lemma B.4]{GM93}, the rational lamination $\lambda_{\Q}(h)$ contains all but finitely many leaves in $\{l_i: i \in \N\}$.
Since these leaves land on repelling periodic points, the lamination $\lambda(g) (= \lambda(g_n))$ also contains all but finitely many leaves in $\{l_i: i \in \N\}$ where $g_n \in \mathcal{H}_g$ with $g_n \to h$.
Since $\overline{\bigcup_i l_i}$ contains uncountably many leaves, we conclude that the Julia set $\mathcal{J}_g \cap \mathcal{T}_g$ is uncountable.
Thus, $g$ has positive core entropy by Theorem \ref{thm:cs}.
Then the proposition follows by symmetry.
\end{proof}

\subsection{\Pcf polynomials via accumulation}\label{subsubsec:a}
\begin{prop}\label{prop:azce}
Let $f$ be a \pcf polynomial.
Suppose that there exists a sequence $f_n \in \bigcup_{\mathcal{H}\in \mathfrak{S}} \mathcal{H}$ so that $f_n \to f$.
Then $f$ has core entropy zero.
\end{prop}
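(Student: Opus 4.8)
The plan is to prove this by contradiction, bootstrapping from the perturbation result of Lemma \ref{lem:uncl}. Suppose $h(f)>0$. For all large $n$ the map $f_n$ is arbitrarily close to $f$; fix such an $n$ and let $c_n$ be the center of the relative hyperbolic component in $\mathfrak{S}$ containing $f_n$, so that $c_n$ is a \pcf polynomial lying in $\bigcup_{\mathcal{H}\in\mathfrak{S}}\mathcal{H}$. Since $f_n$ is $J$-conjugate to $c_n$, there is a homeomorphism of $\C$ carrying $\mathcal{J}_{f_n}$ onto $\mathcal{J}_{c_n}$ and respecting the marked external rays; in particular $\mathcal{J}_{f_n}$ is connected and locally connected (as $\mathcal{J}_{c_n}$ is), and the laminations agree, $\lambda(f_n)=\lambda(c_n)$.

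First I would apply Lemma \ref{lem:uncl} to $g=f_n$: because $h(f)>0$, the polynomial $f_n$ is sufficiently close to $f$, and $\mathcal{J}_{f_n}$ is connected and locally connected, so $\lambda(f_n)$ contains uncountably many leaves, and — reading off the construction in the proof of Lemma \ref{lem:uncl} (via Lemma \ref{lem:ld}) — one may take these to be leaves (and limits of leaves) that separate the bounded Fatou components of $f_n$ containing post-critical points. Transporting through the $J$-conjugacy, $\lambda(c_n)=\lambda(f_n)$ contains an uncountable family of leaves each separating two bounded Fatou components of $c_n$ carrying post-critical points. The common landing point of such a leaf is a point of $\mathcal{J}_{c_n}$ whose removal disconnects two post-critical points, hence lies on every connected set containing the post-critical set of $c_n$, in particular on the Hubbard tree $\mathcal{T}_{c_n}$. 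As $c_n$ is \pcf, only finitely many external rays land at each point, so these uncountably many leaves account for uncountably many distinct points of $\mathcal{J}_{c_n}\cap\mathcal{T}_{c_n}$ — the same inference that closes the proof of Proposition \ref{prop:bzce}. By Theorem \ref{thm:cs}, $h(c_n)>0$. On the other hand, Proposition \ref{prop:icez} gives $h(c_n)=0$, since $c_n$ is a \pcf polynomial in $\bigcup_{\mathcal{H}\in\mathfrak{S}}\mathcal{H}$. This contradiction shows $h(f)=0$.

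The step I expect to require the most care is keeping track of the \emph{separation} property of the relevant leaves through the two passages involved — the perturbation $f_n\to f$ furnished by Lemma \ref{lem:uncl}, and the $J$-conjugacy identifying $f_n$ with its center $c_n$ — since it is this combinatorial separation of post-critically occupied Fatou components, rather than the mere cardinality of $\lambda(c_n)$, that pins the corresponding landing points onto $\mathcal{T}_{c_n}$ and makes Theorem \ref{thm:cs} applicable. A secondary, routine point is that the approximants $f_n$, although not assumed post-critically finite, automatically have connected and locally connected Julia sets because they lie in relative hyperbolic components, which is exactly what permits the application of Lemma \ref{lem:uncl} to them.
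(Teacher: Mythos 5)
Your proposal is correct and follows essentially the same route as the paper's proof: assume $h(f)>0$, invoke Lemma \ref{lem:uncl} to get uncountably many leaves separating the post-critical set of a nearby $f_n$, pass to the \pcf center $c_n$ of the \shc containing $f_n$ (whose lamination agrees with that of $f_n$), conclude $\mathcal{J}_{c_n}\cap\mathcal{T}_{c_n}$ is uncountable, and derive a contradiction via Theorem \ref{thm:cs} and Proposition \ref{prop:icez}. Your added remarks on why the approximants have locally connected Julia sets and why the leaves' landing points lie on the Hubbard tree are correct and simply make explicit what the paper leaves terse.
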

We remark that if the sequence $f_n$ can be chosen to be post-critically finite, then the statement immediately follows from \cite[Theorem 1.2]{GT21} as the core entropy is a continuous function on the space of \pcf polynomials of degree $d$.
We do not know if this can always be achieved, as we do not know if the diameters of relative hyperbolic components are shrinking to zero.
\begin{proof}[Proof of Proposition \ref{prop:azce}]
Suppose for contradiction that $h(f) >0$.
By Lemma \ref{lem:uncl}, for all sufficiently large $n$, there are uncountably many leaves separating the post-critical set of $f_n$.
Let $g_n$ be the \pcf center of the \shc that contains $f_n$.
Then the lamination of $g_n$ is the same as the lamination of $f_n$.
Thus, the Hubbard tree $g_n$ intersects the Julia set in an uncountable set.
Therefore $h(g_n) >0$ by Theorem \ref{thm:cs}, which is a contradiction to Proposition \ref{prop:icez}.
\end{proof}

\begin{proof}[Proof of Theorem \ref{prop:miz}]
The theorem follows from Proposition \ref{prop:icez} and Proposition \ref{prop:azce}.
\end{proof}

\subsection{Bounds on the complexity}\label{subsec:bc}
In this subsection, we shall use the analysis in \cite{L21a} to prove $\mathscr{C}_{c}(f) \leq (d-1)\mathscr{C}_{b}(f)$.
\subsection*{Pointed simplicial tuning}
We use a notion of {\em pointed iterated simplicial Hubbard trees} which was introduced in \cite{L21a}.
We define a {\em pointed Hubbard tree} $(\mathcal{T}_f, p)$ as a Hubbard tree $\mathcal{T}_f$ together with a fixed point $p \in \mathcal{T}_f$.
Intuitively, this fixed point keeps track of where the attracting fixed point goes to.

A pointed Hubbard tree $(\mathcal{T}_g, q)$ is a {\em pointed simplicial tuning} of $(\mathcal{T}_f, p)$ if 
\begin{itemize}
\item $\mathcal{T}_g$ is the Hubbard tree of the \pcf polynomial $g$, which is obtained by a simplicial tuning of $f$ on the Fatou components associated to $p$ and its backward orbits, and
\item $q$ is a fixed point on the new tuned-in tree for $p$.
\end{itemize}
Let $\mult_{go}(p)$ be the sum of multiplicities of critical points in the grand orbit of $p$.
Note that if $\mathcal{T}_f$ is a degree $d$ Hubbard tree, then $\mult_{go}(p) \leq d-1$.
One can verify that if $(\mathcal{T}_g, q)$ is a pointed simplicial tuning of $(\mathcal{T}_f, p)$, then $\mult_{go}(q) \leq \mult_{go}(p)-1$.
Thus, we have:
\begin{lem}\label{lem:fs}
Let $(\mathcal{T}_0, p_0),..., (\mathcal{T}_k, p_k)$ be a sequence of pointed Hubbard trees of degree $d$ so that $(\mathcal{T}_{i+1}, p_{i+1})$ is a pointed simplicial tuning of $(\mathcal{T}_{i}, p_{i})$.
Then $k\leq d-1$.
\end{lem}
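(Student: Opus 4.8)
The plan is to run a descent on a single non-negative integer attached to each pointed Hubbard tree, namely $m_i := \mult_{go}(p_i)$, the sum of the multiplicities of the critical points in the grand orbit of the marked fixed point $p_i$. I want to show that this integer is at most $d-1$ at the start and strictly decreases at every step of the tower, which forces $k\le d-1$.

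First I would pin down the two boundary facts. Because $\mathcal{T}_0$ is a degree $d$ Hubbard tree, there are at most $d-1$ critical points counted with multiplicity in all of $\mathcal{T}_0$, hence at most $d-1$ in the grand orbit of $p_0$; so $m_0\le d-1$. At the other end, $m_i\ge 0$ for every $i$ since it is a sum of positive integers (empty, hence $0$, if there is no such critical point).

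The real content is the strict-decrease step, which is precisely the assertion recorded just before the lemma: if $(\mathcal{T}_{i+1},p_{i+1})$ is a pointed simplicial tuning of $(\mathcal{T}_i,p_i)$, then $m_{i+1}\le m_i-1$. In the write-up I would simply quote this, but the mechanism is as follows. The Fatou components of $f_i$ in the grand orbit of $p_i$ are the planes of a mapping scheme $\mathcal{S}$ with $\deg(\mathcal{S})=m_i+1$, since each plane $\C(s)$ carries $\delta(s)-1$ critical multiplicities; the tuning inserts a \pcf family $\mathcal{F}$ over $\mathcal{S}$, whose critical points therefore total $m_i$ with multiplicity, and the critical points of $f_{i+1}$ contributing to $m_{i+1}$ are exactly those of $\mathcal{F}$ lying in the $\mathcal{F}$-grand orbit of the marked fixed point $p_{i+1}$. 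At least one critical point of $\mathcal{F}$ must escape that grand orbit, for otherwise the entire critical mass of $\mathcal{F}$ would be swallowed by the fixed point $p_{i+1}$, collapsing the tuned-in tree to the single point $p_{i+1}$ and making the tuning trivial; hence $m_{i+1}\le m_i-1$. I expect this strictness to be the only genuine obstacle; everything else is formal.

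Finally, iterating the strict decrease and combining it with the two boundary facts,
\[
0\le m_k\le m_0-k\le (d-1)-k,
\]
so $k\le d-1$, which is the claim.
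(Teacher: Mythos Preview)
Your proposal is correct and follows exactly the approach the paper takes: the lemma is stated immediately after the observations that $\mult_{go}(p)\le d-1$ and that $\mult_{go}$ strictly decreases under a pointed simplicial tuning, and you have simply spelled out the resulting descent $0\le m_k\le m_0-k\le (d-1)-k$. Your extra paragraph explaining the mechanism behind the strict decrease goes slightly beyond the paper's ``one can verify,'' but is consistent with it.
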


A pointed Hubbard tree is called {\em pointed iterated-simplicial} if it is obtained from the trivial pointed Hubbard tree $(\mathcal{T}_f=\{p\},p)$ associated to $z^d$ via a (necessarily finite) sequence of pointed simplicial tunings.

Let $g$ be a \pcf polynomial so that the main hyperbolic component $\mathcal{H}_d$ bifurcates to $\mathcal{H}_g$.
Then it follows from \cite[Theorem 1.1]{L21a} that there exists a fixed point $p$ on the Hubbard tree $\mathcal{T}_g$ of the \pcf polynomial $g$ so that the pointed Hubbard tree $(\mathcal{T}_g, p)$ is pointed iterated-simplicial.

\subsection*{Bifurcation}
More generally, let $f, g$ be \pcf polynomials so that $\mathcal{H}_f$ bifurcates to $\mathcal{H}_g$.
Let $U$ be a critical or post-critical Fatou component of $f$.
Let $\mathcal{T}_U = \mathcal{T}_f \cap U$.
Let $\{x_1,..., x_k\} \in \partial \mathcal{T}_U$.
Note that by definition (see Definition \ref{defn:bif}), we have $\lambda(f) \subseteq \lambda(g)$.
Therefore, there exists a corresponding point $y_i \in \mathcal{J}_g$ so that the angles landing at $x_i$ also land at $y_i$.
By taking the regulated hull of $\{y_1,..., y_k\}$, we obtain a subtree, denoted by $\mathcal{T}_{g,U}$, of the Hubbard tree $\mathcal{T}_g$.

Suppose $U$ is periodic with period $q$, then $\mathcal{T}_{g,U}$ is invariant under $g^q$. It can be thus viewed abstractly as a Hubbard tree with degree $\deg (f^q: U \longrightarrow U)$.
Now using the same argument as in \cite[\S 5]{L21a}, we have:

\begin{prop}\label{prop:is}
Let $f, g$ be \pcf polynomials so that $\mathcal{H}_f$ bifurcates to $\mathcal{H}_g$. Let $U$ be a periodic Fatou component of $f$ with period $q$.
Then there exists $p \in \mathcal{T}_{g,U}$ that is a fixed point of $g^q$ so that the pointed Hubbard tree $(\mathcal{T}_{g,U}, p)$ is pointed iterated-simplicial.
\end{prop}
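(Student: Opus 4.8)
The plan is to reduce to the characterization of \shcs adjacent to the main hyperbolic component proved in \cite[Theorem 1.1]{L21a}, by \emph{localizing} the bifurcation $\mathcal{H}_f \rightarrow \mathcal{H}_g$ to the cycle of the Fatou component $U$ and then running the argument of \cite[\S\S 5--6]{L21a} in that relative setting. First I would unpack the bifurcation data. By Definition \ref{defn:bif} there is a geometrically finite $h \in \partial\mathcal{H}_f \cap \partial\mathcal{H}_g$ which is a root of $\mathcal{H}_g$ (so $h$ is weakly $J$-conjugate to $g$, hence $\lambda_{\Q}(h) = \lambda_{\Q}(g)$) and with $f$ being $J$-semi-conjugate to $h$ (so $\lambda(f) \subseteq \lambda(g)$). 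The boundary $\partial\mathcal{T}_U$ consists of finitely many preperiodic points $x_1,\dots,x_k \in \partial U$, and the (necessarily rational) angles landing at each $x_i$ land for $g$ at a common point $y_i \in \mathcal{J}_g$; by definition $\mathcal{T}_{g,U}$ is the regulated hull of $\{y_1,\dots,y_k\}$, it is $g^q$-invariant, and the first-return map $g^q\colon \mathcal{T}_{g,U}\to\mathcal{T}_{g,U}$ has degree $D := \deg(f^q\colon U\to U)$.

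Next I would localize. Let $\mathcal{S}_f$ be the mapping scheme of $f$, let $s_U\in|\mathcal{S}_f|$ be the vertex of $U$, and let $\mathcal{S}_U \subseteq \mathcal{S}_f$ be the sub-mapping-scheme carried by the $\Phi$-orbit of $s_U$ together with the vertices feeding into it. Under the homeomorphism $\mathcal{H}_f \cong \BP^{\mathcal{S}_f}$, the Blaschke products attached to the vertices of $\mathcal{S}_U$ form a factor $\BP^{\mathcal{S}_U}$, and collapsing the attracting cycle to a single vertex of degree $D$ identifies this with the Blaschke model of the main hyperbolic component $\mathcal{H}_D \subseteq \mathcal{P}_D$. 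Choose a path $f_n \to h$ inside $\mathcal{H}_f$, with corresponding Blaschke mapping schemes $\mathcal{F}_n \in \BP^{\mathcal{S}_f}$. Since $h$ is geometrically finite, the restricted sequence $\mathcal{F}_n|_{\mathcal{S}_U} \in \BP^{\mathcal{S}_U}$ is quasi \pcf in the sense of Definition \ref{defn:qpcfbm} (the critical orbits of the $U$-cycle stay at bounded $d_{\mathcal{S}}$-distance from a finite set). After passing to a subsequence, Theorem \ref{thm:qit} produces the quasi-invariant forest of $\mathcal{F}_n|_{\mathcal{S}_U}$, and after an admissible splitting at the periodic Julia branch points we obtain an admissible angled forest map modeled on $\mathcal{S}_U$; since the core of the main-component model is a single marked point, exactly as in \cite[\S\S 5--6]{L21a} this angled tree map is obtained from the trivial pointed tree $(\{p\},p)$ of $z^D$ by a finite chain of pointed simplicial tunings.

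Finally I would identify the realized tree and conclude. By Theorem \ref{thm:rs} (cf. \cite[\S 6]{L21a}) this angled forest map is realized by the degeneration $\mathcal{F}_n|_{\mathcal{S}_U}$, whose limit is the geometrically finite continuation, inside $\mathcal{P}_D$, of a root of a \shc $\mathcal{H}_{g_U}$ with center $g_U$ the degree $D$ \pcf polynomial whose Hubbard tree is $\mathcal{T}_{g,U}$; tracking the leaves of $\lambda(f) \subseteq \lambda_{\Q}(h) = \lambda_{\Q}(g)$ that support $\partial\mathcal{T}_U$ — which persist under the bifurcation and are exactly the leaves cutting $\mathcal{T}_{g,U}$ out of $\mathcal{J}_g$ — one checks that the underlying pointed tree is precisely $(\mathcal{T}_{g,U}, p)$, where $p$ is the continuation of the attracting periodic point of the $U$-cycle, a fixed point of $g^q$ lying on $\mathcal{T}_{g,U}$. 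Thus $\mathcal{H}_D$ bifurcates to $\mathcal{H}_{g_U}$ in $\mathcal{P}_D$, and \cite[Theorem 1.1]{L21a} gives that $(\mathcal{T}_{g,U},p)$ is pointed iterated-simplicial, which is the assertion.

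The main obstacle is making the localization rigorous: verifying that the restricted sequence $\mathcal{F}_n|_{\mathcal{S}_U}$ is genuinely quasi \pcf, and — more delicately — that the pointed tree realizing its quasi-invariant forest is identified with the concrete subtree $\mathcal{T}_{g,U} \subseteq \mathcal{T}_g$ with no collapsing beyond what the inclusion $\lambda(f) \subseteq \lambda(g)$ already records. This amounts to re-running the argument of \cite[\S 5]{L21a} in the relative setting, together with the bookkeeping about fixed-point- versus zeros-centering of Blaschke products on periodic vertices and about the boundary marking used to define $\mathcal{T}_{g,U}$.
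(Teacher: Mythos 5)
Your plan is essentially the one the paper intends — the paper's entire ``proof'' of this proposition is the phrase ``using the same argument as in \cite[\S 5]{L21a},'' and your sketch (pass to the Blaschke model, take a quasi post-critically finite degeneration inside $\mathcal{H}_f$, localize, apply the quasi-invariant forest machinery, and identify the output with $(\mathcal{T}_{g,U},p)$) is a faithful unpacking of what that citation means. The main technical obstacle you flag at the end — rigorously identifying the realized pointed tree with the concrete subtree $\mathcal{T}_{g,U}\subseteq\mathcal{T}_g$ built from the inclusion $\lambda(f)\subseteq\lambda(g)$ — is indeed where the work lies, and the paper does not carry it out either.

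Two points you should fix when fleshing this out. First, you define $\mathcal{S}_U$ to contain the $\Phi$-cycle of $s_U$ \emph{together with the vertices feeding into it}, and then claim that collapsing the attracting cycle identifies $\BP^{\mathcal{S}_U}$ with the Blaschke model of $\mathcal{H}_D$. That identification fails as soon as $f$ has a strictly pre-periodic critical Fatou component landing in the cycle of $U$: after collapsing you would still have pre-periodic vertices of degree $\geq 2$, so the mapping scheme is not that of $z^D$. Since $D=\deg(f^q\colon U\to U)$ counts only the critical points \emph{on} the cycle, and $\mathcal{T}_{g,U}$ is built from the cycle alone, the correct localization is to the $\Phi$-cycle of $s_U$ only; pre-periodic Fatou vertices do not contribute to the degree-$D$ abstract Hubbard tree. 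Second, the phrase ``after an admissible splitting at the periodic Julia branch points we obtain an admissible angled forest map'' reverses the logic: Theorem \ref{thm:qit} already hands you an angled forest map, and the substantive step (as in \cite[\S 5]{L21a}) is to \emph{recognize} it as an admissible splitting of a pointed Hubbard tree and then read off the iterated-simplicial structure — one does not perform a splitting on the quasi-invariant forest. With these adjustments, your detour through an honest bifurcation $\mathcal{H}_D\to\mathcal{H}_{g_U}$ in $\mathcal{P}_D$ followed by a citation of \cite[Theorem 1.1]{L21a} and the more direct route of re-running the argument of \cite[\S 5]{L21a} relative to the $U$-cycle come to the same thing.
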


\begin{proof}[Proof of Theorem \ref{thm:bcboundr}]
Let $f, g\in\mathcal{P}_d$ be \pcf polynomials.
Suppose that $\mathcal{H}_f$ bifurcates to $\mathcal{H}_g$.
Then by Proposition \ref{prop:is} and Lemma \ref{lem:fs}, $g$ is obtained from $f$ via a sequence of at most $d-1$ simplicial tunings.
Thus, $\mathscr{C}_{c}(f) \leq (d-1)\mathscr{C}_{b}(f)$.

To prove the sharpness of the bound, one can construct a sequence of pointed iterated simplicial tunings of the trivial pointed Hubbard tree with the pattern illustrated as in Figure \ref{fig:EV} for degree $4$.
Let $f$ be the \pcf polynomial associated to the last underlying Hubbard tree in the sequence, where the marked fixed point has local degree $1$.
It follows from \cite[Corollary 1.2]{L21a} that the main hyperbolic component $\mathcal{H}_d$ bifurcates to $\mathcal{H}_f$. Thus, $\mathscr{C}_b(f) =1$. On the other hand, it is easy to verify that $\mathscr{C}_c(f) = \mathscr{C}_{t}(f) = d-1$ (see Figure \ref{fig:DG}).
\end{proof}

\begin{figure}[ht]
  \centering
  \resizebox{0.75\linewidth}{!}{
    \def\svgwidth{\columnwidth}
    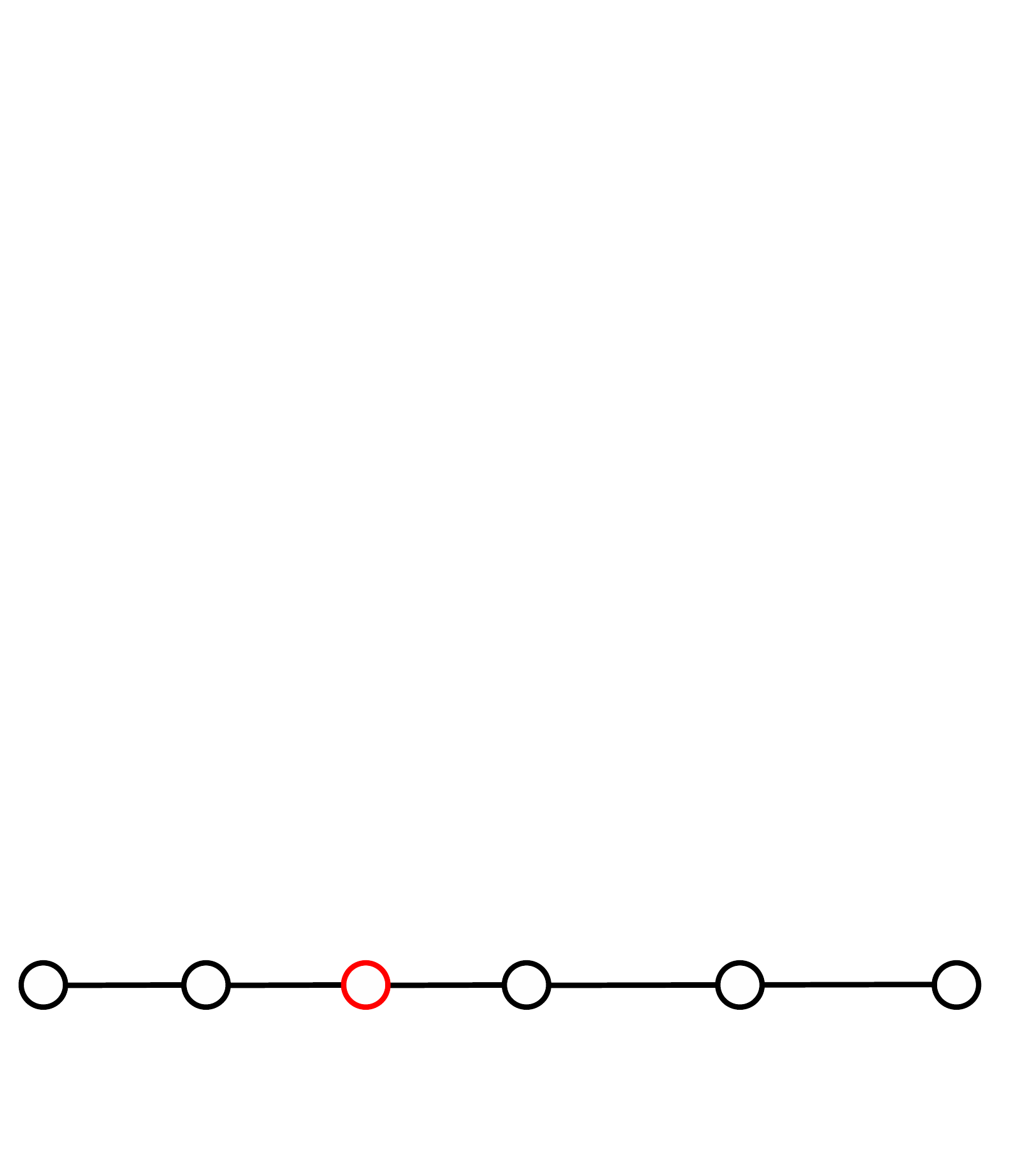

  }
  \caption{A sequence of pointed iterated simplicial tunings of the trivial pointed Hubbard tree, where the red vertex corresponds to the marked point.}
  \label{fig:EV}
\end{figure}

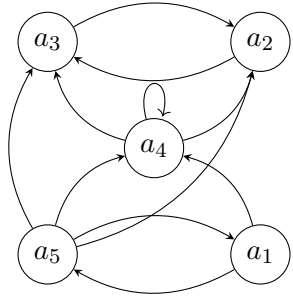
\begin{figure}[ht]
  \centering
  \begin{tikzpicture}[node distance=2
		cm,gnode/.style={circle,draw,font=\bfseries}]
		\node[gnode](3) {$a_3$};
		\node[gnode](4) [below right of=3] {$a_4$};
		\node[gnode](2) [above right of=4] {$a_2$};
		\node[gnode](5) [below left of=4] {$a_5$};
		\node[gnode](1) [below right of=4] {$a_1$};

		\path[-stealth]
		(3) edge [bend left] node {} (2)
		(2) edge [bend left] node {} (3)
		(4) edge [bend left] node {} (3)
		(4) edge [bend right] node {} (2)
		(4) edge [loop above] node {} ()
		(5) edge [bend left] node {} (1)
		(1) edge [bend left] node {} (5)
		(5) edge [bend left] node {} (4)
		(1) edge [bend right] node {} (4)
		(5) edge [bend left] node {} (3)
		(5) edge [bend right] node {} (2);
		
	\end{tikzpicture}
  \caption{The directed graph associated to the bottom Hubbard tree in Figure \ref{fig:EV}. The vertices are labeled so that $a_i$ corresponds to the $i$-th edge of the Hubbard tree from the left. It can be verified easily the maximal depth is $2$, so $\mathscr{C}_t(f) = 3$ by Theorem \ref{thm:CBDecompJuliaIntersectGraph}. }
  \label{fig:DG}
\end{figure}

\appendix
\section{Bisets and relative polynomial activity growth}\label{sec:biset}
In this appendix, we will give another characterization of core entropy zero polynomials using the theory of self-similar groups. This idea was suggested by L.\@ Bartholdi and V.\@ Nekrashevych.

\begin{theorem}\label{thm:mat}
	For a \pcf polynomial of degree $d>1$, $h(f)=0$ if and only if the automaton about the adding machine basis has polynomial activity growth.
\end{theorem}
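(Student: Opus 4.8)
The plan is to reduce the equivalence to Theorem~\ref{thm:nic} by matching the cycle structure of the automaton with that of the directed graph $\mathcal{G}_f$ attached to the Markov map $f:\mathcal{T}_f\righttoleftarrow$. Recall the relevant objects: to the \pcf polynomial $f$ one associates its biset over $\mathrm{IMG}(f)$, and the choice of the external ray of angle $0$ singles out the basis $X=\{0,\dots,d-1\}$ in which the generator of $\mathrm{IMG}(f)$ winding around infinity is the \emph{adding machine} $a=(1,\dots,1,a)\sigma$, with $\sigma$ the cyclic permutation of $X$; the remaining standard generators together with all of their sections form a finite automaton $A_f$, finiteness being the contractivity of $\mathrm{IMG}(f)$ for \pcf $f$. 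A state of $A_f$ is \emph{active} if it permutes $X$ non-trivially, and the \emph{activity} of a state $q$ is $\theta_q(n)=\#\{w\in X^n : q|_w\text{ is active}\}$, where $q|_w$ is the state reached from $q$ after reading $w$. By Sidki's dichotomy \cite{Sid00}, $\theta_q$ grows either polynomially or exponentially, and the dichotomy is governed by the cycles of the Moore diagram through active states; after collecting the odometer sub-automaton arising from each bounded Fatou component of $f$ (each of which has bounded activity), exponential growth occurs precisely when the transitions among the remaining active states contain two intersecting cycles in the sense of \S\ref{sec:hd}. So it suffices to show that $A_f$ has polynomial activity growth if and only if $\mathcal{G}_f$ has no intersecting cycles.

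For this I build a correspondence between the active part of $A_f$ and $\mathcal{G}_f$ preserving the cyclic structure. Fix a forward invariant vertex set $\mathcal{V}\supseteq P_f\cup\Crit(f)$. A word $w\in X^n$ and a standard generator $b$ encode a point of $f^{-n}(P_f)$, obtained by lifting a loop for $b$ along the address $w$, and $b|_w$ is active exactly when this lift passes through a critical point, i.e.\ when the endpoint is a critical value of some iterate lying over a post-critical point. Enlarging $\mathcal{V}$ along the finitely many orbits of such points, every such endpoint is a vertex of the subdivided Hubbard tree, the edge of $\mathcal{G}_f$ it determines being read off from the next letter of $w$; following a Moore-diagram arrow between active states of $A_f$ then corresponds to following a directed edge of $\mathcal{G}_f$, while the bounded Fatou components of $f$ contribute only the odometer sub-automata mentioned above and are irrelevant to the active cyclic structure — this is the sense in which the characterization is \emph{relative} to the external Fatou component. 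Hence the active transitions of $A_f$ have no intersecting cycles if and only if $\mathcal{G}_f$ does. That $\mathcal{T}_f$ and the dynamics on it are recoverable from the biset, together with the bookkeeping just sketched, is standard in the theory of iterated monodromy groups; see \cite{Nek05}.

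Combining the two steps: $A_f$ has polynomial activity growth iff $\mathcal{G}_f$ has no intersecting cycles, iff $h(f)=0$ by Theorem~\ref{thm:nic} (equivalently Corollary~\ref{cor:nic} together with Proposition~\ref{prop:qnic}); this is the claim. The delicate step — the one I expect to absorb most of the work — is the correspondence of the second paragraph: identifying the active sections in the adding-machine basis with vertices and edges of the subdivided Hubbard tree, treating the bounded Fatou components correctly, and checking that Sidki's entanglement condition coincides on the nose with the "intersecting cycles" condition of \S\ref{sec:hd}. An alternative route argues on the limit space $\Julia_{\mathrm{IMG}(f)}\cong\Julia_f$: $\theta_q(n)$ counts the level-$n$ tile-intersection points, and one shows this count is polynomial iff $\Julia_f\cap\mathcal{T}_f$ is countable, then invokes Theorem~\ref{thm:cs}; but this variant meets the same essential obstacle.
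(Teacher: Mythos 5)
Your plan is, in substance, the ``more explicit proof'' the appendix sketches in its final subsection (on Moore diagrams and Hubbard trees), which is a different route from the proof the paper actually gives for Theorem~\ref{thm:mat}. The paper's proof goes through laminations: the backward-infinite Moore-diagram paths supported in the sub-diagram on $\{g^{\pm},e\}$ quotient $X^{-\omega}$ onto the circle at infinity $S^1_\infty$; the remaining backward-infinite paths are in bijection with the leaves of $\lambda(f)$; so polynomial activity growth holds iff $\lambda(f)$ has only countably many leaves, which by Theorem~\ref{thm:cs} is equivalent to $h(f)=0$. Your route --- match cycles of the Moore diagram with cycles of a Markov graph of $\mathcal{T}_f$, then cite Theorem~\ref{thm:nic} --- corresponds to the paper's ``dual'' construction: to each oriented edge $E$ of $\mathcal{T}_f$ assign the loop $g_E\in\pi_1(\C\setminus P_f)$ crossing $\mathcal{T}_f$ exactly once through $\mathrm{Int}(E)$ and not crossing $\mathcal{R}_f(0)$; then a Moore-diagram arrow $g_E\to g_{E''}$ is precisely (with direction reversed) a Markov arrow $E''\to E$ on oriented Hubbard-tree edges. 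That is exactly the bridge your sketch identifies as the delicate step and postpones, so as written the key equivalence ``cycles of the active part of $A_f$ $=$ cycles of $\mathcal{G}_f$'' is asserted rather than established; the lamination argument sidesteps this edge-by-edge bookkeeping at the cost of the (also nontrivial) identification of the intermediate quotient with $S^1_\infty$.

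One concrete error in the sketch: there is no ``odometer sub-automaton arising from each bounded Fatou component.'' The only odometer in the nucleus is the one at $g_\infty^{\pm}$, attached to the unbounded component; bounded Fatou components show up as Hubbard-tree vertices, and the nucleus elements other than $g_\infty^{\pm},e$ are exactly the duals $g_E$ of the oriented edges of $\mathcal{T}_f$. The Basilica example in the appendix makes this concrete: the nucleus is $\{g^{\pm},h^{\pm},e\}$, with $h^{\pm}$ dual to the unique Hubbard-tree edge, and there is no separate odometer for the bounded components at $0$ and $-1$. A smaller slip: the paper's definition of polynomial activity growth (and the underlying dichotomy of Sidki) concerns the sub-diagram spanned by all \emph{nontrivial} nucleus elements, not only the states that permute the alphabet $X$ nontrivially; these need not coincide. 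Neither point is fatal to your strategy, but both need correcting before the argument closes.
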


\subsection*{Bisets of \pcf topological branched coverings}
We briefly summarize the notion of self-similar groups and bisets. See \cite{BD18} \cite{Nek05} for more details.

Let $f:(S^2,P_f)\righttoleftarrow$ be a \pcf topological branched covering of degree $d>1$ with the post-critical set $P_f$. We consider $(S^2,P_f)$ as an orbifold whose orbifold structure is given by the order function $\ord: P_f \to \{2,3,\dots\} \cup \{\infty\}$ satisfying 
\[
\ord(z)=lcm\{\deg_{f^k}(y)~|~y\in f^{-k}(z),~k\ge0\}.
\]
When $z\in P_f$ is in the preimage of a periodic critical point, $\ord(z)=\infty$. Fix a base point $x$ of the orbifold fundamental group $G:=\pi_1^{orb}(S^2\setminus P_f,x)$. Define a set
\[
B(f)=\{ \gamma:[0,1]\to S^2\setminus P_f~|~\gamma(0)=x,\gamma(1)\in f^{-1}(x)\}/\sim_{\mathrm{orbi.homotopy}}
\]
where the homotopy is relative to the endpoints with $(S^2,P_f)$ being considered as an orbifold. The set $B(f)$ is called the {\it biset} of $f$ because it has $G$-actions from both the left and right. More precisely, for $\alpha,\beta\in G$ and $\delta\in B(f)$, we define $\alpha\cdot \delta \cdot \beta$ as the concatenation of $\beta$, $\delta$, and the lift of $\alpha$ through $f$ that starts from the terminal point of $\delta$. We remark that the concatenation is performed from right to left. It easily follows from definition that the left $G$-action is transitive and the right $G$-action is free on $B(f)$.

Since $B(f)$ has $G$-actions from both left and right, we can define its tensor powers $B(f)^{\otimes n}$. For example, $B(f)\otimes B(f)$ is the quotient of $B(f) \times B(f)$ by $\delta \cdot \alpha \times \delta' = \delta \times \alpha \cdot \delta'$ for any $\delta,\delta'\in B(f)$ and $\alpha \in G$. The $G$-actions can be extended to the tensor powers $B(f)^{\otimes n}$ in a natural way. For example, for $\delta, \delta'\in B(f)$, $\delta \otimes \delta'$ is the concatenation of $\delta'$ and the lift of $\delta$ starting from the terminal point of $\delta'$ so that $\delta \cdot \alpha \otimes \delta'=\delta \otimes \alpha \cdot \delta'$ for any $\alpha \in G$; both are equal to the concatenation of (1) $\delta'$ followed by (2) the lifting $\widetilde{\alpha}$ of $\alpha$ that starts from the terminal point of $\delta$ followed by (3) the lifting of $\delta$ starting from the terminal point of $\widetilde{\alpha}$. Again, the left $G$-action is transitive and the right $G$-action is free on $B(f)^{\otimes n}$.

Note that the right action cannot change the terminal point of $\delta \in B(f)$. So the right action has exactly $d$ orbits, which correspond to the terminal points of $\delta \in B(f)$. We call a choice of representatives of the $d$ orbits a {\it basis} of $B(f)$. For a basis $X$ of $B(f)$, $X^{\otimes n}$ is a basis of $B(f)^{\otimes n}$ for any $n\ge1$.

Let $X:=\{\delta_0,\delta_1,\dots,\delta_{d-1}\}$ be a basis of $B(f)$. We have $X\cdot G=B(f)$. More precisely, for any $g\in G$ and $\delta_i\in X$, there exist unique $j$ and $h\in G$ so that
\[
g\cdot \delta_i=\delta_j \cdot h.
\]
We define $g|_{\delta_i}:=h$.
Similarly, for any $\delta_{i_1} \otimes \delta_{i_2} \otimes \dots \otimes \delta_{i_k}$ and any $g\in G$, there exist unique $j_1,j_2,\dots,j_k$ and $h\in G$ so that 
\[
g\cdot (\delta_{i_1} \otimes \delta_{i_2} \otimes \dots \otimes \delta_{i_k})=(\delta_{j_1} \otimes \delta_{j_2} \otimes \dots \otimes \delta_{j_k}) \cdot h.
\]
Again we define $g|_{\delta_{i_1} \otimes \delta_{i_2} \otimes \dots \otimes \delta_{i_k}}:=h$.

A {\it nucleus} is defined as the smallest subset $\Ncal$ of $G$ satisfying the following property: For any $g\in G$ there exists $N>0$ such that $g \cdot X^{\otimes n}\subset X^{\otimes n} \cdot \Ncal$ for any $n>N$. If $f$ is not doubly covered by a torus endomorphism, $B(f)$ has a finite nucleus if and only if $f$ does not have a Levy cycle \cite{BD18}.

In what follows, we suppose that $B(f)$ has a finite nucleus $\mathcal{N}$ about a basis $X$. Define $X^{-\omega}$ as the set of left infinite sequences $\dots \otimes \delta_{i_2}\otimes \delta_{i_1}$. With respect to the product topology, $X^{-\omega}$ is homeomorphic to a Cantor set. We define an equivalence class in such a way that $\dots \otimes \delta_{i_2}\otimes \delta_{i_1} \sim \dots \otimes \delta_{j_2}\otimes \delta_{j_1}$ if and only if for any $n\ge 1$ there exist $g_n,h_n\in \Ncal$ such that
\begin{equation}\label{eqn:EqRel}
g_n\cdot(\delta_{i_n}\otimes \dots \otimes \delta_{i_2}\otimes \delta_{i_1})= (\delta_{j_n}\otimes \dots \otimes \delta_{j_2}\otimes \delta_{j_1})\cdot h_n,
\end{equation}
where $e$ is the identity element of $G$.
Then $(X^{-\omega}/\sim,\sigma)$ is topologically conjugate to $(\Julia_f,f)$ where $\sigma$ is the right shift map \cite{Nek05}.

A {\em Moore diagram} is a directed graph whose vertex set is $\Ncal$ and each edge from $g$ to $h$ corresponds to an equation $g \cdot \delta_i= \delta_j \cdot h$. We label the edge with $(\delta_i,\delta_j)$, which is also a part of the information of the Moore diagram.

\begin{example}\label{eg:BasilicaBiset}
	Let us see the Basilica polynomial $f(z)=z^2+1$ as an example. The post-critical set is $P_f=\{0,-1,\infty\}$. We can choose a generating set $\{g,h\}$ of $\pi_1(\hCbb\setminus P_f)$ and a basis $\{\delta_0,\delta_1\}$ for the biset $B(f)$ as in Figure \ref{fig:BasilicaBiset}. Then the nucleus is $\{g,g^{-1},h,h^{-1},e\}$ and the Moore diagram is given as Figure \ref{fig:BasilicaMooreDiagram}.
	
	\begin{figure}[ht]
		\centering
		\begin{subfigure}[b]{0.45\textwidth}
			\centering
			\def\svgwidth{\textwidth}
\begingroup%
  \makeatletter%
  \providecommand\color[2][]{%
    \errmessage{(Inkscape) Color is used for the text in Inkscape, but the package 'color.sty' is not loaded}%
    \renewcommand\color[2][]{}%
  }%
  \providecommand\transparent[1]{%
    \errmessage{(Inkscape) Transparency is used (non-zero) for the text in Inkscape, but the package 'transparent.sty' is not loaded}%
    \renewcommand\transparent[1]{}%
  }%
  \providecommand\rotatebox[2]{#2}%
  \newcommand*\fsize{\dimexpr\f@size pt\relax}%
  \newcommand*\lineheight[1]{\fontsize{\fsize}{#1\fsize}\selectfont}%
  \ifx\svgwidth\undefined%
    \setlength{\unitlength}{302.05088323bp}%
    \ifx\svgscale\undefined%
      \relax%
    \else%
      \setlength{\unitlength}{\unitlength * \real{\svgscale}}%
    \fi%
  \else%
    \setlength{\unitlength}{\svgwidth}%
  \fi%
  \global\let\svgwidth\undefined%
  \global\let\svgscale\undefined%
  \makeatother%
  \begin{picture}(1,0.53328728)%
    \lineheight{1}%
    \setlength\tabcolsep{0pt}%
    \put(0,0){\includegraphics[width=\unitlength,page=1]{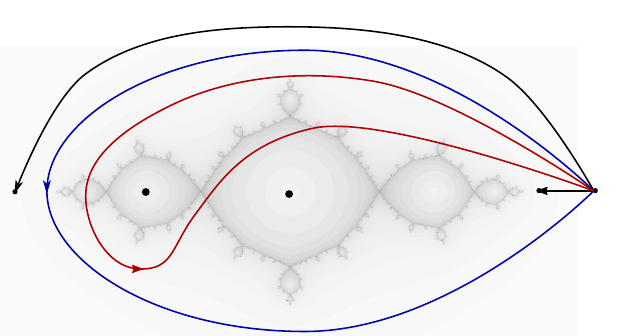}}%
    \put(0.1162838,0.03328511){\color[rgb]{0,0,0}\makebox(0,0)[lt]{\lineheight{1.25}\smash{\begin{tabular}[t]{l}$g$\end{tabular}}}}%
    \put(0.53802626,0.27834204){\color[rgb]{0,0,0}\makebox(0,0)[lt]{\lineheight{1.25}\smash{\begin{tabular}[t]{l}$h$\end{tabular}}}}%
    \put(0.88268162,0.18522776){\color[rgb]{0,0,0}\makebox(0,0)[lt]{\lineheight{1.25}\smash{\begin{tabular}[t]{l}$\delta_0$\end{tabular}}}}%
    \put(0.45928909,0.50358087){\color[rgb]{0,0,0}\makebox(0,0)[lt]{\lineheight{1.25}\smash{\begin{tabular}[t]{l}$\delta_1$\end{tabular}}}}%
  \end{picture}%
\endgroup%

		\end{subfigure}
		\begin{subfigure}[b]{0.45\textwidth}
			\centering
			\def\svgwidth{\textwidth}
\begingroup%
  \makeatletter%
  \providecommand\color[2][]{%
    \errmessage{(Inkscape) Color is used for the text in Inkscape, but the package 'color.sty' is not loaded}%
    \renewcommand\color[2][]{}%
  }%
  \providecommand\transparent[1]{%
    \errmessage{(Inkscape) Transparency is used (non-zero) for the text in Inkscape, but the package 'transparent.sty' is not loaded}%
    \renewcommand\transparent[1]{}%
  }%
  \providecommand\rotatebox[2]{#2}%
  \newcommand*\fsize{\dimexpr\f@size pt\relax}%
  \newcommand*\lineheight[1]{\fontsize{\fsize}{#1\fsize}\selectfont}%
  \ifx\svgwidth\undefined%
    \setlength{\unitlength}{302.05088323bp}%
    \ifx\svgscale\undefined%
      \relax%
    \else%
      \setlength{\unitlength}{\unitlength * \real{\svgscale}}%
    \fi%
  \else%
    \setlength{\unitlength}{\svgwidth}%
  \fi%
  \global\let\svgwidth\undefined%
  \global\let\svgscale\undefined%
  \makeatother%
  \begin{picture}(1,0.53786409)%
    \lineheight{1}%
    \setlength\tabcolsep{0pt}%
    \put(0,0){\includegraphics[width=\unitlength,page=1]{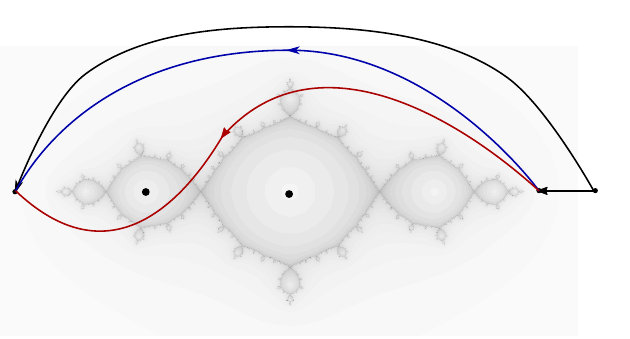}}%
    \put(0.6981942,0.38776062){\color[rgb]{0,0,0}\makebox(0,0)[lt]{\lineheight{1.25}\smash{\begin{tabular}[t]{l}$g$\end{tabular}}}}%
    \put(0.55056743,0.12866243){\color[rgb]{0,0,0}\makebox(0,0)[lt]{\lineheight{1.25}\smash{\begin{tabular}[t]{l}$h$\end{tabular}}}}%
    \put(0.88268162,0.18980457){\color[rgb]{0,0,0}\makebox(0,0)[lt]{\lineheight{1.25}\smash{\begin{tabular}[t]{l}$\delta_0$\end{tabular}}}}%
    \put(0.45928909,0.50815768){\color[rgb]{0,0,0}\makebox(0,0)[lt]{\lineheight{1.25}\smash{\begin{tabular}[t]{l}$\delta_1$\end{tabular}}}}%
    \put(0,0){\includegraphics[width=\unitlength,page=2]{BasilicaBiset1.pdf}}%
    \put(0.15665494,0.01016422){\color[rgb]{0,0,0}\makebox(0,0)[lt]{\lineheight{1.25}\smash{\begin{tabular}[t]{l}$g$\end{tabular}}}}%
    \put(0.27340118,0.32430471){\color[rgb]{0,0,0}\makebox(0,0)[lt]{\lineheight{1.25}\smash{\begin{tabular}[t]{l}$h$\end{tabular}}}}%
  \end{picture}%
\endgroup%

		\end{subfigure}
		\caption{The left figure indicates the choice of $\delta_0,\delta_1,g,h$. The right figure indicates the lifts of $g$ and $h$.}
		\label{fig:BasilicaBiset}
	\end{figure}
	
	\begin{figure}[ht]
		\begin{tikzpicture} [node distance = 2cm, on grid, auto]
			
			\node (e) [state] {$e$};
			\node (g) [state, above left = of e] {$g$};
			\node (g-) [state, below left = of e] {$g^{-1}$};
			\node (h) [state, left = of g] {$h$};
			\node (h-) [state, left = of g-] {$h^{-1}$};
			
			\path [-stealth, thick]
			(e) edge [loop above] node [right] {(0,0)}()
			(e) edge [loop below] node [right] {(1,1)}()
			(g) edge node [below left=-0.2cm] {(0,1)}   (e)
			(g) edge [loop above] node [right] {(1,0)}()
			(g-) edge node [above left=-0.2cm] {(1,0)}  (e)
			(g-) edge [loop below] node [right] {(0,1)}()
			(h) edge node {(1,0)}   (g)
			(h) edge [bend left] node {(0,1)}   (h-)
			(h-) edge node {(0,1)}  (g-)
			(h-) edge [bend left] node {(1,0)}  (h);
		\end{tikzpicture}
		\caption{The Moore diagram of the Basilica biset with the choice of basis $\{\delta_0,\delta_1\}$ in Figure \ref{fig:BasilicaBiset}. For simplicity, we omit $\delta$ in the labels of edges in the diagram, i.e., $(i,j)$ means $(\delta_i,\delta_j)$.}
		\label{fig:BasilicaMooreDiagram}
	\end{figure}
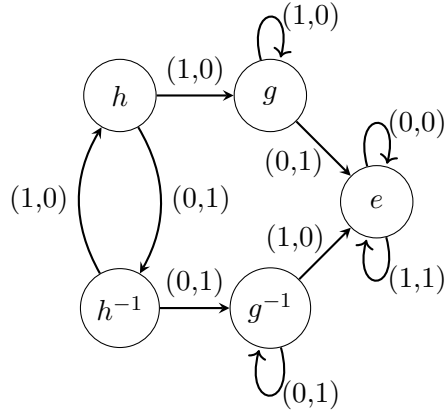
	
\end{example}

The equivalence relation on $X^{-\omega}$ corresponds to the backward infinite directed paths in the Moore diagram. More precisely, suppose that $\dots,g_2,g_1,g_0$ is the sequence of vertices in a backward infinite directed path in the Moore diagram such that the edge from $g_{n}$ to $g_{n-1}$ is labeled by $(\delta_{i_n},\delta_{j_n})$ for any $n\ge 1$. Then Equation \eqref{eqn:EqRel} holds for $h_n=g_0$ for any $n\ge1$. The converse requires further work \cite[Proposition 3.1.6, Theorem 3.5.3]{Nek05}. Thus the more backward infinite directed paths are, the more complicated the quotient $X^{-\omega}/\sim$ is.

The biset $B(f)$ defines a function $A:G \times X \to X \times G$ defined by $A(g,\delta_i)=(\delta_j,h)$ for $g \cdot \delta_i=\delta_j \cdot h$. This function $A$ is called the {\it automaton} of $(B(f),X)$. We say that the automaton (about the basis $X$) has {\it polynomial activity growth} if the sub-directed graph of the Moore diagram generated by all the elements in $\Ncal$ but $e$ has no intersecting cycles.

\begin{defn}[Adding machine basis]
	Let $f$ be a \pcf polynomial of degree $d>1$. Choose a base point $x$ of the orbifold fundamental group $\pi_1((\hCbb,P_f,\ord),x)$ on the external ray $\mathcal{R}_f(0)$ of angle zero. A basis $X=\{\delta_0,\delta_1,\dots,\delta_{d-1}\}$ for the biset $B(f)$ is the {\it adding machine basis} if $\delta_k$ is a curve that connects $\mathcal{R}_f(0)$ to $\mathcal{R}_f(e^{k\cdot 2\pi i/d})$ without any rotation around the filled Julia set of $f$.
\end{defn}

The following lemma is the reason why we call $X$ the adding machine basis. The proof is immediate from definitions.

\begin{lem}
	Let $f$ be a \pcf polynomial of degree $d>1$ and $X$ be the adding machine basis of the biset $B(f)$. For $g\in \pi_1((\hCbb,P_f,\ord),x)$ that is the counter-clockwise loop in the external Fatou component, we have $g\cdot \delta_i=\delta_{i+1} \cdot e$ for $0\le i\le d-2$ and $g \cdot \delta_{d-1}=\delta_0 \cdot g$.
\end{lem}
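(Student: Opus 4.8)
The plan is to unwind, in Böttcher coordinates on the basin of infinity, the definition of the adding machine basis together with the definition of the left $G$-action on $B(f)$. Since $f$ is post-critically finite it has connected Julia set, so the basin $\Omega_\infty$ of $\infty$ is a topological disk and carries a Böttcher coordinate $\phi\colon\Omega_\infty\to\hCbb\setminus\overline{\D}$ conjugating $f$ to $w\mapsto w^d$ and sending $\mathcal{R}_f(0)$ to the positive real ray; normalize the basepoint so that $\phi(x)=\rho\in(1,\infty)$. The $d$ preimages of $x$ under $f$ are then $x_k:=\phi^{-1}(\rho^{1/d}e^{2\pi i k/d})\in\mathcal{R}_f(k/d)$ for $k=0,\dots,d-1$ (angles mod $1$), and, by definition of the adding machine basis, $\delta_k$ is represented by a path in $\Omega_\infty\setminus\{\infty\}$ from $x$ to $x_k$ of total argument increment $2\pi k/d$, while $g$ is represented by $s\mapsto\phi^{-1}(\rho e^{2\pi i s})$, $s\in[0,1]$, a loop at $x$ of argument increment $2\pi$.

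Next I would compute $g\cdot\delta_i$ straight from the definition of the action in \S\ref{sec:biset}: it is represented by $\delta_i$ followed by the $f$-lift $\widetilde g$ of $g$ starting at the terminal point $x_i$ of $\delta_i$. Because $f$ is $w\mapsto w^d$ on $\Omega_\infty$, this lift is $s\mapsto\phi^{-1}(\rho^{1/d}e^{2\pi i(s+i)/d})$, an arc at radius $\rho^{1/d}$ sweeping counterclockwise from argument $2\pi i/d$ to argument $2\pi(i+1)/d$. Hence for $i\le d-2$ the path $g\cdot\delta_i$ runs from $x$ to $x_{i+1}$ with total argument increment $2\pi i/d+2\pi/d=2\pi(i+1)/d$, the same as that of $\delta_{i+1}=\delta_{i+1}\cdot e$; while for $i=d-1$ the lift ends at $\phi^{-1}(\rho^{1/d})=x_0$, so $g\cdot\delta_{d-1}$ runs from $x$ to $x_0$ with argument increment $2\pi(d-1)/d+2\pi/d=2\pi$, the same as that of $\delta_0\cdot g$, which (reading the concatenation right to left as in the appendix) is $g$ followed by $\delta_0$ and so has increment $2\pi+0$.

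Finally I would invoke the only topological ingredient. Every finite post-critical point of $f$ lies in $K_f$, and $\infty$ has been removed, so all the paths above lie in the annulus $\Omega_\infty\setminus\{\infty\}$, which meets $P_f$ nowhere; in an annulus the homotopy class rel endpoints of a path is determined by its total argument increment. Therefore $g\cdot\delta_i$ is homotopic rel $P_f$ to $\delta_{i+1}$ for $0\le i\le d-2$, and $g\cdot\delta_{d-1}$ is homotopic rel $P_f$ to $\delta_0\cdot g$; since orbifold homotopy is coarser than homotopy in $\hCbb\setminus P_f$, these identities hold in $B(f)$, which is the assertion of the lemma. I expect the only delicate point to be the bookkeeping of conventions — counterclockwise orientation in $\Omega_\infty$, the increasing cyclic labeling of the rays $\mathcal{R}_f(k/d)$, and the right-to-left order of concatenation — chosen so that the lift of one full counterclockwise turn is a $1/d$-turn carrying $x_i$ to $x_{i+1}$ rather than to $x_{i-1}$; there is no analytic difficulty, and the presence of $\infty$ in $P_f$ is exactly what makes the loop around $K_f$ nontrivial and forces the carry term $g$ when the index wraps around.
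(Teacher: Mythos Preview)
Your argument is correct and is exactly the unwinding of definitions that the paper has in mind; the paper itself says only that ``the proof is immediate from definitions,'' and you have written out those details carefully in B\"ottcher coordinates. There is nothing to add.
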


\begin{proof}[Proof of Theorem \ref{thm:mat}]
	Let $\mathcal{N}$ be the nucleus of $B(f)$ about the adding machine basis X. By the above lemma, $g^\pm\in \mathcal{N}$. Consider the sub-diagram $A$ of the Moore diagram generated by $g,g^{-1},e$. It is easy to show that the intermediate quotient of $X^{-\omega}$ by the backward infinite paths supported in $A$ is homeomorphic to the circle. Moreover, this circle can be identified with the circle at infinity $S^1_\infty$, which parametrizes the external angles. The Julia set $\Julia_f$ is the further quotient of $S^1_\infty$ by the backward infinite paths in the Moore diagram that are not supported in $A$. This further quotient corresponds to the quotient of the circle $S^1_\infty$ by the invariant lamination $\lambda(f)$. That is, there is a bijective correspondence between the backward infinite paths in the Moore diagram that are not supported in $A$ and the leaves of $\lambda(f)$. Hence, the automaton about the adding machine basis has polynomial activity growth if and only if $\lambda(f)$ has countably many leaves if and only if $h(f)=0$. 
\end{proof}

\begin{example}[Example \ref{eg:BasilicaBiset} continued]
	Let $D$ be the sub-diagram generated by $\{g,g^{-1},e\}$ of the Moore diagram in Figure \ref{fig:BasilicaMooreDiagram}. Backward infinite paths supported in $D$ yield the following equivalences: $\cdots0000 \sim \cdots1111$, and for any possibly empty finite word $w$ 
	\[
	\cdots1110w \sim \cdots0001w.
	\]
	These are exactly the identifications of the two endpoints of the middle intervals missed from the ternary Cantor set. Hence, 
	we obtain the circle as the intermediate quotient. Any backward infinite path that is not supported in $D$ is eventually supported in the loop between $h$ and $h^{-1}$. Its equivalence is written as
	\[
	\cdots0101\,\underline{1}\,\underline{11\cdots11}\,\underline{0}\,\underline{w} \sim \cdots1010\,\underline{0}\,\underline{00\cdots00}\,\underline{1}\,\underline{w}
	\]
	for any finite word $w$. The word $w$ is from loops at $e$, the finite repeats of 0 or 1 in the middle are from the loop at $g$ or $g^{-1}$, and the last repeats of $10$ or $01$ is from the loop between $h$ and $h^{-1}$. The underlined words may be empty. Thus the simplest example is $\cdots010101\sim \cdots 101010$, which means $2/3 \sim 1/3$ where we identify these left infinite words in $0$ and $1$ as dyadic expansions $0.101010\cdots \sim 0.010101\cdots$. See Table \ref{tab:EqClassLami} and Figure \ref{fig:BasilicaLam} for a few more examples.
	
	\begin{table}[h!]
		\begin{center}
			\begin{tabular}{c|c} 
				\textbf{Eq. classes in the Moore diagram} & \textbf{Leaf of lamination}\\
				\hline
				$\cdots010101\sim \cdots 101010$ & $2/3 \sim 1/3$\\
				$\cdots0101011 \sim \cdots 1010100$ & $5/6 \sim 1/6$\\
				$\cdots01010111 \sim \cdots 10101000$ & $11/12 \sim 1/12$\\
				$\cdots01010110 \sim \cdots 10101001$ & $5/12 \sim 7/12$
			\end{tabular}
			\caption{Equivalence classes and leaves of laminations}
			\label{tab:EqClassLami}
		\end{center}
	\end{table}
	\begin{figure}[h!]
		\centering
		\includegraphics[width=0.4\textwidth]{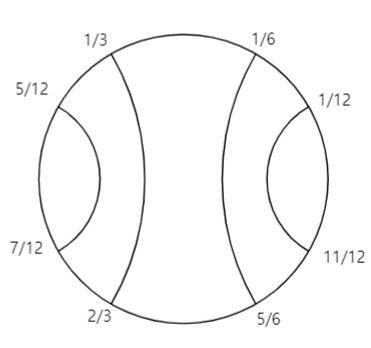}
		\caption{Leaves of the Basilica lamination described in Table \ref{tab:EqClassLami}}
		\label{fig:BasilicaLam}
	\end{figure}
	
\end{example}

\subsection*{Moore diagrams and Hubbard trees}
Let $f$ be a \pcf polynomial and $\mathcal{T}_f$ be the Hubbard tree. 

Take a base point $x$ of the fundamental group $\pi_1(\mathbb{C}\setminus P_f,x)$ on the external ray $\mathcal{R}_f(0)$ of angle zero and near the infinity. Let $E$ be an oriented edge of $\mathcal{T}_f$. Then there is a unique element $g_E\in \pi_1(\mathbb{C}\setminus P_f,x)$ so that (1) $g_E$ intersects $\mathcal{T}_f$ exactly at one point in the interior of $E$, (2) $g_E$ does not cross $\mathcal{R}_f(0)$, and (3) the ordered pair $(E,g_E)$ near the intersection point is compatible with the orientation of the plane. We call $g_E$ the {\it dual} of $E$. See Figure \ref{fig:BasilicaBiset}.

Let $E$ be an oriented edge of $\mathcal{T}_f$ and $\{\delta_i\}_{i=0,1,\dots,d-1}$ be the adding machine basis. For any $i$, there exist unique $j\in\{0,1,\dots,d-1\}$ and $h\in \pi_1(\mathbb{C}\setminus P_f)$ so that 
\[
g_E\otimes \delta_i= \delta_j \otimes h.
\]
The curve $g_E \otimes \delta_i$, which is $\delta_i$ followed by the lifting of $g_E$ starting at the terminal point of $\delta_i$, passes through some lift $E'$ of $E$ through $f$. Note that $f^{-1}(\mathcal{T}_f)$ is a tree containing $\mathcal{T}_f$. We have two case as follows.
\begin{itemize}
	\item If $E'$ is not in $\mathcal{T}_f$, then $h$ is the trivial element $e$.
	\item If $E'$ is contained in an oriented edge $E''$ of $\mathcal{T}_f$ so that the orientations of $E'$ and $E''$ are compatible (resp.\@ not compatible), then $h$ is $g_{E''}$ (resp.\@ $g_{E''}^{-1}$).
\end{itemize}
Therefore, a directed edge in the Moore diagram from $g_E$ to $g_{E''}$ corresponds to the directed edge from $E''$ to $E$ in the directed graph of the Markov map $f:\mathcal{T}_f\righttoleftarrow$.

Let $g_\infty$ is the element in $\pi_1(\mathbb{C}\setminus P_f)$ that is peripheral to the infinity. Then, the subdiagram of the Moore diagram generated by the complement of $\{g^\pm_\infty,e\}$ is equivalent with reversed directions to the directed graph associated to the oriented edges of the Hubbard tree $\mathcal{T}_f$. For example, in Figure \ref{fig:BasilicaMooreDiagram}, the complement of $\{g^{\pm},e\}$ is the diagram $h \leftrightarrow h^{-1}$. The Hubbard tree of the Basilica polynomial has one edge and the induced dynamics reverses the orientation of the edge. This relation between two diagrams also provides a more explicit proof of Theorem \ref{thm:mat}.

To obtain the entire Moore diagram, we can consider an extended Hubbard tree. Let $\beta$ be the fixed point which is the landing point of $\mathcal{R}_f(0)$. Let $e_\infty$ be the union of $\mathcal{R}_f(0)$ and the regulated path from $\beta$ to $\mathcal{T}_f$ in the filled Julia set. We define the {\it augmented Hubbard tree $\hat{\mathcal{T}}_f$} by $\mathcal{T}_f \cup \{e_\infty\}$. Then $g_\infty$ is the dual of the augment edge $e_\infty$, and we obtain the entire Moore diagram from the dynamcis on the oriented edges of the augmented Hubbard tree.

\end{document}